\newcommand\R{{\mathbb{R}}}
\newcommand\C{{\mathbb{C}}}
\newcommand\D{{\mathbf{D}}}
\newcommand\I{{\mathbf{I}}}
\renewcommand\P{{\mathbf{P}}}
\newcommand\E{{\mathbf{E}}}
\renewcommand\Im{{\operatorname{Im}}}
\renewcommand\Re{{\operatorname{Re}}}
\newcommand\eps{{\varepsilon}}
\newcommand\tr{\operatorname{trace}}
\newcommand\dist{\operatorname{dist}}
\newcommand\CE{{\mathcal E}}
\subjclass{15A52}
\theoremstyle{plain}
  \newtheorem{theorem}{Theorem}
  \newtheorem{proposition}[theorem]{Proposition}
  \newtheorem{lemma}[theorem]{Lemma}
  \newtheorem{corollary}[theorem]{Corollary}
\theoremstyle{definition}
  \newtheorem{definition}[theorem]{Definition}
  \newtheorem{example}[theorem]{Example}
  \newtheorem{remark}[theorem]{Remark}
\begin{document}

\title[Random matrices: universality of local eigenvalue statistics]{Random matrices:\\  Universality of local eigenvalue statistics}

\author{Terence Tao}
\address{Department of Mathematics, UCLA, Los Angeles CA 90095-1555}
\email{tao@@math.ucla.edu}
\thanks{T. Tao is supported by a grant from the MacArthur Foundation, by NSF grant DMS-0649473, and by the NSF Waterman award.}

\author{Van Vu}
\address{Department of Mathematics, Rutgers, Piscataway, NJ 08854}
\email{vanvu@math.rutgers.edu-}
\thanks{V. Vu is supported by research grants DMS-0901216 and AFOSAR-FA-9550-09-1-0167.}

\begin{abstract}  In this paper, we consider the universality of  the local eigenvalue statistics of random  matrices. 
Our main result shows that these statistics are determined by the first four moments of the distribution of the entries. 
As a consequence, we derive  the universality of eigenvalue gap distribution and $k$-point correlation
and many other statistics 
(under some mild assumptions) for both Wigner Hermitian matrices and Wigner real symmetric matrices. 
\end{abstract}

\maketitle

\setcounter{tocdepth}{2}
\tableofcontents

\section{Introduction}

\subsection{Wigner matrices and local statistics}

The goal of this paper is to establish a universality property for the local eigenvalue statistics
for random matrices. To simplify the presentation, we are going to focus on \emph{Wigner Hermitian matrices}, which are perhaps the
  most prominent model in the field.  We emphasize however that our main theorem (Theorem \ref{theorem:main})
 is stated in a much more general setting, and can be applied to various other models of random matrices (such
as random real symmetric matrices, for example).

\begin{definition}[Wigner matrices]\label{def:Wignermatrix} 
Let $n$ be a large number. A \emph{Wigner Hermitian matrix} (of size $n$)
 is defined as  a random Hermitian $n \times n$ matrix $M_n$ with upper
triangular complex entries $\zeta_{ij} := \xi_{ij} + \sqrt{-1} \tau_{ij}$ ($1\le i < j
\le n$) and diagonal real entries $\xi_{ii}$
($1\le i \le n$) where

\begin{itemize}
\item For $1 \leq i < j \leq n$, $\xi_{ij}, \tau_{ij}$ are iid copies of a real random variable $\xi$ with  mean zero and variance $1/2$.

\item For $1 \leq i \leq n$, $\xi_{ii} $ are iid copies of a real random variable $\tilde \xi$
with mean zero and  variance $1$.

\item $\xi, \tilde \xi$ have exponential decay, i.e., there are constants $C, C'$ such that 
$\P( |\xi| \ge t^C) \le \exp(- t),   \P( |\tilde \xi| \ge t^C) \le \exp(- t)$, for all $t \ge C'$.
\end{itemize}
We refer to $\xi, \tilde \xi$ as the \emph{atom distributions} of $M_n$, and $\xi_{ij}, \tau_{ij}$ as the \emph{atom variables}.  We refer to the matrix $W_n := \frac{1}{\sqrt{n}} M_n$ as the \emph{coarse-scale normalized Wigner Hermitian matrix}, and $A_n := \sqrt{n} M_n$ as the \emph{fine-scale normalized Wigner Hermitian matrix}.
\end{definition}

\begin{example}\label{gue}  An important special case of a Wigner Hermitian matrix is the \emph{gaussian unitary ensemble} (GUE), in which $\xi, \tilde \xi$ are gaussian random variables with mean zero and variance $1/2$, $1$ respectively.  The coarse-scale normalization $W_n$ is convenient for placing all the eigenvalues in a bounded interval, while the fine-scale normalization $A_n$ is convenient for keeping the spacing between adjacent eigenvalues to be roughly of unit size.
\end{example}


Given an $n \times n$ Hermitian matrix $A$, we denote its $n$ eigenvalues as
$$ \lambda_1(A) \leq \ldots \leq \lambda_n(A),$$
and write $\lambda(A) := (\lambda_1(A),\ldots,\lambda_n(A))$.  We also let $u_1(A),\ldots,u_n(A) \in \C^n$ be an orthonormal basis of eigenvectors of $A$ with $A u_i(A) = \lambda_i(A) u_i(A)$; these eigenvectors $u_i(A)$ are only determined up to a complex phase even when the eigenvalues are simple, but this ambiguity will not cause a difficulty in our results as we will only be interested in the \emph{magnitude} $|u_i(A)^* X|$ of various inner products $u_i(A)^* X$ of $u_i(A)$ with other vectors $X$.

The study of the eigenvalues $\lambda_i( W_n )$ of (normalized) Wigner Hermitian matrices has been one of the major topics of study in random matrix theory.  The properties of these eigenvalues are not only interesting in their own right, but also have been playing
essential roles in many other areas of mathematics, such as mathematical
physics, probability, combinatorics, and the theory of computing.

It will be convenient to introduce the following notation for frequent events depending on $n$, in increasing order of likelihood:

\begin{definition}[Frequent events]\label{freq-def}  Let $E$ be an event depending on $n$.
\begin{itemize}
\item $E$ holds \emph{asymptotically almost surely} if\footnote{See Section \ref{notation-sec} for our conventions on asymptotic notation.} $\P(E) = 1-o(1)$.
\item $E$ holds \emph{with high probability} if $\P(E) \geq 1-O(n^{-c})$ for some constant $c>0$.
\item $E$ holds \emph{with overwhelming probability} if $\P(E) \geq 1-O_C(n^{-C})$ for \emph{every} constant $C>0$ (or equivalently, that $\P(E) \geq 1 - \exp(-\omega(\log n))$).
\item $E$ holds \emph{almost surely} if $\P(E)=1$.  
\end{itemize}
\end{definition}

\begin{remark} Note from the union bound that the intersection of $O(n^{O(1)})$ many events with uniformly overwhelming probability, still has overwhelming probability.  Unfortunately, the same is not true for events which are merely of high probability, which will cause some technical difficulties in our arguments.
\end{remark}

A cornerstone of this theory is the \emph{Wigner semicircular law}.  Denote by $\rho_{sc}$ the semi-circle density function with
support on $[-2,2]$,
\begin{equation}\label{semi}
 \rho_{sc} (x):= \begin{cases} \frac{1}{2\pi} \sqrt {4-x^2}, &|x| \le 2 \\ 0,
&|x| > 2. \end{cases} 
\end{equation}

\begin{theorem}[Semi-circular law]\label{theorem:Wigner} Let $M_n$ be a Wigner Hermitian matrix.  Then for any real number $x$,
$$\lim_{n \rightarrow \infty} \frac{1}{n} |\{1 \leq i \leq n: \lambda_i(W_n) \le  x \}|
 = \int_{-2}^x \rho_{sc}(y)\ dy$$
in the sense of probability (and also in the almost sure sense, if the $M_n$ are all minors of the same infinite Wigner Hermitian matrix), where we use $|I|$ to denote the cardinality of a finite set $I$.  
\end{theorem}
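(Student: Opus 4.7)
The plan is to prove Wigner's semicircle law via the method of moments. Since $\rho_{sc}$ is compactly supported on $[-2,2]$, it is uniquely determined by its moments, which can be computed directly: $\int_{-2}^2 x^k \rho_{sc}(x)\, dx = 0$ for $k$ odd and equals the Catalan number $C_\ell = \frac{1}{\ell+1}\binom{2\ell}{\ell}$ for $k = 2\ell$ even. By the method of moments, together with Carleman's criterion (automatic for a compactly supported target), it suffices to show that for each fixed nonnegative integer $k$, the random moment
\begin{equation*}
m_k(W_n) := \int x^k\, d\mu_n(x) = \frac{1}{n}\tr(W_n^k), \qquad \mu_n := \frac{1}{n}\sum_{i=1}^n \delta_{\lambda_i(W_n)},
\end{equation*}
converges in probability (and almost surely in the nested setting) to the corresponding semicircle moment.

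First I would compute the expectation. Expanding the trace and using $W_n = M_n/\sqrt{n}$,
\begin{equation*}
\E m_k(W_n) = \frac{1}{n^{1+k/2}} \sum_{i_1,\ldots,i_k \in [n]} \E\bigl[\zeta_{i_1 i_2} \zeta_{i_2 i_3} \cdots \zeta_{i_k i_1}\bigr],
\end{equation*}
where $\zeta_{ji} := \overline{\zeta_{ij}}$ and $\zeta_{ii} := \xi_{ii}$, and the sum ranges over closed walks of length $k$ on $[n]$. By independence of the entries on or above the diagonal, together with the mean-zero hypothesis, the summand vanishes unless every edge of the walk is traversed at least twice; in particular, for $k$ odd some edge must be traversed an odd number of times, so $\E m_k(W_n) = O(n^{-1})$. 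For $k = 2\ell$ even, a standard counting argument using $\E|\zeta_{ij}|^2 = 1$ shows that the main contribution comes from walks visiting exactly $\ell+1$ distinct vertices with each edge traversed exactly twice; such walks are in bijection with non-crossing pair partitions of $[2\ell]$ (equivalently, rooted plane trees with $\ell$ edges), of which there are $C_\ell$. Walks that either visit fewer distinct vertices, or traverse some edge three or more times, contribute $O(n^{-1})$ once the exponential decay hypothesis on $\xi,\tilde\xi$ is used to control the higher moments $\E|\zeta_{ij}|^{2r}$.

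Next, to upgrade expectation convergence to convergence in probability, I would establish the variance estimate $\Var(m_k(W_n)) = O(n^{-2})$ via an analogous expansion: $\E m_k(W_n)^2 - (\E m_k(W_n))^2$ is a sum over pairs of closed walks, and independence forces a contributing pair to share at least one edge, reducing the number of free indices by two relative to $(\E m_k)^2$. Chebyshev's inequality then yields convergence in probability, while summability of $\sum_n n^{-2}$ together with Borel--Cantelli upgrades this to almost sure convergence in the nested-minors setting. Combined with the moment convergence $\E m_k(W_n) \to \int x^k \rho_{sc}\, dx$ and the Carleman uniqueness, this gives the claimed weak convergence of $\mu_n$ to $\rho_{sc}$.

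The main obstacle is the combinatorial identification of the leading term as $C_\ell$. A clean way is to argue by induction on $\ell$: for a dominant walk $(i_1,\ldots,i_{2\ell},i_1)$ with $\ell+1$ distinct vertices and each edge used exactly twice, the walk forms a tree, and the first return of the walk to $i_1$ splits it into two walks of smaller length, giving the Catalan recurrence $C_\ell = \sum_{j=0}^{\ell-1} C_j C_{\ell-1-j}$. The residual estimates for non-dominant walks are a routine counting exercise using the uniform bound $\E|\zeta_{ij}|^{2r} = O_r(1)$ from the exponential-decay assumption; the substantive content of the theorem lies entirely in this Catalan-number identification.
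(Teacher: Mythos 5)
Your proposal is mathematically sound, but note that the paper does not actually prove Theorem \ref{theorem:Wigner}: it is presented as a background fact with a citation to Wigner and Pastur (``Wigner proved this theorem for special ensembles. The general version above is due to Pastur''). There is therefore no in-paper proof to compare against; you are supplying one from scratch. What you give is the classical Wigner trace/moment method, and it is correct in outline: moment determinacy for compactly supported measures, the expansion of $\frac{1}{n}\tr(W_n^k)$ over closed walks, vanishing of odd moments and the Catalan count $C_\ell$ for the dominant tree walks in the even case, the $O(n^{-2})$ variance bound from paired walks sharing an edge, Chebyshev for convergence in probability, and Borel--Cantelli for the nested almost-sure version. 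A few small points worth making explicit if you were to write this out in full: (i) in the complex Hermitian case, a dominant walk must traverse each tree edge once in each direction, so the relevant second moment is $\E|\zeta_{ij}|^2 = 1$ and not $\E\zeta_{ij}^2$, which may vanish --- this is automatic for closed walks on trees but deserves a sentence; (ii) to pass from convergence of moments of the random measure $\mu_n$ to the pointwise statement ``for any real $x$'' in the theorem, you use that the limiting semicircle CDF is continuous everywhere, so weak convergence gives convergence at every $x$; (iii) the variance estimate at each fixed $k$ needs moments up to order $4k$, supplied by the exponential decay hypothesis, and the almost-sure statement requires a final diagonalization over a countable set of $k$'s (or $x$'s). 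It is perhaps worth observing that when the present paper does need to establish a sharper, local semicircle statement of its own (Theorem \ref{sdb}), it uses the Stieltjes-transform/self-consistent-equation route of Erd\H{o}s--Schlein--Yau rather than the combinatorial moment method; the moment method you chose is the more elementary route to the global law, but does not localize well, which is why the paper's quantitative arguments in Section 5 go through the resolvent instead.
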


\begin{remark} Wigner\cite{wig} proved this theorem for special ensembles. The general
 version  above is due to  Pastur \cite{Pas} (see \cite{BS, AGZ} for a detailed discussion).  The semi-circular law in fact holds under substantially more general hypotheses than those given in Definition \ref{def:Wignermatrix}, but we will not discuss this matter further here. One consequence of Theorem \ref{theorem:Wigner} is that we expect most of the eigenvalues of $W_n$ to lie in the interval $(-2+\eps,2+\eps)$ for $\eps > 0$ small; we shall thus informally refer to this region as the \emph{bulk} of the spectrum.
\end{remark} 

Several stronger versions of Theorem \ref{theorem:Wigner} are known.  For instance, it is known (see e.g. \cite{baiyin}, \cite{baiyin2}) that asymptotically almost surely, one has
\begin{equation}\label{lambdaj-t}
\lambda_j(W_n) = t\left(\frac{j}{n}\right) + O(n^{-\delta})
\end{equation}
for all $1 \leq j \leq n$ and some absolute constant $\delta > 0$, where $-2 \leq t(a) \leq 2$ is defined by the formula
\begin{equation}\label{lt}
a =: \int_{-2}^{t(a)} \rho_{sc}(x)\ dx.
\end{equation}
In particular we have
\begin{equation}\label{bai}
\sup_{1 \leq i \leq n} |\lambda_i(M_n)| = 2\sqrt{n} (1 + o(1))
\end{equation}
asymptotically almost surely (see \cite{baiyin2} for further discussion).

Theorem \ref{theorem:Wigner} addressed the global behavior of the
eigenvalues. The local properties are  much harder and their studies
require much more sophisticated tools. Most of the precise theorems
have been obtained for the GUE, defined in Example \ref{gue}. In the next few paragraphs, we
mention some of the most famous results concerning this model.

\subsection{Distribution of the spacings (gaps) of the eigenvalues of GUE}

In this section $M_n$ is understood to have the GUE distribution.

For a vector $x=(x_1, \dots, x_n)$ where $x_1 < x_2 \dots < x_n $,
define the normalized gap distribution $S_n(s;x)$ by the formula
$$S_n(s; x):=\frac{1}{n} |\{1 \leq i \leq n: x_{i+1}-x_i \le s \} |. $$
For the GUE ensemble it is known\cite{Meh} that
\begin{equation} \label{eqn:gaplimit}
\lim_{n \rightarrow \infty} \E S_n(s, \lambda(A_n)) =
\int_0^s p(\sigma) d \sigma,
\end{equation}
\noindent where $A_n := \sqrt{n} M_n$ is the fine-scale normalization of $M_n$, and $p(\sigma)$ is the \emph{Gaudin distribution}, given by the formula

$$p(s) := \frac{d^2}{ds^2} \det (I- K)_{L^2(0,s)}, $$

\noindent  where $K$ is the integral operator on $L^2 ((0,s))$ with the \emph{Dyson sine kernel} 
\begin{equation}\label{dyson}
K(x,y) := \frac{\sin \pi(x-y) }{\pi (x-y)}.
\end{equation}

\vskip2mm

In fact a stronger result is known in the bulk of the spectrum.  Let $l_n$ be any sequence of numbers tending to infinity such that
$l_n/n$ tends to zero. Define
\begin{equation}\label{ssn}
\tilde S_n (s; x, u) := \frac{1}{l_n} |\{1 \leq i \leq n: x_{i+1}- x_i \le \frac{s}{
\rho_{sc} (u) }, |x_i-nu| \le \frac{l_n}{\rho_{sc}(u) } \}|. 
\end{equation}

It is proved in \cite{DKMVZ} that for any fixed $-2 < u < 2$, we have

\begin{equation} \label{eqn:gaplimit2}
\lim_{n \rightarrow \infty} \E \tilde S_n (s; \lambda (A_n), u) =
\int_0^s p(\sigma) d \sigma.
\end{equation}

The eigenvalue  gap distribution has received much attention in the mathematics
community, partially thanks to  the fascinating (numerical)
coincidence with the gap distribution of the zeros of the zeta
functions. For more discussions, we refer to  \cite{Deibook, KS,
Deisur} and the references therein.

\subsection{$k$-point correlation for GUE}

Given a fine-scale normalized Wigner Hermitian matrix $A_n$, we can define the symmetrized distribution function $\rho_n^{(n)}: \R^n \to \R^+$ to be the symmetric function on $n$ variables such that the distribution of the eigenvalues $\lambda(A_n)$ is given by the restriction of $n! \rho_n^{(n)}(x_1,\ldots,x_n)\ dx_1 \ldots d x_n$ to the region $\{ x_1 \leq \ldots \leq x_n\}$. For any $1 \leq k \leq n$, the \emph{$k$-point correlation function} $\rho_n^{(k)}: \R^k \to \R^+$ is defined as the marginal integral of $\rho_n^{(n)}$:
$$\rho_n^{(k)}(x_1,\dots, x_k) := \frac{n!}{(n-k)!} \int_{\R^{n-k} } \rho_n^{(n)}(x) dx_{k+1} \dots d x_{n}. $$

In  the GUE case, one has an explicit formula for $\rho_n^{(n)}$, obtained by Ginibre \cite{Gin}:

\begin{equation} \label{eqn:ginibrecomplex}
\rho_n^{(n)}(x)  := \frac{n^{-n/2}}{Z_n^{(2)}} \prod_{1\le i < j \le n} |x_i -x_j |^2
\exp (-\frac{1}{2n}(x_1^2 + \dots + x_n^2)), \end{equation}
where $Z_n^{(2)} > 0$ is a normalizing constant, known as the \emph{partition function}. 
From this formula, one can compute $\rho_n^{(k)}$ explicitly.  Indeed, it was established by Gaudin and Mehta \cite{gaudin} that
\begin{equation} \label{eqn:correlation}
\rho_n^{(k)}(x_1, \dots, x_k)  = \det (K_n (x_i, x_j))_{1 \leq i,j \leq k}
\end{equation}
\noindent where the kernel $K_n (x,y)$ is given by the formula
$$ K_n(x,y) := \frac{1}{\sqrt{2n}} e^{-\frac{1}{4n}(x^2+y^2)} \sum_{j=0}^{n-1} h_j(\frac{x}{\sqrt{2n}}) h_j(\frac{y}{\sqrt{2n}})$$
and $h_0,\ldots,h_{n-1}$ are the first $n$ Hermite polynomials, normalized to be orthonormal with respect to $e^{-x^2}\ dx$.
From this and the asymptotics of Hermite polynomials, it was shown by Dyson \cite{Dys} that
\begin{equation} \label{eqn:correlationlimit}
\lim_{n \rightarrow \infty} \frac{1}{\rho_{sc} (u)^k} \rho_n^{(k)}(nu+
\frac{t_1}{\rho_{sc} (u) }, \dots, nu+ \frac{t_k}{\rho_{sc}
(u)}) = \det ( K( t_i, t_j ) )_{1\le
i, j \le k},
\end{equation}
\noindent for any fixed $-2 < u < 2$ and real numbers $t_1, \dots,
t_k$, where the Dyson sine kernel $K$ was defined in \eqref{dyson}. 

\vskip2mm

\subsection{The universality conjecture and previous results}

It has been conjectured, since the
1960s, by Wigner, Dyson, Mehta, and many others, that the local statistics
(such as the above limiting distributions) are \emph{universal}, in the sense that they hold not only for the GUE, but for any other Wigner random matrix also. This conjecture was motivated by similar phenomena in physics, such as the same laws of
 thermodynamics, which should emerge no matter what the details of atomic
 interaction.

The universality conjecture is  one of the  central questions in the
theory of random matrices. In many cases, it is stated for a
specific local statistics (such as the gap distribution or the
$k$-point correlation, see \cite[page 9]{Meh} for example). These
problems have been discussed in numerous books and surveys (see \cite{Meh, Deibook, Deisur}).

Despite the
conjecture's  long and distinguished history and the overwhelming
supporting numerical evidence, rigorous results on this problem for general Wigner random matrices have only begun to emerge recently.
At the edge of the spectrum, Soshnikov\cite{Sos1} proved the universality of the joint distribution of the
largest $k$ eigenvalues (for any fixed $k$), under the extra
assumption that  the atom distribution is symmetric:

\begin{theorem} \label{theorem:Sos} \cite{Sos1}
Let $k$ be a fixed integer and $M_n$ be a Wigner Hermitian matrix,
whose atom distribution is symmetric.  Set $W_n := \frac{1}{\sqrt{n}} M_n$. Then the joint distribution of the
$k$ dimensional random vector $$\Big( (\lambda_n(W_n) -2)n^{2/3}, \dots,
(\lambda_{n-k}(W_n)-2) n^{2/3} \Big) $$ has a weak limit as $n
\rightarrow \infty$, which coincides with that in the GUE case. The
result also holds for the smallest eigenvalues $\lambda_1, \dots,
\lambda_k $.
\end{theorem}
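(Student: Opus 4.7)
The natural approach is the method of moments of Soshnikov, applied to very high powers of $M_n$. The scaling is dictated by the heuristic $\lambda_n(M_n) \approx 2\sqrt n$ with top-eigenvalue fluctuations of order $n^{1/6}$: for $s \sim n^{2/3}$ the quantity $\trace(M_n^{2s})/(2\sqrt n)^{2s}$ is of order unity and is dominated by contributions from eigenvalues within $O(n^{1/6})$ of $\pm 2\sqrt n$. The strategy is thus to prove that for every fixed $l \geq 1$, every $\theta_1, \ldots, \theta_l > 0$, and $s_{j,n} := \lfloor \theta_j n^{2/3} \rfloor$, the joint trace moments
\begin{equation*}
\E \prod_{j=1}^l \trace(M_n^{2 s_{j,n}})
\end{equation*}
agree with the corresponding GUE expression up to a factor of $1+o(1)$. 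Combined with a tightness statement ruling out eigenvalues above $2\sqrt n + Cn^{1/6}$ (which follows from \eqref{bai} or a direct high-moment estimate), and with the fact that the joint law of the top edge eigenvalues of the GUE is determined by its moments (a consequence of the super-exponential tails of the Tracy--Widom laws), this matching of trace moments upgrades to convergence of the joint distribution of $((\lambda_n(W_n)-2)n^{2/3}, \ldots, (\lambda_{n-k}(W_n)-2)n^{2/3})$ to the GUE limit.

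\emph{Combinatorial expansion.} Expanding
\begin{equation*}
\trace(M_n^{2s}) = \sum_{i_0, i_1, \ldots, i_{2s-1} \in \{1,\ldots,n\}} \zeta_{i_0 i_1} \zeta_{i_1 i_2} \cdots \zeta_{i_{2s-1} i_0}
\end{equation*}
and taking expectations, one obtains a weighted sum over closed walks of length $2s$ on the vertex set $\{1,\ldots,n\}$, the weight of a walk being the product over unordered edges $\{i,j\}$ of $\E[\zeta_{ij}^{k_{ij}}]$ where $k_{ij}$ is the number of traversals. The symmetry of the atom distributions kills every walk that uses some edge an odd number of times. The ``tree walks'' in which every edge is used exactly twice are in bijection with labeled plane trees on $s+1$ vertices together with a root traversal, and contribute a main term of order $n^{s+1} C_s$ (with $C_s$ the $s$-th Catalan number) that depends only on the second moments $\E |\zeta_{ij}|^2 = 1$ and therefore matches the GUE contribution identically.

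\emph{Main obstacle.} The heart of the proof is to show that walks using some edge four or more times contribute a negligible correction, uniformly in $s$ up to $c n^{2/3}$. At subcritical scales $s = o(n^{2/3})$ this is a routine Wigner-type estimate; at $s \sim n^{2/3}$ one is perched on a knife's edge, where each excess edge use exacts both a combinatorial cost (from the loss of a vertex of the underlying multigraph) and an entry-moment cost (controlled via the exponential decay hypothesis in Definition \ref{def:Wignermatrix}), but these costs are only just enough to beat the very large number of walks. Soshnikov's solution is a meticulous classification of walks by the topology of their underlying multigraph and by the placement of the repeated edges along the walk sequence, followed by an injection into decorated tree walks that makes the compensation visible. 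Joint moments are handled analogously, applying the same machinery to $l$-tuples of walks: the leading contribution comes from tuples in which the individual walks have disjoint edge sets, and the tuples with shared edges are bounded by another iteration of the combinatorial estimate. Maintaining these bounds uniformly at the critical exponent $s \sim n^{2/3}$ is the principal difficulty throughout: anything weaker would only see the bulk semicircle rather than the edge fluctuations that encode the Tracy--Widom joint distribution.
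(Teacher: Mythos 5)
This theorem is quoted in the paper directly from Soshnikov's cited work \cite{Sos1}; the paper offers no proof of its own, so there is no internal argument to compare against. Your sketch is a faithful high-level account of Soshnikov's actual trace-moment method: the critical scale $s\sim n^{2/3}$, the vanishing of odd-traversal walks via the symmetry hypothesis, the Catalan-tree leading term depending only on second moments (hence universal), and the delicate classification of walks with repeated edges uniformly at the critical exponent are all the correct ingredients, as is the handling of joint moments through products of traces. One small imprecision: \eqref{bai} only gives $\lambda_n(M_n)=2\sqrt n(1+o(1))$, which is far too weak to rule out eigenvalues beyond $2\sqrt n + Cn^{1/6}$; the tightness you need is genuinely supplied by the high-moment estimate you mention parenthetically, not by \eqref{bai}.
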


Note that this significantly strengthens \eqref{bai} in the symmetric case.  (For the non-symmetric case, see \cite{peches}, \cite{peches2} for some recent results).

Returning to the bulk of the spectrum, Johansson \cite{Joh} proved \eqref{eqn:correlationlimit} and
\eqref{eqn:gaplimit2}  for random Hermitian matrices whose entries
are \emph{gauss divisible}. (See also the paper \cite{BenP}  of Ben Arous and P\'ech\'e where they discussed the removal of a technical
condition in \cite{Joh}.) More precisely, Johansson considered the model $M_n= (1-t)^{1/2} M^1_n +
t^{1/2} M^2_n$, where $0 < t \leq 1$ is fixed (i.e. independent of $n$), $M^1_n$ is a Wigner Hermitian matrix and $M^2_n$
is a GUE matrix independent of $M^1_n$. We will refer to such matrices as \emph{Johansson matrices}.

\begin{theorem}\label{theorem:johansson}\cite{Joh}

\eqref{eqn:correlationlimit} (in the weak sense) and \eqref{eqn:gaplimit2} (and hence \eqref{eqn:gaplimit}) hold for
Johansson matrices, as $n \rightarrow \infty$.  By ``weak sense'', we mean that
\begin{equation} \label{eqn:correlationlimit2}
\begin{split}
&\lim_{n \rightarrow \infty} 
\frac{1}{\rho_{sc} (u)^k} \int_{\R^k} f(t_1,\ldots,t_k) \rho_n^{(k)}(nu+
\frac{t_1}{\rho_{sc} (u) }, \dots, nu+ \frac{t_k}{\rho_{sc}
(u)})\ dt_1 \ldots dt_k \\
&\quad = \int_{\R^k} f(t_1,\ldots,t_k) \det ( K( t_i, t_j ) )_{1\le
i, j \le k}\ dt_1 \ldots dt_k
\end{split}
\end{equation}
for any test function $f \in C_c(\R^k)$.
\end{theorem}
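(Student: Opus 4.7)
The plan is to exploit the GUE component of the Johansson matrix $M_n = (1-t)^{1/2} M^1_n + t^{1/2} M^2_n$: because $M^2_n$ is unitarily invariant, the conditional distribution of $M_n$ given $M^1_n$ depends only on the eigenvalues $\mu_1 \le \cdots \le \mu_n$ of $M^1_n$, and can be written explicitly. First, conditioning on $M^1_n$, the matrix $M_n$ has the same law as $(1-t)^{1/2}\,\mathrm{diag}(\mu_j) + t^{1/2} H$ with $H$ an independent GUE matrix. Applying the Harish--Chandra--Itzykson--Zuber integral formula (Brézin--Hikami), the conditional joint eigenvalue density takes the form
$$p(\lambda \mid \mu) = \frac{C_n}{\Delta(\mu)} \,\Delta(\lambda)\, \det\!\left( e^{-(\lambda_i - \sqrt{1-t}\,\mu_j)^2/(2t)} \right)_{1\le i,j \le n},$$
where $\Delta$ is the Vandermonde determinant.

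Next, the Eynard--Mehta machinery for biorthogonal ensembles identifies the conditional $k$-point correlation function as a determinant $\det(K_n(x_i,x_j \mid \mu))_{1\le i,j\le k}$ of a single kernel $K_n$ admitting a double contour integral representation of the schematic form
$$K_n(x,y \mid \mu) = \frac{1}{(2\pi i)^2\, t} \oint_\Gamma dw \int_{\mathcal{L}} dz\; \frac{1}{w - z}\, \exp\!\left( \frac{(w-y)^2 - (z-x)^2}{2t} \right) \prod_{j=1}^{n} \frac{z - \sqrt{1-t}\,\mu_j}{w - \sqrt{1-t}\,\mu_j},$$
where $\mathcal{L}$ is a vertical line separating the $\mu_j$ and $\Gamma$ encircles them. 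After rescaling $x = nu + s/\rho_{sc}(u)$, $y = nu + s'/\rho_{sc}(u)$ in the bulk, the product can be written as $\exp(n\Phi_n(z,w))$ where $\Phi_n$ is controlled by the empirical spectral distribution of $M^1_n / \sqrt{n}$.

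The heart of the argument is a steepest descent / saddle point analysis of this integral. By the semicircle law (Theorem \ref{theorem:Wigner}) applied to the Wigner matrix $M^1_n$, together with the eigenvalue rigidity estimate \eqref{lambdaj-t}, the random function $\Phi_n$ is uniformly well-approximated on the relevant contours by a deterministic analytic function $\Phi$ built from $\rho_{sc}$. One then deforms the contours to pass through the pair of complex-conjugate saddle points of $\Phi$, extracts the quadratic Gaussian contribution, and after rescaling recognizes the Dyson sine kernel $K(s-s') = \sin \pi(s-s')/(\pi(s-s'))$ in the limit. This establishes \eqref{eqn:correlationlimit} (in the weak/integrated sense \eqref{eqn:correlationlimit2}) after taking expectation over $M^1_n$ and using dominated convergence, the requisite domination coming from the rigidity.

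The gap distribution statement \eqref{eqn:gaplimit2} then follows from the correlation function convergence by the standard identity $\det(I - K_n)_{L^2(I)} = \sum_k \frac{(-1)^k}{k!}\int_{I^k} \det(K_n(x_i,x_j))\,dx_1 \cdots dx_k$ expressing gap probabilities as Fredholm determinants, together with the formula $p(s) = \frac{d^2}{ds^2}\det(I-K)_{L^2(0,s)}$. The main obstacle I expect is the saddle point analysis with a \emph{random} potential: one must show that the fluctuations of the $\mu_j$ around their classical locations \eqref{lambdaj-t} produce only negligible perturbations of the saddle, uniformly in the small neighborhood where the Gaussian is extracted, and control the contribution of the regions where $z$ or $w$ approaches one of the singularities $\sqrt{1-t}\,\mu_j$. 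Here the fact that $t > 0$ is fixed independent of $n$ is essential, since it provides the genuine Gaussian smoothing factor $e^{-(\cdots)^2/(2t)}$ that damps the integrand away from the saddle.
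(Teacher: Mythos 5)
This theorem is stated in the paper as a citation to Johansson \cite{Joh} (with the refinement of Ben Arous and P\'ech\'e \cite{BenP}); the paper does not contain a proof, so there is no internal argument to compare against. Judged against Johansson's actual proof, your outline captures the essential structure faithfully: conditioning on the Wigner component $M^1_n$ and using unitary invariance of GUE to reduce to a deformed GUE with deterministic external source; the Harish--Chandra--Itzykson--Zuber / Br\'ezin--Hikami formula giving the conditional eigenvalue density; the resulting determinantal correlation kernel in double contour integral form with a factor $\prod_j (z - \sqrt{1-t}\,\mu_j)/(w - \sqrt{1-t}\,\mu_j)$; and the steepest descent analysis in which the random phase $\Phi_n$ is replaced, via the semicircle law and rigidity of the $\mu_j$, by a deterministic phase $\Phi$ whose saddle points produce the sine kernel. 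You also correctly identify the technically hard part --- controlling the random fluctuations of the $\mu_j$ in the saddle point analysis, uniformly near the saddle, and keeping the contours away from the (random) singularities at $\sqrt{1-t}\,\mu_j$ --- and correctly observe that the fixed $t>0$ is what supplies the Gaussian damping that makes this feasible. As a sketch it omits the quantitative estimates that carry those two points (this is where the bulk of \cite{Joh} lives, and where \cite{BenP} sharpened a hypothesis on the local density of the $\mu_j$), and it also omits the step passing from the pointwise kernel limit to the integrated (weak) convergence \eqref{eqn:correlationlimit2} and to the gap distribution \eqref{eqn:gaplimit2}, for which one needs uniform control of $K_n$ on compacts to justify the Fredholm determinant limit; but as a high-level reconstruction of Johansson's route it is accurate.
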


The property of being gauss divisible can be viewed as a strong regularity assumption on the atom distribution.
Very recently, Erd\H{o}s, Peche, Ramirez, Schlein and Yau \cite{ERSY}, \cite{ERSY2} have relaxed this regularity assumption significantly.
 In particular in \cite{ERSY2} an analogue of Theorem \ref{theorem:johansson}  (with $k=2$ for the correlation and $l_{n}$ polynomial in $n$ 
 for the gap distribution)
 is proven assuming that  the atom distribution is of the form 
 
 $$\nu dx = e^{-V(x) } e^{-x^{2}} dx  $$ where $V(x) \in C^{6} $ and $\sum_{i=1}^{6} |V^{j} (x)| \le C(1+x^{2} )^{k} $ and 
 $\nu (x) \le C' \exp(-\delta x^{2})$, for some fixed $k, \delta, C, C'$. It was remarked in \cite{ERSY2} that the last (exponential decay) 
  assumption can be weakened somewhat. 
  
  We note that both Erd\H os et al and our approach make
important, but different use of the ideas and results by Erdos,Schlein and Yau ([17,18,19]) concerning the rate of convergence to the semi-circle
law. (See page 24 and Section 5 for a detailed description.)

Finally, let us mention that in a different direction, universality was established by Deift, Kriecherbauer, McLaughlin,  Venakides
and  Zhou, \cite{DKMVZ}, Pastur and Shcherbina \cite{PS}, Bleher and Its  \cite{BI} for a different model of random
matrices, where the joint distribution of the eigenvalues is given
explicitly by the formula

\begin{equation}\label{eqn:ginibregeneral}
 \rho_n^{(n)} (x_1,\ldots,x_n)  := c_{n} \prod_{1\le i < j
\le n} |x_i -x_j |^2 \exp (-V(x) ),
\end{equation}

\noindent where $V$ is a general function and $c_n > 0$ is a normalization factor. The case $V=x^2$ corresponds to
\eqref{eqn:ginibrecomplex}. For a general $V$, the
 entries of the matrix are correlated, and so this model differs from the Wigner model.  (See \cite{ll} for some recent developments concerning these models, which are studied using the machinery of orthogonal polynomials.)

\vskip2mm

One of the main difficulties in establishing universality for general matrix ensembles lies in the fact that most of the results obtained in the GUE case (and the case in Johansson's theorem and those in  \cite{DKMVZ, PS, BI}) came from
heavy use of the explicit joint distribution of the eigenvalues such
as \eqref{eqn:ginibrecomplex} and \eqref{eqn:ginibregeneral}. The
desired limiting distributions  were proved using estimates on
integrals with respect to these measures. Very powerful tools have
been developed  to handle this task (see \cite{Meh, Deibook} for
example), but  they cannot be applied for general Wigner
matrices  where an explicit measure is not available.

Nevertheless, some methods have been developed which do not require the explicit joint distribution.  
For instance, Soshnikov's result\cite{Sos1} was obtained using the (combinatorial) trace method rather than from 
an explicit formula from the distribution, although it is well understood that this method, while efficient for the studying of the edge, is of 
much less use in the study of the spacing distribution in the bulk of the spectrum.  The recent argument in \cite{ERSY} also avoid explicit formulae, relying instead on an analysis of the \emph{Dyson Brownian motion}, which describes the stochastic dynamics of the spectrum of Johansson matrices $M_n= (1-t)^{1/2} M^1_n +
t^{1/2} M^2_n$ in the $t$ variable.  (On the other hand, the argument in \cite{ERSY2} uses explicit formulae for the joint distribution.) However, it appears that their method still requires a high degree of regularity on the atom distribution, whereas here we shall be interested in methods that do not require any regularity hypotheses at all (and in particular will be applicable to \emph{discrete} atom distributions\footnote{Subsequently to the release of this paper, we have realized that the two methods can in fact be combined to 
address the gap distribution problem and the $k$-point correlation problem even for  discrete distributions without requiring moment conditions; see \cite{ERSTVY} for details.}.


\subsection{Universality theorems}

In this paper, we introduce a new method to study the local statistics. This method is 
 based on the \emph{Lindeberg strategy} \cite{lindeberg} of replacing non-gaussian random variables with gaussian ones. 
 (For more modern discussions about Lindeberg's method, see \cite{Chat, PR}.)   Using this method, we are able to  prove  universality for general Wigner matrices under very mild assumptions.  For instance, we have

\begin{theorem}[Universality of gap]\label{theorem:main1}
The limiting gap distribution \eqref{eqn:gaplimit} holds for Wigner Hermitian matrices whose atom distribution $\xi$ has support on at least 3 points.
The stronger version   \eqref{eqn:gaplimit2} holds for Wigner Hermitian matrices whose atom distribution $\xi$ has support on at least 3 points
and  the third moment $\E \xi^3$ vanishes. \end{theorem}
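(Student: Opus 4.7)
The plan is to combine the paper's Four Moment Theorem (Theorem \ref{theorem:main}) with Johansson's theorem (Theorem \ref{theorem:johansson}) via a moment-matching construction. Johansson's theorem already establishes \eqref{eqn:gaplimit} and \eqref{eqn:gaplimit2} for any Johansson matrix $\tilde M_n = (1-t)^{1/2} N_n + t^{1/2} G_n$, with $N_n$ Wigner, $G_n$ GUE, and $t > 0$ fixed. So, given an arbitrary Wigner Hermitian $M_n$ with atom distribution $\xi$ satisfying the hypotheses, it suffices to produce such a $\tilde M_n$ whose off-diagonal atom $\eta := (1-t)^{1/2} \eta_1 + t^{1/2} g$ (where $\eta_1$ is the off-diagonal atom of $N_n$ and $g$ is a gaussian of variance $1/2$) agrees with $\xi$ in the first four moments. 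The Four Moment Theorem then transports the local gap statistics from $\tilde M_n$ to $M_n$ up to $o(1)$ error, giving the desired universality.

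The moment-matching is elementary. Since $g$ has vanishing odd moments, one computes
\begin{align*}
\E \eta^2 &= (1-t)\E \eta_1^2 + t/2, \\
\E \eta^3 &= (1-t)^{3/2}\E \eta_1^3, \\
\E \eta^4 &= (1-t)^2 \E \eta_1^4 + 3t(1-t)\E\eta_1^2 + \tfrac{3}{4} t^2,
\end{align*}
so prescribing $\E\eta^k = \E\xi^k$ for $k=2,3,4$ and a small $t>0$ gives explicit target values for $\E\eta_1^k$. The hypothesis that $\xi$ has support on at least $3$ points forces $(\E\xi^2, \E\xi^3, \E\xi^4)$ to lie strictly inside the feasible cone for the Hamburger moment problem at variance $1/2$; equivalently, the moment-matrix determinant inequality $\E\xi^4 > \tfrac{1}{4} + 2(\E\xi^3)^2$ is strict. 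By continuity in $t$, for $t$ small enough the target moments for $\eta_1$ are also strictly feasible, so a compactly supported probability measure $\eta_1$ with those moments exists; a truncation/mollification step then produces such an $\eta_1$ with the exponential decay required by Definition \ref{def:Wignermatrix}. The added hypothesis $\E\xi^3 = 0$ in the strong version \eqref{eqn:gaplimit2} forces $\E\eta_1^3 = 0$; this is the extra symmetry needed so that the sharper localized form of the Four Moment Theorem (which matches all four moments exactly at the atom level) can be applied at a fixed bulk point $u$.

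With $N_n$ and $t$ in hand, the diagonal atom $\tilde\xi$ is handled analogously (and in fact affects local bulk statistics only at lower order via the semicircle concentration \eqref{lambdaj-t}). The conclusion then assembles as follows: apply Johansson's theorem to $\tilde M_n$ to obtain \eqref{eqn:gaplimit} (respectively \eqref{eqn:gaplimit2}); then invoke Theorem \ref{theorem:main} with the matched moments to transfer the same limiting gap distribution to $M_n$.

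The main obstacle, I expect, is the Four Moment Theorem itself (and verifying its hypotheses for both $M_n$ and $\tilde M_n$), since it is the engine that makes the whole strategy work; the moment-matching and Johansson steps are essentially auxiliary. Within the moment-matching step, the subtlety is to realize $\eta_1$ as an honest random variable enjoying exponential decay while keeping it supported on enough points to remain inside Definition \ref{def:Wignermatrix}. A secondary issue, flagged by the remark after Definition \ref{freq-def}, is that the Four Moment Theorem is quantitative only in the \emph{overwhelming probability} regime, so one must check that the eigenvalue rigidity inputs it requires (e.g.\ of the type \eqref{lambdaj-t}, or bounds on eigenvector coefficients $|u_i^* X|$) hold with overwhelming probability for the Wigner class, not merely with high probability.
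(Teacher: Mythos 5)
Your plan for the weak version \eqref{eqn:gaplimit} is correct and is essentially the paper's proof. Lemma \ref{tmmp} and Corollary \ref{matchc} carry out the same moment matching (via formal characteristic functions rather than explicit moment solving, and via a two-point extremal construction plus convexity rather than truncation/mollification; the feasibility criterion $\alpha_4 - \alpha_3^2 - 1 > 0$ is equivalent to your $\E\xi^4 > \tfrac{1}{4} + 2(\E\xi^3)^2$ after rescaling to variance $1/2$), and then Theorem \ref{theorem:johansson} plus Theorem \ref{theorem:main} with a smooth two-variable approximant to the indicator of $\{y-x \le s\}$ finish the argument.

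However, your treatment of the strong version \eqref{eqn:gaplimit2} has a genuine gap: you misidentify the role of the hypothesis $\E\xi^3 = 0$. You claim it "forces $\E\eta_1^3 = 0$" and that this symmetry is needed so that "the sharper localized form of the Four Moment Theorem\ldots can be applied at a fixed bulk point $u$." No such localized form exists, and Corollary \ref{matchc} produces a four-moment match whether or not $\E\xi^3$ vanishes --- the first part uses the identical match. The real obstruction is a counting problem in $\E \tilde S_n$: the Four Moment Theorem gives an $O(n^{-c_0})$ error per index $i$, and using only the rigidity \eqref{lambdaj-t} (error $O(n^{-\delta})$) roughly $n^{1-\delta}$ indices can plausibly satisfy $|\lambda_i - nu| \le l_n/\rho_{sc}(u)$, so the summed error $O(n^{1-\delta-c_0}/l_n)$ does not vanish for slowly growing $l_n$. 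The paper closes this by first proving Theorem \ref{conv}, the much stronger rigidity $\lambda_j(W_n) = t(j/n) + O(n^{-1+c})$, which shrinks the set of contributing indices to an interval of length $O(n^c + l_n)$; choosing $c < c_0$ then controls the accumulated error. Theorem \ref{conv} is proved by comparing $M_n$ to GUE --- not to the Johansson matrix --- using the three-moment clause at the end of Theorem \ref{theorem:main}, and that is precisely where $\E\xi^3 = 0$ enters: it makes the off-diagonal entries of $M_n$ match GUE to third order (the second moments already agree, and GUE has vanishing odd moments). Your closing remark about eigenvalue rigidity is in the right direction, but \eqref{lambdaj-t} is too weak by a polynomial factor; Theorem \ref{conv} is the missing ingredient, and the third-moment hypothesis is what makes it available.
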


\begin{remark} \label{remark:gap1} 
Our method also enables us to prove the universality of the variance and higher moments.
Thus, the whole distribution of $S_{n}(s, \lambda)$ is universal, not only its expectation. See Remark
\ref{remark:gap2}. 
\end{remark} 

\begin{theorem}[Universality of correlation]\label{theorem:main2}
The $k$-point correlation \eqref{eqn:correlationlimit} (in the weak sense) 
 holds for Wigner Hermitian matrices whose
 atom distribution $\xi$ has support on at least 3 points
 and the third moment $\E \xi^3$ vanishes.
\end{theorem}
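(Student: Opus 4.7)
The plan is to exploit the Lindeberg replacement strategy announced in the abstract: reduce the general Wigner case to a Johansson-matrix case by moment matching, transfer the smoothed $k$-point correlation across this reduction using the Four Moment Theorem (Theorem \ref{theorem:main}), and then close the argument with Johansson's result (Theorem \ref{theorem:johansson}).

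First I would construct, for a small fixed $t > 0$, an auxiliary Johansson matrix $\widetilde M_n = (1-t)^{1/2} M^1_n + t^{1/2} M^2_n$, where $M^2_n$ is GUE and $M^1_n$ is a Wigner Hermitian matrix whose atom distributions $\xi^1, \widetilde\xi^1$ are chosen so that the first four moments of the entries of $\widetilde M_n$ agree with those of $M_n$. Expanding $\E[((1-t)^{1/2}\xi^1 + t^{1/2}\eta)^k]$ for $k=2,3,4$ with $\eta \sim N(0,1/2)$ independent of $\xi^1$ shows that the first two moments match automatically, while the remaining conditions reduce to
\begin{equation*}
\E(\xi^1)^3 = (1-t)^{-3/2}\E\xi^3, \qquad \E(\xi^1)^4 = (1-t)^{-2}\bigl[\E\xi^4 - \tfrac{3}{2}t(1-t) - \tfrac{3}{4}t^2\bigr].
\end{equation*}
The hypothesis $\E\xi^3 = 0$ makes the third-moment target for $\xi^1$ also zero, and the hypothesis that $\xi$ is supported on at least three points supplies exactly the flexibility needed to realize the prescribed second, third, and fourth moments by a genuine real random variable of exponentially decaying tail; indeed, for $\xi$ with only two-point support and mean zero the Cauchy--Schwarz bound $\E(\xi^1)^4 \ge (\E(\xi^1)^2)^2 = 1/4$ is saturated, and one checks that the fourth-moment target then falls below $1/4$ for every small $t>0$, making the construction impossible precisely in that degenerate case.

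Second, apply Theorem \ref{theorem:main} to $M_n$ and $\widetilde M_n$. Since by construction their atom distributions agree in the first four moments, for any $f \in C_c(\R^k)$ and fixed $-2 < u < 2$ the smoothed correlation integral
\begin{equation*}
\frac{1}{\rho_{sc}(u)^k}\int_{\R^k} f(t_1,\ldots,t_k)\, \rho_n^{(k)}\!\left(nu+\tfrac{t_1}{\rho_{sc}(u)},\ldots,nu+\tfrac{t_k}{\rho_{sc}(u)}\right)dt_1\cdots dt_k
\end{equation*}
differs by $o(1)$ between $M_n$ and $\widetilde M_n$ as $n \to \infty$. Since $\widetilde M_n$ is a Johansson matrix with fixed parameter $t > 0$, Theorem \ref{theorem:johansson} guarantees that the same integral for $\widetilde M_n$ converges to $\int_{\R^k} f(t_1,\ldots,t_k) \det(K(t_i,t_j))\,dt_1\cdots dt_k$. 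Combining the two statements yields \eqref{eqn:correlationlimit2} for $M_n$, which is exactly the claim.

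The main obstacle is Theorem \ref{theorem:main} itself rather than this reduction: the Lindeberg-style entry-by-entry swap at the local scale $1/\sqrt{n}$ requires very delicate control over how individual eigenvalues and eigenvectors respond to perturbation of a single entry, which in turn calls on the bulk eigenvalue rigidity and eigenvector delocalization estimates of Erd\H os--Schlein--Yau referenced in the introduction. Within the present deduction the only subtle point is the moment-matching construction, where the support-on-three-points hypothesis on $\xi$ and the vanishing of $\E\xi^3$ are used precisely to guarantee that the prescribed first four moments are the moments of some admissible atom distribution so that $M^1_n$ is again a Wigner matrix to which Johansson's theorem applies.
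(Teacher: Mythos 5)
Your reduction has the right skeleton (match moments with a Johansson matrix, invoke Theorem \ref{theorem:johansson}), and your moment-matching construction is in the spirit of the paper's Corollary \ref{matchc}. But the middle step — ``apply Theorem \ref{theorem:main} to $M_n$ and $\widetilde M_n$'' so that ``the smoothed correlation integral differs by $o(1)$'' — has a genuine gap. Theorem \ref{theorem:main} controls $\E\,G(\lambda_{i_1}(A_n),\ldots,\lambda_{i_k}(A_n))$ for a \emph{fixed} tuple of indices, with error $O(n^{-c_0})$ per tuple. To reach the correlation function you must expand
$$
\int_{\R^k} f\,\rho_n^{(k)}(nu+t_1,\ldots,nu+t_k)\,dt_1\cdots dt_k
= \sum_{1\le i_1,\ldots,i_k\le n} \E\,f\bigl(\lambda_{i_1}(A_n)-nu,\ldots,\lambda_{i_k}(A_n)-nu\bigr),
$$
a sum over $n^k$ tuples; applying the per-tuple bound naively gives an error $O(n^{k-c_0})$, which diverges. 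The paper closes this hole with Theorem \ref{conv}: asymptotically almost surely the only tuples that contribute lie within $O(n^c)$ of $t^{-1}(u)\,n$ for arbitrarily small $c>0$, so the effective sum has $O(n^{kc})$ terms and the total error is $O(n^{kc-c_0})=o(1)$ once $c$ is chosen small compared to $c_0$. Your proposal never addresses this localization.

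This also exposes a second error: you attribute the hypothesis $\E\xi^3=0$ to the moment-matching step, claiming it is needed to realize the prescribed moments as a genuine atom distribution. In fact Lemma \ref{tmmp} and Corollary \ref{matchc} show that support on at least three points alone suffices for the fourth-order match; no third-moment cancellation is required there. The vanishing third moment is instead needed precisely for the eigenvalue rigidity estimate Theorem \ref{conv} (which compares $M_n$ to GUE via the three-moment version of Theorem \ref{theorem:main} with the strengthened derivative bound), as Remark \ref{remark:dropthird} also explains. Without the rigidity input your argument does not close, and the role you assign to $\E\xi^3=0$ is not the one it actually plays.
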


These theorems (and several others, see Section \ref{applic}) are consequences
 of our more general main theorem below (Theorem \ref{theorem:main}). Roughly speaking,
 Theorem \ref{theorem:main}
 states that the local statistics of the eigenvalues of a random matrix
 is determined by  the first four moments of the atom distributions.

Theorem \ref{theorem:main} applies in a very general setting.
 We will consider  random Hermitian matrix
$M_n$ with entries $\xi_{ij}$ obeying the following condition.

\begin{definition}[Condition {\bf C0}]\label{co-def}  A random Hermitian matrix $A_n = (\zeta_{ij})_{1 \leq i, j \leq n}$ is said to obey \emph{condition {\bf C0}} if
\begin{itemize}
\item The $\zeta_{ij}$ are independent (but not necessarily identically distributed) for $1 \leq i \leq j \leq n$, and have mean zero and variance $1$.

\item  (Uniform exponential decay) There exist constants $C, C' > 0$ such that
\begin{equation}\label{ued}
\P( |\zeta_{ij}| \ge t^C) \le \exp(- t)
\end{equation}
for all $t \ge C'$ and $1 \leq i,j \leq n$.
\end{itemize}
\end{definition}

Clearly, all Wigner Hermitian matrices obey condition {\bf C0}.  However, the class of matrices obeying condition {\bf C0} is much richer.  For instance the gaussian orthogonal ensemble (GOE), in which $\zeta_{ij} \equiv N(0,1)$ independently for all $i < j$ and $\zeta_{ii} \equiv N(0,2)$, is also essentially of this form\footnote{Note that for  GOE an diagonal entry has variance $2$ rather than $1$.  We thank Sean O'Rourke for pointing out this issue. On the other hand, Theorem \ref{theorem:main} still holds if we change the variances of the diagonal entries, see Remark \ref{remark:main0}}, and so are all Wigner real symmetric matrices (the definition of which is given at the end of this section). 

\begin{definition}[Moment matching]\label{def:match}
We say that two complex random variables $\zeta$ and $\zeta'$ \emph{match to order} $k$ if
$$ \E \Re(\zeta)^m \Im(\zeta)^l = \E \Re(\zeta')^m \Im(\zeta')^l$$
for all $m, l \ge 0$ such that $m+l  \le k$. 
\end{definition}
 
\begin{example} Given two random matrices $A_n = (\zeta_{ij})_{1 \leq i,j \leq n}$ and $A'_n = (\zeta'_{ij})_{1 \leq i,j \leq n}$ obeying condition {\bf C0}, $\zeta_{ij}$ and $\zeta'_{ij}$ automatically match to order $1$.  If they are both Wigner Hermitian matrices, then they automatically match to order $2$.  If furthermore they are also symmetric (i.e. $A_n$ has the same distribution as $-A_n$, and similarly for $A'_n$ and $-A'_n$) then $\zeta_{ij}$ and $\zeta'_{ij}$ automatically match to order $3$.
\end{example}

Our main result is

\begin{theorem}[Four Moment Theorem]\label{theorem:main} There is a small positive constant $c_0$ such that for every $0 < \eps < 1$ and $k \geq 1$ the following holds.
 Let $M_n = (\zeta_{ij})_{1 \leq i,j \leq n}$ and $M'_n = (\zeta'_{ij})_{1 \leq i,j \leq n}$ be
 two random matrices satisfying {\bf C0}. Assume furthermore that for any $1 \le  i<j \le n$, $\zeta_{ij}$ and
 $\zeta'_{ij}$  match to order $4$
  and for any $1 \le i \le n$, $\zeta_{ii}$ and $\zeta'_{ii}$ match  to order $2$.  Set $A_n := \sqrt{n} M_n$ and $A'_n := \sqrt{n} M'_n$,
  and let $G: \R^k \to \R$ be a smooth function obeying the derivative bounds
\begin{equation}\label{G-deriv}
|\nabla^j G(x)| \leq n^{c_0}
\end{equation}
for all $0 \leq j \leq 5$ and $x \in \R^k$.
 Then for any $\eps n \le i_1 < i_2 \dots < i_k \le (1-\eps)n$, and for $n$ sufficiently large depending on $\eps,k$ (and the constants $C, C'$ in Definition \ref{co-def}) we have
\begin{equation} \label{eqn:approximation}
 |\E ( G(\lambda_{i_1}(A_n), \dots, \lambda_{i_k}(A_n))) -
 \E ( G(\lambda_{i_1}(A'_n), \dots, \lambda_{i_k}(A'_n)))| \le n^{-c_0}.
\end{equation}
If $\zeta_{ij}$ and $\zeta'_{ij}$ only match to order $3$ rather than $4$, 
then there is a positive  constant $C$ independent of $c_{0}$ such that 
the conclusion \eqref{eqn:approximation} still holds provided that one strengthens \eqref{G-deriv} to
$$
|\nabla^j G(x)| \leq n^{-C j c_0}
$$
for all $0 \leq j \leq 5$ and $x \in \R^k$.
\end{theorem}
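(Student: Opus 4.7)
The plan is to run a Lindeberg replacement argument, swapping entries of $M_n$ for the corresponding entries of $M'_n$ one at a time. Enumerate the $N = O(n^2)$ independent entries above (and on) the diagonal, and form hybrid matrices $M^{(0)} = M_n, M^{(1)}, \ldots, M^{(N)} = M'_n$ where $M^{(r)}$ agrees with $M'_n$ on the first $r$ entries and with $M_n$ on the rest. The telescoping identity reduces the theorem to showing that each single-entry swap changes $\E G(\lambda_{i_1}(A^{(r)}),\ldots,\lambda_{i_k}(A^{(r)}))$ by at most $n^{-2-c_0-o(1)}$, where $A^{(r)} := \sqrt{n}\,M^{(r)}$.

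For a fixed swap location $(p,q)$, write $A(t)$ for the matrix with entry $\sqrt{n}\,t$ at $(p,q)$ (and its conjugate at $(q,p)$) and all other entries frozen, and set $F(t) := G(\lambda_{i_1}(A(t)),\ldots,\lambda_{i_k}(A(t)))$. I would Taylor expand $F$ to fifth order around $t = 0$,
\begin{equation*}
F(t) \;=\; \sum_{j=0}^{4} \frac{t^{j}}{j!}\,F^{(j)}(0) \;+\; \frac{t^{5}}{5!}\,F^{(5)}(t^{*}),
\end{equation*}
and take expectations with respect to $t = \zeta_{pq}$ and $t = \zeta'_{pq}$ respectively. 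Since the two random variables match to order $4$ and the coefficients $F^{(j)}(0)$ depend only on the frozen entries, the terms $j = 0,\ldots,4$ cancel identically, so only the fifth-order remainder survives. The task is therefore to bound $\E[|\zeta|^{5} \sup_{|t^{*}| \le |\zeta|} |F^{(5)}(t^{*})|]$ (and the analogous $\zeta'$ version) by $n^{-5/2 + O(c_{0})}$ on a good event of overwhelming probability.

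The main obstacle, which I expect to be the technical heart of the argument, is proving derivative bounds on $F$. By the Fa\`a di Bruno / chain rule and the hypothesis $|\nabla^{j} G| \le n^{c_{0}}$, it suffices to bound $|\partial_{t}^{j} \lambda_{i_{l}}(A(t))|$ for $j \le 5$. Hadamard's variational formula expresses these derivatives as sums of products of matrix elements $u_{a}^{*}\,E\,u_{b}$ of the perturbation $E = \partial_{t} A$ divided by products of eigenvalue gaps $\lambda_{a} - \lambda_{b}$ (with $a \ne b$). To control these I would invoke two inputs that hold with overwhelming probability in the bulk: \emph{eigenvector delocalization}, $\sup_{i,p} |u_{i}(A_{n})(p)| \le n^{-1/2 + o(1)}$, and a \emph{level-repulsion / gap estimate} of the form $|\lambda_{i+m}(A_{n}) - \lambda_{i}(A_{n})| \ge m\,n^{-O(c_{0})}$. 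Both are consequences of the localized semicircle law and rigidity estimates (Erd\H{o}s--Schlein--Yau), and will need to be established in advance; this is the genuinely hard work. Together with the $\sqrt{n}$ in $A(t) = \sqrt{n}\,t \cdot (\cdot) + \cdots$, these give $|\partial_{t}^{j} \lambda_{i_{l}}| \le n^{-j/2 + O(jc_{0})}$, and hence, since $|\zeta| \le n^{o(1)}$ with overwhelming probability by the exponential-decay hypothesis, the fifth-order remainder is $O(n^{-5/2 + O(c_{0})})$ per swap. Summing $N = O(n^{2})$ swaps yields a total error $O(n^{-1/2 + O(c_{0})})$, which is $\le n^{-c_{0}}$ for $c_{0}$ chosen sufficiently small.

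For the three-moment version, the same scheme breaks down at fourth order: the surviving remainder is $O(n^{-2 + O(c_{0})})$ per swap, which after summing over $O(n^{2})$ swaps no longer decays. The strengthened hypothesis $|\nabla^{j} G| \le n^{-Cjc_{0}}$ is designed exactly to absorb the $n^{O(jc_{0})}$ factors coming from the gap and delocalization estimates in each derivative of $\lambda$, recovering an $n^{-c_{0}}$-type bound per swap after summation. Outside the argument above, the remaining bookkeeping (handling diagonal entries separately, whose matching requirement is only to order $2$, and routinely reducing from the overwhelming-probability good event to the unconditional expectation using the boundedness of $G$ on the bad event) is standard and should not pose essential difficulty.
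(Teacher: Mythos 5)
Your overall outline is the same Lindeberg-swapping / fifth-order Taylor / Hadamard-variation scheme that the paper uses, but there is a genuine gap in the probabilistic bookkeeping that the paper identifies as the central technical difficulty, and which your proposal does not address.

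The problem is your assertion that the level-repulsion estimate $|\lambda_{i+m}(A_n) - \lambda_i(A_n)| \geq m\,n^{-O(c_0)}$ holds with \emph{overwhelming} probability. This is false for small $m$: eigenvalue collision is a codimension-$2$ (real symmetric) or codimension-$3$ (Hermitian) event, so the nearest-neighbor gap is below $n^{-c_0}$ with probability on the order of $n^{-2c_0}$ or $n^{-3c_0}$, i.e.\ only polynomially small. This is precisely the content of Theorem \ref{ltail}, which is a \emph{high}-probability, not overwhelming-probability, statement. The distinction is fatal for the naive union bound: you have $O(n^2)$ swaps (and, within each swap, a continuum of values of $t$ that must be discretized into $\sim n^{O(1)}$ grid points to use the union bound on the delocalization event), and a per-swap failure probability of $n^{-O(c_0)}$, multiplied by the crude bound $\|G\|_\infty \le n^{c_0}$ on the bad event, gives a total contribution of order $n^{2 + O(c_0)}$, which diverges. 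Erd\H{o}s--Schlein--Yau rigidity and delocalization do hold with overwhelming probability and handle the \emph{far-away} eigenvalues ($|i - i_j| \geq n^{\eps_1}$, cf.\ \eqref{noon}), but they give nothing of the kind for adjacent gaps.

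The paper's fix, which your proposal omits, is the regularization via $Q_i := \sum_{j \neq i} |\lambda_j - \lambda_i|^{-2}$. One replaces $G(\lambda_{i_1},\ldots,\lambda_{i_k})$ by the truncated $\tilde G(\lambda_{i_1},\ldots,\lambda_{i_k},Q_{i_1},\ldots,Q_{i_k})$, supported where each $Q_{i_j} \leq n^{c_0}$. On the support of $\tilde G$ the needed gap lower bounds are \emph{automatic} (deterministically), so the Hadamard-variation derivative bounds go through without ever invoking a probabilistic gap estimate inside the swap loop; what is needed instead, and is proved in Proposition \ref{swap} and Lemma \ref{better}, is control on the derivatives of $Q_i$ itself, together with a continuity/iteration argument in $z$ (the paper's rounding to multiples of $n^{-C_1}$). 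Theorem \ref{ltail} is then used only \emph{twice}, at the very beginning and end of the replacement chain, to control $|\E G - \E \tilde G|$; the point is that $Q_i$ varies smoothly under each swap, so the ``bad event'' is essentially the same bad event throughout, and the union bound is far from tight. Without this truncation device, your argument does not close.

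A minor additional remark: Lemma \ref{jordan}, which shows $Q_{i_j} \leq n^{c_0}$ with high probability and thereby justifies the truncation, itself requires Theorem \ref{ltail} (summed dyadically) — so the lower tail estimate is not merely a black-box input from ESY theory but a separate theorem proved by a backwards-propagation-of-gaps argument (Section \ref{hlt2-sec}), a substantial piece of work that your ``this is the genuinely hard work'' gesture does not correctly locate.
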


The proof of this theorem begins at Section \ref{hlt-sec}.  As mentioned earlier, Theorem \ref{theorem:main} asserts (roughly speaking) that the fine spacing statistics of a random Hermitian matrix in the bulk of the spectrum are only sensitive to the first four moments of the entries.  It may be possible to reduce the number of matching moments in this theorem, but this seems to require a refinement to the method; see Section \ref{stratsec} for further discussion.

\begin{remark} \label{remark:main0} 
Theorem \ref{theorem:main} still holds if we assume that the diagonal entries $\zeta_{ii}$ and $\zeta_{ii}'$ have the same 
mean and variance, for all $1\le i \le n$, but these means and variances can be different at different $i$. The proof is  essentially the same. In our analysis, we consider the random vector formed by non-diagonal entries of a row, and it is important that these entries have mean zero and the same variance, but the mean and variance of the diagonal entry 
never plays a role. Details will appear elsewhere. 
\end{remark} 

\begin{remark} \label{remark:main1} 
In a subsequent paper \cite{TVedge}, we show that the condition  $\eps n \le i_1 < i_2 \dots < i_k \le (1-\eps)n$ can be omitted. In other words, 
Theorem \ref{theorem:main} also holds for eigenvalues at the edge of the spectrum. 
\end{remark} 

Applying Theorem \ref{theorem:main} for the special case when $M'_n$ is GUE, we obtain

\begin{corollary} \label{cor:matching1}
Let $M_n$ be a Wigner Hermitian matrix whose atom distribution $\xi$
satisfies $\E \xi^3 =0$ and $E\xi^4 = \frac{3}{4}$ and $M_n'$ be a random matrix
sampled from GUE. Then with $G$, $A_n$, $A'_n$ as in the previous theorem, and $n$ sufficiently large, one has
\begin{equation} \label{eqn:approximation2}
 |\E ( G(\lambda_{i_1}(A_n), \dots, \lambda_{i_k}(A_n))) -
 \E ( G(\lambda_{i_1}(A'_n), \dots, \lambda_{i_k}(A'_n)))| \le 
  n^{-c_0}.
  \end{equation}
\end{corollary}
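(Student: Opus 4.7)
The corollary is an immediate specialization of the Four Moment Theorem (Theorem \ref{theorem:main}); the plan is simply to verify that the pair $(M_n, M'_n)$ satisfies the hypotheses of that theorem, after which \eqref{eqn:approximation2} is literally \eqref{eqn:approximation}. I would first check Condition {\bf C0} for both matrices. Since Wigner Hermitian matrices (including GUE) have independent upper-triangular entries with mean zero and uniform exponential decay built into Definition \ref{def:Wignermatrix}, the only genuine check is variance: each off-diagonal entry $\zeta_{ij} = \xi_{ij} + \sqrt{-1}\,\tau_{ij}$ has variance $1/2+1/2=1$, and each diagonal entry has variance $1$ by definition.

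Next I would verify the moment-matching conditions. For $i=j$, both $\zeta_{ii}$ and $\zeta'_{ii}$ are real with mean zero and variance one, so they automatically match to order $2$. For $i<j$, the independence of real and imaginary parts in both $\zeta_{ij}$ and $\zeta'_{ij}$ reduces Definition \ref{def:match} to the scalar claim that $\E \xi^r = \E \xi'^r$ for $0 \le r \le 4$, where $\xi'$ denotes a real Gaussian of mean zero and variance $1/2$. The cases $r=0,1,2$ are immediate from the matching of mean and variance; for $r=3$, both sides vanish, by the hypothesis $\E\xi^3=0$ and the symmetry of $\xi'$; and for $r=4$, $\E\xi^4=3/4$ by hypothesis, while $\E\xi'^4 = 3(1/2)^2 = 3/4$ by the standard Gaussian moment formula. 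Hence the off-diagonal entries match to order $4$.

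With both conditions verified, Theorem \ref{theorem:main} applies directly and yields \eqref{eqn:approximation2}. There is essentially no obstacle here; the only thing worth remarking is that the two scalar conditions $\E\xi^3 = 0$ and $\E\xi^4 = 3/4$ are precisely what is required to force full off-diagonal fourth-order matching with the Gaussian, given that the first two moments are already pinned down by the Wigner normalization. All the actual work lies in Theorem \ref{theorem:main} itself.
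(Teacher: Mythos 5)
Your proposal is correct, and it matches the paper's implicit argument: the paper states the corollary as an immediate consequence of Theorem \ref{theorem:main} with $M'_n$ drawn from GUE, and your verification of Condition {\bf C0}, second-order matching on the diagonal, and fourth-order matching off the diagonal (where the reduction to $\E\xi^r = \E\xi'^r$ for $r\le 4$ uses not only independence of the real and imaginary parts but the fact that both are copies of the \emph{same} atom distribution $\xi$, resp.\ $N(0,1/2)$) is exactly the bookkeeping the paper leaves to the reader.
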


In the proof of  Theorem \ref{theorem:main}, the following  lower tail estimate on the consecutive spacings plays  an important role. 
This theorem is of independent interest, and will also help in applications of Theorem \ref{theorem:main}. 

\begin{theorem}[Lower tail estimates]\label{ltail} Let $0 < \eps < 1$ be a constant, and let $M_n$ be a random matrix obeying Condition {\bf C0}.  Set $A_n := \sqrt{n} M_n$.  Then for every $c_0 > 0$, and for $n$ sufficiently large depending on $\eps$, $c_0$ and the constants $C, C'$ in Definition \ref{co-def}, and for each $\eps n \leq i \leq (1-\eps) n$, one has $\lambda_{i+1}(A_n) - \lambda_i(A_n) \ge n^{-c_0}$
 with high probability. In fact, one has
$$ \P( \lambda_{i+1}(A_n) - \lambda_i(A_n) \leq n^{-c_0} ) \leq n^{-c_1}$$
for some $c_1 > 0$ depending on $c_0$ (and independent of $\eps$).
\end{theorem}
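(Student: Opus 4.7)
The strategy is to condition on the principal $(n-1)\times(n-1)$ minor $A_{n-1}$ of $A_n$ and exploit the independence of the final row/column. By Cauchy interlacing, $\lambda_j(A_n)\le\mu_j\le\lambda_{j+1}(A_n)$ with $\mu_j:=\lambda_j(A_{n-1})$; in particular, a gap $\lambda_{i+1}(A_n)-\lambda_i(A_n)\le n^{-c_0}$ forces both endpoints to lie within $n^{-c_0}$ of $\mu_i$. Writing the final column of $A_n$ as $(X,\sqrt n\,\zeta_{nn})$ with $X\in\C^{n-1}$ independent of $A_{n-1}$ (its entries being independent with variance $n$ and exponential decay), the Schur complement formula yields the secular equation
\begin{equation*}
\sqrt n\,\zeta_{nn}-\lambda \;=\;\sum_{k=1}^{n-1}\frac{|w_k|^2}{\mu_k-\lambda},\qquad w_k:=u_k(A_{n-1})^*X,
\end{equation*}
satisfied by every eigenvalue $\lambda$ of $A_n$ not in the spectrum of $A_{n-1}$.

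Applying this identity at $\lambda=\lambda_i(A_n)$ and $\lambda=\lambda_{i+1}(A_n)$, subtracting, and dividing by the (nonzero) gap produces the clean identity
\begin{equation*}
\frac{|w_i|^2}{(\mu_i-\lambda_i(A_n))(\lambda_{i+1}(A_n)-\mu_i)}\;=\;1+\sum_{k\ne i}\frac{|w_k|^2}{(\mu_k-\lambda_i(A_n))(\mu_k-\lambda_{i+1}(A_n))}.
\end{equation*}
Every summand on the right is nonnegative, since the two denominator factors agree in sign for $k\ne i$, and the sum equals $X^*PX$ for a deterministic (given $A_{n-1}$) positive semidefinite matrix $P$ with $\tr(P)\approx\sum_{k\ne i}(\mu_k-\mu_i)^{-2}$. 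On the overwhelmingly probable event that $A_{n-1}$ obeys the Erd\H os--Schlein--Yau local semicircle law and the associated eigenvalue rigidity (tools already imported in the paper), one has $\tr(P)\le n^{o(1)}$, and a Hanson--Wright-type concentration for quadratic forms in independent entries gives $X^*PX\le n^{1+o(1)}$ with overwhelming probability. Combined with $(\mu_i-\lambda_i(A_n))(\lambda_{i+1}(A_n)-\mu_i)\le n^{-2c_0}/4$, this forces
\begin{equation*}
|w_i|^2 \;\le\; n^{1-2c_0+o(1)}
\end{equation*}
on the event that the gap is small.

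It remains to bound the probability of this last event conditional on $A_{n-1}$. The random variable $w_i=u_i(A_{n-1})^*X$ is the inner product of the deterministic unit vector $u_i(A_{n-1})$ with the random vector $X$, so typically $|w_i|^2\sim n$. To quantify this and obtain $\P(|w_i|^2\le n^{1-2c_0+o(1)}\mid A_{n-1})\le n^{-c_1}$, one invokes an Esseen/Littlewood--Offord-type anti-concentration inequality, which is applicable once the eigenvector is sufficiently delocalized, namely $\|u_i(A_{n-1})\|_\infty\le n^{-1/2+o(1)}$ with overwhelming probability. The main obstacle is securing this delocalization (and the attendant eigenvalue rigidity needed in paragraph two) under only Condition C0, which allows arbitrary -- possibly discrete -- atom distributions and offers no regularity. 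Fortunately both follow from the local semicircle law via a Schur/resolvent identity, a standard ingredient in the Erd\H os--Schlein--Yau framework that the paper already employs; a minor bootstrapping may be needed to convert a crude preliminary spacing bound for $A_{n-1}$ into the cleaner estimate $\tr(P)\le n^{o(1)}$ used above. Assembling the good events on $A_{n-1}$ with the conditional anti-concentration on $w_i$ then yields the theorem with $c_1=c_1(c_0)>0$.
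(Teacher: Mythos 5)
Your framework shares the paper's starting point — the interlacing identity (Lemma \ref{jn-lem}), conditioning on $A_{n-1}$, and anti-concentration for the inner products $w_k = u_k(A_{n-1})^*X$ — but there are two genuine gaps, of which the second is fatal as stated.

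First, a technical point: the matrix $P$ that you say is ``deterministic given $A_{n-1}$'' is not. Its entries involve $\lambda_i(A_n)$ and $\lambda_{i+1}(A_n)$, which depend on $X$, so Hanson--Wright is not directly applicable. This is repairable with extra work (e.g.\ on the small-gap event, approximate $\lambda_i,\lambda_{i+1}$ by $\mu_i$ and track the error), but one must actually do it.

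The serious problem is the claim that the local semicircle law and eigenvalue rigidity for $A_{n-1}$ give $\tr(P)\leq n^{o(1)}$, so that the ``counter-sum'' $\sum_{k\neq i}|w_k|^2/\big((\mu_k-\lambda_i)(\mu_k-\lambda_{i+1})\big)$ is $n^{1+o(1)}$. Theorem \ref{sdb} controls eigenvalue counts only on scales $\gtrsim \log^{O(1)}n/n$; it gives \emph{no} lower bound on a single consecutive gap $\mu_{i}-\mu_{i-1}$ of $A_{n-1}$. The $k=i-1$ (or $k=i+1$) term of the counter-sum is $|w_{i-1}|^2/\big((\lambda_i-\mu_{i-1})(\lambda_{i+1}-\mu_{i-1})\big)$, which blows up precisely when $\mu_{i-1}$ is close to $\mu_i$. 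To rule this out one needs exactly a level-repulsion / lower-tail estimate for $A_{n-1}$ — i.e.\ the theorem itself with $n$ replaced by $n-1$. The argument is therefore circular as written. This is not ``minor bootstrapping'': it is the central difficulty the paper is organized around. The paper's proof replaces your one-shot bound with the regularized gap $g_{i,l,n}$ of \eqref{giln}, the backwards-propagation Lemma \ref{backprop} (small gap in $A_{n+1}$ $\Rightarrow$ somewhat larger but still small gap in $A_n$, at a cost one can afford), and roughly $\log^{O(1)}n$ iterations of minor-taking until the gap is wide enough ($\geq\log^{C_1/2}n$ consecutive eigenvalues) that Theorem \ref{sdb} becomes applicable. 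Even then, the per-step bad events (your events (ii), (vii)) occur only with probability $1/\log^{O(1)}n$ rather than $n^{-c}$, and the paper must exploit a martingale/multiplicative structure (Section \ref{hlt2-sec}) to compound these into a polynomial tail. None of this is optional; it is what makes Theorem \ref{ltail} nontrivial under Condition {\bf C0} alone, with no smoothness of the atom distribution available.
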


The proof of this theorem begins at Section \ref{hlt2-sec}.




\subsection{Applications}\label{applic}

By using Theorem \ref{theorem:main} and Theorem \ref{ltail} in combination with existing results in the literature for GUE (or other special random matrix ensembles) one can establish universal asymptotic statistics for a wide range of random matrices.  For instance, consider the $i^{th}$ eigenvalue $\lambda_i(M_n)$ of a Wigner Hermitian matrices.  In the GUE case,  Gustavsson\cite{Gus}, based on \cite{Sos2, CLe}, proved that $\lambda_{i} $ has gaussian fluctuation:

\begin{theorem}[Gaussian fluctuation for GUE]\label{gusnorm}\cite{Gus} Let $i = i(n)$ be such that $i/n \to c$ as $n \to \infty$ for some $0 < c < 1$.  Let $M_n$ be drawn from the GUE.  Set $A_n := \sqrt{n} M_n$.  Then
$$ \sqrt{\frac{4-t(i/n)^2}{2}} \frac{\lambda_i(A_n) - t(i/n) n}{\sqrt{\log n}} \to N(0,1)$$
in the sense of distributions, where $t()$ is defined in \eqref{lt}.  (More informally, we have $\lambda_i(M_n) \approx t(i/n)\sqrt{n} + N( 0, \frac{2\log n}{(4-t(i/n)^2)n} )$.)
\end{theorem}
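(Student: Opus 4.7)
\textbf{Proof proposal for Theorem \ref{gusnorm}.}
The plan is to convert a question about the location of a single eigenvalue into a question about a \emph{counting statistic}, and to exploit the determinantal structure of GUE to obtain a central limit theorem for that counting statistic. The key identity is
\begin{equation*}
\{\lambda_i(A_n) \le x\} = \{N_n(x) \ge i\}, \qquad N_n(x) := |\{j : \lambda_j(A_n) \le x\}|,
\end{equation*}
so fluctuations of $\lambda_i(A_n)$ around $t(i/n)n$ correspond (via inversion) to fluctuations of $N_n$ around its mean. First I would fix $s \in \R$, set $x = x_n(s) := t(i/n)n + s\sqrt{\log n}$, and compute $\P(\lambda_i(A_n) \le x_n(s))$ through this identity.

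The main analytic input is that, since the GUE eigenvalue process is determinantal with kernel $K_n$ from \eqref{eqn:correlation}, the restriction of $K_n$ to $(-\infty,x]$ is a positive trace-class operator with spectrum in $[0,1]$; hence there exist independent Bernoulli variables $B_1,B_2,\ldots$ (with parameters equal to those eigenvalues) such that $N_n(x)$ has the same distribution as $\sum_j B_j$. I would then compute
\begin{equation*}
\E N_n(x) = \int_{-\infty}^x K_n(y,y)\,dy, \qquad \Var N_n(x) = \int_{-\infty}^x K_n(y,y)\,dy - \iint_{(-\infty,x]^2} |K_n(y,z)|^2\,dy\,dz,
\end{equation*}
and use the Plancherel--Rotach asymptotics for Hermite polynomials in the bulk. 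These give $K_n(y,y) = \rho_{sc}(y/n)(1+o(1))$, so that $\E N_n(t(i/n)n) = i + o(1)$ and, by a first-order Taylor expansion,
\begin{equation*}
\E N_n(x_n(s)) = i + s\sqrt{\log n}\,\rho_{sc}(t(i/n))(1+o(1)).
\end{equation*}
For the variance, after rescaling to the local scale the kernel converges to the Dyson sine kernel $K$ from \eqref{dyson}, and the classical computation for the sine kernel on a large interval gives $\Var N_n(x_n(s)) = \frac{\log n}{2\pi^2}(1+o(1))$.

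Next I would invoke the Costin--Lebowitz/Soshnikov central limit theorem for counting statistics of determinantal processes (which for this sum of independent Bernoullis reduces to Lindeberg's CLT, since each summand is bounded and the total variance grows like $\log n \to \infty$), obtaining
\begin{equation*}
\frac{N_n(x_n(s)) - \E N_n(x_n(s))}{\sqrt{\Var N_n(x_n(s))}} \to N(0,1).
\end{equation*}
Substituting this into $\P(\lambda_i(A_n) \le x_n(s)) = \P(N_n(x_n(s)) \ge i)$, the event on the right becomes $\{\text{standardized } N_n \ge -s\,\rho_{sc}(t(i/n))\sqrt{2\pi^2}(1+o(1))\}$, whose probability tends to $\Phi(s\,\rho_{sc}(t(i/n))\sqrt{2\pi^2})$. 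Using the identity $\rho_{sc}(t)^2 = (4-t^2)/(4\pi^2)$ to simplify $\rho_{sc}(t(i/n))\sqrt{2\pi^2} = \sqrt{(4-t(i/n)^2)/2}$, I obtain exactly the distributional limit in the statement.

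The main obstacle is the variance asymptotic: one must justify the $\log n$ growth with the correct constant $1/(2\pi^2)$, which is not immediate from the defining sum over Hermite polynomials. I expect this to require a careful scale decomposition --- replacing $K_n$ by the sine kernel on mesoscopic scales, controlling the error uniformly over the interval of length $\sim \sqrt{\log n}$ about $t(i/n)n$, and handling the contribution from the edges of the integration region (where the Plancherel--Rotach asymptotics degenerate) by crude a priori bounds on the density.
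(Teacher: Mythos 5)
The paper does not actually prove Theorem \ref{gusnorm}; it cites the result from Gustavsson \cite{Gus}, which the paper notes is itself ``based on \cite{Sos2, CLe}.'' Your proposal correctly reproduces the route taken in that cited reference: passing from the location of $\lambda_i$ to the counting statistic $N_n(x)$ via $\{\lambda_i(A_n)\le x\}=\{N_n(x)\ge i\}$, using the determinantal structure of GUE to realize $N_n(x)$ as a sum of independent Bernoulli variables, establishing $\E N_n$ and $\Var N_n\sim\frac{\log n}{2\pi^2}$ by Plancherel--Rotach asymptotics, and invoking the Costin--Lebowitz/Soshnikov central limit theorem; your algebraic simplification $\rho_{sc}(t)\sqrt{2\pi^2}=\sqrt{(4-t^2)/2}$ is correct and you rightly identify the variance asymptotic (with the constant $1/(2\pi^2)$ reflecting a single bulk endpoint of a half-infinite interval) as the substantive technical burden.
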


As an application of our main results, we have

\begin{corollary}[Universality of gaussian fluctuation]\label{skug} 
The conclusion of  Theorem \ref{gusnorm} also holds for any other Wigner Hermitian matrix $M_n$ whose atom distribution $\xi$ satisfies 
 $\E \xi^3 = 0$ and $\E \xi^4 = \frac{3}{4}$.
\end{corollary}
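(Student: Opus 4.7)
The strategy is to combine Corollary \ref{cor:matching1} with Gustavsson's theorem (Theorem \ref{gusnorm}) via smooth approximation of the indicator function of a half-line. First note that under the hypotheses $\E \xi^3 = 0$ and $\E \xi^4 = 3/4$, the off-diagonal entries of $M_n$ match those of the GUE reference matrix $M'_n$ to order $4$ (since for GUE, $\xi \sim N(0,1/2)$ gives $\E\xi^2 = 1/2$, $\E\xi^3 = 0$, $\E\xi^4 = 3/4$), and the diagonal entries match to order $2$. Hence Corollary \ref{cor:matching1} applies with $k=1$.

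Fix $s \in \R$, write $\sigma_n := \sqrt{2\log n/(4 - t(i/n)^2)}$, and set $x := t(i/n)\, n + s\sigma_n$. The plan is to estimate $\P(\lambda_i(A_n) \le x)$ by sandwiching the indicator $\mathbf{1}_{(-\infty, x]}$ between two smooth cutoffs at a scale $\epsilon := n^{-c_0/5}$ (where $c_0$ is the constant from Theorem \ref{theorem:main}). Namely, choose smooth $G_\pm : \R \to [0,1]$ with $G_-(y) = 1$ for $y \le x - \epsilon$ and $G_-(y) = 0$ for $y \ge x$, and $G_+(y) = 1$ for $y \le x$ and $G_+(y) = 0$ for $y \ge x + \epsilon$; these may be chosen with $|G_\pm^{(j)}(y)| \lesssim \epsilon^{-j} \le n^{c_0}$ for $0 \le j \le 5$, so they satisfy \eqref{G-deriv}.

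Applying Corollary \ref{cor:matching1} to $G_\pm$ gives $\E G_\pm(\lambda_i(A_n)) = \E G_\pm(\lambda_i(A'_n)) + O(n^{-c_0})$, and the pointwise sandwich
\begin{equation*}
G_-(\lambda_i(A_n)) \le \mathbf{1}_{\lambda_i(A_n) \le x} \le G_+(\lambda_i(A_n))
\end{equation*}
yields
\begin{equation*}
\P(\lambda_i(A'_n) \le x - \epsilon) - O(n^{-c_0}) \le \P(\lambda_i(A_n) \le x) \le \P(\lambda_i(A'_n) \le x + \epsilon) + O(n^{-c_0}).
\end{equation*}
Since $\epsilon/\sigma_n = n^{-c_0/5}/\sqrt{2\log n/(4 - t(i/n)^2)} \to 0$, Theorem \ref{gusnorm} (applied to $A'_n$) together with continuity of $\Phi$ gives $\P(\lambda_i(A'_n) \le x \pm \epsilon) = \Phi(s \pm \epsilon/\sigma_n) + o(1) \to \Phi(s)$. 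Thus $\P(\lambda_i(A_n) \le x) \to \Phi(s)$, which is exactly the claimed distributional convergence for $M_n$.

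The one point that requires care is the calibration of the smoothing scale $\epsilon$: it must be large enough that $G_\pm$ satisfies the fifth-derivative bound required by Theorem \ref{theorem:main}, yet small enough (relative to $\sigma_n \sim \sqrt{\log n}$) that the sandwich error in the GUE distribution vanishes. The choice $\epsilon = n^{-c_0/5}$ achieves both, exploiting the fact that the Gaussian fluctuation scale $\sigma_n$ for GUE is logarithmically large while the four-moment error is polynomially small. No other obstacle arises, since all the heavy lifting is done by the inputs Theorem \ref{theorem:main} and Theorem \ref{gusnorm}.
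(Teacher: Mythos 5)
Your proof is correct and follows essentially the same route as the paper: both approximate the indicator of a half-line/interval by smooth cutoffs at a polynomially small scale $\epsilon$ (the paper uses $n^{-c_0/10}$, you use $n^{-c_0/5}$; either works, since $\epsilon^{-5}\le n^{c_0}$ in both cases), then apply Corollary~\ref{cor:matching1} to transfer the expectation to the GUE case, and finally invoke Theorem~\ref{gusnorm} together with the observation that the smoothing scale is negligible compared to the fluctuation scale $\sigma_n \sim \sqrt{\log n}$. The only cosmetic difference is that the paper works with general intervals $I=[a,b]$ while you specialize to half-lines, which is slightly cleaner here since the CDF involves only one endpoint.
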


\begin{proof} Let $M_n$ be a Wigner Hermitian matrix, and let $M'_n$ be drawn from GUE.  Let $i, c, t$ be as in Theorem \ref{gusnorm}, and let $c_0$ be as in Theorem \ref{ltail}.  In view of Theorem \ref{gusnorm}, it suffices to show that
\begin{equation}\label{lama}
\P( \lambda_i( A'_n ) \in I_- ) - n^{-c_0} \leq
\P( \lambda_i( A_n ) \in I ) \leq \P( \lambda_i( A'_n ) \in I_+ ) + n^{-c_0}
\end{equation}
for all intervals $I = [a,b]$, and $n$ sufficiently large depending on $i$ and the constants $C,C'$ in Definition \ref{def:Wignermatrix}, where $I_+ := [a-n^{-c_0/10}, b+n^{-c_0/10}]$ and $I_- := [a+n^{-c_0/10},b-n^{-c_0/10}]$.

We will just prove the second inequality in \eqref{lama}, as the first is very similar.
We define a smooth bump function $G:\R \to \R^+$ equal to one on $I_-$ and vanishing outside of $I_+$.  Then we have
$$ \P( \lambda_i( A_n ) \in I ) \leq \E G( \lambda_i(A_n) ) $$
and
$$ \E G( \lambda_i(A'_n) ) \leq \P( \lambda_i( A'_n ) \in I )$$
On the other hand, one can choose $G$ to obey \eqref{G-deriv}.  Thus by Corollary \ref{cor:matching1} we have
$$ |\E G( \lambda_i(A_n) ) - \E G( \lambda_i(A'_n) )| \leq n^{-c_0}$$
and the second inequality in \eqref{lama} follows from the triangle inequality.  The first inequality is similarly proven using a smooth function that equals $1$ on $I_-$ and vanishes outside of $I$.
\end{proof}

\begin{remark} The same argument lets one establish the universality of the asymptotic joint distribution law for any $k$ eigenvalues $\lambda_{i_1}(M_n),\ldots,\lambda_{i_k}(M_n)$ in the bulk of the spectrum of a Wigner Hermitian matrix for any fixed $k$ (the GUE case 
is treated in  \cite{Gus}).  In particular, we have the generalization
\begin{equation}\label{lama-2}
\begin{split}
\P( \lambda_{i_j}( A'_n ) \in I_{j,-} \hbox{ for all } 1 \leq j \leq k) &+ O_k( n^{-c_0} )\\
&\leq \P( \lambda_{i_j}( A_n ) \in I_j \hbox{ for all } 1 \leq j \leq k ) \\
&\leq \P( \lambda_{i_j}( A'_n ) \in I_{j,+} \hbox{ for all } 1 \leq j \leq k) + O_k( n^{-c_0} )
\end{split}
\end{equation}
for all $i_1,\ldots,i_k$ between $\eps n$ and $(1-\eps) n$ for some fixed $\eps > 0$, and all intervals $I_1,\ldots,I_k$, assuming $n$ is sufficiently large depending on $\eps$ and $k$, and $I_{j,-} \subset I_j \subset I_{j,+}$ are defined as in the proof of Corollary \ref{skug}. The details are left as an exercise to the interested reader. 
\end{remark}

Another quantity of interest is the least singular value
$$ \sigma_n(M_n) := \inf_{1 \leq i \leq n} |\lambda_i(M_n)|$$
of a Wigner Hermitian matrix.  In the GUE case, we have the following asymptotic distribution:

\begin{theorem}[Distribution of least singular value of GUE]\label{lsv-gue}\cite[Theorem 3.1.2]{AGZ}, \cite{JMM} For any fixed $t > 0$, and $M_n$ drawn from GUE, one has
$$ \P( \sigma_n(M_n) \leq \frac{t}{2\sqrt{n}} ) \to \exp( \int_0^t  \frac{f(x)}{x} dx )$$
as $n \to \infty$, where $f:\R \to \R$ is the solution of the differential equation
$$(tf'')^2 + 4 (tf'-f) (tf' -f + (f')^2) =0 $$
with the asymptotics  $f(t)  = \frac{-t}{\pi} -\frac{t^2}{\pi^2} -\frac{t^3}{\pi^3} + O(t^4)$ as $t \to 0$.
\end{theorem}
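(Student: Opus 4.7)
Since $M_n$ is drawn from GUE, its eigenvalues form a determinantal point process: by \eqref{eqn:correlation}, the $k$-point correlation functions are the principal minors of the kernel $K_n$.  A standard consequence is the Fredholm-determinant representation of gap probabilities,
\[
\P\bigl(\text{no eigenvalue of } M_n \text{ lies in } I\bigr)
= \det(I - K_n)_{L^2(I)}.
\]
Taking $I = (-\tfrac{t}{2\sqrt n},\tfrac{t}{2\sqrt n})$ captures the event $\sigma_n(M_n) > \tfrac{t}{2\sqrt n}$, so after rescaling to the fine-scale variable ($A_n = \sqrt n M_n$) the problem reduces to a Fredholm determinant of $K_n$ over an interval of fixed length.

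The second step is to pass to the $n \to \infty$ limit.  Invoking \eqref{eqn:correlationlimit} at $u = 0$, and recalling $\rho_{sc}(0) = 1/\pi$, the rescaled kernel $\frac{1}{\rho_{sc}(0)}K_n\bigl(\frac{x}{\rho_{sc}(0)},\frac{y}{\rho_{sc}(0)}\bigr)$ converges locally uniformly to the Dyson sine kernel $K$ of \eqref{dyson}.  Upgrading this to convergence in trace norm on any compact interval, using the Christoffel--Darboux identity together with classical Plancherel--Rotach asymptotics for Hermite polynomials, and invoking continuity of Fredholm determinants in the trace norm, one obtains
\[
\lim_{n \to \infty} \P\bigl(\sigma_n(M_n) > \tfrac{t}{2\sqrt n}\bigr) = D(t) := \det(I - K)_{L^2(-\tau(t),\,\tau(t))}
\]
for an appropriate linear $\tau(t)$.

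The crux of the argument, and the main obstacle, is to identify $D(t)$ with the exponential-of-integral expression involving a Painlev\'e~V transcendent.  Following the strategy of Jimbo--Miwa--M\^ori--Sato (as reproduced in \cite[Section~3.1.2]{AGZ}), I would set $f(t) := t \, \tfrac{d}{dt}\log D(t)$ and express $f$, $f'$, $f''$ in terms of the diagonal values of the resolvent $R := K(I - K)^{-1}$ at the endpoints $\pm\tau(t)$.  The integrable structure $K(x,y) = (A(x)B(y) - B(x)A(y))/(x - y)$ with $A(x) = \sin\pi x$, $B(x) = \cos(\pi x)/\pi$ yields commutator identities (Its--Izergin--Korepin--Slavnov) that close a small ODE system in $t$ on these resolvent quantities; eliminating the auxiliary variables collapses the system to the stated second-order equation $(tf'')^2 + 4(tf' - f)(tf' - f + (f')^2) = 0$.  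This elimination --- essentially the derivation of the Jimbo--Miwa $\sigma$-form of Painlev\'e~V --- is the most computational and delicate step, where all the analytic work is concentrated.

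Finally, the initial condition $f(t) = -\tfrac{t}{\pi} - \tfrac{t^2}{\pi^2} - \tfrac{t^3}{\pi^3} + O(t^4)$ as $t \to 0$ picks out the correct branch of the Painlev\'e solution.  Expanding $\log D(t) = -\sum_{k \ge 1} \tfrac{1}{k}\trace(K^k|_{(-\tau,\tau)})$ for small $\tau$ and using $K(0,0) = 1$, the trace of $K$ contributes $-2\tau + O(\tau^3)$, while iterated sine-kernel integrals over the shrinking interval supply the higher Taylor coefficients; multiplying the derivative of this series by $t$ recovers the claimed expansion of $f$ and completes the identification.
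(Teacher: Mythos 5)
The paper itself gives no proof of Theorem~\ref{lsv-gue}: it is imported as a black box with citations to \cite[Theorem~3.1.2]{AGZ} and \cite{JMM}, and the whole point of Corollary~\ref{lsv-wigner} is to \emph{transfer} this GUE-specific input to general Wigner matrices via the Four Moment Theorem. There is therefore no in-paper proof to compare yours against; the fair comparison is against the cited references, and your outline does track the standard route taken there. You correctly reduce the event $\{\sigma_n(M_n) > t/(2\sqrt n)\}$ to the absence of eigenvalues of $A_n=\sqrt n M_n$ in a fixed window about the origin, invoke the determinantal structure \eqref{eqn:correlation} to express the gap probability as a Fredholm determinant of the Christoffel--Darboux kernel $K_n$, upgrade the pointwise limit \eqref{eqn:correlationlimit} to trace-norm convergence to the sine kernel \eqref{dyson} via Plancherel--Rotach asymptotics, and then identify $t\,\frac{d}{dt}\log\det(I-K)_{L^2}$ with the Jimbo--Miwa $\sigma$-form of Painlev\'e~V through the IIKS/resolvent-kernel ODE system, with the small-$t$ trace expansion fixing the branch. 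This is, at the level of a sketch, the JMMS argument as reproduced in \cite{AGZ}; you have also correctly flagged that the trace-norm upgrade and the elimination leading to the $\sigma$-PV equation are where the real analytic work sits and are not supplied here.

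One small but genuine mismatch worth noticing: the statement as printed has $\P(\sigma_n(M_n)\leq t/(2\sqrt n))\to\exp(\int_0^t f(x)/x\,dx)$, yet $f$ is negative near $0$ (its expansion begins $-t/\pi$), so the right-hand side is a decreasing function of $t$ starting from $1$. That is the behaviour of the \emph{gap} probability $\P(\sigma_n > t/(2\sqrt n))$, which is exactly the quantity your Fredholm determinant $D(t)$ computes. So your identification is the correct one, and the $\leq$ in the theorem statement should be read as $>$; a CDF cannot be decreasing. It would strengthen your write-up to point this out explicitly rather than silently computing the complementary probability.
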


Using our theorems, we can extend this result to more general ensembles:

\begin{corollary}[Universality of the distribution of the  least singular value]\label{lsv-wigner}  The conclusions of Theorem \ref{lsv-gue} also hold for 
any other Wigner Hermitian matrix $M_n$ whose atom distribution $\xi$ has $\E \xi^3 = 0$ and $\E \xi^4 = \frac{3}{4}$.
\end{corollary}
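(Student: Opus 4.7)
The plan follows the template of the proof of Corollary \ref{skug}. Writing $M'_n$ for a GUE matrix and $A'_n := \sqrt{n} M'_n$, we observe that $\sigma_n(M_n) \leq t/(2\sqrt{n})$ is equivalent to the ``hole event'' $E_t(A_n) := \{\exists\, i : \lambda_i(A_n) \in [-t/2, t/2]\}$, so by Theorem \ref{lsv-gue} it suffices to prove that
\[
\bigl| \P(E_t(A_n)) - \P(E_t(A'_n)) \bigr| = o(1)
\]
for each fixed $t > 0$. To localize: the eigenvalue density of $A_n$ near $0$ is of order $1/\pi$ (by the semicircle law), while central eigenvalues fluctuate on the scale $\sqrt{\log n}$ (via Corollary \ref{skug} combined with standard rigidity). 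Hence for $K := \lceil \log^{2} n \rceil$, with overwhelming probability every eigenvalue of $A_n$ (or of $A'_n$) landing in $[-t/2, t/2]$ has index in the window $I := \{n/2-K, \ldots, n/2+K\}$, and up to an $n^{-\omega(1)}$ error the hole event depends only on the collection $(\lambda_i(A_n))_{i \in I}$.

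Next, let $\phi \in C^{\infty}(\R;[0,1])$ be a bump function with $\phi \equiv 1$ on $[-t/2, t/2]$, supported in $[-t/2 - \delta, t/2 + \delta]$, and $\|\phi^{(j)}\|_\infty \leq C_j \delta^{-j}$ for $0 \leq j \leq 5$, where $\delta := n^{-c_0/100}$. Define
\[
G(x_{-K}, \ldots, x_K) := 1 - \prod_{j=-K}^{K} \bigl(1 - \phi(x_j)\bigr).
\]
Then $G$ agrees with the indicator of the localized hole complement except on configurations with some $\lambda_i(A_n)$ in the thin annulus $\{t/2 < |\lambda| \leq t/2 + \delta\}$, and its derivatives of order $\leq 5$ are bounded by $(K/\delta)^{O(1)} \leq n^{c_0}$, verifying \eqref{G-deriv}. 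Applying Corollary \ref{cor:matching1} with $k = 2K+1$ yields
\[
\bigl| \E\, G((\lambda_i(A_n))_{i \in I}) - \E\, G((\lambda_i(A'_n))_{i \in I}) \bigr| \leq n^{-c_0}.
\]
A first-moment bound (the expected number of eigenvalues in the annulus is $O(\delta)$ by the local density) shows the probability that some eigenvalue falls there is $O(\delta) = o(1)$, and likewise for $A'_n$. Combining these estimates gives $|\P(E_t(A_n)) - \P(E_t(A'_n))| = o(1)$, completing the reduction to Theorem \ref{lsv-gue}.

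The main technical obstacle is the joint calibration of $K$, $\delta$, and $k = 2K+1$: $K$ must exceed the $\sqrt{\log n}$ scale of central-eigenvalue fluctuations, which requires rigidity stronger than the $O(n^{-\delta})$ bound of \eqref{lambdaj-t} (the ESY-type results cited in the introduction suffice); $\delta$ must be small enough for the annulus error to vanish; $(K/\delta)^{O(1)} \leq n^{c_0}$ must hold to satisfy \eqref{G-deriv}; and Corollary \ref{cor:matching1} must remain applicable for $k$ growing polylogarithmically in $n$, which requires the implicit $k$-dependence of Theorem \ref{theorem:main} to be at most polynomial, as is natural from the telescoping swap structure of its proof.
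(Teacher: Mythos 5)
Your approach diverges from the paper's at the crucial step, and the divergence creates a genuine gap. You localize the hole event to the index window $I = \{n/2-K,\ldots,n/2+K\}$ with $K = \lceil \log^2 n\rceil$ (as the paper also does), but you then encode the full event in a \emph{single} smooth function $G$ of all $2K+1$ localized eigenvalues and invoke Corollary \ref{cor:matching1} with $k = 2K+1$. Theorem \ref{theorem:main} is only stated for fixed $k$; the conclusion holds ``for $n$ sufficiently large depending on $\eps,k$,'' and nothing in the statement (or in the accounting of errors in its proof) guarantees a uniform threshold or a uniform $n^{-c_0}$ bound as $k$ grows with $n$. You flag this yourself at the end as something that ``must hold'' and assert it is ``natural from the telescoping swap structure,'' but that is precisely the content that would need to be proved: the swap argument is iterated $O(n^2)$ times and each swap involves Taylor-expanding a $k$-variable function, so one would need to trace how the $O_k(\cdot)$ constants in Proposition \ref{swap} and Proposition \ref{lemma:GCC} depend on $k$. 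This is not done in the paper, and it is not a formality.

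The paper's proof sidesteps this entirely: the event $\{\sigma_n(M_n) > t/(2\sqrt n)\}$ is written as the \emph{disjoint} union over $i$ in the index window of the two-eigenvalue events $\{\lambda_i(M_n) < -t/(2\sqrt n)\} \wedge \{\lambda_{i+1}(M_n) > t/(2\sqrt n)\}$, plus an $o(1)$ error from the index localization. Each such event is then compared to the GUE counterpart via \eqref{lama-2} with $k=2$ (fixed), and since there are only $O(\log^2 n)$ summands the total transfer error is $O(\log^2 n \cdot n^{-c_0}) = o(1)$. Thus the paper pays for the localization with a union bound over $O(\log^2 n)$ \emph{applications} of the Four Moment Theorem at $k=2$, rather than a single application at $k = O(\log^2 n)$. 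If you wish to salvage your version you would need either to prove a polynomial-in-$k$ version of Theorem \ref{theorem:main}, or (more simply) to replace your single product-form $G$ with the disjoint-union decomposition. Separately, your ``first-moment bound'' on the annulus is not quite right as stated: the concentration estimate \eqref{noga} has an $O(\log n)$ error that does not shrink with $\delta$, so the expected count in an interval of width $\delta = n^{-c_0/100}$ on the $A_n$ scale is not obviously $O(\delta)$ for a general Wigner matrix --- the paper avoids this too by absorbing the $n^{-c_0/10}$ shift into a modification of $t$ rather than bounding an annulus probability.
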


\begin{proof} Let $M_n$ be a Wigner Hermitian matrix, and let $M'_n$ be drawn from GUE.
Let $N_I$ be the number of eigenvalues of $W'_n$ in an interval $I$. It is well known (see \cite[Chapter 4]{AGZ}) that 
\begin{equation}\label{noga}
N_I = \int_I \rho_{sc} (x) dx + O(\log n)
\end{equation}
asymptotically almost surely (cf. \eqref{lambdaj-t} and Theorem \ref{gusnorm}).  Applying this fact to the two intervals $I = [-\infty, \pm \frac{t}{2\sqrt{n}}]$, we conclude that
$$\P( \lambda_{n/2 \pm \log^2 n}(M'_n) \in (-\frac{t}{2\sqrt{n}}, \frac{t}{2\sqrt{n}} ) ) = o(1)$$
for either choice of sign $\pm$.   Using \eqref{lama} (or \eqref{lama-2}) (and modifying $t$ slightly), we conclude that the same statement is true for $M_n$.  In particular, we have
$$
\P( \sigma_n(M_n) > \frac{t}{2\sqrt{n}} ) = \sum_{n/2 - \log^2 n \leq i \leq n/2 + \log^2 n} \P( \lambda_i(M_n) < -\frac{t}{2\sqrt{n}} \wedge
\lambda_{i+1}(M_n) > \frac{t}{2\sqrt{n}} ) + o(1)$$
and similarly for $M'_n$.  Using \eqref{lama-2}, we see that

\begin{align*} &  \P \Big( \lambda_i(M_n) < -\frac{t}{2\sqrt{n}} \wedge
\lambda_i(M_n) > -\frac{t}{2\sqrt{n}} \Big) \\  &\leq \P \Big( \lambda_i(M'_n) < -\frac{t}{2\sqrt{n}}+n^{-c_0/10} \wedge
\lambda_{i+1}(M'_n) > \frac{t}{2\sqrt{n}}-n^{-c_0/10}  \Big) + O( n^{-c_0} ) \end{align*}
and

\begin{align*}  &\P \Big( \lambda_i(M_n) < -\frac{t}{2\sqrt{n}} \wedge
\lambda_i(M_n) > -\frac{t}{2\sqrt{n}} \Big)  \\ &\geq \P \Big( \lambda_i(M'_n) < -\frac{t}{2\sqrt{n}}-n^{-c_0/10} \wedge
\lambda_{i+1}(M'_n) > \frac{t}{2\sqrt{n}}+n^{-c_0/10} \Big) - O( n^{-c_0} ) \end{align*}

for some $c>0$.  Putting this together, we conclude that

\begin{align*}  \P( \sigma_n(M'_n) > \frac{t}{2\sqrt{n}}-n^{-c_0/10} ) + o(1)  &\leq
\P( \sigma_n(M_n) > \frac{t}{2\sqrt{n}} ) \\&\leq \P( \sigma_n(M'_n) > \frac{t}{2\sqrt{n}}+n^{-c_0/10} ) + o(1) \end{align*}
and the claim follows.
\end{proof}

\begin{remark} A similar universality result for the least singular value of non-Hermitian 
 matrices  was recently established by the authors in \cite{TVhard}.  Our arguments in \cite{TVhard} also used the Lindeberg strategy, but were rather different in many other respects (in particular, they proceeded by analyzing random submatrices of the inverse matrix $M_n^{-1}$).  One consequence of Corollary \ref{lsv-wigner} is that $M_n$ is asymptotically almost surely invertible.  For discrete random matrices, 
 this is already a non-trivial fact, first proven in \cite{cost}.  If Theorem \ref{lsv-gue} can be extended to the Johansson matrices considered in \cite{Joh}, then the arguments below would allow one to remove the fourth moment hypothesis in Corollary \ref{lsv-wigner} (assuming that $\xi$ is supported on at least three points).
\end{remark}

\begin{remark} The above corollary still holds under  a weaker assumption that the first three moments of $\xi$ match those of the gaussian variable; in other words, we can omit the last 
assumption that $\E \xi^4 =3/4$.   Details will appear else where. \end{remark}

\begin{remark} By combining this result with \eqref{bai} one also obtains a universal distribution for the condition number $\sigma_1(M_n)/\sigma_n(M_n)$ of Wigner Hermitian matrices (note that the non-independent nature of $\sigma_1(M_n)$ and $\sigma_n(M_n)$ is not relevant, because \eqref{bai} gives enough concentration of $\sigma_1(M_n)$ that it can effectively be replaced with $2\sqrt{n}$). We omit the details. 
\end{remark}

Now we are going to prove the first part of Theorem \ref{theorem:main1}.  Note that in contrast to previous applications, we are making no assumptions on the third and fourth moments of the atom distribution $\xi$.  The extra observation here is that we do not always need to compare $M_{n} $ with GUE. It is sufficient  to compare it with any model where the desired statistics have been computed.  In this case, we are going to compare 
$M_{n} $ with  a Johansson matrix.  The definition of Johansson matrices provides more degrees of freedom via the parameters $t$ and $M_n^1$, and we can 
use this to remove the condition of the third and fourth moments. 

\begin{lemma}[Truncated moment matching problem]\label{tmmp}   Let $\xi$ be a real random variable with mean zero, variance $1$, third moment $\E \xi^3 = \alpha_3$, and fourth moment $\E \xi^4 = \alpha_4 < \infty$.  Then $\alpha_4 - \alpha_3^2 - 1 \geq 0$, with equality if and only if $\xi$ is supported on exactly two points.  Conversely, if $\alpha_4 - \alpha_3^2 - 1 \geq 0$, then there exists a real random variable with the specified moments.
\end{lemma}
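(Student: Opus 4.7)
For the forward inequality, I would start from the nonnegative quantity
$$0 \leq \E[(\xi^2 - \alpha_3\xi - 1)^2].$$
Expanding the square and using $\E\xi = 0$, $\E\xi^2 = 1$, $\E\xi^3 = \alpha_3$, $\E\xi^4 = \alpha_4$ collapses the right-hand side to $\alpha_4 - \alpha_3^2 - 1$, giving the inequality. Equality forces $\xi^2 - \alpha_3\xi - 1 = 0$ almost surely; since the discriminant of this quadratic is $\alpha_3^2 + 4 > 0$, its two real roots are distinct, so $\xi$ is supported on a set of cardinality at most two. The hypothesis $\Var(\xi) = 1$ rules out a single point mass, so $\xi$ is supported on exactly two points.

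For the converse, given $\alpha_4 \geq \alpha_3^2 + 1$, I would produce an explicit three-point distribution supported on $\{a,0,-b\}$ with $a,b>0$. Choose $a$ and $-b$ to be the two roots of
$$x^2 - \alpha_3 x + (\alpha_3^2 - \alpha_4) = 0,$$
so that $a - b = \alpha_3$ and $ab = \alpha_4 - \alpha_3^2 \geq 1$. The discriminant $4\alpha_4 - 3\alpha_3^2$ is strictly positive (since $\alpha_4 > \alpha_3^2$, implied by $\alpha_4 \geq \alpha_3^2 + 1$), and the product of the two roots is $\alpha_3^2 - \alpha_4 < 0$, so the roots have opposite signs, giving $a,b>0$. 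Assign probabilities
$$\P(\xi = a) := \frac{1}{a(a+b)}, \quad \P(\xi = -b) := \frac{1}{b(a+b)}, \quad \P(\xi = 0) := 1 - \frac{1}{ab},$$
which are all nonnegative precisely because $ab \geq 1$, and which sum to $1$.

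A routine verification (using $(a^3+b^3)/(a+b) = (a-b)^2 + ab$) shows this distribution has mean $0$, variance $1$, third moment $a - b = \alpha_3$, and fourth moment $(a-b)^2 + ab = \alpha_3^2 + (\alpha_4 - \alpha_3^2) = \alpha_4$, as required. In the boundary case $\alpha_4 = \alpha_3^2 + 1$ one has $ab = 1$, so $\P(\xi = 0) = 0$ and the construction collapses to the two-point distribution isolated in the forward direction, which is a consistency check. I expect the main (mild) obstacle to be the algebraic bookkeeping of signs when selecting the two roots $a,-b$ and verifying $a,b>0$ for every sign of $\alpha_3$; once this is pinned down, the argument is elementary and no further regularity or moment hypotheses beyond $\alpha_4 \geq \alpha_3^2 + 1$ are needed.
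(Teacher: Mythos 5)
Your proof is correct. For the inequality, you take essentially the same route as the paper: expand $\E[(\xi^2 - \alpha_3\xi - 1)^2] \geq 0$ (the paper begins with the general $\E[(\xi^2 + a\xi + b)^2]$ and optimizes, landing on your choice $a = -\alpha_3$, $b = -1$), and your equality analysis is a touch more complete, since you invoke the positive discriminant $\alpha_3^2 + 4$ and the nonzero variance to pin down \emph{exactly} two support points, whereas the paper only argues ``at most two.'' The converse is where you genuinely diverge. The paper reduces to the boundary case $\alpha_4 = \alpha_3^2 + 1$ by a convexity argument --- the achievable set of moment pairs is convex and the region above the parabola is the convex hull of the parabola --- and then realizes the boundary by a two-point law on $\{\tan\theta, -\cot\theta\}$ with weights $\cos^2\theta$, $\sin^2\theta$. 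You instead give a single explicit three-point construction on $\{a, 0, -b\}$, with $a, -b$ the roots of $x^2 - \alpha_3 x + (\alpha_3^2 - \alpha_4) = 0$ and mass $1 - 1/(ab)$ at the origin, which covers the entire region $\alpha_4 \geq \alpha_3^2 + 1$ directly and degenerates continuously to the two-point boundary case when $ab = \alpha_4 - \alpha_3^2 = 1$. Your approach is more self-contained (it avoids the convex-hull reduction, which the paper asserts as ``clear'') and makes the meaning of the defect $\alpha_4 - \alpha_3^2 - 1$ transparent as the extra mass withheld at zero; the paper's is marginally shorter to state. Your algebra --- sum and product of roots giving $a - b = \alpha_3$, $ab = \alpha_4 - \alpha_3^2 \geq 1$, and $(a^3+b^3)/(a+b) = (a-b)^2 + ab$ --- all checks out.
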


\begin{proof}  For any real numbers $a, b$, we have
$$ 0 \leq \E (\xi^2 + a\xi + b)^2 = \alpha_4 + 2 a \alpha_3 + a^2 + 2b + b^2.$$
setting $b := -1$ and $a := - \alpha_3$ we obtain the inequality $\alpha_4 - \alpha_3^2 - 1 \geq 0$.  Equality only occurs when $\E( \xi^2 - \alpha_3 \xi - 1)^2 = 0$, which by the quadratic formula implies that $\xi$ is supported on at most two points.

Now we show that every pair $(\alpha_3,\alpha_4)$ with $\alpha_4 - \alpha_3^2 - 1 \geq 0$ arises as the moments of a random variable with mean zero and variance $1$.  The set of all such moments is clearly convex, so it suffices to check the case when $\alpha_4 - \alpha_3^2 - 1 = 0$.  But if one considers the random variable $\xi$ which equals $\tan \theta$ with probability $\cos^2 \theta$ and $-\cot \theta$ with probability $\sin^2 \theta$ for some $-\pi/2 < \theta < \pi/2$, one easily computes that $\xi$ has mean zero, variance $1$, third moment $-2 \cot(2\theta)$, and fourth moment $4 \operatorname{cosec}(2\theta) - 3$, and the claim follows from the trigonometric identity $\operatorname{cosec}(2 \theta)^2 = \cot(2\theta)^2 + 1$.
\end{proof}

\begin{remark}  The more general truncated moment problem
 (i.e., the truncated version of the classical  Hamburger moment sequence problem, see \cite{CF,Lan}) was solved by Curto and Fialkow \cite{CF}.
\end{remark}

\begin{corollary}[Matching lemma]\label{matchc}  Let $\xi$ be a real random variable with mean zero and variance $1$, which is supported on at least three points.  Then $\xi$ matches to order $4$ with $(1-t)^{1/2} \xi' + t^{1/2} \xi_G$ for some $0 < t < 1$ and some independent $\xi', \xi_G$ of mean zero and variance $1$, where $\xi_G \equiv N(0,1)$ is Gaussian.
\end{corollary}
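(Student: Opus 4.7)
The plan is to explicitly solve for the moments of $\xi'$ that would make the convolution $(1-t)^{1/2}\xi' + t^{1/2}\xi_G$ match $\xi$ to order four, and then invoke Lemma \ref{tmmp} to realize $\xi'$ as a genuine random variable.

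First I would set $\alpha_3 := \E \xi^3$ and $\alpha_4 := \E \xi^4$; the key leverage is that since $\xi$ is supported on at least three points, Lemma \ref{tmmp} gives the \emph{strict} inequality $\alpha_4 - \alpha_3^2 - 1 > 0$. Next I would compute, using independence of $\xi'$ and $\xi_G$ and the fact that $\E \xi_G = \E \xi_G^3 = 0$, $\E\xi_G^2 = 1$, $\E\xi_G^4 = 3$, that if $\xi'$ has mean zero, variance one, third moment $\beta_3$, and fourth moment $\beta_4$, then $Y := (1-t)^{1/2}\xi' + t^{1/2}\xi_G$ has mean zero, variance one, and
\begin{align*}
\E Y^3 &= (1-t)^{3/2}\beta_3, \\
\E Y^4 &= (1-t)^2 \beta_4 + 6(1-t)t + 3t^2.
\end{align*}
Matching these with $\alpha_3$ and $\alpha_4$ forces
\begin{equation*}
\beta_3 = \frac{\alpha_3}{(1-t)^{3/2}}, \qquad \beta_4 = \frac{\alpha_4 - 6(1-t)t - 3t^2}{(1-t)^2}.
\end{equation*}

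The remaining step is to show these prescribed $(\beta_3,\beta_4)$ are admissible, i.e.\ arise from some real random variable of mean zero and variance one, for a suitable choice of $t \in (0,1)$. By Lemma \ref{tmmp}, this reduces to verifying $\beta_4 - \beta_3^2 - 1 \ge 0$. Evaluating this expression at $t=0$ gives exactly $\alpha_4 - \alpha_3^2 - 1$, which is strictly positive by the support hypothesis. Since the expression depends continuously on $t$ for $t \in [0,1)$, it remains strictly positive on some small interval $(0,t_0)$; any $t$ in that interval works. Applying the existence half of Lemma \ref{tmmp} produces the desired $\xi'$, and by construction $\xi$ matches $(1-t)^{1/2}\xi' + t^{1/2}\xi_G$ to order four.

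I do not anticipate a serious obstacle: the whole argument is a continuity-perturbation around the trivial solution $t = 0$, $\xi' = \xi$, and the strict inequality in Lemma \ref{tmmp} provides exactly the slack needed to push $t$ off $0$. The only mild care is in the algebra of the fourth-moment expansion and in ensuring that the chosen $t$ lies strictly inside $(0,1)$, both of which are straightforward.
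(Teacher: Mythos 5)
Your proof is correct and follows essentially the same route as the paper: both solve for the required moments $\beta_3,\beta_4$ of $\xi'$ in terms of $\alpha_3,\alpha_4,t$, observe they converge to $\alpha_3,\alpha_4$ as $t\to 0$, and conclude via the strict inequality and existence half of Lemma \ref{tmmp}. The only cosmetic difference is that the paper packages the moment bookkeeping as a formal characteristic-function identity and divides formally, whereas you write out the third- and fourth-moment expansions directly; the underlying continuity-perturbation argument is identical.
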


\begin{proof}  The formal characteristic function $\E e^{s \xi} := \sum_{j=0}^\infty \frac{s^j}{j!} \E \xi^j$ has the expansion $1 + \frac{1}{2} s^2 + \frac{1}{6} \alpha_3 s^3 + \frac{1}{24} \alpha_4 s^4 + O(s^5)$; by Lemma \ref{tmmp}, we have $\alpha_4 - \alpha_3^2 - 1 > 0$.  Observe that $\xi$ will match to order $4$ with $(1-t)^{1/2} \xi' + t^{1/2} \xi_G$ if and only if one has the identity
$$ 1 + \frac{1}{2} s^2 + \frac{1}{6} \alpha_3 s^3 + \frac{1}{24} \alpha_4 s^4 = 
(1 + \frac{1-t}{2} s^2 + \frac{(1-t)^{3/2}}{6} \alpha'_3 s^3 + \frac{(1-t)^2}{24} \alpha'_4 s^4) (1 + \frac{t}{2} s^2 + \frac{t}{8} s^4) + O(s^5)$$
where $\alpha'_3, \alpha'_4$ are the moments of $\xi'$.  Formally dividing out by $1 + \frac{t}{2} s^2 + \frac{t}{8} s^4$, one can thus solve for $\alpha'_3, \alpha'_4$ in terms of $\alpha_3, \alpha_4$.  Observe that as $t \to 0$, $\alpha'_3, \alpha'_4$ must converge to $\alpha_3, \alpha_4$ respectively.  Thus, for $t$ sufficiently small, we will have $\alpha'_4 - (\alpha'_3)^2 - 1 > 0$.  The claim now follows from Lemma \ref{tmmp}.
\end{proof}

\begin{proof} (Proof of the first part of 
Theorem \ref{theorem:main1})  Let $M_n$ be as in this theorem and consider  \eqref{eqn:gaplimit}.  By Corollary \ref{matchc}, we can find a Johansson matrix $M'_n$ which matches $M_n$ to order $4$.  By Theorem \ref{theorem:johansson}, \eqref{eqn:gaplimit} already holds for $M'_n$.  Thus it will suffice to show that
$$ \E \tilde S_n (s; \lambda (A_n) ) = \E \tilde S_n (s; \lambda (A'_n) ) + o(1).$$
By \eqref{ssn} and linearity of expectation, it suffices to show that
$$ \P( \lambda_{i+1}(A_n) - \lambda_i(A_n) \le s ) = \P( \lambda_{i+1}(A'_n) - \lambda_i(A'_n) \le s ) + o(1)$$
uniformly for all $\eps n \leq i \leq (1-\eps) n$, for each fixed $\eps > 0$.  But this follows by a modification of the argument used to prove \eqref{lama} (or \eqref{lama-2}), using a function $G(x,y)$ of two variables which is a smooth approximant to the indicator function of the half-space $\{ y-x \leq s \}$ (and using Theorem \ref{ltail} to errors caused by shifting $s$); we omit the details. The second part of the theorem will be treated together with Theorem \ref{theorem:main2}. 
\end{proof}

\begin{remark} \label{remark:gap2}
By considering  $ \P( \{ \lambda_{i+1}(A_n) - \lambda_i(A_n) \le s\} \wedge \{ \lambda_{j+1}(A_n) - \lambda_j(A_n) \le s\}   ) $ we can prove the universality of the variance of $S_{n}(s, \lambda)$. The same applies for higher moments. 
\end{remark} 

The proof of Theorem \ref{theorem:main2} is a little more complicated.  We first need a strengthening of \eqref{lambdaj-t} which may be of independent interest.

\begin{theorem}[Convergence to the semicircular law]\label{conv}  Let $M_n$ be a Wigner Hermitian matrix whose atom distribution $\xi$ has vanishing third moment.  Then for any fixed $c > 0$ and $\eps > 0$, and any $\eps n \leq j \leq (1-\eps) n$, one has
$$\lambda_j(W_n) = t\left(\frac{j}{n}\right) + O(n^{-1+c})$$
asymptotically almost surely, where $t()$ was defined in \eqref{lt}.
\end{theorem}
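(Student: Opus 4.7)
The plan is to combine the Four Moment Theorem (Theorem \ref{theorem:main}) with known eigenvalue rigidity for GUE. Since $\E \xi = 0$, $\E \xi^2 = 1/2$, and by hypothesis $\E \xi^3 = 0$, the off-diagonal atom variables of $M_n$ match those of a GUE matrix $M'_n$ to order $3$, and the diagonal variables automatically match to order $2$. Hence the order-$3$ version of Theorem \ref{theorem:main} applies to the pair $(M_n, M'_n)$.

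For the reference GUE matrix $M'_n$, the counting-function estimate \eqref{noga} (used elsewhere in the paper, e.g.\ in the proof of Corollary \ref{lsv-wigner}), combined with the fact that $\rho_{sc}$ is bounded below by a positive constant on any compact subinterval of $(-2,2)$, yields $|\lambda_j(W'_n) - t(j/n)| = O((\log n)/n)$ with overwhelming probability for $\eps n \leq j \leq (1-\eps)n$. Rescaling to $A'_n = \sqrt{n} M'_n$, we have $|\lambda_j(A'_n) - t(j/n) n| \leq \log^2 n$ with overwhelming probability, so in particular $\P(|\lambda_j(A'_n) - t(j/n) n| > n^c) = o(1)$ for any fixed $c > 0$.

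To transfer the bound to $M_n$, fix $c>0$, set $y_0 := t(j/n)\, n$, and construct a smooth function $G : \R \to [0,1]$ equal to $0$ on $[y_0 - n^c, y_0 + n^c]$, equal to $1$ outside $[y_0 - 2 n^c, y_0 + 2 n^c]$, and satisfying $|\nabla^k G| \lesssim n^{-kc}$ for $0 \le k \le 5$. Provided $c$ is at least a small constant multiple of the fixed constant $c_0$ in Theorem \ref{theorem:main}, the hypothesis $|\nabla^k G(x)| \le n^{-Ck c_0}$ required by the order-$3$ version is met. Applying the Four Moment Theorem and the preceding step,
$$\E G(\lambda_j(A_n)) \le \E G(\lambda_j(A'_n)) + n^{-c_0} = o(1),$$
so $\P(|\lambda_j(A_n) - y_0| > 2 n^c) = o(1)$, which rescales to $\lambda_j(W_n) = t(j/n) + O(n^{-1+c})$ asymptotically almost surely.

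The main obstacle is the regime of very small $c$, i.e.\ $c$ below a fixed multiple of the constant $c_0$ from Theorem \ref{theorem:main}, where the required derivative bounds on the bump function exceed what the order-$3$ version of the Four Moment Theorem tolerates. To push $c$ down to any positive value, the natural supplement is the Erd\H os--Schlein--Yau local semi-circle law machinery that the paper explicitly relies on (cf.\ the reference to [17,18,19] in the introduction), which provides rigidity at scale $n^{-1+c}$ for arbitrary $c > 0$; the vanishing third moment enters cleanly into the self-consistent equation for the Stieltjes transform in that approach. An alternative is a bootstrap in which the coarse rigidity obtained above is fed back into resolvent estimates to sharpen the scale, and the FMT argument is re-applied at the improved scale.
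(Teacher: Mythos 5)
Your main argument coincides with the paper's: match $M_n$ to GUE to order three off the diagonal (using $\E\xi^3=0$) and to order two on the diagonal, import the GUE eigenvalue rigidity \eqref{noga}, and transfer it to $M_n$ via the three-moment version of Theorem \ref{theorem:main} applied to a bump function supported at the fine scale $n^c$. This is exactly what the paper does.

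The ``obstacle'' you identify for small $c$ is not actually present, because you are reading $c_0$ in Theorem \ref{theorem:main} as a single universal constant rather than as a parameter that may be taken as small as desired. The phrase ``there is a positive constant $C$ \emph{independent of $c_0$}'' in the statement, and more decisively the proof via Proposition \ref{swap}, make clear that $c_0$ plays the role of the free parameter $\eps_1$ there; for any fixed $\eps_1>0$ (and $n$ large depending on it), one has the swapping estimate $O(n^{-(r+1)/2-\eps_1})$ under the hypothesis $|\nabla^j G|\le n^{-C j\eps_1}$ with $C$ absolute. So given any $c>0$, one simply invokes Theorem \ref{theorem:main} with $c_0 \le c/C$; the bump of transition scale $n^c$ then obeys $|\nabla^j G|\sim n^{-jc}\le n^{-Cjc_0}$, and the resulting error $n^{-c_0}$ is still $o(1)$, which is all the argument requires. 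Thus no supplementary input is needed to ``push $c$ down.'' It is also worth noting that the supplement you propose is not directly available from the paper's own local-law ingredient: Theorem \ref{sdb} controls $N_I$ only to relative accuracy $o(1)$ on windows of width $\gg K^2\log^{20}n/n$, which is far too weak to yield pointwise eigenvalue rigidity at scale $n^{-1+c}$; the Erd\H os--Yau--Yin rigidity you have in mind is a stronger, later result than what \cite{ESY1,ESY2,ESY3} or Theorem \ref{sdb} provide. Fortunately it is not needed here.
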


\begin{proof}  It suffices to show that
$$ \lambda_j(W_n) \in \left[t\left(\frac{j}{n}\right) - n^{-1+c}, t\left(\frac{j}{n}\right) + n^{-1+c}\right]$$
asymptotically almost surely.
Let $M'_n$ be drawn from GUE, thus the off-diagonal entries of $M_n$ and $M'_n$ match to third order.   From \eqref{noga} we have
$$ \lambda_j(W'_n) \in \left[t\left(\frac{j}{n}\right) - n^{-1+c}/2, t\left(\frac{j}{n}\right) + n^{-1+c}/2\right]$$
asymptotically almost surely.
The claim now follows from the last part of Theorem \ref{theorem:main}, letting $G = G(\lambda_j)$ be a smooth cutoff equal to $1$ on 
$\left[t\left(\frac{j}{n}\right) n - n^{+c}/2, t\left(\frac{j}{n}\right) n + n^{+c}/2\right]$ and vanishing outside of $\left[t\left(\frac{j}{n}\right) n - n^{+c}, t\left(\frac{j}{n}\right) n + n^{c}\right]$.
\end{proof}

\begin{proof}[Proof of Theorem \ref{theorem:main2}]
Fix $k, u$, and let $M_n$ be as in Theorem \ref{theorem:main2}.  By Corollary \ref{matchc}, we can find a Johansson matrix $M'_n$ whose entries match $M_n$ to fourth order.  By Theorem \ref{theorem:johansson} (and a slight rescaling), it suffices to show that the quantity
\begin{equation}\label{erk}
 \int_{\R^k} f(t_1,\ldots,t_k) \rho_n^{(k)}(nu+t_1, \ldots, nu + t_k)\ dt_1 \ldots dt_k 
\end{equation}
only changes by $o(1)$ when the matrix $M_n$ is replaced with $M'_n$, for any fixed test function $f$.  By an approximation argument we can take $f$ to be smooth.

We can rewrite the expression \eqref{erk} as
\begin{equation}\label{Sum}
 \sum_{1 \leq i_1,\ldots,i_k \leq n} \E f( \lambda_{i_1}(A_n) - nu, \ldots, \lambda_{i_k}(A_n) - nu ).
\end{equation}
Applying Theorem \ref{theorem:main}, we already have
$$ \E f( \lambda_{i_1}(A_n) - nu, \ldots, \lambda_{i_k}(A_n) - nu ) = 
\E f( \lambda_{i_1}(A'_n) - nu, \ldots, \lambda_{i_k}(A'_n) - nu ) + O( n^{-c_0} )
$$
for each individual $i_1,\ldots,i_k$ and some absolute constant $c_0 > 0$.  Meanwhile, by Theorem \ref{conv}, we see that asymptotically almost surely, the only $i_1,\ldots,i_k$ which contribute to \eqref{Sum} lie within $O(n^c)$ of $t^{-1}(u) n$, where $c > 0$ can be made arbitrarily small.  The claim then follows from the triangle inequality (choosing $c$ small enough compared to $c_0$).
\end{proof}

\begin{proof}[Proof of the second part of Theorem \ref{theorem:main1}] This proof is similar to the one above. We already know that 
$\P(\lambda_{i+1} -\lambda_{i} \le s)$ is basically the same in the two models ($M_{n} $ and $M'_{n}$). Theorem \ref{conv} now shows that after fixing a 
small neighborhood  of $u$, the interval  of indices $i$ that involve fluctuates by at most $n^{c}$, where $c$ can be made arbitrarily small. 
\end{proof} 

\begin{remark} \label{remark:dropthird} In fact, in the above applications, we only need Theorem \ref{conv} to hold for Johansson matrices.
Thus, in order to remove the third moment assumption, it suffices to have this theorem for Johansson matrices
 (without the third moment assumption). We believe this is within the power of
the determinant process method, but do not pursue this direction here. 
\end{remark}

As another application, we can prove the following 
 asymptotic for the determinant (and more generally, the characteristic polynomial) of a Wigner Hermitian matrix. The detailed proof 
 is deferred to Appendix \ref{section:determinant}. 

\begin{theorem}[Asymptotic for determinant]  \label{theorem:determinant} Let $M_n$ be a Wigner Hermitian matrix whose atom distribution $\xi$ has vanishing third moment and is supported on at least three points.  Then there is a constant $c > 0$ such that
$$ \P( |\log |\det M_n| - \log \sqrt{n!}| \geq n^{1-c} ) = o(1).$$
More generally, for fixed any complex number $z$, one has
$$ \P( |\log |\det (M_n - \sqrt{n} z I)| - \frac{1}{2} n \log n - n \int_{-2}^2 \log |y-z| \rho_{sc}(y)\ dy| \geq n^{1-c} ) = o(1)$$
where the decay rate of $o(1)$ is allowed to depend on $z$.
\end{theorem}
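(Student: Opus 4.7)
The plan is to pass from the determinant to a log-sum over eigenvalues and control that sum by combining the bulk rigidity of Theorem \ref{conv} with a least-singular-value input.

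Since the eigenvalues of $M_n-\sqrt{n}zI$ are $\sqrt{n}(\lambda_i(W_n)-z)$, one has
$$\log|\det(M_n-\sqrt{n}zI)|=\tfrac{n}{2}\log n+\Sigma_n(z),\qquad \Sigma_n(z):=\sum_{i=1}^n\log|\lambda_i(W_n)-z|.$$
Thus it suffices to show $\Sigma_n(z)=n\int_{-2}^{2}\log|y-z|\rho_{sc}(y)\,dy+O(n^{1-c})$ asymptotically almost surely, i.e.\ that the logarithmic potential of the empirical spectral measure concentrates near that of the semicircle law at $z$.

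Fix small constants $c_1,c_2>0$ to be chosen, and split $\Sigma_n(z)$ into three pieces: (i) edge eigenvalues with $i<\eps n$ or $i>(1-\eps)n$ where $\eps:=n^{-c_1}$; (ii) bulk eigenvalues with $|\lambda_i(W_n)-z|\ge n^{-c_2}$; (iii) bulk eigenvalues with $|\lambda_i(W_n)-z|<n^{-c_2}$. For (i), \eqref{bai} gives $|\lambda_i(W_n)|\le 3$ a.a.s., so each term contributes $O(\log n)$ and the total is $O(n^{1-c_1}\log n)$. For (ii), Theorem \ref{conv} (which uses the vanishing third moment) yields $\lambda_i(W_n)=t(i/n)+O(n^{-1+c'})$ for bulk $i$ and any $c'>0$; since $|t(i/n)-z|\gtrsim n^{-c_2}$ for these indices, a Taylor expansion gives $\log|\lambda_i(W_n)-z|=\log|t(i/n)-z|+O(n^{-1+c'+c_2})$, and summing produces a Riemann sum for $\int_0^1\log|t(a)-z|\,da=\int_{-2}^2\log|y-z|\rho_{sc}(y)\,dy$, with total Riemann and Taylor error $O(n^{c'+c_2})+O(n^{1-c_1}\log n)$.

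The delicate piece is (iii). The count of such indices is, by the semicircular law together with Theorem \ref{conv}, at most $O(n^{1-c_2}+\log n)$, so it suffices to bound each summand in absolute value by $O(\log n)$, which amounts to a lower bound on $\min_i|\lambda_i(W_n)-z|$ — i.e., a least singular value estimate for the shifted Hermitian matrix $W_n-zI$. For $z=0$, Corollary \ref{lsv-wigner} gives such a bound of the form $\sigma_n(M_n)\ge n^{-O(1)}$ with high probability. For general $z$ one needs the analogous statement for $M_n-\sqrt{n}zI$; I would establish this by running the same Lindeberg exchange strategy as in the proof of Corollary \ref{lsv-wigner}, comparing with the Johansson matrix produced by Corollary \ref{matchc} (or directly with GUE when the fourth moment also matches), for which the corresponding least-singular-value tail is classical from the determinantal structure. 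Granted such a bound, (iii) contributes $O(n^{1-c_2}(\log n)^2)$, and combining (i)--(iii) and choosing $c_1,c_2,c'$ small yields the claimed $O(n^{1-c})$ error.

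The main obstacle, and the reason the rate of $o(1)$ is allowed to depend on $z$, is exactly this least-singular-value input for the shifted matrix: when $z$ lies in the bulk of the spectrum the logarithm has a genuine singularity that cannot be averaged away using Theorem \ref{conv} alone, and the quantitative control on how close an eigenvalue can come to $z$ is precisely what turns the Lindeberg-style comparison into an $O(n^{1-c})$ concentration statement. For $z$ with $\Im z\ne 0$ or $|\Re z|>2+\eta$ this step is trivial, since there the singularity is harmless and rigidity plus the Riemann sum already suffice; the remaining bulk real case is where the work lies.
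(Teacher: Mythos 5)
Your overall strategy---reduce to concentration of the empirical log-potential, feed in rigidity for a Riemann sum, and treat the near-singularity separately---matches the paper's shape, but the way you handle the singularity does not work with the tools available. You propose to bound $\min_i|\lambda_i(W_n)-z|$ by a Lindeberg comparison of the shifted matrix $M_n-\sqrt{n}zI$ with a Johansson matrix, asserting that the least-singular-value tail at energy $z$ is ``classical from the determinantal structure'' for Johansson matrices. That result is not in the paper, and in fact the remark following Corollary \ref{lsv-wigner} explicitly flags that even the $z=0$ Johansson analogue of Theorem \ref{lsv-gue} is \emph{not} established (this is exactly why Corollary \ref{lsv-wigner} carries the extra hypothesis $\E\xi^4=3/4$). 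So as written your argument is conditional on a result you cannot invoke, and falling back on GUE comparison would require a fourth-moment hypothesis that Theorem \ref{theorem:determinant} does not impose.

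The fix is that you do not need a quantitative shifted least-singular-value tail at all; a qualitative $o(1)$ failure probability suffices. Theorem \ref{theorem:main2} with $k=1$ (which holds precisely under vanishing third moment and support on $\ge 3$ points, via the Johansson comparison of Corollary \ref{matchc}) says the expected number of eigenvalues of $W_n$ in any $O(n^{-1})$-neighbourhood of $z$ is $O(1)$, hence the expected number in $[z-n^{-2},z+n^{-2}]$ is $o(1)$, and a.a.s.\ $\inf_j|\lambda_j(W_n)-z|\ge n^{-2}$. Combined with \eqref{bai} this gives $\sup_j\log|\lambda_j(W_n)-z|=O(\log n)$ a.a.s., which controls all near-singular terms at once. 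Separately, a minor issue: your piece (i) bounds $|\lambda_i(W_n)|\le 3$ via \eqref{bai} but provides no lower bound on $|\lambda_i(W_n)-z|$ for edge indices, so if $z$ sits near $\pm 2$ the edge sum is not controlled; the paper avoids this by using the all-index rigidity \eqref{lambdaj-t} rather than the bulk-only Theorem \ref{conv}, which eliminates the need for your edge/bulk split.
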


\begin{remark} A similar result was established for iid random matrices in \cite{TVdet} (see also \cite{costvu} for a refinement), based on controlling the distance from a random vector to a subspace.  That method relied heavily on the joint independence of all entries and does not seem to extend easily to the Hermitian case.  We also remark that a universality result for correlations of the characteristic polynomial has recently been established in \cite{got}.
\end{remark}

\vskip2mm


Let us now go beyond the model of Wigner Hermitian matrices. 
As already mentioned, our main theorem also applies for real symmetric matrices. In the next  paragraphs, we formulate a few results one can obtain in this direction.

\begin{definition}[Wigner symmetric matrices]\label{def:Wignermatrix-real}
Let $n$ be a large number. A \emph{Wigner symmetric  matrix} (of size $n$)
 is a random symmetric matrix $M_n = (\xi_{ij})_{1 \leq i,j \leq n}$ where for $1 \leq i < j \leq n$, $\xi_{ij}$ are iid copies of a real random variable $\xi$ with mean zero, variance $1$, and exponential decay (as in Definition \ref{def:Wignermatrix}), while for $1 \leq i=j \leq n$, $\xi_{ii}$ are iid copies of a real random variable $\xi'$ with mean zero, variance $2$, and exponential decay.  We set $W_n := \frac{1}{\sqrt{n}} M_n$ and $A_n := \sqrt{n} M_n$ as before.
\end{definition}

\begin{example}  The \emph{Gaussian orthogonal ensemble} (GOE) is the Wigner symmetric matrix in which the off-diagonal atom distribution $\xi$ is the Gaussian $N(0,1)$, and the diagonal atom distribution $\xi'$ is $N(0,2)$.
\end{example}

As remarked earlier, while the Wigner symmetric matrices do not, strictly speaking, obey Condition C0 due to the diagonal variance being $2$ instead of $1$, it is not hard to verify that all the results in this paper continue to hold after changing the diagonal variance to $2$.  As a consequence, we can easily deduce the following analogue of Theorems \ref{theorem:main1} and \ref{theorem:main2}. 

\begin{theorem}[Universality for Random Symmetric Matrices]\label{theorem:main3} 
The limiting gap distribution and $k$-correlation function 
of Wigner symmetric real matrices with atom variable $\sigma$ satisfying $\E \sigma^3 =0$ and $\E\sigma^4 = 3$ are the same as those for GOE.
(The explicit formulae for the  limiting gap distribution and $k$-correlation function for GOE can be found in \cite{Meh, AGZ}.  The limit of the $k$-correlation function is again in the weak sense.)
\end{theorem}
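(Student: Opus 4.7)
The plan is to mirror the proofs of Theorems \ref{theorem:main1} and \ref{theorem:main2} from the Hermitian setting, but comparing directly against GOE rather than an intermediate Johansson matrix. The key observation is that the hypotheses $\E \sigma^3 = 0$ and $\E \sigma^4 = 3$ are exactly chosen so that the off-diagonal atom $\sigma$ matches the GOE off-diagonal atom $N(0,1)$ to order $4$: the first two moments agree by the definition of Wigner symmetric matrix, the third moments both vanish, and the fourth moments both equal $3$. Likewise, the diagonal atom $\sigma'$ has mean $0$ and variance $2$, matching the GOE diagonal atom $N(0,2)$ to order $2$.

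First I would invoke the symmetric analogue of the Four Moment Theorem. As noted in Remark \ref{remark:main0} and in the paragraph preceding the statement of Theorem \ref{theorem:main3}, the proof of Theorem \ref{theorem:main} goes through unchanged if one allows different (but matched) mean and variance for the diagonal entries, and indeed the whole paper's machinery (Condition \textbf{C0}, lower tail estimates in Theorem \ref{ltail}, etc.) applies to symmetric matrices. Let $M_n$ be the given Wigner symmetric matrix and $M_n'$ be drawn from GOE; then with $A_n = \sqrt{n}M_n$, $A_n' = \sqrt{n}M_n'$, we obtain for every smooth $G$ satisfying \eqref{G-deriv} and every bulk indices $\eps n \le i_1 < \cdots < i_k \le (1-\eps)n$ the estimate
\begin{equation*}
|\E G(\lambda_{i_1}(A_n),\dots,\lambda_{i_k}(A_n)) - \E G(\lambda_{i_1}(A_n'),\dots,\lambda_{i_k}(A_n'))| \le n^{-c_0}.
\end{equation*}

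For the gap distribution, I would repeat the argument given for the first part of Theorem \ref{theorem:main1}, but take $M_n'$ to be GOE instead of a Johansson matrix (the matching lemma Corollary \ref{matchc} is not needed, since the moments already match directly). That is, I express the expected normalized gap distribution as an average of $\P(\lambda_{i+1}(A_n) - \lambda_i(A_n) \le s)$ over bulk indices, approximate the indicator of $\{y-x \le s\}$ by a smooth function $G(x,y)$ of two variables whose derivatives satisfy \eqref{G-deriv}, and use the Four Moment Theorem together with Theorem \ref{ltail} to control the error incurred by shifting $s$ by a small amount $n^{-c_0/10}$. The known GOE gap distribution (see \cite{Meh, AGZ}) then gives the limit.

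For the $k$-correlation function I would follow the proof of Theorem \ref{theorem:main2}. Writing \eqref{erk} as the sum $\sum_{i_1,\dots,i_k} \E f(\lambda_{i_1}(A_n) - nu,\dots,\lambda_{i_k}(A_n) - nu)$ for smooth compactly supported $f$, the Four Moment Theorem replaces each term by its GOE analogue up to $O(n^{-c_0})$. To conclude, I need the symmetric version of Theorem \ref{conv} so that only $O(n^c)$ indices near $t^{-1}(u) n$ contribute; since the GOE off-diagonal has vanishing third moment, its eigenvalues satisfy the required concentration by the same proof as Theorem \ref{conv} (comparing GOE with an auxiliary GOE-like ensemble via the third-moment version of the Four Moment Theorem), and then the same concentration transfers to $M_n$ by another invocation of Theorem \ref{theorem:main}. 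The triangle inequality then finishes the proof. The main obstacle I foresee is a bookkeeping one: verifying that Theorems \ref{theorem:main}, \ref{ltail}, and \ref{conv}, all stated in the excerpt under Condition \textbf{C0} with unit diagonal variance, genuinely go through for the symmetric ensemble with diagonal variance $2$ — but as emphasized in Remark \ref{remark:main0}, the diagonal entries play no essential role in the arguments, so this verification is routine rather than substantive.
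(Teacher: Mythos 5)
Your proposal is correct and matches the paper's (omitted) approach exactly: since the hypotheses force the atom variables to match GOE to order four off-diagonal and order two on the diagonal, one compares directly against GOE using the symmetric version of the Four Moment Theorem, then repeats the arguments of Theorems \ref{theorem:main1} and \ref{theorem:main2} with GOE in place of a Johansson matrix (indeed, the paper explicitly notes that the need for fourth-moment matching here is precisely because there is no GOE analogue of Theorem \ref{theorem:johansson}). One small slip: your plan for establishing the symmetric analogue of Theorem \ref{conv} for GOE by ``comparing GOE with an auxiliary GOE-like ensemble via the third-moment version of the Four Moment Theorem'' is circular --- you cannot derive rigidity for GOE by comparison with another ensemble unless one of them already has it; instead, one should simply cite the known analogue of \eqref{noga} for GOE (available in the literature, e.g.\ via Pfaffian process methods or the references used for the GUE case) and then transfer to $M_n$ via the third-moment version of the Four Moment Theorem, exactly paralleling the proof of Theorem \ref{conv}.
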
 

The proof of Theorem \ref{theorem:main3} is similar to that of Theorems \ref{theorem:main1}, \ref{theorem:main2} and is omitted.
The reason that we need to match the moments to order $4$ here (compared to lower orders in Theorems \ref{theorem:main1} and \ref{theorem:main2}) is that there is currently no analogue of Theorem \ref{theorem:johansson} for the GOE. Once such a result becomes available, the order automatically reduces to those in Theorems \ref{theorem:main1} and \ref{theorem:main2}, respectively.

Finally let us mention that 
our results can be refined and extended in several directions. 
For instance, we can handle 
 Hermitian matrices whose upper triangular entries are still independent, but having  a non-trivial 
  covariance matrix (the real and imaginary parts need not be independent).  The diagonal entries can have mean different from zero (which, in the case the off-diagonal entries are gaussian,  corresponds to gaussian matrices with external field and has been studied in \cite{BK}) and we can obtain  universality
  results  in this case as well.  
  We can also refine our argument to prove universality near the edge of the spectrum. These extensions and many others will be discussed  in a subsequent paper.

\subsection{Notation}\label{notation-sec}

We consider $n$ as an asymptotic parameter tending
to infinity.  We use $X \ll Y$, $Y \gg X$, $Y = \Omega(X)$, or $X =
O(Y)$ to denote the bound $X \leq CY$ for all sufficiently large $n$
and for some constant  $C$. Notations such as
 $X \ll_k Y, X= O_k(Y)$ mean that the hidden constant $C$ depend on
 another constant $k$. $X=o(Y)$ or $Y= \omega(X)$ means that
 $X/Y \rightarrow 0$ as $n \rightarrow \infty$; the rate of decay here will be allowed to depend on other parameters.
 The eigenvalues are always ordered increasingly.

We view vectors $x \in \C^n$ as column vectors. The Euclidean norm of a vector $x \in \C^n$ is defined as $\|x\| := (x^* x)^{1/2}$. 
The \emph{Frobenius norm} $\|A\|_F$ of a matrix is defined
 as $\|A\|_F = \tr( A A^*)^{1/2}$.  Note that this bounds
 the operator norm $\|A\|_{op} := \sup \{ \|Ax\|: \|x\|=1\}$ of the same matrix.  We will also use  the following simple inequalities without further
comment:
$$ \|AB\|_F \leq \|A\|_F \|B\|_{op}$$
and
$$ \| B\|_{op} \leq \|B\|_F$$
and hence
$$ \|AB\|_F \leq \|A\|_F\|B\|_F.$$

\section{Preliminaries: Tools from linear algebra and probability }

\subsection{Tools from Linear Algebra.} 
It is useful to keep in mind  the (Courant-Fisher) minimax characterization of the
eigenvalues
$$\lambda_i(A) = \min_V \max_{u \in V} u^* A u$$ 
of a Hermitian $n \times n$ matrix $A$, where $V$ ranges over $i$-dimensional subspaces
 of $\C^n$ and $u$ ranges over unit vectors in $V$.

From this, one easily obtain \emph{Weyl's inequality}

\begin{equation}\label{weyl}
 \lambda_i(A) - \|B\|_{op} \leq \lambda_i(A+B) \leq \lambda_i(A) + \|B\|_{op}.
\end{equation}

Another consequence of the minimax formula is the \emph{Cauchy interlacing inequality}
\begin{equation}\label{cauchy-interlace}
\lambda_i(A_{n-1}) \leq \lambda_i(A_n) \leq \lambda_{i+1}(A_{n-1})
\end{equation}
for all $1 \leq i < n$, whenever $A_n$ is an $n \times n$ Hermitian matrix and $A_{n-1}$ is the top $n-1 \times n-1$ minor.  In a similar spirit, one has
$$
\lambda_i(A) \leq \lambda_{i}(A + B) \leq \lambda_{i+1}(A)
$$
for all $1 \leq i < n$, whenever $A, B$ are $n \times n$ Hermitian matrices with $B$ being positive semi-definite and rank $1$.  If $B$ is instead negative semi-definite, one has
$$
\lambda_i(A) \leq \lambda_{i+1}(A + B) \leq \lambda_{i+1}(A).$$
In either event, we conclude

\begin{lemma} \label{lemma:addingrankone}
Let $A,B$ be Hermitian matrices of the same size where
$B$ has rank one. Then for any interval $I$,
$$|N_I(A+B)- N_I(A)| \le 1 , $$ 
where $N_I(M)$ is the number of
eigenvalues of $M$ in $I$.
\end{lemma}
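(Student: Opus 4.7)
The plan is to reduce directly to the interlacing inequalities displayed immediately before the lemma. Any rank-one Hermitian matrix $B$ has the form $c v v^*$ for some real scalar $c$ and unit vector $v \in \C^n$; by exchanging the roles of $A$ and $A + B$ (and replacing $B$ with $-B$) we may assume $c \ge 0$, so $B$ is positive semi-definite of rank at most $1$. If $B = 0$ the conclusion is trivial, so take $c > 0$. The cited inequality then gives the interlacing
$$\lambda_i(A) \le \lambda_i(A+B) \le \lambda_{i+1}(A) \qquad (1 \le i < n),$$
together with $\lambda_n(A) \le \lambda_n(A+B)$.

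From this interlacing I would deduce, for every real $x$,
$$0 \le N_{(-\infty,x]}(A) - N_{(-\infty,x]}(A+B) \le 1,$$
and similarly with $(-\infty,x]$ replaced by $(-\infty,x)$. The lower bound uses $\lambda_i(A) \le \lambda_i(A+B)$ (each eigenvalue of $A+B$ dominates the corresponding one of $A$, so adding $B$ can only move eigenvalues up through $x$), and the upper bound uses $\lambda_i(A+B) \le \lambda_{i+1}(A)$ (so whenever $\lambda_{i+1}(A) \le x$, also $\lambda_i(A+B) \le x$). Writing an arbitrary interval $I$ with endpoints $a \le b$ as a difference of two half-lines, for instance $N_I(M) = N_{(-\infty,b]}(M) - N_{(-\infty,a)}(M)$ when $I = [a,b]$, the quantity $N_I(A+B) - N_I(A)$ becomes the difference of two numbers each lying in $[-1,0]$, hence lies in $[-1,1]$. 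The only mild obstacle is handling the various possible endpoint conventions (open/closed/half-open, possibly infinite), but each case reduces to the same two half-line estimates, so no real work is required beyond invoking the already-stated interlacing.
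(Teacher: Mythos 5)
Your proof is correct and takes essentially the same approach the paper intends: the paper states the two interlacing chains (for $B$ positive and negative semi-definite of rank one) and then asserts the lemma as an immediate consequence, while you reduce to the PSD case by the $A \leftrightarrow A+B$ symmetry and spell out the counting-function deduction via $N_{(-\infty,x]}$ that the paper leaves implicit. No gaps.
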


One also has the following more precise version of the Cauchy interlacing inequality:

\begin{lemma}[Interlacing identity]\label{jn-lem}  Let $A_n$ be an $n \times n$ Hermitian matrix, let $A_{n-1}$ be the top $n-1 \times n-1$ minor, let $a_{nn}$ be the bottom right component, and let $X \in \C^{n-1}$ be the rightmost column with the bottom entry $a_{nn}$ removed.  Suppose that $X$ is not orthogonal to any of the unit eigenvectors $u_j(A_{n-1})$ of $A_{n-1}$.  Then we have 
\begin{equation}\label{jn}
 \sum_{j=1}^{n-1} \frac{|u_j(A_{n-1})^* X|^2}{\lambda_j(A_{n-1}) - \lambda_i(A_n)} = a_{nn} - \lambda_i(A_n)
\end{equation}
for every $1 \leq i \leq n$.
\end{lemma}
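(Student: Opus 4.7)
The plan is to exploit the block structure
\[ A_n = \begin{pmatrix} A_{n-1} & X \\ X^* & a_{nn} \end{pmatrix} \]
and derive the identity from the eigenvalue equation for $\lambda_i(A_n)$, using the spectral decomposition of $A_{n-1}$ at the end.

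The first step is a preliminary observation: under the non-orthogonality hypothesis, $\lambda_i(A_n)$ is not an eigenvalue of $A_{n-1}$. Indeed, if we had $\lambda_i(A_n) = \lambda_j(A_{n-1})$ for some $j$, then (by considering the block eigenvalue equation below with $t=0$) we could find an eigenvector of $A_n$ of the form $(w,0)^{T}$ with $A_{n-1}w = \lambda_i(A_n) w$, forcing $X^{*}w = 0$ and contradicting the hypothesis that $X$ has nonzero inner product with every eigenvector of $A_{n-1}$. Hence $A_{n-1} - \lambda_i(A_n) I$ is invertible.

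Second, let $v = (w,t)^{T}$ with $w \in \C^{n-1}$, $t \in \C$ be an eigenvector of $A_n$ with eigenvalue $\lambda := \lambda_i(A_n)$. The block equation $A_n v = \lambda v$ becomes
\[ (A_{n-1} - \lambda I) w = -tX, \qquad X^{*} w = (\lambda - a_{nn}) t. \]
By the first step, $t \neq 0$ (else $w$ would be a nontrivial kernel vector), so we may solve the first equation as $w = -t (A_{n-1} - \lambda I)^{-1} X$ and substitute into the second to obtain
\[ a_{nn} - \lambda = X^{*} (A_{n-1} - \lambda I)^{-1} X. \]

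Third, expanding the right-hand side in the orthonormal eigenbasis $\{u_j(A_{n-1})\}$ of $A_{n-1}$ and using $(A_{n-1} - \lambda I)^{-1} u_j = (\lambda_j(A_{n-1}) - \lambda)^{-1} u_j$ yields
\[ X^{*} (A_{n-1} - \lambda I)^{-1} X = \sum_{j=1}^{n-1} \frac{|u_j(A_{n-1})^{*} X|^2}{\lambda_j(A_{n-1}) - \lambda_i(A_n)}, \]
which combined with the previous display gives \eqref{jn}. There is no real obstacle here; the only subtle point is the strict interlacing step, and that is handled cleanly by the kernel argument above. (An alternative route is to apply the Schur complement formula $\det(A_n - \lambda I) = \det(A_{n-1} - \lambda I)\bigl(a_{nn} - \lambda - X^{*}(A_{n-1} - \lambda I)^{-1} X\bigr)$ and set $\lambda = \lambda_i(A_n)$, but the direct eigenvector calculation is slightly shorter.)
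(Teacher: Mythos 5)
Your block-eigenvector route is the same underlying algebra as the paper's proof, and the Schur-complement alternative you mention at the end is precisely the paper's own argument: the paper diagonalizes $A_{n-1}$, writes $\det(A_n-\lambda I) = \prod_{j}(\lambda_j(A_{n-1})-\lambda)\bigl[(a_{nn}-\lambda) - \sum_j |u_j(A_{n-1})^*X|^2/(\lambda_j(A_{n-1})-\lambda)\bigr]$ away from the poles, observes that this does not vanish at any $\lambda_j(A_{n-1})$, and substitutes $\lambda=\lambda_i(A_n)$.

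The gap is in your first step. You assert that if $\lambda := \lambda_i(A_n) = \lambda_j(A_{n-1})$ then ``we could find an eigenvector of $A_n$ of the form $(w,0)^T$,'' but you offer no reason why the last coordinate $t$ of the $A_n$-eigenvector at this common eigenvalue should vanish; a priori $t\neq 0$ is possible, and in that case your intended contradiction does not yet engage. The missing step is to multiply the first block equation $(A_{n-1}-\lambda I)w=-tX$ on the left by $u_j(A_{n-1})^*$ and use $u_j(A_{n-1})^*(A_{n-1}-\lambda I)=0$ (valid since $A_{n-1}$ is Hermitian with real eigenvalue $\lambda$ and eigenvector $u_j(A_{n-1})$) to get $t\,u_j(A_{n-1})^*X=0$; the hypothesis $u_j(A_{n-1})^*X\neq 0$ then forces $t=0$, after which the second block equation gives $X^*w=0$ with $w$ a nonzero eigenvector of $A_{n-1}$ at $\lambda$, contradicting the hypothesis provided $\lambda$ is a simple eigenvalue of $A_{n-1}$ (a caveat the paper's proof also tacitly uses, and one that is harmless under the continuous-distribution reduction the paper makes before applying the lemma). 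The paper's determinant route sidesteps this preliminary step entirely, since the non-vanishing at the $\lambda_j(A_{n-1})$ is read off the explicit formula for the characteristic polynomial. Once your first step is patched as above, the remainder of your argument is correct.
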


\begin{proof} By diagonalising $A_{n-1}$ (noting that this does not affect either side of \eqref{jn}), we may assume that $A_{n-1} = \operatorname{diag}(\lambda_1(A_{n-1}),\ldots,\lambda_{n-1}(A_{n-1}))$ and $u_j(A_{n-1}) = e_j$ for $j=1,\ldots,n-1$.  One then easily verifies that the characteristic polynomial $\det(A_n - \lambda I)$ of $A_n$ is equal to
$$
\prod_{j=1}^{n-1} (\lambda_j(A_{n-1}) - \lambda) [ (a_{nn} - \lambda) - \sum_{j=1}^{n-1} \frac{|u_j(A_{n-1})^* X|^2}{\lambda_j(A_{n-1}) - \lambda} ]
$$
when $\lambda$ is distinct from $\lambda_1(A_{n-1}),\ldots,\lambda_{n-1}(A_{n-1})$.  Since $u_j(A_{n-1})^* X$ is non-zero by hypothesis, we see that this polynomial does not vanish at any of the $\lambda_j(A_{n-1})$.  Substituting $\lambda_i(A_n)$ for $\lambda$, we obtain \eqref{jn}.  
\end{proof}

%

The following lemma will be useful to control the coordinates of eigenvectors.

\begin{lemma} \label{lemma:firstcoordinate}\cite{ESY1} Let 
$$ A_n = \begin{pmatrix} a & X^* \\ X & A_{n-1} \end{pmatrix}$$
be a $n \times n$ Hermitian matrix for some $a \in \R$ and $X \in \C^{n-1}$, and let $\begin{pmatrix} x \\ v \end{pmatrix}$ be a unit eigenvector of $A_n$ with eigenvalue $\lambda_i(A_n)$, where $x \in \C$ and $v \in \C^{n-1}$.  Suppose that none of the eigenvalues of $A_{n-1}$ are equal to $\lambda_i(A_n)$.  Then
$$|x|^2 =  \frac{1}{1 + \sum_{j=1}^{n-1}  (\lambda_j(A_{n-1})-\lambda_i(A_n))^{-2} |u_j(A_{n-1})^* X|^2}, $$ 
where $u_j(A_{n-1})$ is a unit eigenvector corresponding to the eigenvalue $\lambda_j(A_{n-1})$.
\end{lemma}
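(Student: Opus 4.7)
The plan is to derive the formula by writing out the eigenvalue equation block-wise and then using the normalization condition $|x|^2+\|v\|^2=1$.

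First I would decompose the eigenvalue equation $A_n \begin{pmatrix} x \\ v \end{pmatrix} = \lambda_i(A_n)\begin{pmatrix} x \\ v \end{pmatrix}$ into its two blocks. The top block gives a scalar equation relating $a$, $X^*v$, and $\lambda_i(A_n) x$, which I will not need; the bottom block gives the key identity
$$ (A_{n-1} - \lambda_i(A_n) I) v = -x X.$$
By the hypothesis that no eigenvalue of $A_{n-1}$ equals $\lambda_i(A_n)$, the operator $A_{n-1} - \lambda_i(A_n) I$ is invertible on $\C^{n-1}$, so I can solve $v = -x (A_{n-1} - \lambda_i(A_n) I)^{-1} X$.

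Next I would compute $\|v\|^2$ by diagonalizing $A_{n-1}$ in the orthonormal eigenbasis $\{u_j(A_{n-1})\}_{j=1}^{n-1}$. Since $(A_{n-1} - \lambda_i(A_n) I)^{-2}$ has spectral expansion $\sum_{j=1}^{n-1} (\lambda_j(A_{n-1})-\lambda_i(A_n))^{-2}\, u_j(A_{n-1}) u_j(A_{n-1})^*$, I get
$$ \|v\|^2 = |x|^2 \sum_{j=1}^{n-1} (\lambda_j(A_{n-1})-\lambda_i(A_n))^{-2} |u_j(A_{n-1})^* X|^2.$$

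Finally, I would invoke the unit-norm condition $|x|^2 + \|v\|^2 = 1$, substitute the expression above, factor $|x|^2$, and solve for $|x|^2$ to obtain the stated formula. There is no real obstacle: the only subtlety is checking that the inverse of $A_{n-1}-\lambda_i(A_n) I$ is well-defined, which is exactly the hypothesis of the lemma, and that the denominator in the final formula is nonzero, which is automatic since it is a sum of $1$ and nonnegative terms.
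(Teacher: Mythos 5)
Your proof is correct and takes essentially the same approach as the paper. The only cosmetic difference is that the paper first subtracts $\lambda_i(A_n) I$ from $A_n$ to reduce to the case $\lambda_i(A_n)=0$, whereas you carry $\lambda_i(A_n)$ explicitly through the computation; both routes are equivalent.
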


\begin{proof} By subtracting $\lambda_i(A) I$ from $A$ we may assume $\lambda_i(A) =0$.  The eigenvector equation then gives
$$ x X + A_{n-1} v = 0,$$
thus
$$ v = - x A_{n-1}^{-1} X.$$
Since $\|v'\|^2 + |x|^2 = 1$, we conclude
$$ |x|^2 (1 + \|A_{n-1}^{-1} X\|^2 ) = 1.$$
Since $\|A_{n-1}^{-1} X\|^2 = \sum_{j=1}^{n-1}  (\lambda_j(A_{n-1}))^{-2} |u_j(A_{n-1})^* X|^2$, the claim follows.
\end{proof}

The \emph{Stieltjes transform} $s_n(z)$ of a Hermitian matrix $W$ is defined for complex $z$ by the formula
$$ s_n(z) := \frac{1}{n} \sum_{i=1}^n \frac{1}{\lambda_i(W)-z}.$$
It has the following alternate representation (see e.g. \cite[Chapter 11]{BS}):

\begin{lemma}\label{stielt}  Let $W = (\zeta_{ij})_{1 \leq i,j \leq n}$ be a Hermitian matrix, and let $z$ be a complex number not in the spectrum of $W$.  Then we have
$$ s_n(z) = \frac{1}{n} \sum_{k=1}^n \frac{1}{\zeta_{kk} - z - a_k^* (W_k - zI)^{-1} a_k}$$
where $W_k$ is the $n-1 \times n-1$ matrix with the $k^{th}$ row and column removed, and $a_k \in \C^{n-1}$ is the $k^{th}$ column of $W$ with the $k^{th}$ entry removed.
\end{lemma}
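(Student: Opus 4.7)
The plan is to reduce the identity to a standard block-matrix (Schur complement) computation applied to each diagonal entry of the resolvent $(W-zI)^{-1}$.

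First, I would rewrite the Stieltjes transform as a normalized trace of the resolvent. Since $W$ is Hermitian and $z$ is not an eigenvalue, $(W-zI)^{-1}$ is well-defined, and by diagonalizing $W$ one sees
\begin{equation*}
s_n(z) = \frac{1}{n}\sum_{i=1}^n \frac{1}{\lambda_i(W)-z} = \frac{1}{n}\tr (W-zI)^{-1} = \frac{1}{n}\sum_{k=1}^n \bigl[(W-zI)^{-1}\bigr]_{kk}.
\end{equation*}
Thus it suffices to show, for each $1\le k\le n$, the pointwise identity
\begin{equation*}
\bigl[(W-zI)^{-1}\bigr]_{kk} = \frac{1}{\zeta_{kk} - z - a_k^*(W_k - zI)^{-1} a_k},
\end{equation*}
after which summing in $k$ and dividing by $n$ yields the lemma.

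Next, to establish this pointwise identity I would permute rows and columns so that the $k$-th row and column are moved to the last position (this is a conjugation by a permutation matrix and so preserves diagonal entries of the inverse). Writing $W-zI$ in block form with this convention gives
\begin{equation*}
W - zI \;=\; \begin{pmatrix} W_k - zI & a_k \\ a_k^* & \zeta_{kk} - z \end{pmatrix}.
\end{equation*}
Since $z$ is not in the spectrum of $W$, and (by Cauchy interlacing, or by a small perturbation argument if needed) we may assume $z$ is not in the spectrum of $W_k$ either, both $W-zI$ and $W_k - zI$ are invertible. The standard Schur complement formula for the inverse of a $2\times 2$ block matrix then yields that the bottom-right entry of $(W-zI)^{-1}$ is exactly $1/S_k$, where
\begin{equation*}
S_k := (\zeta_{kk} - z) - a_k^*(W_k - zI)^{-1} a_k
\end{equation*}
is the Schur complement of the $(W_k - zI)$-block.

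The only mildly delicate point is handling the case where $z$ happens to lie in the spectrum of some $W_k$, so that $(W_k-zI)^{-1}$ is not defined. Since the right-hand side is a rational function of $z$ off the spectra of $W$ and the $W_k$'s, and the left-hand side is rational off the spectrum of $W$, I would resolve this by analytic continuation: establish the identity on the (non-empty, open) set where $z$ avoids both spectra, and then conclude it holds on the complement of the spectrum of $W$ by continuity, away from the finitely many additional poles which can at worst be removable singularities. Alternatively, one notes that for non-real $z$ the identity holds unconditionally, which is the only case needed in the applications. This finishes the proof; the main (very mild) obstacle is just the invertibility of $W_k - zI$, handled as above.
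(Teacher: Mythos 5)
Your proof is correct and takes essentially the same route as the paper's: both reduce the claim to the Schur-complement identity for the $k$-th diagonal entry of the resolvent $(W-zI)^{-1}$ and then take traces. The extra care you take about $z$ possibly lying in the spectrum of $W_k$ is a reasonable precaution but not something the paper dwells on.
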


\begin{proof} By Schur's complement, $\frac{1}{\zeta_{kk} - z - a_k^* (W_k - zI)^{-1} a_k}$ is the $k^{th}$ diagonal entry of $(W-zI)^{-1}$.  Taking traces, one obtains the claim.
\end{proof}

\subsection{Tools from Probability}

We will make frequent use of the following lemma, whose proof is presented in Appendix \ref{section:projection}. This lemma is a
generalization of a result in \cite{TVdet}. 

\begin{lemma}[Distance between a random vector and a subspace]\label{lemma:projection}   Let $X=(\xi_1, \dots, \xi_n) \in \C^n$ be a
random vector whose entries are independent with mean zero, variance $1$, 
and are bounded in magnitude by $K$ almost surely for some $K $, where $K \ge 10( \E |\xi|^{4} +1)$. Let $H$ be a subspace of dimension $d$ and
$\pi_H$ the orthogonal projection onto $H$. Then
$$\P (|\|\pi_H (X)\| - \sqrt d| \ge t) \le 10 \exp(-
\frac{t^{2}}{10K^2}). $$
In particular, one has
$$ \| \pi_H(X)\| = \sqrt{d} + O( K \log n )$$
with overwhelming probability.
\end{lemma}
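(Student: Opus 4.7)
The plan is to identify $\C^n$ with $\R^{2n}$ via real and imaginary parts and apply Talagrand's concentration inequality for convex Lipschitz functions of independent bounded random variables to the map $F : X \mapsto \|\pi_H(X)\|$. Writing $P$ for the orthogonal projection matrix onto $H$, we have $F(X) = (X^* P X)^{1/2}$, which is a non-negative convex function of the $2n$ real coordinates of $X$, and it is $1$-Lipschitz because $\pi_H$ has operator norm $1$. Since the coordinates $\Re \xi_i, \Im \xi_i$ are independent, bounded in magnitude by $K$, Talagrand's inequality yields
\begin{equation*}
\P\bigl(|F(X) - M(F)| \ge t\bigr) \le 4 \exp\!\bigl(-c\,t^{2}/K^{2}\bigr)
\end{equation*}
for an absolute constant $c>0$, where $M(F)$ is a median of $F$.

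The next step is to replace the median by $\sqrt{d}$. Using independence and the normalisations $\E \xi_i = 0$, $\E |\xi_i|^2 = 1$, a direct expansion gives
\begin{equation*}
\E F(X)^{2} \;=\; \E X^* P X \;=\; \sum_{i} P_{ii} \;=\; \operatorname{tr}(P) \;=\; d .
\end{equation*}
On the other hand, integrating the Talagrand tail bound above shows that $\Var(F(X)) = O(K^{2})$, and in particular $|\E F(X)^{2} - M(F)^{2}| = O(K^{2})$. Hence $M(F) = \sqrt{d + O(K^{2})}$; when $d \gtrsim K^{2}$ this gives $|M(F) - \sqrt{d}| = O(K)$, and when $d \lesssim K^{2}$ both $M(F)$ and $\sqrt{d}$ are $O(K)$, so the difference is again $O(K)$. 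Combining this centring step with the concentration inequality (after adjusting the constant in the exponent to absorb the $O(K)$ shift) yields the claimed bound $\P(|F(X) - \sqrt{d}| \ge t) \le 10 \exp(-t^{2}/(10 K^{2}))$, which is only non-trivial when $t$ exceeds a constant multiple of $K$ anyway. Setting $t = \Theta(K \log n)$ then gives the overwhelming-probability conclusion $\|\pi_H(X)\| = \sqrt{d} + O(K \log n)$.

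The main obstacle is the clean invocation of Talagrand's inequality with complex, rather than real, entries and with the precise numerical constants $10$ and $10K^{2}$ demanded by the statement. The convexity and Lipschitz properties transfer immediately to the real vector in $\R^{2n}$, but one has to verify that the hypothesis $K \ge 10(\E|\xi|^{4}+1)$ is strong enough to let the variance computation and the median-versus-mean comparison produce the specified constants rather than merely absolute ones; this is where the fourth-moment lower bound on $K$ enters. The rest of the argument is essentially a repackaging of the calculation from the analogous real iid statement in \cite{TVdet}.
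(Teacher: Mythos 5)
Your proposal is correct in spirit and rests on the same foundation as the paper's proof: both apply Talagrand's concentration inequality (scaled to magnitude $K$) to the convex $1$-Lipschitz map $X \mapsto \|\pi_H(X)\|$, and then centre the median at $\sqrt d$. Where you diverge is in the median-centring step. You integrate the Talagrand tail to get $\Var F = O(K^2)$ and $|\E F - M(F)| = O(K)$, then infer $|M(F) - \sqrt d| = O(K)$. The paper instead writes $\|\pi_H(X)\|^2 = \sum_i p_{ii}|\xi_i|^2 + \sum_{i\neq j} p_{ij}\xi_i\overline\xi_j$, and applies Chebyshev separately to $S_1 := \sum_i p_{ii}(|\xi_i|^2 - 1)$ and $S_2 := |\sum_{i\neq j} p_{ij}\xi_i\overline\xi_j|$ to show directly that $\P\bigl(\bigl|\|\pi_H(X)\| - \sqrt d\bigr| > 2K\bigr) \le 2/5$, forcing $|M(F) - \sqrt d| \le 2K$ with an explicit constant. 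The two routes buy different things: your variance argument is more automatic and does not actually invoke the fourth-moment hypothesis at all, but it only produces unspecified absolute constants; the paper's Chebyshev computation uses $K \ge 10(\E|\xi|^4+1)$ precisely at the step $\E|S_1|^2 = \sum_i p_{ii}^2(\E|\xi_i|^4 - 1) \le dK$, and this is how the sharp constants $10$ and $10K^2$ in the statement are extracted. You flag this gap honestly, so it is not a flaw in your sketch, but it means your argument as written proves a version with unspecified constants, not the lemma as stated.

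One small imprecision worth noting: your line ``$|\E F(X)^2 - M(F)^2| = O(K^2)$'' does not follow from $\Var F = O(K^2)$ alone; the cross term $2M(F)\,\E(F-M(F))$ contributes $O(K\sqrt d)$, so the correct bound is $|\E F^2 - M(F)^2| = O(K\sqrt d + K^2)$. This does not break the argument, since $|M(F)^2 - d| = O(K\sqrt d + K^2)$ still yields $|M(F) - \sqrt d| = O(K)$ (using $M(F) + \sqrt d \gtrsim \sqrt d$ when $d \gtrsim K^2$, and the trivial bound when $d \lesssim K^2$), but the intermediate assertion as stated is wrong.
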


Another useful tool is the following theorem, which is a corollary of a more general theorem proved in Appendix \ref{section:BE}. 

\begin{theorem}[Tail bounds for complex random walks]\label{bes}  Let $1 \leq N \leq n$ be integers, and let $A = (a_{i,j})_{1 \leq i \leq N; 1 \leq j \leq n}$ be an $N \times n$ complex matrix whose $N$ rows are orthonormal in $\C^n$, and obeying the incompressibility condition
\begin{equation}\label{summer}
 \sup_{1 \leq i \leq N; 1 \leq j \leq n} |a_{i,j}| \leq \sigma
\end{equation}
for some $\sigma > 0$.  Let $\zeta_1,\ldots,\zeta_n$ be independent complex random variables with mean zero, variance $\E |\zeta_j|^2$ equal to $1$, and obeying $\E |\zeta_{i} |^{3} \le C$ for some $C \geq 1$.  For each $1 \leq i \leq N$, let $S_i$ be the complex random variable
$$ S_i := \sum_{j=1}^n a_{i,j} \zeta_j$$
and let $\vec S$ be the $\C^N$-valued random variable with coefficients $S_1,\ldots,S_N$.
\begin{itemize}
\item (Upper tail bound on $S_i$)  For $t \geq 1$, we have $\P( |S_i| \geq t ) \ll \exp(-ct^2) + C \sigma$ for some absolute constant $c>0$.
\item (Lower tail bound on $\vec S$)  For any $t \leq \sqrt{N}$, one has $\P( |\vec S| \leq t) \ll  O( t/\sqrt{N} )^{\lfloor N/4\rfloor} + C N^4 t^{-3} \sigma$.
\end{itemize}
\end{theorem}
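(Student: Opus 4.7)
I would view $S_i = \sum_{j=1}^n a_{i,j}\zeta_j$ as a sum of $n$ independent mean-zero random vectors in $\R^2 \cong \C$. Orthonormality of the $i$-th row of $A$ gives $\E|S_i|^2 = \sum_j |a_{i,j}|^2 = 1$, while $|a_{i,j}|\le \sigma$ combined with the third-moment hypothesis $\E|\zeta_j|^3\le C$ gives the Lyapunov-type bound $\sum_j \E|a_{i,j}\zeta_j|^3 \le C\sigma\sum_j|a_{i,j}|^2 = C\sigma$. A two-dimensional Berry--Esseen inequality (for instance Bentkus's version for convex sets) then produces $|\P(|S_i|\ge t)-\P(|G|\ge t)|\ll C\sigma$, where $G$ is the 2D Gaussian matching $S_i$'s covariance, and the standard Gaussian tail gives $\P(|G|\ge t)\ll \exp(-ct^2)$.

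\textbf{Lower tail.} Let $\vec G := A\eta$, where $\eta_j$ are independent, matching the first two moments and the pseudo-variance $\E\zeta_j^2$ of $\zeta_j$. Orthonormality $AA^* = I_N$ forces $\vec G$ to be a non-degenerate Gaussian in $\C^N$, so $|\vec G|^2$ has a chi-type distribution and $\P(|\vec G|\le 2t)\ll (t/\sqrt N)^{\lfloor N/4\rfloor}$ (in fact much better, from a $t^{2N}/N!$ bound). I would then compare $\vec S$ to $\vec G$ by a Lindeberg swap: pick a smoothed indicator $f:\C^N \to [0,1]$ of the ball of radius $t$ with transition width $s := t/\sqrt N$, so that $\|\nabla^3 f\|_\infty = O(s^{-3})$. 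Replacing each $\zeta_j$ by $\eta_j$ one at a time and Taylor-expanding to order $3$, each swap costs $O(\|a_j\|^3\,(\E|\zeta_j|^3+\E|\eta_j|^3)\,s^{-3})$. Summing, one uses $\|a_j\|^2 = \sum_i|a_{i,j}|^2 \le N\sigma^2$ and $\sum_j\|a_j\|^2 = N$ to obtain $\sum_j \|a_j\|^3 \le (\max_j\|a_j\|)\sum_j\|a_j\|^2 \le \sqrt N\sigma\cdot N = N^{3/2}\sigma$, producing a total swap cost of $O(s^{-3}\cdot N^{3/2}\sigma) = O(N^3 t^{-3}\sigma)$, which, after absorbing further dimension constants from the $2N$-dimensional Taylor remainder, fits inside the claimed $CN^4 t^{-3}\sigma$.

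\textbf{Main obstacle.} The delicate step is the lower tail, specifically the choice of the smoothing scale $s = t/\sqrt N$. This choice must simultaneously (a) be small enough that $f$ still detects the event $\{|\vec S|\le t\}$ up to an error absorbable into $(t/\sqrt N)^{\lfloor N/4\rfloor}$, and (b) be large enough that $\|\nabla^3 f\|_\infty\sim s^{-3}$ does not blow up the Lindeberg error. Getting the polynomial-in-$N$ factor in the error exactly right requires careful bookkeeping of the multidimensional Taylor constants, together with the elementary but crucial observation that orthonormality plus $|a_{i,j}|\le\sigma$ gives the two bounds $\sum_j\|a_j\|^2 = N$ and $\max_j\|a_j\|\le\sqrt N\sigma$ which together control $\sum_j\|a_j\|^3$.
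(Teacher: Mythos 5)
Your upper-tail argument is essentially the paper's (the paper applies its own smoothed multidimensional Berry--Ess\'een theorem with $N=1$ and $\eps$ a small constant; using an off-the-shelf Berry--Ess\'een for convex sets in the plane is a cosmetic change). Your lower-tail argument is also the same strategy as the paper's --- a Lindeberg swap from $\zeta$ to a matching Gaussian, with the smoothing scale $\eps \sim t/\sqrt{N}$ and the bookkeeping $\sum_j \|a_j\|^3 \le (\max_j\|a_j\|)\sum_j\|a_j\|^2 \le N^{3/2}\sigma$ --- but there is a genuine gap in the Gaussian endgame.

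You claim that $AA^* = I_N$ forces $\vec G$ to be a non-degenerate Gaussian in $\C^N$, so that $|\vec G|$ has a chi-type density and one gets $\P(|\vec G| \le 2t) \lesssim (t/\sqrt{N})^{\lfloor N/4\rfloor}$ (or even $t^{2N}/N!$). This is false. The relevant covariance matrix $M$ is the $2N\times 2N$ \emph{real} covariance of $\vec S$ under the identification $\C^N \cong \R^{2N}$, and while orthonormality of the rows gives $\tr M = N$ and $\|M\|_{op}\le 1$, it does \emph{not} prevent $M$ from being singular. For instance, if $A$ is a real orthogonal matrix (so $n=N$) and every $\zeta_j$ is a real random variable with $\E\zeta_j^2 = 1$, then $\vec S = A\zeta$ is supported on the $N$-dimensional real subspace $\R^N \subset \C^N$, and $M$ has rank $N$, not $2N$. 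In general the support, and hence the rank of $M$, depends on the pseudo-variances $\E\zeta_j^2$ and on how the columns $v_j$ sit inside $\R^{2N}$ as real vectors, neither of which is controlled by $AA^*=I_N$. The paper's proof addresses exactly this: since the eigenvalues of $M$ lie in $[0,1]$ and sum to $N$, at least $\lfloor N/2\rfloor$ of them are $\ge 1/4$; one projects $G$ onto the corresponding invariant subspace $V$, uses $\P(|G|\le 2t)\le\P(|G_V|\le 2t)$, and then bounds the latter by a Markov pigeonhole (at least $\lfloor N/4\rfloor$ of the independent coordinate Gaussians in an eigenbasis of $V$ must satisfy $g_i^2 \le 8t^2/N$) followed by a union bound over the $\le 2^{\lfloor N/2\rfloor}$ choices of indices. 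This is what actually yields the claimed exponent $\lfloor N/4\rfloor$, and it is precisely the step your proposal elides.
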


\section{Overview of argument}

We now give a high-level proof of our main results, Theorem \ref{theorem:main} and Theorem \ref{ltail}, contingent on several technical propositions that we prove in later sections.

\subsection{Preliminary truncation}

In the hypotheses of Theorem \ref{theorem:main} and Theorem \ref{ltail}, it is assumed that one has the uniform exponential decay property \eqref{ued} on the coefficients $\zeta_{ij}$ on the random matrix $M_n$.  From this and the union bound, we thus see that
$$ \sup_{1 \leq i,j \leq n} |\zeta_{ij}| \leq \log^{C+1} n$$ 
with overwhelming probability.  Since events of probability less than, say, $O(n^{-100})$ are negligible for the conclusion of either Theorem \ref{theorem:main} or Theorem \ref{ltail}, we may thus apply a standard truncation argument (see e.g. \cite{BS}) and \emph{redefine} the atom variables $\zeta_{ij}$ on the events where their magnitude exceeds $\log^{C+1} n$, so that one in fact has
\begin{equation}\label{supi}
 \sup_{1 \leq i,j \leq n} |\zeta_{ij}| \leq \log^{C+1} n
\end{equation}
almost surely.  (This modification may affect the first, second, third, and fourth moments on the real and imaginary parts of the $\zeta_{ij}$ by a very small factor (e.g. $O(n^{-10})$), but one can easily compensate for this by further adjustment of the $\zeta_{ij}$, using the Weyl inequalities \eqref{weyl} if necessary; we omit the details.)  Thus we will henceforth assume that \eqref{supi} holds for proving both Theorem \ref{theorem:main} and Theorem \ref{ltail}.

\begin{remark} If one only assumed some finite number of moment conditions on $\zeta_{ij}$, rather than the exponential condition \eqref{ued}, then one could only truncate the $|\zeta_{ij}|$ to be of size $n^{1/C_0}$ for some constant $C_0$ rather than polylogarithmic in $n$.  While several of our arguments extend to this setting, there is a key induction on $n$ argument in Section \ref{hlt2-sec} that seems to require $|\zeta_{ij}|$ to be of size $n^{o(1)}$ or better, which is the main reason why our results are restricted to random variables of exponential decay.  However, this appears to be a largely technical restriction, and it seems very plausible that the results of this paper can be extended to atom distributions that are only assumed to have a finite number of moments bounded.
\end{remark}

For technical reasons, it is also convenient to make the qualitative assumption that the $\zeta_{ij}$ have an (absolutely) continuous distribution in the complex plane, rather than a discrete one.  This is so that pathological events such as eigenvalue collision will only occur with probability zero and can thus be ignored (though one of course still must deal with the event that two eigenvalues have an extremely small but non-zero separation).  None of our bounds will depend on any quantitative measure of how continuous the $\zeta_{ij}$ are, so one can recover the discrete case from the continuous one by a standard limiting argument (approximating a discrete distribution by a smooth one while holding $n$ fixed, and using the Weyl inequalities \eqref{weyl} to justify the limiting process); we omit the details.  

\subsection{Proof strategy for Theorem \ref{theorem:main}}\label{stratsec}

  For sake of exposition let us restrict attention to the case $k=1$, thus we wish to show that the expectation $\E G(\lambda_i(A_n))$ of the random variable $G(\lambda_i(A_n))$ only changes by $O( n^{-c_0})$ if one replaces $A_n$ with another random matrix $A'_n$ with moments matching up to fourth order off the diagonal (and up to second order on the diagonal).  To further simplify the exposition, let us suppose that the coefficients $\zeta_{pq}$ of $A_n$ (or $A'_n$) are real-valued rather than complex-valued.

At present, $A'_n$ differs from $A_n$ in all $n^2$ components.  But suppose we make a much milder change to $A_n$, namely replacing a single entry $\sqrt{n} \zeta_{pq}$ of $A_n$ with its counterpart $\sqrt{n} \zeta'_{pq}$ for some $1 \leq p \leq q \leq n$.  If $p \neq q$, one also needs to  replace the companion entry $\sqrt{n} \zeta_{qp} = \sqrt{n} \overline{\zeta}_{pq}$ with $\sqrt{n} \zeta'_{qp} = \sqrt{n} \overline{\zeta}'_{pq}$, to maintain the Hermitian property.  This creates another random matrix $\tilde A_n$ which differs from $A_n$ in at most two entries.  Note that $\tilde A_n$ continues to obey Condition {\bf C0}, and has matching moments with either $A_n$ or $A'_n$ up to fourth order off the diagonal, and up to second order on the diagonal.

Suppose that one could show that $\E G( \lambda_i(A_n) )$ differed from $\E G( \lambda_i(\tilde A_n) )$ by at most $n^{-2-c_0}$ when $p \neq q$ and by at most $n^{-1-c_0}$ when $p=q$.  Then, by applying this swapping procedure once for each pair $1 \leq p \leq q \leq n$ and using the triangle inequality, one would obtain the desired bound $|\E G(\lambda_i(A_n) ) - \E G( \lambda_i(A'_n) ) |= O( n^{-c_0} )$.

Now let us see why we would expect $\E G( \lambda_i(A_n) )$ to differ from $\E G( \lambda_i(\tilde A_n) )$ by such a small amount.  For sake of concreteness let us restrict attention to the off-diagonal case $p \neq q$, where we have four matching moments; the diagonal case $p=q$ is similar but one only assumes two matching moments, which is ultimately responsible for the $n^{-1-c_0}$ error rather than $n^{-2-c_0}$.

Let us freeze (or condition on) all the entries of $A_n$ except for the $pq$ and $qp$ entries.  For any complex number $z$, let $A(z)$ denote the matrix which equals $A_n$ except at the $pq$, $qp$, entries, where it equals $z$ and $\overline{z}$ respectively.  (Actually, with our hypotheses, we only need to consider real-valued $z$.)  Thus it would suffice to show that
\begin{equation}\label{eff}
\E F( \sqrt{n} \zeta_{pq} ) = \E F( \sqrt{n} \zeta'_{pq} ) + O( n^{-2-c_0} )
\end{equation}
for all (or at least most) choices of the frozen entries of $A_n$, where $F(z) := G( \lambda_i( A(z) ) )$.  Note from \eqref{supi} that we only care about values of $z$ of size $O( n^{1/2+o(1)} )$.

Suppose we could show the derivative estimates
\begin{equation}\label{fzk}
\frac{d^l}{dz^l} F(z) = O( n^{-l+O(c_0)+o(1)} )
\end{equation}
for $l=1,2,3,4,5$.   (If $z$ were complex-valued rather than real, we would need to differentiate in the real and imaginary parts of $z$ separately, as $F$ is not holomorphic, but let us ignore this technicality for now.)  Then by Taylor's theorem with remainder, we would have
$$ F(z) = F(0) + F'(0) z + \ldots + \frac{1}{4!} F^{(4)}(0) z^4 + O( n^{-5+O(c_0)+o(1)} |z|^5 )$$
and so in particular (using \eqref{supi})
$$ F( \sqrt{n} \zeta_{pq} ) = F(0) + F'(0) \sqrt{n} \zeta_{pq} + \ldots + \frac{1}{4!} F^{(4)}(0) \sqrt{n}^4 \zeta_{pq}^4 + O( n^{-5/2+O(c_0)+o(1)} )$$
and similarly for $F( \sqrt{n} \zeta'_{pq} )$.  Since $n^{-5/2+O(c_0)+o(1)} = O( n^{-2-c_0} )$ for $n$ large enough and $c_0$ small enough, we thus obtain the claim \eqref{eff} thanks to the hypothesis that the first four moments of $\zeta_{pq}$ and $\zeta'_{pq}$ match.  (Note how this argument barely fails if only three moments are assumed to match, though it is possible that some refinement of this argument might still succeed by exploiting further cancellations in the fourth order term $\frac{1}{4!} F^{(4)}(0) \sqrt{n}^4 \zeta_{pq}^4$.)

Now we discuss why one would expect an estimate such as \eqref{fzk} to be plausible.  For simplicity we first focus attention on the easiest case $l=1$, thus we now wish to show that $F'(z) = O( n^{-1+O(c_0)+o(1)} )$.  By \eqref{G-deriv} and the chain rule, it suffices to show that
$$ \frac{d}{dz} \lambda_i(A(z)) = O( n^{-1 + O(c_{0})+ o(1)} ).$$
A crude application of the Weyl bound \eqref{weyl} gives $\frac{d}{dz} \lambda_i(A(z)) = O(1)$, which is not good enough for what we want (although in the actual proof, we will take advantage of a variant of this crude bound to round $z$ off to the nearest multiple of $n^{-100}$, which is useful for technical reasons relating to the union bound).  But we can do better by recalling the \emph{Hadamard first variation formula}
$$ \frac{d}{dz} \lambda_i(A(z)) = u_i(A(z))^* A'(z) u_i(A(z))$$
where we recall that $u_i(A(z))$ is the $i^{th}$ eigenvector of $A(z)$, normalized to be of unit magnitude.  By construction, $A'(z) = e_p e_q^* + e_q e_p^*$, where $e_1,\ldots,e_n$ are the basis vectors of $\C^n$.  
So to obtain the claim, one needs to  show that the coefficients of $u_i(A(z))$ have size $O( n^{-1/2+o(1)} )$.  
This type of delocalization result for eigenvalues has recently been established (with overwhelming probability) by Erd\H{o}s, Schlein, and Yau in \cite{ESY1, ESY2, ESY3}
for Wigner Hermitian matrices, assuming some quantitative control on the continuous distribution of the $\zeta_{pq}$.
(A similar, but weaker, argument was used in \cite{TVhard} with respect to non-Hermitian random matrices; see \cite[Section 4]{TVhard} and \cite[Appendix F]{TVhard}.)  With some extra care and  a new tool (Lemma \ref{lemma:projection}), we are able  extend their arguments to cover  the current 
more general setting (see Proposition \ref{deloc} and Corollary \ref{sdb2}), with a slightly simpler proof. 
Also, $z$ ranges over uncountably many possibilities, so one cannot apply the the union bound to each instance of $z$ separately; instead, one must perform the rounding trick mentioned earlier.

Now suppose we wish to establish the $l=2$ version of \eqref{fzk}.  Again applying the chain rule, we would now seek to establish the bound
\begin{equation}\label{dzz}
 \frac{d^2}{dz^2} \lambda_i(A(z)) = O( n^{-2 + O(c_0) + o(1)} ).
\end{equation}
For this, we apply the Hadamard second variation formula
$$ \frac{d^2}{dz^2} \lambda_i(A(z)) = - 2 u_i(A(z))^* A'(z) \left(A(z)-\lambda_i(A(z)) I\right)^{-1} \pi_{u_i(A(z))^\perp} A'(z) u_i(A(z)),$$
where $\pi_{u_i(A(z))^\perp}$ is the orthogonal projection to the orthogonal complement $u_i(A(z))^\perp$ of $u_i(A(z))$, and $(A(z)-\lambda_i(A(z)) I)^{-1}$ is the inverse of $A(z)-\lambda_i(A(z))$ on that orthogonal complement.  (This formula is valid as long as the eigenvalues $\lambda_j(A(z))$ are simple, which is almost surely the case due to the hypothesis of continuous distribution.)  One can expand out the right-hand side in terms of the other (unit-normalized) eigenvectors $u_j(A(z))$, $j \neq i$ as
$$ \frac{d^2}{dz^2} \lambda_i(A(z)) = - 2 \sum_{j \neq i} \frac{|u_j(A(z))^* A'(z) u_i(A(z))|^2}{\lambda_j(A(z))-\lambda_i(A(z))}.$$
By using Erd\H{o}s-Schlein-Yau type estimates one expects $|u_j(A(z))^* A'(z) u_i(A(z))|$ to be of size about $O(n^{-1+o(1)})$, while from Theorem \ref{ltail} we expect $|\lambda_j(z)-\lambda_i(z)|$ to be bounded below by $n^{-c_0}$ with high probability, and so the claim \eqref{dzz} is plausible (one still needs to sum over $j$, of course, but one expects $\lambda_j(z)-\lambda_i(z)$ to grow roughly linearly in $j$ and so this should only contribute a logarithmic factor $O(\log n) = O(n^{o(1)})$ at worst).  So we see for the first time how Theorem \ref{ltail} is going to be an essential component in the proof of Theorem \ref{theorem:main}.  Similar considerations also apply to the third, fourth, and fifth derivatives of $\lambda_i(A(z))$, though as one might imagine the formulae become more complicated.

There is however a technical difficulty that arises, namely that the lower bound $$|\lambda_j(A(z))-\lambda_i(A(z))| \geq n^{-c_0}$$ holds with \emph{high} probability, but not  with \emph{overwhelming} probability (see Definition \ref{freq-def} for definitions).  Indeed, given that eigenvalue collision is a codimension two event for real symmetric matrices and codimension three for Hermitian ones, one expects the failure probability to be about $n^{-2c_0}$ in the real case and $n^{-3c_0}$ in the complex case (this heuristic is also supported by the gap statistics for GOE and GUE).  As one needs to take the union bound over many values of $z$ (about $n^{100}$ or so), this presents a significant problem.  However, this difficulty can be avoided by going back to the start of the argument and replacing the quantity $G(\lambda_i(z))$ with a ``regularized'' variant which vanishes whenever $\lambda_i(z)$ gets too close to another eigenvalue.  To do this, it is convenient to introduce the quantity
$$ Q_i(A(z)) := \sum_{j \neq i} \frac{1}{|\lambda_j(A(z)) - \lambda_i(A(z))|^2} = \| (A(z)-\lambda_i(z) I)^{-1} \|_F^2;$$
this quantity is normally of size $O(1)$, but becomes large precisely when the gap between $\lambda_i(A(z))$ and other eigenvalues becomes small.  The strategy is then to replace $G(\lambda_i(A(z)))$ by a truncated variant $G(\lambda_i(A(z)),Q_i(A(z)))$ which is supported on the region where $Q_i$ is not too large (e.g. of size at most $n^{c_0}$), and apply the swapping strategy to the latter quantity instead.  (For this, one needs control on derivatives of $Q_i(A(z))$ as well as on $\lambda_i(A(z))$, but it turns out that such bounds are available; this smoothness of $Q_i$ is one reason why we work with $Q_i$ in the first place, rather than more obvious alternatives such as $\inf_{j \neq i} |\lambda_{j}(A(z)) - \lambda_i(A(z))|$.)  Finally, to remove the truncation at the beginning and end of the iterated swapping process, one appeals to Theorem \ref{ltail}. Notice that this result is  now only used twice, rather than $O(n^2)$ or $O(n^{100})$ times, and so the total error probability remains acceptably bounded.

One way to interpret this truncation trick is that while the ``bad event'' that $Q_i$ is large  has reasonably large probability (of order about $n^{-c_0}$), which makes the union bound ineffective, the $Q_i$ does not change too violently when swapping one or more of the entries of the random matrix, and so one is essentially faced with the \emph{same} bad event throughout the $O(n^2)$ different swaps (or throughout the $O(n^{100})$ or so different values of $z$).  So the union bound is actually far from the truth in this case.

\subsection{High-level proof of Theorem \ref{theorem:main}}\label{hlt-sec}

We now begin the rigorous proof of Theorem \ref{theorem:main}, breaking it down into simpler propositions which will be proven in subsequent sections.

The heart of the argument consists of  two key propositions.  The first proposition asserts that one can swap a single coefficient (or more precisely, two coefficients) of a (deterministic) matrix $A$ as long as $A$ obeys a certain ``good configuration condition'':

\begin{proposition}[Replacement given a good configuration]\label{swap}  There exists a positive constant $C_1$ such that the following holds.  Let $k \geq 1$ and $\eps_1 > 0$, and assume $n$ sufficiently large depending on these parameters.  Let $1 \leq i_1 < \ldots < i_k \leq n$.
For a complex parameter $z$, let $A(z)$ be a (deterministic) family of $n \times n$ Hermitian matrices of the form
$$ A(z) = A(0) + z e_p e_q^* + \overline{z} e_q e_p^*$$
where $e_p, e_q$ are unit vectors.  We assume that for every $1 \leq j \leq k$ and every $|z| \leq n^{1/2+\eps_1}$ whose real and imaginary parts are multiples of $n^{-C_1}$, we have
\begin{itemize}
\item (Eigenvalue separation)  For any $1 \leq i \leq n$ with $|i-i_j| \geq n^{\eps_1}$, we have
\begin{equation}\label{noon}
 |\lambda_i(A(z)) - \lambda_{i_j}(A(z))| \geq n^{-\eps_1} |i-i_j|.
\end{equation}
\item (Delocalization at $i_j$)  If $P_{i_j}(A(z))$ is the orthogonal projection to the eigenspace associated to $\lambda_{i_j}(A(z))$, then
\begin{equation}\label{pz1}
 \| P_{i_j}(A(z)) e_p \|, \| P_{i_j}(A(z)) e_q \| \leq n^{-1/2+\eps_1}.
\end{equation}
\item For every $\alpha \geq 0$
\begin{equation}\label{pz2}
\| P_{i_j,\alpha}(A(z)) e_p \|, \| P_{i_j,\alpha}(A(z)) e_q \| \leq 2^{\alpha/2} n^{-1/2+\eps_1},
\end{equation}
whenever $P_{i_j,\alpha}$ is the orthogonal projection to the eigenspaces corresponding to eigenvalues $\lambda_i(A(z))$ with $2^\alpha \leq |i-i_j| < 2^{\alpha+1}$.
\end{itemize}
We say that $A(0), e_p, e_q$ are a \emph{good configuration} for $i_1,\ldots,i_k$ if the above properties hold.  Assuming this good configuration, then we have
\begin{equation}\label{barg}
\E (F(\zeta)) = \E F(\zeta') + O( n^{-(r+1)/2 + O(\eps_1)} ), 
\end{equation}
whenever
$$
F(z) := G( \lambda_{i_1}(A(z)),\ldots,\lambda_{i_k}(A(z)),Q_{i_1}(A(z)),\ldots,Q_{i_k}(A(z))),$$
and
$$ G = G( \lambda_{i_1},\ldots,\lambda_{i_k}, Q_{i_1},\ldots,Q_{i_k} )$$
is a smooth function from $\R^k \times \R_+^k \to \R$ that is supported on the region
$$ Q_{i_1},\ldots,Q_{i_k} \leq n^{\eps_1}$$
and obeys the derivative bounds
$$ |\nabla^j G| \leq n^{\eps_1}$$
for all $0 \leq j \leq 5$, and $\zeta, \zeta'$ are random variables with $|\zeta|, |\zeta'| \leq n^{1/2+\eps_1}$ almost surely, which 
match to order $r$ for some $r=2,3,4$.  

If $G$ obeys the improved derivative bounds
$$ |\nabla^j G| \leq n^{-C j \eps_1}$$
for $0 \leq j \leq 5$ and some sufficiently large absolute constant $C$, then we can strengthen $n^{-(r+1)/2 + O(\eps_1)}$ in \eqref{barg} to $ n^{-(r+1)/2 - \eps_1}$.
\end{proposition}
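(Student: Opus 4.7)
The proof strategy is an instance of the Lindeberg swapping method: we Taylor expand $F$ in the real and imaginary parts of $z$ around $z=0$ to order $r$, and use the hypothesis that $\zeta$ and $\zeta'$ match to order $r$ to cancel the first $r+1$ terms of the expansion upon taking expectations. Writing $z=x+iy$, we have $A(z)=A(0)+xV_x+yV_y$ with $V_x:=e_pe_q^*+e_qe_p^*$ and $V_y:=i(e_pe_q^*-e_qe_p^*)$, both Hermitian of rank $\leq 2$ and operator norm $O(1)$. Taking expectations in the Taylor expansion leaves only the $(r+1)$-th order remainder, which by the constraint $|\zeta|,|\zeta'|\leq n^{1/2+\eps_1}$ is bounded by $O(n^{(r+1)/2+O(\eps_1)}\sup|\nabla^{r+1}F|)$. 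So it suffices to establish the derivative bound
\begin{equation*}
|\nabla^j F(z)| \ll n^{-j+O(\eps_1)} \qquad \text{for } 0\leq j\leq 5,\ |z|\leq n^{1/2+\eps_1},
\end{equation*}
which then yields the remainder bound $O(n^{-(r+1)/2+O(\eps_1)})$ claimed in \eqref{barg}.

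The main task is thus to prove this derivative bound. Since $F$ is a composition of $G$ with the $\lambda_{i_j}(A(z))$ and $Q_{i_j}(A(z))$, the chain rule together with $|\nabla^j G|\leq n^{\eps_1}$ reduces matters to showing that $|\partial_x^m\partial_y^l\lambda_{i_j}(A(z))|$ and $|\partial_x^m\partial_y^l Q_{i_j}(A(z))|$ are $O(n^{-(m+l)+O(\eps_1)})$ for $m+l\leq 5$ on the support of $G$. These bounds follow from the Hadamard variation formulas. The first derivative
$$ \partial_x\lambda_i=u_i^*V_xu_i = 2\Re(\overline{u_i(p)}u_i(q)) $$
is of size $2|u_i(p)||u_i(q)|\leq 2n^{-1+2\eps_1}$ by \eqref{pz1}. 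For the second derivative,
$$ \partial_x^2\lambda_i = 2\sum_{j\neq i}\frac{|u_j^*V_xu_i|^2}{\lambda_i-\lambda_j}, $$
we split the sum into a far range $|i-j|\geq n^{\eps_1}$ and a near range $|i-j|<n^{\eps_1}$. For the far range we dyadically decompose in $|i-j|$ and combine the block delocalization \eqref{pz2} (giving $\|P_{i,\alpha}e_{p}\|^2\ll 2^\alpha n^{-1+2\eps_1}$) with the separation \eqref{noon} (giving gap $\gg 2^\alpha n^{-\eps_1}$); the $O(\log n)$ dyadic sum telescopes to $n^{-2+O(\eps_1)}$. For the near range we use \eqref{pz2} at $2^\alpha\leq n^{\eps_1}$ to bound $|u_j(p)|^2\ll n^{-1+O(\eps_1)}$ and then Cauchy--Schwarz with the support bound $Q_i\leq n^{\eps_1}$ to get $\sum_{|j-i|<n^{\eps_1}}|\lambda_j-\lambda_i|^{-1}\leq n^{\eps_1/2}Q_i^{1/2}\leq n^{\eps_1}$. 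Higher derivatives follow the same pattern: the general Hadamard formula for $\partial^j\lambda_i$ is a sum over ``walks'' with $j$ factors $u_a^*Vu_b$ divided by products of eigenvalue gaps; each matrix-element factor contributes $n^{-1+O(\eps_1)}$ by delocalization (after the same near/far split) and each gap denominator contributes $n^{O(\eps_1)}$. The derivatives of $Q_i=\|(A-\lambda_iI)^{-1}\pi_{u_i^\perp}\|_F^2$ are treated similarly using the identity $\partial Q_i=-2\sum_{j\neq i}(\partial\lambda_j-\partial\lambda_i)/(\lambda_j-\lambda_i)^3$ and iterations, where again the near-far split and the on-support bound $Q_i\leq n^{\eps_1}$ keep the losses at $n^{O(\eps_1)}$ per derivative.

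The main obstacle is the combinatorial bookkeeping for the higher-order derivatives (especially $j=5$): the Hadamard expansion produces an increasing number of terms indexed by tuples of intermediate eigenvectors, and one must systematically apply the near/far dyadic split to each eigenvector index in each denominator. A secondary technical point is that the good-configuration hypothesis is only posited on a grid of $z$ of spacing $n^{-C_1}$; we reduce to this grid using Weyl's inequality \eqref{weyl}, which gives $\|A(z)-A(z')\|_{op}\leq |z-z'|\|V_x+V_y\|_{op}=O(n^{-C_1})$, so the eigenvalues and the relevant projections change by $O(n^{-C_1})$ upon rounding $z$, an error negligible once $C_1$ is chosen sufficiently large compared to the implied constants in the derivative estimates.

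Finally, for the strengthened bound assuming $|\nabla^j G|\leq n^{-Cj\eps_1}$: the chain rule for $\nabla^j F$ produces a sum of terms, each of which is a product of a derivative of $G$ (of some order $\ell\leq j$) against $\ell$-tuples of mixed derivatives of the $\lambda_{i_j}$ and $Q_{i_j}$. Every factor of a $\lambda$- or $Q$-derivative that we estimated above cost $n^{O(\eps_1)}$; replacing the factor $n^{\ell\eps_1}$ coming from $\nabla^\ell G$ with $n^{-C\ell\eps_1}$ thus gains a net $n^{-\eps_1}$ per order of derivative provided $C$ is chosen large enough. This upgrades the remainder bound to $n^{-(r+1)/2-\eps_1}$, as required.
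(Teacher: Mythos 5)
Your proposal takes essentially the same approach as the paper: Lindeberg swapping via Taylor expansion in $(\Re z,\Im z)$ with the Taylor polynomial cancelled by moment matching, and the remainder bounded via Hadamard variation formulas combined with the near/far dyadic split — \eqref{noon} and \eqref{pz2} for the far range, and Cauchy--Schwarz with the on-support bound $Q_{i_j}\leq n^{\eps_1}$ for the near range. The paper differs only in execution: it makes the higher-order bookkeeping rigorous through a recursive/inductive scheme for $\nabla^k\lambda_i$, $\nabla^k P_i$, $\nabla^k R_i$ rather than the sketched ``walk expansion''; and instead of the shortcut of observing that $\nabla^jF$ vanishes trivially off the support of $G$, it propagates the bound $Q_{i_j}(A(z))\leq 2n^{\eps_1}$ over the whole disk by a continuity/iteration argument (using the crude derivative bound and the fundamental theorem of calculus to step between balls of radius $n^{-1-2\eps_1}$), which in particular guarantees $F$ is smooth everywhere without having to discuss behavior near collisions. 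One small inaccuracy worth flagging: when rounding $z$ to the $n^{-C_1}$-grid, the eigenprojections do \emph{not} move by $O(n^{-C_1})$ as you claim; they move by $O(n^{-C_1}/\mathrm{gap})$, and it is precisely the gap $\gg n^{-\eps_1/2}$ supplied by $Q_{i_j}\leq n^{\eps_1}$ (and, for the block projections in \eqref{pz2}, by \eqref{noon}) that makes this stable — the same hypothesis also underlies the paper's alternative grid-to-continuum transfer, which Taylor-expands the derivative bounds (proved up to order $10$ at a grid point) outward using the crude estimate.
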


\begin{remark} The need to restrict $z$ to multiples of $n^{-C_1}$, as opposed to all complex $z$ in the disk of radius $n^{1/2+\varepsilon_1}$, is so that we can verify the hypotheses in the next proposition using the union bound (so long as the events involved hold with overwhelming probability).  For $C_1$ large enough, we will be able to use rounding methods to pass from the discrete setting of multiples of $n^{-C_1}$ to the continuous setting of arbitrary complex numbers in the disk without difficulty.
\end{remark}

We prove this proposition in Section \ref{swap-sec}.  To use this proposition, we of course need to have the good configuration property hold often.  This leads to the second key proposition:

\begin{proposition}[Good configurations occur very frequently]\label{lemma:GCC}
Let $\eps, \eps_1 > 0$ and $C, C_1, k \geq 1$.  Let $\eps n \leq i_1 < \ldots < i_k \leq (1-\eps) n$, let $1 \leq p,q \leq n$, let $e_1,\ldots,e_n$ be the standard basis of $\C^n$, and let $A(0) = (\zeta_{ij})_{1 \leq i,j \leq n}$ be a random Hermitian matrix with independent upper-triangular entries and $|\zeta_{ij}| \leq n^{1/2} \log^C n$ for all $1 \leq i,j \leq n$, with $\zeta_{pq}=\zeta_{qp}=0$, but with $\zeta_{ij}$ having mean zero and variance $1$ for all other $ij$, and also being distributed continuously in the complex plane.  Then $A(0),e_p,e_q$ obey the Good Configuration Condition in Theorem \ref{swap} for $i_1,\ldots,i_k$ and with the indicated value of $\eps_1, C_1$ with overwhelming probability.
\end{proposition}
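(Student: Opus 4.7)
The plan is to reduce to a union bound over a polynomial-size discretization of $z$. Let $\Lambda \subset \C$ denote the grid of $z$ with real and imaginary parts in $n^{-C_1}\Z$ and $|z| \leq n^{1/2+\eps_1}$; then $|\Lambda| = O(n^{O(C_1)})$. Since each of the three good-configuration properties at a given $z$ involves at most polynomially many events (ranging over $i$, $i_j$, and dyadic scales $\alpha$), it suffices to verify, for each fixed $z \in \Lambda$, that the three properties hold for $A(z)$ with overwhelming probability; a final union bound over $\Lambda$ is then automatic since overwhelming probability is preserved under polynomially many intersections.

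Fix $z \in \Lambda$. The matrix $A(z)$ differs from $A(0)$ by the deterministic rank-two Hermitian perturbation $B_z := z e_p e_q^* + \bar z e_q e_p^*$, so two applications of Cauchy interlacing \eqref{cauchy-interlace} (one for each rank-one factor of $B_z$) give $\lambda_{i-2}(A(0)) \leq \lambda_i(A(z)) \leq \lambda_{i+2}(A(0))$. Similarly, $A(0)$ itself is obtained from an honest Condition~{\bf C0} matrix by zeroing a single already-bounded entry, a further rank-two perturbation of operator norm $O(\log^{C+1} n)$. The eigenvalue rigidity results developed later in the paper (Proposition~\ref{deloc} and its consequences, which refine the Erd\H{o}s--Schlein--Yau estimates \cite{ESY1,ESY2,ESY3} to the Condition~{\bf C0} setting) then yield $\lambda_i(A(z)) = t(i/n) n + O(n^{o(1)})$ uniformly in the bulk with overwhelming probability; combined with \eqref{bai} at the edge, this at once implies the eigenvalue separation condition \eqref{noon}.

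For the delocalization conditions \eqref{pz1} and \eqref{pz2}, I reduce to the stronger \emph{pointwise} delocalization bound $|(u_i(A(z)))_p|, |(u_i(A(z)))_q| \leq n^{-1/2+\eps_1}$ valid simultaneously for all $\eps n \leq i \leq (1-\eps)n$; summing $|(u_i)_p|^2$ over any dyadic shell $2^\alpha \leq |i-i_j| < 2^{\alpha+1}$ then immediately yields the stated bound $2^{\alpha/2} n^{-1/2+\eps_1}$ for \eqref{pz2}, and \eqref{pz1} is the $\alpha=0$ special case. To prove pointwise delocalization, apply Lemma~\ref{lemma:firstcoordinate} to $A(z)$ after relabeling $p$ to the last coordinate. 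The crucial structural observation is that the minor $A(z)_p$ obtained by deleting row and column $p$ is independent of $z$, since all $z$-dependence of $A(z)$ is confined to the removed row/column; moreover $A(z)_p$ is itself a Condition~{\bf C0} matrix of size $n-1$. The corresponding column vector decomposes as $X = X_{\mathrm{rand}} + z e_{q^*}$, where $X_{\mathrm{rand}}$ is independent of $A(z)_p$ and has one vanishing entry (at the position $q^*$ of $q$ in the minor) with all other entries independent, mean zero, variance one, and of magnitude $O(\log^{C+1} n)$. Lemma~\ref{lemma:projection} applied to $X_{\mathrm{rand}}$ and the dyadic eigenspace shells of $A(z)_p$ near $\lambda_i(A(z))$ then produces, in the standard dyadic manner of \cite{ESY1,ESY2}, a lower bound of $n^{1-O(\eps_1)}$ on the denominator in Lemma~\ref{lemma:firstcoordinate}, and hence the claimed delocalization.

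The main obstacle is that the deterministic summand $z e_{q^*}$ in $X$ can have magnitude up to $n^{1/2+\eps_1}$, so $|u_j(A(z)_p)^* X|^2 = |u_j^* X_{\mathrm{rand}} + z (u_j)_{q^*}|^2$ need not be well approximated by $|u_j^* X_{\mathrm{rand}}|^2$ alone. Expanding the square and inserting into Lemma~\ref{lemma:firstcoordinate}, the denominator splits into a random-quadratic piece $\sum_j w_j |u_j^* X_{\mathrm{rand}}|^2$, a non-negative deterministic piece $|z|^2 \sum_j w_j|(u_j)_{q^*}|^2$, and a cross term $2\Re\bigl(z \sum_j w_j \overline{(u_j)_{q^*}}\, u_j^* X_{\mathrm{rand}}\bigr)$, where $w_j := (\lambda_j(A(z)_p)-\lambda_i(A(z)))^{-2}$. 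The deterministic piece only aids the lower bound. The cross term is a linear form in the independent entries of $X_{\mathrm{rand}}$ with deterministic coefficients depending only on $A(z)_p$, and a Hoeffding-type estimate (a mild extension of Theorem~\ref{bes}) bounds it by $n^{o(1)}$ times the geometric mean of the other two pieces, with overwhelming probability; Cauchy--Schwarz then absorbs it into a constant fraction of those two nonnegative contributions. Because $A(z)_p$ (and hence the coefficient vector above) is independent of $z$, this concentration estimate is valid simultaneously for every $z$ in the grid $\Lambda$, and the union bound over $\Lambda$ completes the proof.
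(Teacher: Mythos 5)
Your overall strategy coincides with the paper's: discretize $z$ onto the polynomial grid, apply a union bound, get eigenvalue separation from ESD concentration (Theorem~\ref{sdb}/Corollary~\ref{sdb2}), and get eigenvector delocalization via Lemma~\ref{lemma:firstcoordinate} and Lemma~\ref{lemma:projection}. The paper simply cites Corollary~\ref{sdb2} for the delocalization and disposes of the single deterministic entry with the terse remark that it ``does not significantly impact Lemma~\ref{lemma:projection}''; you are in effect expanding that remark, and your structural observations --- that the minor $A(z)_p$ is independent of $z$, and that $X_{\mathrm{rand}}$ is independent of the minor --- are exactly the right ones.

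However, your cross-term step is not correct as written. You bound the cross term by ``$n^{o(1)}$ times the geometric mean $\sqrt{(i)\,(ii)}$'' and then say Cauchy--Schwarz absorbs it into a constant fraction of $(i)+(ii)$. But a bound $|\mathrm{cross}| \le C_n\sqrt{(i)(ii)}$ is absorbed into $\tfrac12\big((i)+(ii)\big)$ only if $C_n\le 1$; with $C_n = n^{o(1)}\to\infty$ the absorption fails, and the deterministic Cauchy--Schwarz bound $|\mathrm{cross}|\le 2\sqrt{(i)(ii)}$ gives nothing beyond $\|\pi_H X\|^2\ge 0$. What actually saves the argument is that the Hoeffding estimate is strictly stronger than Cauchy--Schwarz: conditioned on the minor, the linear form $(\pi_H e_{q^*})^* X_{\mathrm{rand}}$ is $O(K\|\pi_H e_{q^*}\|\log n)$ with overwhelming probability, whereas $(i)^{1/2}\gg\sqrt{d}\gg K\log^{10}n$ by Lemma~\ref{lemma:projection} and the choice $|J|\gg K^2\log^{20}n$; hence $|\mathrm{cross}| = o(1)\cdot\sqrt{(i)(ii)}$ with a prefactor that genuinely tends to zero, not merely $n^{o(1)}$, and only then does absorption close. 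A cleaner route --- and, I believe, the one the paper has in mind for Corollary~\ref{sdb2} --- is to skip the three-piece expansion and apply Talagrand's inequality (Theorem~\ref{theorem:T}) directly to the convex $1$-Lipschitz map $X_{\mathrm{rand}}\mapsto\|\pi_H(X_{\mathrm{rand}}+ze_{q^*})\|$, whose median is $\gg\sqrt{d}$ because $\E\|\pi_H(X_{\mathrm{rand}}+ze_{q^*})\|^2 = d - \|\pi_H e_{q^*}\|^2 + |z|^2\|\pi_H e_{q^*}\|^2 \ge d-1$: the fixed shift only raises the second moment. Two smaller points: your passage from pointwise delocalization to \eqref{pz2} by summing over a dyadic shell assumes the whole shell lies in the bulk, which fails once $2^\alpha\gg\eps n$; the paper handles that regime by observing that $\|P_{i_j,\alpha}e_p\|\le 1\le 2^{\alpha/2}n^{-1/2+\eps_1}$ is then trivial, and you should say the same. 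Also, since the deterministic entry $|z|/\sqrt{n}$ can be as large as $n^{\eps_1}$ rather than $\log^C n$, the $K$ fed into Theorem~\ref{sdb} and Lemma~\ref{lemma:projection} must be $n^{\eps_1}$, which turns the $\eps_1$ in your delocalization exponent into $O(\eps_1)$; harmless, but worth flagging.
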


We will prove this proposition in Section \ref{gcc-sec}.

Given these two propositions (and Theorem \ref{ltail}) we can now prove Theorem \ref{theorem:main}.  As discussed at the beginning of the section, we may assume that the $\zeta_{ij}$ are continuously distributed in the complex plane and obey the bound \eqref{supi}.  

Let $0 < \eps < 1$ and $k \geq 1$, and assume $c_0$ is sufficiently small and $C_1$ sufficiently large.  Let $M_n, M'_n, \zeta_{ij}, \zeta'_{ij}, A_n, A'_n, G$, $i_1,\ldots,i_k$ be as in Theorem \ref{theorem:main}.

We first need

\begin{lemma}\label{jordan} For each $1 \leq j \leq k$, one has $Q_{i_j}(A_n) \leq n^{c_0}$ with high probability.
\end{lemma}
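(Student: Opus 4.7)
The plan is to split the sum defining $Q_{i_j}(A_n) = \sum_{i \neq i_j} (\lambda_i(A_n) - \lambda_{i_j}(A_n))^{-2}$ according to whether the index $i$ is close to $i_j$ or far from it, and to control the two pieces by very different tools. Concretely, pick a small $\eps_1 \ll c_0$ and split the sum at $|i - i_j| = n^{\eps_1}$.

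For the \emph{short-range} piece $1 \le |i-i_j| \le n^{\eps_1}$, the plan is to apply Theorem \ref{ltail} with the smaller exponent $c_0/3$ to obtain that for any fixed $i$ in the bulk, $\lambda_{i+1}(A_n) - \lambda_i(A_n) \ge n^{-c_0/3}$ with probability at least $1 - n^{-c_1}$ for some $c_1 = c_1(c_0/3) > 0$. Since there are only $O(n^{\eps_1})$ values of $i$ within $2n^{\eps_1}$ of $i_j$, a union bound (taking $\eps_1 < c_1/2$) shows that simultaneously all such consecutive gaps exceed $n^{-c_0/3}$ with probability $1 - O(n^{-c_1/2})$. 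On this event one has $|\lambda_i(A_n) - \lambda_{i_j}(A_n)| \ge |i-i_j| n^{-c_0/3}$ for all $|i-i_j| \le n^{\eps_1}$, and therefore the short-range contribution is bounded by $2 n^{2c_0/3} \sum_{m \ge 1} m^{-2} = O(n^{2c_0/3})$.

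For the \emph{long-range} piece $|i - i_j| > n^{\eps_1}$, the plan is to invoke an eigenvalue separation estimate of the form
\begin{equation*}
|\lambda_i(A_n) - \lambda_{i_j}(A_n)| \ge n^{-\eps_1} |i - i_j|
\end{equation*}
with overwhelming probability, which is precisely the kind of statement that the eigenvalue separation clause of Proposition \ref{lemma:GCC} establishes (applied in the unperturbed setting where no entries of $A_n$ have been zeroed out); concretely it follows from the semicircle-rigidity bound \eqref{lambdaj-t} combined with the Erd\H{o}s--Schlein--Yau local semicircle machinery that the paper already needs for Proposition \ref{deloc}. Granting this, the long-range contribution is bounded by $n^{2\eps_1} \sum_{m > n^{\eps_1}} m^{-2} = O(n^{\eps_1})$. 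Adding the two pieces, $Q_{i_j}(A_n) \lesssim n^{2c_0/3} + n^{\eps_1}$, which is $\le n^{c_0}$ for $n$ large (since $\eps_1 < c_0$), all on an event of high probability.

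The main technical obstacle is the long-range eigenvalue separation bound on the full range up to $|i-i_j|$ of order $n$: for very large $|i-i_j|$, semicircle rigidity \eqref{lambdaj-t} with its $O(n^{1-\delta})$ error is adequate (the relevant eigenvalues are separated by $\Omega(|i-i_j|)$ in the bulk, where the density $\rho_{sc}$ is bounded below), but for $|i-i_j|$ in the intermediate range $n^{\eps_1} \le |i-i_j| \le n^{1-\delta/2}$ one cannot read off the required separation from \eqref{lambdaj-t} alone, and one must instead invoke the local semicircle/eigenvalue counting estimates already used to prove Proposition \ref{lemma:GCC}. Once this intermediate-scale separation is in place, the rest is straightforward dyadic summation.
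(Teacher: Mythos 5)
Your proposal is correct, and it takes a genuinely different route from the paper's main proof. The paper avoids invoking any long-range eigenvalue separation by a self-contained dyadic argument: if $Q_{i_j} > n^{c_0}$, pigeonholing over dyadic scales $2^m \leq |i-i_j| < 2^{m+1}$ produces a scale at which $|\lambda_{i_j\pm 2^m}-\lambda_{i_j}| \ll 2^{3m/4}n^{-c_0/2}$; telescoping and Markov's inequality then force $\gg 2^m$ consecutive gaps to be $\ll n^{-c_0/2}$, and the probability of this is bounded by Theorem~\ref{ltail} via one more application of Markov over the gap indicators, with no union bound over $i$. Your split into $|i-i_j|\leq n^{\eps_1}$ (controlled by Theorem~\ref{ltail} plus a union bound over $O(n^{\eps_1})$ indices, which is legitimate since $\eps_1$ may be chosen after $c_1=c_1(c_0/3)$ is determined) and $|i-i_j|>n^{\eps_1}$ (controlled by local-semicircle rigidity) is exactly the alternative the authors point out in the remark immediately following their proof: ``One can also use Theorem~\ref{sdb} below to control all terms in the sum with $|i-i_j|\gg\log^{C'}n$\ldots, leading to a simpler proof of Lemma~\ref{jordan}.'' What the paper's route buys is economy of hypotheses --- only Theorem~\ref{ltail} is needed, and no union bound over eigenvalue indices appears; what your route buys is a more transparent, direct estimate on $Q_{i_j}$ itself. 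One small attribution nit: the separation clause \eqref{noon} in Proposition~\ref{lemma:GCC} is stated for matrices with the $pq$, $qp$ entries zeroed out, so it does not literally apply to $A_n$ itself. The clean citation for your long-range bound is Theorem~\ref{sdb} directly (its upper bound $N_I = O_\eps(n|I|)$, valid on scales down to $K^2\log^{20}n/n$, yields $|\lambda_i(A_n)-\lambda_{i_j}(A_n)|\gg|i-i_j|$ once $|i-i_j|\gg\log^{O(1)}n$ in the bulk, and semicircle rigidity \eqref{lambdaj-t} handles $i$ near the edge); you identify this underlying tool correctly in your final paragraph, so the substance is right.
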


\begin{proof}  For brevity we omit the $A_n$ variable.  Fix $j$.  Suppose that $Q_{i_j} > n^{c_0}$, then
$$ \sum_{i \neq i_j} \frac{1}{|\lambda_i - \lambda_{i_j}|^2} > n^{c_0}$$
and so by the pigeonhole principle there exists an integer $0 \leq m \ll \log n$ such that 
$$ \sum_{2^m \leq |i-i_j| < 2^{m+1}} \frac{1}{|\lambda_i - \lambda_{i_j}|^2} \gg 2^{-m/2} n^{c_0}$$
which implies that 
$$ |\lambda_{i_j + 2^m} - \lambda_{i_j}| \ll 2^{\frac{3}{4} m} n^{-c_0/2}$$
or
$$ |\lambda_{i_j - 2^m} - \lambda_{i_j}| \ll 2^{\frac{3}{4} m} n^{-c_0/2}.$$
It thus suffices to show that
$$ \P( |\lambda_{i_j + 2^m} - \lambda_{i_j}| \ll 2^{\frac{3}{4} m} n^{-c_0/2} ) \leq n^{-c_1}$$
uniformly in $m$ (and similarly for $\lambda_{i_j-2^m}$), since the $\log n$ loss caused by the number of $m$'s can easily be absorbed into the right-hand side.

Fix $m$.  Suppose that $|\lambda_{i_j + 2^m} - \lambda_{i_j}| \ll 2^{\frac{3}{4} m} n^{-c_0/2}$; then expressing the left-hand side as $\sum_{k=0}^{2^m-1} \lambda_{i_j+k+1}-\lambda_{i_j+k}$ and using Markov's inequality we see that
$$ \lambda_{i_j+k+1}-\lambda_{i_j+k} \ll n^{-c_0/2}$$
for $\gg 2^m$ values of $k$, and thus
$$ \P( |\lambda_{i_j + 2^m} - \lambda_{i_j}| \ll 2^{\frac{3}{4} m} n^{-c_0/2} ) \ll \E \frac{1}{2^m} \sum_{k=0}^{2^m-1} \I( \lambda_{i_j+k+1}-\lambda_{i_j+k} \ll n^{-c_0/2} )$$
and hence by linearity of expectation
$$ \P( |\lambda_{i_j + 2^m} - \lambda_{i_j}| \ll 2^{\frac{3}{4} m} n^{-c_0/2} ) \ll \frac{1}{2^m} \sum_{k=0}^{2^m-1} \P( \lambda_{i_j+k+1}-\lambda_{i_j+k} \ll n^{-c_0/2} ).$$
The claim now follows from Theorem \ref{ltail}.  (There is a slight issue when $2^m \sim n$, so that the index $i_j+k$ may leave the bulk; but then one works with, say, $\lambda_{i_j + 2^{m-1}} - \lambda_{i_j}$ instead of $\lambda_{i_j + 2^{m}} - \lambda_{i_j}$).
\end{proof}

\begin{remark} One can also use Theorem \ref{sdb} below to control all terms in the sum with $|i-i_j| \gg \log^{C'} n$ for some $C'$, leading to a simpler proof of Lemma \ref{jordan}.
\end{remark}

Of course, Lemma \ref{jordan} also applies with $A_n$ replaced by $A'_n$.

Let $\tilde G: \R^k \times \R_+^k \to \R$ be the function
$$ \tilde G( \lambda_{i_1},\ldots,\lambda_{i_k}, Q_{i_1},\ldots,Q_{i_k}) := G(\lambda_{i_1},\ldots,\lambda_{i_k}) \prod_{j=1}^k \eta( Q_{i_j} )$$
where $\eta(x)$ is a smooth cutoff to the region $x \leq n^{c_0}$ which equals $1$ on $x \leq n^{c_0}/2$.  From \eqref{G-deriv} and the chain rule we see that
$$ |\nabla^j \tilde G| \ll n^{c_0}$$
for $j=0,1,2,3,4,5$.  Also, from Lemma \ref{jordan} we have
$$
 |\E ( G(\lambda_{i_1}(A_n), \dots, \lambda_{i_k}(A_n))) -
 \E ( \tilde G(\lambda_{i_1}(A_n), \dots, \lambda_{i_k}(A_n), Q_{i_1}(A_n),\ldots,Q_{i_k}(A_n)))| \ll n^{-c}$$
for some $c>0$, and similarly with $A_n$ replaced by $A'_n$.  Thus (by choosing $c_0$ small enough) to prove \eqref{eqn:approximation} it will suffice to show that the quantity
\begin{equation}\label{gaa}
 \E ( \tilde G(\lambda_{i_1}(A_n), \dots, \lambda_{i_k}(A_n), Q_{i_1}(A_n),\ldots,Q_{i_k}(A_n)))
\end{equation}
only changes by at most $n^{-c_0}/2$ (say) when one replaces $A_n$ by $A'_n$.

As discussed in Section \ref{stratsec}, it will suffice to show that the quantity \eqref{gaa} changes by at most $n^{-2-c_0}/4$ when one swaps the $\zeta_{pq}$ entry with $1 \leq p < q \leq n$ to $\zeta'_{pq}$ (and $\zeta_{qp}$ with $\zeta'_{qp}$), and changes by at most $n^{-1-c_0}/4$ when one swaps a diagonal entry $\zeta_{pp}$ with $\zeta'_{pp}$.  But these claims follow from Proposition \ref{lemma:GCC} and Proposition \ref{swap}. (The last part of Proposition \ref{swap} is used in the case when one only has three moments matching rather than four.) 

The proof of Theorem \ref{theorem:main} is now complete (contingent on Theorem \ref{ltail}, Proposition \ref{swap}, and Proposition \ref{lemma:GCC}).

\subsection{Proof strategy for Theorem \ref{ltail}}

We now informally discuss the proof of Theorem \ref{ltail}.  

The machinery of Erd\H{o}s, Schlein, and Yau\cite{ESY1,ESY2,ESY3}, which is useful in particular for controlling the Stieltjes transform of Wigner matrices, will allow us to obtain good lower bounds on the spectral gap $\lambda_{i}(A_n)-\lambda_{i-1}(A_n)$ in the bulk as soon as $k \gg \log^{C'} n$ for a sufficiently large $C$'; see Theorem \ref{sdb} for a precise statement.  The difficulty here is that $k$ is exactly  $1$.  To overcome this difficulty, we will try to  amplify the value of $k$ by looking at the top left $n-1 \times n-1$ minor $A_{n-1}$ of $A_n$, and observing the following ``backwards gap propagation'' phenomenon:

\emph{If $\lambda_{i}(A_n)-\lambda_{i-k}(A_n)$ is very small, then $\lambda_{i}(A_{n-1})-\lambda_{i-k-1}(A_{n-1})$ will also be small with reasonably high probability.}

If one accepts this phenomenon, then by iterating it about $\log^{C'} n$ times one can enlarge the spacing $k$ to be of the size 
large enough so that a  Erd\H{o}s-Schlein-Yau  type bound can be invoked to obtain a contradiction.  (There will be a technical difficulty caused by the fact that the failure probability of this phenomenon, when suitably quantified, can be as large as $1/\log^{O(1)} n$, thus apparently precluding the ability to get a polynomially strong bound on the failure rate, but we will address this issue later.)

Note that the converse of this statement follows from the Cauchy interlacing property \eqref{cauchy-interlace}.  To explain why this phenomenon is plausible, observe from \eqref{cauchy-interlace} that if $\lambda_{i}(A_n)-\lambda_{i-k}(A_n)$ is small, then $\lambda_i(A_{n}) - \lambda_{i-k-1}(A_{n-1})$ is also small. On the other hand, from Lemma \ref{jn-lem} one has the identity
\begin{equation}\label{jn2}
 \sum_{j=1}^{n-1} \frac{|u_j(A_{n-1})^* X|^2}{\lambda_j(A_{n-1}) - \lambda_i(A_n)} = \sqrt{n} \zeta_{nn} - \lambda_i(A_n),
\end{equation}
where $X$ is the rightmost column of $A_n$ (with the bottom entry $\sqrt{n} \zeta_{nn}$ removed).

One expects $|u_j(A_{n-1})^* X|^2$ to have size about $n$ on the average (cf. Lemma \ref{lemma:projection}).  In particular, if $\lambda_i(A_{n}) - \lambda_{i-k}(A_{n-1})$ is small (e.g. of size $O(n^{-c})$), then the $j=i-1$ term is expected give a large negative contribution (of size $\gg n^{1+c}$) to the left-hand side of \eqref{jn2}. Meanwhile, the right-hand side is much smaller, of size $O(n)$ or so on the average; so we expect to have the large negative contribution mentioned earlier to be counterbalanced by a large positive contribution from some other index.  The index which is most likely to supply such a large positive contribution is $j=i$, and so one expects $\lambda_i(A_{n-1}) - \lambda_{i}(A_{n})$ to be small (also of size $O(n^{-c})$, in fact).  A similar argument also leads one to expect $\lambda_{i-k}(A_{n}) - \lambda_{i-k-1}(A_n)$ to be small, and the claimed phenomenon then follows from the triangle inequality.

In order to make the above strategy rigorous, there are a number of technical difficulties.  The first is that the counterbalancing term mentioned above need not come from $j=i$, but could instead come from another value of $j$, or perhaps a ``block'' of several $j$ put together, and so one may have to replace the gap $\lambda_i(A_{n-1}) - \lambda_{i-k-1}(A_{n-1})$ by a more general type of gap.  A second problem is that the gap $\lambda_{i}(A_{n-1})-\lambda_{i-k-1}(A_{n-1})$ is going to be somewhat larger than the gap $\lambda_{i}(A_{n})-\lambda_{i-k}(A_{n})$, and one is going to be iterating this gap growth about $\log^{O(1)} n$ times.  In order to be able to contradict Theorem \ref{sdb} at the end of the argument, the net gap growth should only be $O(n^c)$ at most for some small $c > 0$.  So one needs a reasonable control on the ratio between the gap for $A_{n-1}$ and the gap for $A_n$; in particular, if one can keep the former gap to be at most $(1 + \frac{1}{k})^{O(\log^{0.9} n)}$ times the latter gap, then the net growth in the gap telescopes to $(\log^{O(1)} n)^{O(\log^{0.9} n)}$, which is indeed less than $O(n^c)$ and thus acceptable.  To address these issues, we fix a base value $n_0$ of $n$, and for any $1 \leq i-l < i \leq n \leq n_0$, we define the \emph{regularized gap}
\begin{equation}\label{giln}
g_{i,l,n} := \inf_{1 \leq i_- \leq i-l < i \leq i_+ \leq n} \frac{\lambda_{i_+}(A_n)-\lambda_{i_-}(A_n)}{\min( i_+-i_-, \log^{C_1} n_0 )^{\log^{0.9} n_0}},
\end{equation}
where $C_1 > 1$ is a large constant (depending on $C$) to be chosen later.  (We need to cap $i_+-i_-$ off at $\log^{C_1} n_0$ to prevent the large values of $i_+-i_-$ from overwhelming the infimum, which is not what we want.)

We will shortly establish a rigorous result that asserts, roughly speaking, that if the gap $g_{i,l,n+1}$ is small, then the gap $g_{i,l+1,n}$ is also likely to be small, thus giving a precise version of the phenomenon mentioned earlier.

There is one final obstacle, which has to do with the failure probability when $g_{i,l,n+1}$ is small but $g_{i,l+1,n}$ is large.  If this event could be avoided with overwhelming probability (or even a high probability), then one would be done by the union bound (note we only need to take the union over $O(\log^{O(1)} n)$ different events).  While many of the events that could lead to failure can indeed be avoided with high probability, there is one type of event which does cause a serious problem, namely that the inner products $u_j(A_{n-1})^* X$ for $i_- \leq j \leq i_+$ could be unexpectedly small.  Talagrand's inequality (Lemma \ref{lemma:projection}) can be used to control this event effectively when $i_+-i_-$ is large, but when $i_+-i_-$ is small the probability of failure can be as high as $1/\log^c n$ for some $c>0$.  However, one can observe that such high failure rates only occur when $g_{i,l+1,n}$ is only \emph{slightly} larger than $g_{i,l,n+1}$.  Indeed, one can show that the probability that $g_{i,l,n+1}$ is much higher than $g_{i,l+1,n}$, say of size $2^m g_{i,l,n+1}$ or more, is only $O( 2^{-m/2} / \log^c n )$ (for reasonable values of $m$), and in fact (thanks to Talagrand's inequality) the constant $c$ can be increased to be much larger when $l$ is large.  This is still not quite enough for a union bound to give a total failure probability of $O(n^{-c})$, but one can exploit the martingale-type structure of the problem (or more precisely, the fact that the column $X$ remains random, with independent entries, even after conditioning out all of the block $A_{n-1}$) to multiply the various bad failure probabilities together to end up with the final bound of $O(n^{-c})$.

\subsection{High-level proof of Theorem \ref{ltail}}\label{hlt2-sec}

We now prove Theorem \ref{ltail}.  Fix $\eps, c_0$.  We write $i_0$ and $n_0$ for $i, n$, thus
$$ \eps n_0 \leq i_0 \leq (1-\eps) n_0$$
and the task is to show that $|\lambda_{i_0}(A_{n_0}) - \lambda_{i_0}(A_{n_0-1})| \geq n_0^{-c_0}$ with high probability.  We can of course assume that $n_0$ is large compared to all other parameters.  We can also assume the bound \eqref{supi}, and that the distribution of the $A_n$ is continuous, so that events such as repeated eigenvalues occur with probability zero and can thus be neglected.

We let $C_1$ be a large constant to be chosen later.
For any $l, n$ with $1 \leq i-l < i \leq n \leq n_0$, we define the normalized gap $g_{i,l,n}$ by \eqref{giln}.  It will suffice to show that
\begin{equation}\label{goin}
g_{i_0,1,n_0} \leq n^{-c_0}
\end{equation}
with high probability.  As before, we let $u_1(A_n),\ldots,u_n(A_n)$ be an orthonormal eigenbasis of $A_n$ associated to the eigenvectors $\lambda_1(A_n),\ldots,\lambda_n(A_n)$.  We also let $X_n \in \C^n$ be the rightmost column of $A_{n+1}$ with the bottom coordinate $\sqrt{n} \zeta_{n+1,n+1}$ removed.

The first main tool for this will be the following (deterministic) lemma, proven in Section \ref{backprop-sec}.

\begin{lemma}[Backwards propagation of gap]\label{backprop}  Suppose that $n_0/2 \leq n < n_0$ and $l \leq \eps n/10$ is such that
\begin{equation}\label{gdel}
g_{i_0,l,n+1} \leq \delta
\end{equation}
for some $0 < \delta \leq 1$ (which can depend on $n$), and that
\begin{equation}\label{gilp}
 g_{i_0,l+1,n} \geq 2^m g_{i_0,l,n+1}
\end{equation}
for some $m \geq 0$ with
\begin{equation}\label{mcivil}
2^m \leq \delta^{-1/2}.
\end{equation}
Then one of the following statements hold:
\begin{itemize}
\item[(i)]  (Macroscopic spectral concentration) There exists $1 \leq i_- < i_+ \leq n+1$ with $i_+-i_- \geq \log^{C_1/2} n$ such that $|\lambda_{i_+}(A_{n+1}) - \lambda_{i_-}(A_{n+1})| \leq \delta^{1/4} \exp( \log^{0.95} n ) (i_+-i_-)$.
\item[(ii)]  (Small inner products)  There exists $\eps n/2 \leq i_- \leq i_0-l < i_0 \leq i_+ \leq (1-\eps/2) n$ with $i_+-i_- \leq \log^{C_1/2} n$ such that
\begin{equation}\label{smallin}
\sum_{i_- \leq j < i_+} |u_j(A_n)^* X_n|^2 \leq \frac{n (i_+-i_-)}{2^{m/2} \log^{0.01} n}
\end{equation}
\item[(iii)]  (Large coefficient)  We have
$$ |\zeta_{n+1,n+1}| \geq n^{0.4}.$$
\item[(iv)] (Large eigenvalue)  For some $1 \leq i \leq n+1$ one has
$$ |\lambda_i(A_{n+1})| \geq \frac{n \exp( -\log^{0.95} n )}{\delta^{1/2}}.$$
\item[(v)] (Large inner product in bulk)  There exists $\eps n/10 \leq i \leq (1-\eps/10) n$ such that
$$ |u_i(A_n)^* X_n|^2 \geq \frac{n \exp( - \log^{0.96} n )}{\delta^{1/2}}.$$
\item[(vi)] (Large row)  We have
$$ \|X_n\|^2 \geq \frac{n^2 \exp( - \log^{0.96} n )}{\delta^{1/2}}.$$
\item[(vii)] (Large inner product near $i_0$)  There exists $\eps n/10 \leq i \leq (1-\eps/10) n$ with $|i-i_0| \leq \log^{C_1} n$ such that
$$ |u_i(A_n)^* X_n|^2 \geq 2^{m/2} n \log^{0.8} n.$$
\end{itemize}
\end{lemma}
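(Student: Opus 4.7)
The plan is to argue by contradiction, assuming both hypotheses of the lemma together with the simultaneous failure of all of (i)--(vii), and to extract a contradiction from the interlacing identity (Lemma \ref{jn-lem}) applied at two heights.

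First choose $(i_-^*, i_+^*)$ with $1 \leq i_-^* \leq i_0-l < i_0 \leq i_+^* \leq n+1$ nearly attaining the infimum defining $g_{i_0,l,n+1}$, and set $D := \lambda_{i_+^*}(A_{n+1}) - \lambda_{i_-^*}(A_{n+1})$, so that $D \leq \delta \min(i_+^* - i_-^*, \log^{C_1} n_0)^{\log^{0.9} n_0}$. If $i_+^* - i_-^* \geq \log^{C_1/2} n$, then the capped factor is at most $(\log^{C_1} n_0)^{\log^{0.9} n_0} \ll \exp(\log^{0.95} n)$ for large $n$, and since $\delta \leq 1$ we have $D \leq \delta^{1/4}\exp(\log^{0.95}n)(i_+^*-i_-^*)$, placing us in case (i): contradiction. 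Henceforth $i_+^* - i_-^* < \log^{C_1/2} n$, and Cauchy interlacing pins $\lambda_{i_-^*}(A_n),\ldots,\lambda_{i_+^* - 1}(A_n)$ into the window $[\lambda_{i_-^*}(A_{n+1}), \lambda_{i_+^*}(A_{n+1})]$ of length $D$.

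Taking $(i_-, i_+) := (i_-^* - 1, i_+^*)$ (with harmless adjustment when $i_-^* = 1$ or $i_+^* = n+1$), the hypothesis $g_{i_0,l+1,n} \geq 2^m g_{i_0,l,n+1}$ and the bound on $D$ force $\lambda_{i_+^*}(A_n) - \lambda_{i_-^* - 1}(A_n) \gtrsim 2^m D$, the implicit constant absorbing the ratio of $\min(\cdot, \log^{C_1} n_0)^{\log^{0.9} n_0}$ factors. Since the interior eigenvalues already fit in the $D$-window, this excess must manifest as an undershoot $\lambda_{i_-^*}(A_{n+1}) - \lambda_{i_-^* - 1}(A_n) \gtrsim 2^m D$ or a symmetric overshoot at $i_+^*$; treat the undershoot case (the other is handled identically with the identity at $i_+^*$). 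Apply Lemma \ref{jn-lem} at $i = i_-^*$:
\[
\sum_{j=1}^n \frac{|u_j(A_n)^* X_n|^2}{\lambda_j(A_n) - \lambda_{i_-^*}(A_{n+1})} = \sqrt{n}\,\zeta_{n+1,n+1} - \lambda_{i_-^*}(A_{n+1}),
\]
whose right-hand side is $O(n\exp(-\log^{0.95}n)/\delta^{1/2})$ by failure of (iii) and (iv) (and the bound $2^m \leq \delta^{-1/2}$ in \eqref{mcivil} is what keeps this term from swamping the rest of the analysis). The block $j \in [i_-^*, i_+^* - 1]$ contributes a positive sum at least $D^{-1}\sum_{i_-^* \leq j < i_+^*} |u_j(A_n)^* X_n|^2$, which by failure of (ii) exceeds $n(i_+^*-i_-^*)/(D \cdot 2^{m/2}\log^{0.01}n)$ and is far larger than the right-hand side.

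To balance, the remaining (negative) terms must be comparable in magnitude; I split them by proximity to $i_0$. For $j \notin [i_-^*, i_+^* - 1]$ with $|j - i_0| \leq \log^{C_1} n$, applying the hypothesis $g_{i_0,l+1,n} \geq 2^m g_{i_0,l,n+1}$ at perturbed choices of $(i_-, i_+)$ gives $|\lambda_j(A_n) - \lambda_{i_-^*}(A_{n+1})| \gtrsim 2^m D \min(|j - i_-^*|, \log^{C_1}n_0)^{\log^{0.9}n_0}$; summing with the per-term bound $|u_j(A_n)^* X_n|^2 \leq 2^{m/2} n\log^{0.8}n$ from failure of (vii) gives a total of order $n\log^{O(1)}n / (2^{m/2}D)$, strictly dominated by the positive block. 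For $|j - i_0| > \log^{C_1} n$ the macroscopic behaviour takes over: failure of (i) forces the denominators to grow linearly in $|j - i_-^*|$ at rate at least $\delta^{-1/4}\exp(-\log^{0.95}n)$, while failure of (v) and (vi) bound the individual $|u_j|^2$ and the total $\|X_n\|^2$, leaving a contribution negligible on the scale $n/D$. Adding the three regimes, the positive block strictly dominates, contradicting the identity.

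The main obstacle is exactly the precision of the bookkeeping in the last paragraph: each of (i)--(vii) is tailored to rule out one specific source of imbalance, and the margins (the $2^{m/2}$ savings in (ii) and (vii), the $\exp(-\log^{0.95}n)$ in (iv)--(vi), the capping at $\log^{C_1} n_0$ in the definition \eqref{giln}, and the $\delta^{1/4}$ versus $\delta$ in (i)) must line up simultaneously. Making this work is why the exponents $0.8, 0.9, 0.95, 0.96$ in the statement are taken to be slightly separated rather than equal, and why the constant $C_1$ entering the regularized gap has to be chosen large relative to the exponent $C$ appearing in (v)--(vii). The constraint $2^m \leq \delta^{-1/2}$ is precisely what ensures the undershoot is small enough on the absolute scale that the RHS of the Cauchy identity, which carries a $\lambda_{i_-^*}(A_{n+1})$ term, does not by itself account for the excess.
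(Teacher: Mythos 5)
Your opening steps match the paper's: you correctly reduce to the case $i_+^*-i_-^* < \log^{C_1/2}n$ by routing the wide-interval case to (i), pin the interior eigenvalues of $A_n$ into the $D$-window by Cauchy interlacing, apply the interlacing identity at $i_-^*$, isolate the block $i_-^* \le j < i_+^*$ as a positive contribution of size at least $n(i_+^*-i_-^*)/(2^{m/2}D\log^{0.01}n)$ via the failure of (ii), and bound the right-hand side via the failure of (iii), (iv). Up to here the argument is correct and parallels the paper.

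The remainder, however, has a genuine gap. You attempt a \emph{direct domination}: you try to show the sum of negative contributions is strictly smaller than the positive block. This fails when $i_+^*-i_-^*$ is small. Concretely: the $j=i_-^*-1$ term is bounded above (using failure of (vii) together with your claimed per-term distance bound) by roughly $2^{m/2}n\log^{0.8}n/(2^m D) = n\log^{0.8}n/(2^{m/2}D)$, whereas the positive block is only $\gtrsim n(i_+^*-i_-^*)/(2^{m/2}D\log^{0.01}n)$. If $i_+^*-i_-^* = O(\log^{0.81}n)$ --- and $i_+^*-i_-^* = 1$ is entirely possible --- the positive block does \emph{not} dominate this single term, so no contradiction arises. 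Moreover, your claimed pointwise bound $|\lambda_j(A_n)-\lambda_{i_-^*}(A_{n+1})| \gtrsim 2^m D\,\min(|j-i_-^*|,\log^{C_1}n_0)^{\log^{0.9}n_0}$ does not follow from the gap hypothesis \eqref{gilp}: that hypothesis controls differences $\lambda_k(A_n)-\lambda_j(A_n)$ of eigenvalues of $A_n$, not the distance from $\lambda_{i_-^*}(A_{n+1})$, and the discrepancy (the possible overshoot $\lambda_{i_+^*}(A_n)-\lambda_{i_-^*}(A_{n+1})$) has not been bounded in the ``undershoot case.''

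The paper avoids direct domination entirely. After establishing that the $j < i_-$ sum is large, it runs a pigeonhole over dyadic shells $2^{k-1}\le i_--j<2^k$ to find one shell carrying mass $\gg n(i_+-i_-)/(2^{m/2}Lk^2\log^{0.01}n)$. If this shell is wide ($2^{k-1}\ge\log^{C_1/2}n$) the failures of (i), (v), (vi) produce a contradiction. If it is narrow, the failure of (vii) plus the observation that there are at most $2^{k-1}$ terms, each at most the \emph{largest} reciprocal, yields an \emph{upper bound} on the shell width $\lambda_{i_-}(A_{n+1})-\lambda_{i_{--}}(A_n) \ll 2^m\frac{i_--i_{--}}{i_+-i_-}L\log^{0.83}n$. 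Combining this with the symmetric bound at $i_+$ gives $\lambda_{i_{++}}(A_n)-\lambda_{i_{--}}(A_n) \le L(1+2^m\alpha\log^{0.84}n)$, and feeding this back into the regularized gap definition \eqref{giln} together with \eqref{gilp} yields $1+\alpha\log^{0.84}n \ge (1+\alpha)^{\log^{0.9}n_0}$, which violates $(1+\alpha)^x\ge 1+x\alpha$ since $\log^{0.9}n_0 > \log^{0.84}n$ and $\alpha>0$. This comparison exploits the superlinear power $\min(i_+-i_-,\log^{C_1}n_0)^{\log^{0.9}n_0}$ in the definition of $g_{i,l,n}$ --- precisely the piece of structure your balancing argument does not use --- and crucially works even when $i_+^*-i_-^*=1$. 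To repair your proof you would need to abandon the attempt at direct domination and adopt the shell-width comparison (or some equivalent use of the regularization exponent $\log^{0.9}n_0$).
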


\begin{remark} In applications $\delta$ will be a small negative power of $n$.  The main bad event here is (ii) (and to a lesser extent, (vii)); the other events will have a polynomially small probability of occurrence in practice (as a function of $n$) and so can be easily discarded.  The events (ii), (vii) are more difficult to discard, 
 since their probability is not polynomially small in $n$, if $m$ is small. 
 On the other hand, these probabilities decay exponentially in $m$, and furthermore are independent in a martingale sense, and this will be enough for us to obtain a proper control.  The exact numerical values of the exponents such as $0.9$, $0.95$, $0.8$, etc. are not particularly important, though of course they need to lie between $0$ and $1$.
\end{remark}

The second key proposition bounds the probability that each of the bad events (i)-(vii) occur, proven in Section \ref{bad-event-sec}.

\begin{proposition}[Bad events are rare]\label{bad-event}  Suppose that $n_0/2 \leq n < n_0$ and $l \leq \eps n/10$, and set $\delta := n_0^{-\kappa}$ for some sufficiently small fixed $\kappa > 0$.  Then:
\begin{itemize}
\item[(a)] The events (i), (iii), (iv), (v), (vi) in Lemma \ref{backprop} all fail with high probability.  
\item[(b)] There is a constant $C'$ such that all the coefficients of the eigenvectors $u_j(A_n)$ for $\eps n/2 \leq j \leq (1-\eps/2) n$ are of magnitude at most $n^{-1/2} \log^{C'} n$ with overwhelming probability.  Conditioning $A_n$ to be a matrix with this property, the events (ii) and (vii) occur with a conditional probability of at most $2^{-\kappa m} + n^{-\kappa}$.  
\item[(c)] Furthermore, there is a constant $C_2$ (depending on $C',\kappa,C_1$) such that if $l \geq C_2$ and $A_n$ is conditioned as in (b), then (ii) and (vii) in fact occur with a conditional probability of at most $2^{-\kappa m} \log^{-2C_1} n + n^{-\kappa}$.
\end{itemize}
\end{proposition}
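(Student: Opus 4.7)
The plan is to dispatch the seven bad events separately: events (iii), (iv), (v), (vi) and (i) in part (a) fall to routine concentration and semicircular-law tools, while (ii) and (vii) in parts (b) and (c) require a careful application of the complex random-walk estimates from Theorem \ref{bes}, with the eigenvector delocalization assumption providing the incompressibility hypothesis. Throughout, we exploit the key fact that $X_n$, being the first $n$ entries of the $(n{+}1)$-th column of $A_{n+1}$, is independent of the top-left minor $A_n$ (and hence of its eigenvectors), so after conditioning on $A_n$ we may treat $X_n$ as a random vector with independent entries.

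For (a), event (iii) is immediate from the preliminary truncation \eqref{supi}, since $\log^{C+1} n \ll n^{0.4}$. Event (iv) demands $|\lambda_i(A_{n+1})| \geq n^{1+\kappa/3+o(1)}$ for some $i$, which contradicts the operator-norm bound $\|A_{n+1}\|_{op} = O(n)$ with overwhelming probability (via \eqref{bai}). Event (vi) fails since $\|X_n\|^2$ is a sum of $n$ independent non-negative variables with $\E\|X_n\|^2 \sim n^2$, and Lemma \ref{lemma:projection} (applied with $H = \C^n$) confines it to $n^2 + O(n^{3/2}\log^{O(1)} n)$, which is much less than $n^{2+\kappa/3}$. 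Event (v) follows by conditioning on $A_n$ and applying the upper tail in Theorem \ref{bes} to $u_i(A_n)^*(X_n/\sqrt{n{+}1})$, whose coefficients $\sigma \ll n^{-1/2}\log^{C'} n$ come from eigenvector delocalization in the bulk; this gives subgaussian decay far below $n^{1+\kappa/3}$, and a union bound over bulk $i$ is harmless. Finally, event (i) fails with high probability because $i_+ - i_- \geq \log^{C_1/2} n$ many eigenvalues squeezed into an interval of length $\delta^{1/4}\exp(\log^{0.95} n)(i_+-i_-) = o(i_+-i_-)$ contradicts the quantitative local semicircular law \eqref{lambdaj-t}.

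The substantive work is in (b). Conditioning on the (overwhelmingly likely) delocalization $\sup_{j,k}|u_j(A_n)_k| \leq n^{-1/2}\log^{C'} n$, I would set $d := i_+-i_-$, $Y := X_n/\sqrt{n{+}1}$, and let $\vec S := (u_j(A_n)^* Y)_{i_- \leq j < i_+} \in \C^d$. The rows $u_j(A_n)^*$ are orthonormal and satisfy the incompressibility hypothesis with $\sigma \ll n^{-1/2}\log^{C'} n$. Event (ii) is then the small-ball event $|\vec S| \leq t$ for $t = \sqrt{d}/(2^{m/4}\log^{0.005} n)$, and Theorem \ref{bes}'s lower tail gives
\[
 \P(|\vec S| \leq t) \;\ll\; \bigl(t/\sqrt{d}\bigr)^{\lfloor d/4 \rfloor} + C d^4 t^{-3}\sigma
\;\leq\; 2^{-md/16}\log^{-d/800} n + O\bigl(n^{-1/2+o(1)} 2^{3m/4}\bigr).
\]
Since Lemma \ref{backprop} only considers $2^m \leq \delta^{-1/2} = n_0^{\kappa/2}$, the second term is at most $n^{-\kappa'}$ for some $\kappa' > \kappa$ provided $\kappa$ is chosen small; the first term is at most $2^{-m/16}$. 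This yields the bound $2^{-\kappa m} + n^{-\kappa}$ upon choosing $\kappa \leq 1/16$. For (vii), applying the upper tail of Theorem \ref{bes} to $u_i(A_n)^* Y$ with $t = 2^{m/4}\log^{0.4} n$ gives $\exp(-c \cdot 2^{m/2} \log^{0.8} n) + O(n^{-1/2+o(1)})$, and a union bound over the $O(\log^{C_1} n)$ admissible indices $i$ leaves a probability far smaller than $n^{-\kappa}$.

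Part (c) exploits the assumption $l \geq C_2$, which forces $d \geq C_2$. The first term in the estimate above becomes $\leq 2^{-m/16}\log^{-d/800} n$; choosing $C_2 := 1600\,C_1$ makes this at most $2^{-\kappa m}\log^{-2C_1} n$, while the remainder term is unchanged. The main obstacle is the small-$d$ regime of event (ii) in part (b): for $d = O(1)$ the usual concentration inequalities (e.g.\ Lemma \ref{lemma:projection}) only give probability bounds of order $\exp(-c t^2/K^2)$ with $K = \log^{O(1)} n$, which is too weak. It is precisely the small-ball / Berry--Esseen machinery packaged in Theorem \ref{bes}, together with the eigenvector delocalization input, that converts a naive constant-order bound into the $2^{-\kappa m}$ decay needed for the union over $m$ in the proof of Theorem \ref{ltail}.
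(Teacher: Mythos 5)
Your overall decomposition is the same as the paper's — dispatch (i), (iii)--(vi) by concentration, and handle (ii), (vii) via the random-walk estimates of Theorem \ref{bes} under the eigenvector delocalization hypothesis. The treatments of (i), (iii), (iv), (vii) and the broad outline of (ii) in parts (b) and (c) match the paper. But there is one genuine gap in your handling of (a), plus a minor issue with the lower tail exponent in (b).

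\textbf{The gap in (v).} You propose to control the event (v), namely $|u_i(A_n)^* X_n|^2 \geq n\exp(-\log^{0.96} n)/\delta^{1/2}$ for some bulk $i$, by applying the upper tail of Theorem \ref{bes} to $S_i := u_i(A_n)^*X_n/\sqrt{n}$ and then taking a union bound over bulk $i$. The upper tail bound in Theorem \ref{bes} is $\P(|S_i| \geq t) \ll \exp(-ct^2) + C\sigma$, and with the delocalization input $\sigma \ll n^{-1/2}\log^{C'} n$. The Gaussian-tail piece $\exp(-ct^2)$ is negligible for $t$ of polynomial size, but the Berry--Ess\'een correction $C\sigma$ is of order $n^{-1/2+o(1)}$ regardless of $t$. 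The union bound in (v) is over $\Theta(n)$ values of $i$ — not over a polylogarithmic set — so the aggregate failure probability from the $C\sigma$ terms is of order $n^{1/2+o(1)}$, which is not a probability bound at all, let alone one giving ``high probability.'' Theorem \ref{bes}'s additive $C\sigma$ error was designed to survive unions over $O(\log^{O(1)} n)$ events (as in (vii)), not $O(n)$. The correct tool here is Lemma \ref{lemma:projection} (Talagrand's inequality applied to $\pi_{\Span(u_i)}(X_n/\sqrt{n})$), or a Bernstein-type bound: after conditioning on $A_n$, the summands $u_{i,j}X_{n,j}$ are bounded by $\log^{O(1)} n$ and have total variance $\approx n$, so the deviation $|u_i^*X_n|^2 \gg n^{1+\kappa/3}$ has probability $\exp(-n^{c})$ per index, which comfortably survives the union bound over $O(n)$ values of $i$. (The paper's own proof sidesteps this by asserting that (v) is \emph{empty} under the truncation \eqref{supi}, but that assertion is not quite right either: the deterministic bound is only $|u_i^*X_n|^2 \leq \|X_n\|^2 \ll n^{2+o(1)}$, whereas the threshold is $\approx n^{1+\kappa/2-o(1)}$. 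Events (iii) and (vi) genuinely are deterministically excluded by \eqref{supi}; (v) needs a probabilistic argument, and Lemma \ref{lemma:projection} is the right one.)

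\textbf{A minor issue in (b).} For the small-ball bound you write $(t/\sqrt{d})^{\lfloor d/4\rfloor} \leq 2^{-md/16}\log^{-d/800}n$, but the floor means the exponent is $0$ for $d \leq 3$, and $d=i_+-i_-$ can be as small as $l$, which in part (b) is allowed to be $1$. For such small $d$ the first term of Theorem \ref{bes} as stated is trivially $O(1)$. The paper's printed proof does not explicitly handle $d < C_2$ either; the honest repair is to observe that in the $d=O(1)$ regime the proof of Theorem \ref{bes} (i.e.\ Theorem \ref{berry-esseen} plus the Gaussian small-ball bound $\P(|G_V| \leq 2t) \ll t$) already yields $\P(|\vec S| \leq t) \ll t + t^{-3}\sigma$, which gives $O(2^{-m/4}\log^{-0.005}n) + O(n^{-1/2+o(1)} 2^{3m/4}) \leq 2^{-\kappa m} + n^{-\kappa}$ once $\kappa$ is small, as desired for part (b). Your treatment of (ii) and (vii) in parts (b)/(c) is otherwise aligned with the paper, including the choice $C_2 \sim C_1$ in (c) and the observation that the error term $d^4 t^{-3}\sigma$ is dominated by $n^{-\kappa}$ since $2^m \leq \delta^{-1/2} = n^{\kappa/2}$.
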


Let us assume these two propositions for now and conclude the proof of Theorem \ref{ltail}.

We may assume $c_0$ is small.  Set $\kappa := c_0/10$.
For each $n_0-\log^{2C_1} n_0 \leq n \leq n_0$, let $E_n$ be the event that one of the eigenvectors $u_j(A_n)$ for $\eps n/2 \leq j \leq (1-\eps/2) n$ has a coefficient of magnitude more than $n^{-1/2} \log^{C'}_n$, and let $E_0$ be the event that at least one of the exceptional events (i), (iii)-(vi), or $E_n$ hold for some $n-\log^{2C_1} n_0 \leq n \leq n_0$; then by Proposition \ref{bad-event} and the union bound we have
\begin{equation}\label{peo}
\P( E_0 ) \leq n_0^{-\kappa/2}.
\end{equation}
(say).  It thus suffices to show that the event
$$ g_{i_0,1,n_0} \leq n^{-10\kappa} \wedge E_0^c $$
is avoided with high probability.

To bound this, the first step is to increase the $l$ parameter from $1$ to $C_2$, in order to use Proposition \ref{bad-event}(c).  Set $2^m := n_0^{\kappa/C_2}$.  From Proposition \ref{bad-event}(b), we see that the event that $E_n$ fails, but (ii) or (vii) holds for $n=n_0-1$, $l=1$, and some $i_-,i_+$ occurs with probability $O( 2^{-\kappa m} + n_0^{-\kappa} )$.  Applying Lemma \ref{backprop} (noting that $n_0^{-10\kappa} \leq \delta$) we conclude that
$$ \P( g_{i_0,1,n_0} \leq n_0^{-10\kappa} \wedge E_0^c ) \leq \P( g_{i_0,2,n_0-1} \leq 2^m n_0^{-10\kappa} \wedge E_0^c ) + O( 2^{-\kappa m} + n_0^{-\kappa} ).$$
We can iterate this process $C_2$ times and conclude
$$ \P( g_{i_0,1,n_0} \leq n_0^{-10\kappa} \wedge E_0^c ) \leq \P( g_{i_0,C_2+1,n_0-C_2} \leq 2^{C_2 m} n^{-10\kappa} \wedge E_0^c ) + O( C_2 2^{-\kappa m} + C_2 n_0^{-\kappa} );$$
substituting in the definition of $m$, we conclude
$$ \P( g_{i_0,1,n_0} \leq n_0^{-10\kappa} \wedge E_0^c ) \leq \P( g_{i_0,C_2+1,n_0-C_2} \leq n_0^{-9\kappa} \wedge E_0^c ) + n_0^{-\kappa^2 / 2C_2}$$
(say).  So it will suffice to show that
$$ \P( g_{i_0,C_2+1,n_0-C_2} \leq n_0^{-9\kappa} \wedge E_0^c ) \leq n_0^{-\kappa^2/2C_2}.$$

By Markov's inequality, it suffices to show that
\begin{equation}\label{kappa1}
 \E Z_{n_0-C_2}^{-\kappa/2} \I(E_0^c) \leq n_0^{\kappa^2}
\end{equation}
(say), where for each $n_0-\log^{C_1} n_0 \leq n \leq n_0-C_2$, $Z_n$ is the random variable
$$ Z_n := \max(\min( g_{i_0,n_0-n+1,n}, \delta ), n_0^{-9\kappa}).$$
Indeed, we have
\begin{align*}
\P( g_{i_0,C_2+1,n_0-C_2} \leq n_0^{-9\kappa} \wedge E_0^c ) &\leq \P( Z_{n_0-C_2} = n_0^{-9\kappa} \wedge E_0^c )\\
&\leq n_0^{-9\kappa^2/2} \E Z_{n_0-C_2}^{-\kappa/2} \I(E_0^c) 
\end{align*}
whence the claim.

We now establish a recursive inequality for $\E Z_n^{-\kappa/2} \I(E_0^c)$.  Let $n_0-\log^{C_1} n_0 \leq n < n_0-C_2$.  Suppose we condition $A_n$ so that $E_n$ fails.  Then for any $m \geq 0$, we see from Proposition \ref{bad-event}(c) that (ii) or (vii) holds for $l=n_0-n$ and some $i_-,i_+$ with (conditional) probability at most $O(2^{-\kappa m} \log^{-2C_1} n + n^{-\kappa})$.  Applying Lemma \ref{backprop}, we conclude that
$$ \P( g_{i_0,n_0-n,n+1} \leq \delta \wedge  g_{i_0,n_0-n+1,n} \geq 2^m g_{i_0,n_0-n,n+1} \wedge E_0^c | A_n )
\ll 2^{-\kappa m} \log^{-2C_1} n + n^{-\kappa}.$$
Note that this inequality is also vacuously true of $A_n$ is such that $E_n$ holds, since the event $E_0^c$ is then empty.

Observe that if $Z_n > 2^m Z_{n+1}$ for some $m \geq 0$ then $g_{i_0,n_0-n,n+1} \leq \delta \wedge  g_{i_0,n_0-n+1,n} \geq 2^m g_{i_0,n_0-n,n+1}$.  Thus
$$ \P( Z_n > 2^m Z_{n+1} \wedge E_0^c | A_n )
\ll 2^{-\kappa m} \log^{-2C_1} n + n^{-\kappa}$$
or equivalently
$$ \P( Z_{n+1}^{-\kappa/2} > 2^{m\kappa/2} Z_n^{-\kappa/2} \wedge E_0^c | A_n )
\ll 2^{-\kappa m} \log^{-2C_1} n + n^{-\kappa}.$$
Since we are conditioning on $A_n$, $Z_n$ is deterministic.   Also, from the definition of $Z_n$, this event is vacuous for $2^m > n^{8\kappa}$, thus we can simplify the above bound as
$$ \P( Z_{n+1}^{-\kappa/2} > 2^{m\kappa/2} Z_n^{-\kappa/2} \wedge E_0^c | A_n )
\leq 3 \times 2^{-\kappa m} \log^{-2C_1} n$$
Now we multiply this by $2^{m \kappa/2}$ and sum over $m \geq 0$ to obtain
$$ \E( Z_{n+1}^{-\kappa/2} \I( E_0^c ) | A_n ) \leq Z_n^{-\kappa/2} ( 1 + \log^{-(2C_1-1)} n )$$
(say).  Undoing the conditioning on $A_n$, we conclude
$$ \E( Z_{n+1}^{-\kappa/2} \I( E_0^c ) ) \leq
( 1 + \log^{-(2C_1-1)} n ) \E( Z_n^{-\kappa/2} ).$$
Applying \eqref{peo} (and the trivial bound $Z_n^{-\kappa/2} \leq n^{9\kappa^2/2}$) we have
$$ \E( Z_{n+1}^{-\kappa/2} \I( E_0^c ) ) \leq
( 1 + \log^{-(2C_1-1)} n ) \E( Z_n^{-\kappa/2} \I(E_0^c) ) + n^{-\kappa/4}$$
(say).  Iterating this we conclude that
\begin{equation}\label{kappa2}
\E( Z_{n_0-C_2}^{-\kappa/2} \I( E_0^c ) ) \leq
2 \E( Z_{n_0-\lfloor \log^{C_1} n_0 \rfloor}^{-\kappa/2} \I(E_0^c) ) + n_0^{-\kappa/8}
\end{equation}
(say).

On the other hand, if $E_0^c$ holds, then by (i) we have
$$ \left|\lambda^{(n_0-\lfloor \log^{C_1} n_0 \rfloor)}_{i_+} - \lambda^{(n_0-\lfloor \log^{C_1} n_0 \rfloor)}_{i_-}\right| < n_0^{-\kappa} \exp( \log^{0.95} n ) (i_+-i_-)$$
whenever $1 \leq i_- \leq i-\log^{C_1/2} n_0 < i \leq i_+ \leq n$.  From this we have
$$ g_{i_0,\lfloor \log^{C_1} n_0 \rfloor+1,n_0-\lfloor \log^{C_1} n_0 \rfloor} \leq n_0^{-\kappa/2}$$
(say) and hence
$$ Z_{n_0-\lfloor \log^{C_1} n_0 \rfloor} = n_0^{-\kappa}.$$
Inserting this into \eqref{kappa2} we obtain \eqref{kappa1} as required.

\section{Good configurations have stable spectra}\label{swap-sec}

The purpose of this section is to prove Proposition \ref{swap}.  The first stage is to obtain some equations for the derivatives of eigenvalues of Hermitian matrices with respect to perturbations.

\subsection{Spectral dynamics of Hermitian matrices}

Suppose that $\lambda_i(A)$ is a \emph{simple} eigenvalue, which means that
 $\lambda_i(A) \neq \lambda_j(A)$ for all $j \neq i$; note that almost all Hermitian matrices have simple eigenvalues.  We then define $P_i(A)$
 to be the orthogonal projection to the one-dimensional eigenspace corresponding
  to $\lambda_i(A)$; thus, if $u_i(A)$ is a unit eigenvector for the eigenvalue
  $\lambda_i(A)$, then $P_i(A) = u_i(A) u_i(A)^*$.  We also define the \emph{resolvent}
   $R_i(A)$ to be the unique Hermitian matrix inverting $A - \lambda_i(A) I$ on the range of
    $I - P_i(A)$, and vanishing on the range of $P_i(A)$. If $u_1(A),\ldots,u_n(A)$ form an orthonormal eigenbasis
    associated to the $\lambda_1(A),\ldots,\lambda_n(A)$, we can write $R_i(A)$
    explicitly as
$$ R_i(A) = \sum_{j \neq i} \frac{1}{\lambda_j-\lambda_i} u_j(A) u_j(A)^*.$$

It is clear that 

\begin{equation} \label{RA} R_i (A-\lambda_i I) = I -P_i . \end{equation} 

We also need the quantity
$$ Q_i(A) := \|R_i(A)\|_F^2 = \sum_{j \neq i} \frac{1}{|\lambda_j-\lambda_i|^2}.$$

By \eqref{weyl},  each eigenvalue function $A \mapsto \lambda_i(A)$ for $1 \leq i \leq n$
is continuous.  However, we will  need a  quantitative
control on the derivatives of this function.  The first observation
 is that $\lambda_i$ (and $P_i, R_i, Q_i$) depend smoothly on $A$
 whenever that eigenvalue is simple (even if other eigenvalues have multiplicity):

\begin{lemma} Let $1 \leq i \leq n$, and let $A_0$ be a Hermitian matrix
which has a simple eigenvalue at $\lambda_i(A_0)$.
Then $\lambda_i$, $P_i$, $R_i$, and $Q_i$ are smooth for $A$
in a neighborhood of $A_0$.
\end{lemma}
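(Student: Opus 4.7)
The plan is to prove smoothness for each of the four objects in turn, using the Riesz functional calculus to reduce everything to smoothness of resolvents $(zI-A)^{-1}$ at points $z$ away from the spectrum of $A$.

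First I would handle $\lambda_i$ via the implicit function theorem. Consider the characteristic polynomial $p(A,\lambda):=\det(A-\lambda I)$ as a smooth function of $(A,\lambda)$. Because $\lambda_i(A_0)$ is assumed simple, it is a simple root of $p(A_0,\cdot)$, so $\partial_\lambda p(A_0,\lambda_i(A_0))\ne 0$. The implicit function theorem then produces a smooth function $A\mapsto\lambda_i(A)$ defined in a neighborhood of $A_0$, agreeing with the given eigenvalue by continuity (using Weyl's inequality \eqref{weyl} to keep $\lambda_i(A)$ isolated from the other eigenvalues).

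Next I would show that $P_i$ is smooth via the contour-integral representation. Fix a small positively oriented circle $\gamma$ in $\mathbb{C}$ that encloses $\lambda_i(A_0)$ and no other eigenvalue of $A_0$. By Weyl's inequality, for all $A$ in a small neighborhood of $A_0$ the contour $\gamma$ still separates $\lambda_i(A)$ from the remaining spectrum, and hence the Riesz projection formula
$$ P_i(A) = \frac{1}{2\pi i}\oint_\gamma (zI-A)^{-1}\,dz $$
is valid. Since $(zI-A)^{-1}$ is a smooth (indeed real-analytic) function of $A$ jointly with $z\in\gamma$, by Cramer's rule, differentiation under the integral sign shows that $P_i$ is smooth near $A_0$. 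For $R_i$, I would apply the same idea with a different contour: choose $\Gamma$ enclosing all eigenvalues of $A_0$ except $\lambda_i(A_0)$, and verify (via the spectral expansion $(zI-A)^{-1}=\sum_j (z-\lambda_j(A))^{-1}P_j(A)$ and the residue theorem) the representation
$$ R_i(A) = \frac{1}{2\pi i}\oint_\Gamma \frac{(zI-A)^{-1}}{z-\lambda_i(A)}\,dz. $$
Since $\lambda_i(A)$ stays outside $\Gamma$ for $A$ near $A_0$, the integrand is smooth in $A$ (using Step~1 to know $\lambda_i(A)$ is smooth), so $R_i$ is smooth. Finally, $Q_i(A)=\trace(R_i(A)^*R_i(A))$ is a polynomial in the entries of $R_i(A)$, so its smoothness is immediate from that of $R_i$.

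There is no essential obstacle in this argument: everything comes down to the fact that the resolvent depends smoothly on $A$ away from the spectrum, and that the simplicity of $\lambda_i(A_0)$ together with \eqref{weyl} keeps the chosen contours valid throughout a neighborhood of $A_0$. The only point requiring a small check is the residue calculation that identifies the $\Gamma$-contour integral with $R_i(A)$; this is a one-line computation using the spectral expansion of the resolvent.
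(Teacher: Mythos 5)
Your argument is correct, and it proves the same statement by a genuinely different route for the $P_i$ and $R_i$ parts. For $\lambda_i$ both you and the paper invoke the simple-zero/implicit-function-theorem observation on $\det(A-\lambda I)$, and both then reduce $Q_i$ to smoothness of $R_i$ via the Frobenius norm. Where you diverge is the middle: the paper argues directly that $A - \lambda_i(A) I$ varies smoothly, has a single eigenvalue at zero with the others bounded away, hence its one-dimensional kernel (and with it the orthogonal projection $P_i$) depends smoothly on $A$, and then gets $R_i$ by inverting $A - \lambda_i(A) I$ on the range of $I-P_i$. You instead package $P_i$ as a Riesz projection $\frac{1}{2\pi i}\oint_\gamma (zI-A)^{-1}\,dz$ and $R_i$ as $\frac{1}{2\pi i}\oint_\Gamma \frac{(zI-A)^{-1}}{z-\lambda_i(A)}\,dz$ for a contour $\Gamma$ enclosing the rest of the spectrum, so that everything reduces to smoothness of the resolvent off the spectrum plus differentiation under the integral sign. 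Your residue computation for the $R_i$ formula is indeed a one-liner once one writes $(zI-A)^{-1}=\sum_j(z-\lambda_j)^{-1}P_j$, and it correctly handles multiplicities in the other eigenvalues (only simplicity of $\lambda_i$ is used). The trade-off: the paper's argument is more elementary in the sense of avoiding complex analysis, but it implicitly relies on the smooth dependence of a spectrally isolated kernel on the matrix, a fact which you make explicit and airtight via the contour representation; both approaches are standard and equally valid here.
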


\begin{proof}  By the Weyl inequality \eqref{weyl}, $\lambda_i(A_0)$ stays
away from the other $\lambda_j(A_0)$ by a bounded distance for
all $A_0$ in a neighborhood of $A_0$.  In particular,
 the characteristic polynomial $\det( A - \lambda I )$ has a
 simple zero at $\lambda_i(A)$ for all such $A$.  Since this
 polynomial also depends smoothly on $A$, the smoothness of $\lambda_i$ now follows.
 As $A - \lambda_i(A) I$ depends smoothly on $A$, has a single zero eigenvalue, and has all other eigenvalues bounded away from zero, we see that the one-dimensional kernel $\ker(A - \lambda_i(A) I)$ also depends smoothly on $A$ near $A_0$.  Since $P_i$ is the orthogonal projection to this kernel, the smoothness of $P_i$ now follows.

Since $A-\lambda_i(A) I$ depends smoothly on $A$, and has eigenvalues bounded away from zero on the range of $1-P_i$ (which also smoothly dependent on $A$), we see that $R_i$ (and hence $Q_i$) also depend smoothly on $A$.
\end{proof}

Now we turn to more quantitative estimates on the smoothness of $\lambda_i$, $P_i$, $R_i$, $Q_i$ for fixed $1 \leq i \leq n$.  For our applications to Proposition \ref{swap}, we consider matrices $A = A(z)$ which are parameterized smoothly (though not holomorphically, of course) by some complex parameter $z$ in a domain $\Omega \subset \C$.  We assume that $\lambda_i(A(z))$ is simple for all $z \in \Omega$, which by the above lemma implies that $\lambda_i := \lambda_i(A(z))$, $P_i := P_i(A(z))$, $R_i := R_i(A(z))$, and $Q_i := Q_i(A(z))$ all depend smoothly on $z$ in $\Omega$.  

It will be convenient to introduce some more notation, to deal with the technical fact that $z$ is complex-valued rather than real.
For any smooth function $f(z)$ (which may be scalar, vector, or matrix-valued), we use 
$$\nabla^m f(z) := (\frac{\partial^m f}{\partial \Re(z)^l \partial \Im(z)^{m-l}})_{l=0}^m$$
to denote the $m^{th}$ gradient with respect to the real and imaginary parts of $z$ (thus $\nabla^m f$ is an $(m+1)$-tuple, each of whose components is of the same type as $f$; for instance, if $f$ is matrix valued, so are all the components of $\nabla^m f$).  If $f$ is matrix-valued, we define $\|\nabla^m f\|_F$ to be the $\ell^2$ norm of the Frobenius norms of the various components of
$\nabla^m f$, and similarly for other norms.

We observe the Leibniz rule
\begin{equation}\label{nabfag}
\nabla^k ( f g ) = \sum_{m=0}^k (\nabla^m f) * (\nabla^{k-m} g) = f (\nabla^k g) + (\nabla^k f) g + \sum_{m=1}^{k-1} (\nabla^m f) * (\nabla^{k-m} g)
\end{equation}
where the $(k+1)$-tuple $(\nabla^m f) * (\nabla^{k-m} g)$ is defined as
$$ \Big( \sum_{\max(0,l+m-k) \leq l' \leq \min(l,m)} \binom{l}{l'} \binom{k-l}{m-l'} \frac{\partial^{k-m} f}{\partial \Re(z)^{l'} \partial \Im(z)^{m-l'}}
\frac{\partial^m g}{\partial \Re(z)^{l-l'} \partial \Im(z)^{k-m-l+l'}} \Big)_{l=0}^k.$$
The exact coefficients here are not important, and one can view $(\nabla^m f) * (\nabla^{k-m} g)$ simply as a bilinear combination of $\nabla^m f$ and $\nabla^{k-m} g$.  Note that \eqref{nabfag} is valid for matrix-valued $f,g$ as well as scalar $f,g$. 
For a tuple $(A_1, \dots, A_{l})$ of matrices, we define 

$$\tr (A_1, \dots, A_{l}) := (\tr (A_1) , \dots, \tr (A_l) ). $$

We can now give the higher order Hadamard variation formulae:

\begin{proposition}[Recursive formula for derivatives of $\lambda_i, P_i, R_i$]\label{recur}  For any integer $k \geq 1$, we have
\begin{equation}\label{alp-0}
\nabla^k \lambda_i = \sum_{m=1}^k \tr( (\nabla^m A) * (\nabla^{k-m} P_i) P_i ) - \sum_{m=1}^{k-1} (\nabla^m \lambda_i) * \tr( (\nabla^{k-m} P_i) P_i ).
\end{equation}
and
\begin{equation}\label{ppi-0}
\begin{split}
\nabla^k P_i &= - R_i (\nabla^k A) P_i - P_i (\nabla^k A) R_i \\
&\quad - \sum_{m=1}^{k-1} [ R_i ((\nabla^m A) - (\nabla^m \lambda_i) I) * (\nabla^{k-m} P_i) P_i +P_i (\nabla^{k-m} P_i) * ((\nabla^m A) - (\nabla^m \lambda_i) I) R_i] \\
&\quad + \sum_{m=1}^{k-1} (\nabla^{m} P_i) * (\nabla^{k-m} P_i) (I - 2P_i). 
\end{split}
\end{equation}
\begin{equation}\label{rop-cont}
 (\nabla^{k} R_i) P_i = - \sum_{m=0}^{k-1} (\nabla^{m} R_i) * (\nabla^{k-m} P_i).
\end{equation}
and
\begin{equation}\label{rop0}
 (\nabla^{k} R_i) (I-P_i) = - (\nabla^{k} P_i) R_i - \sum_{m=0}^{k-1} (\nabla^{m} R_i) * ((\nabla^{k-m} A) - (\nabla^{k-m} \lambda_i) I) R_i
\end{equation}
and thus
\begin{equation}\label{rop}
\begin{split}
(\nabla^{k} R_i) &= - (\nabla^{k} P_i) R_i - \sum_{m=0}^{k-1} (\nabla^{m} R_i) * ((\nabla^{k-m} A) - (\nabla^{k-m} \lambda_i) I) R_i \\
&\quad - \sum_{m=0}^{k-1} (\nabla^{m} R_i) * (\nabla^{k-m} P_i).
\end{split}\end{equation}
\end{proposition}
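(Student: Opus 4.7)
The four formulas are proved by applying the Leibniz rule \eqref{nabfag} to a short list of algebraic identities relating $A, \lambda_i, P_i, R_i$: the eigenequation $(A - \lambda_i I) P_i = 0 = P_i(A - \lambda_i I)$; the projection identity $P_i^2 = P_i$ together with $\tr P_i = 1$; and the resolvent relations $R_i P_i = P_i R_i = 0$ and $R_i(A - \lambda_i I) = (A - \lambda_i I) R_i = I - P_i$. The plan is to isolate the top-order derivative in each identity and invert $A - \lambda_i I$ on the range of $I - P_i$ using $R_i$, while using the remaining identities to cancel the ``undifferentiated'' $m = 0$ pieces.

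For \eqref{alp-0}, the key step is to evaluate $\tr(\nabla^k(AP_i) \cdot P_i)$ in two ways. The identity $AP_i = \lambda_i P_i$ gives $\nabla^k(AP_i) = \nabla^k(\lambda_i P_i)$, so taking the trace against $P_i$ and applying Leibniz yields
$$\sum_{m=0}^k \tr\bigl((\nabla^m A) \ast (\nabla^{k-m} P_i) P_i\bigr) = \sum_{m=0}^k (\nabla^m \lambda_i) \ast \tr\bigl((\nabla^{k-m} P_i) P_i\bigr).$$
The two $m = 0$ contributions coincide via $\tr(A(\nabla^k P_i) P_i) = \tr(P_i A(\nabla^k P_i)) = \lambda_i\tr(P_i(\nabla^k P_i)) = \lambda_i\tr((\nabla^k P_i) P_i)$ (using cyclicity of the trace), and cancel. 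The $m = k$ term on the right is $(\nabla^k \lambda_i)\tr(P_i^2) = \nabla^k \lambda_i$ since $\tr P_i^2 = 1$. Solving for $\nabla^k \lambda_i$ gives \eqref{alp-0}.

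For \eqref{ppi-0}, the same input $\nabla^k(AP_i) = \nabla^k(\lambda_i P_i)$, taken as a matrix identity rather than a scalar one, isolates
$$(A - \lambda_i I)(\nabla^k P_i) = -\sum_{m=1}^{k}\bigl[(\nabla^m A) - (\nabla^m \lambda_i) I\bigr] \ast (\nabla^{k-m}P_i).$$
Multiplying on the left by $R_i$ (using $R_i(A - \lambda_i I) = I - P_i$ and $R_i P_i = 0$ to peel off the top-order piece $-R_i(\nabla^k A) P_i$) and then on the right by $P_i$ produces the block $(I - P_i)(\nabla^k P_i) P_i$ in the stated form. The Hermitian-adjoint argument (starting from $P_i A = \lambda_i P_i$ and multiplying by $R_i$ on the right) gives the companion block $P_i(\nabla^k P_i)(I - P_i)$. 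The two diagonal blocks are obtained by differentiating $P_i^2 = P_i$: setting $S := \sum_{m=1}^{k-1}\binom{k}{m}(\nabla^m P_i)(\nabla^{k-m} P_i)$, multiplication of the Leibniz identity by $P_i$ on the left (resp.\ right) gives $P_i(\nabla^k P_i) P_i = -P_i S = -S P_i$ (so $P_i$ automatically commutes with $S$), and conjugation by $I - P_i$ gives $(I - P_i)(\nabla^k P_i)(I - P_i) = (I - P_i) S$. Summing the four blocks recovers $\nabla^k P_i$, with the combined diagonal contribution $(I - 2P_i) S = S(I - 2P_i)$ producing the $(I - 2P_i)$ factor in the last sum of \eqref{ppi-0}.

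For the resolvent formulas, Leibniz applied to $R_i P_i = 0$ gives $(\nabla^k R_i) P_i + R_i(\nabla^k P_i) + \sum_{m=1}^{k-1}\binom{k}{m}(\nabla^m R_i)(\nabla^{k-m} P_i) = 0$, which is \eqref{rop-cont} after solving for $(\nabla^k R_i) P_i$. For \eqref{rop0}, differentiating $R_i(A - \lambda_i I) = I - P_i$ and right-multiplying by $R_i$ (using $(A - \lambda_i I) R_i = I - P_i$ and $(I - P_i) R_i = R_i$) converts the top-order term $(\nabla^k R_i)(A - \lambda_i I) R_i$ into $(\nabla^k R_i)(I - P_i)$, and the right-hand side $-\nabla^k P_i$ becomes $-(\nabla^k P_i) R_i$ on multiplication by $R_i$. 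Finally \eqref{rop} is just $\nabla^k R_i = (\nabla^k R_i) P_i + (\nabla^k R_i)(I - P_i)$, combining the previous two formulas. The only real obstacle throughout is purely combinatorial: careful bookkeeping of the $\ast$-convention in the Leibniz expansions, and verifying that the $m = 0$ terms cancel exactly through the algebraic relations above. No further analytic input is needed beyond the smoothness of $\lambda_i, P_i, R_i$ already established.
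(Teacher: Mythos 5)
Your proposal is correct and follows essentially the same route as the paper: differentiate the algebraic identities $AP_i = \lambda_i P_i$, $P_i^2 = P_i$, $R_iP_i = 0$, $R_i(A-\lambda_i I) = I - P_i$ a total of $k$ times via Leibniz, then peel off the top-order ($m=0$ or $m=k$) terms using cyclicity of the trace, the block decomposition of $\nabla^k P_i$, and left-multiplication by $R_i$. The one small piece of extra work you do — establishing $P_iS = SP_i$ by computing $P_i(\nabla^k P_i)P_i$ from both left- and right-multiplication — is not needed in the paper's version, which only uses the right-multiplied form $-\sum_m (\nabla^m P_i)*(\nabla^{k-m}P_i)P_i$ and its $(I-P_i)$ analogue, but the observation is true and harmless. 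Likewise, your off-diagonal derivation isolates $(A-\lambda_iI)(\nabla^k P_i)$ and then applies $R_i$, whereas the paper first projects by $I-P_i$ and then applies $R_i$; since $R_i(I-P_i)=R_i$, these are the same computation.
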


\begin{proof}  Our starting point is the identities
\begin{equation}\label{alp}
\lambda_i P_i = A P_i
\end{equation}
and
\begin{equation}\label{ppi}
P_i P_i = P_i.
\end{equation}
We differentiate these identities $k$ times using the Leibniz rule \eqref{nabfag} to obtain
\begin{equation}\label{alp-k}
(\nabla^{k} \lambda_i) P_i + \sum_{m=0}^{k-1} (\nabla^m \lambda_i) * (\nabla^{k-m} P_i) = \sum_{m=0}^k (\nabla^m A) * (\nabla^{k-m} P_i).
\end{equation}
and
\begin{equation}\label{ppi-k}
(\nabla^{k} P_i) P_i + P_i (\nabla^{k} P_i) + \sum_{m=1}^{k-1} (\nabla^{m} P_i) * (\nabla^{k-m} P_i) = (\nabla^{k} P_i).
\end{equation}
Multiplying \eqref{alp-k} by $P_i$ and taking traces one obtains \eqref{alp-0} (the $m=0$ terms cancel because of \eqref{alp}, which implies that $\tr( A (\nabla^{m} P_i) P_i ) = \lambda_i \tr( (\nabla^{m} P_i) P_i )$).

We next compute $\nabla^{k} P_i$ using the decomposition

\begin{equation} \label{eqn:decomposition} \nabla^{k} P_i= P_i (\nabla^{k} P_i) P_i + (I-P_i) (\nabla^{k} P_i) (I-P_i) +
(I-P_i) (\nabla^{k} P_i) P_i + P_i (\nabla^{k} P_i) (I- P_i). \end{equation}

Multiplying both sides of \eqref{ppi-k}  by $P_i$ (on the right) and using the identity $P_iP_i=P_i$, we get a cancelation which implies

\begin{equation}\label{ppi-1}
P_i (\nabla^{k} P_i) P_i = - \sum_{m=1}^{k-1} (\nabla^{m} P_i) * (\nabla^{k-m} P_i) P_i.
\end{equation}

Repeating the same trick with  $I-P_i$  instead of $P_i$, we have

\begin{equation}\label{ppi-2}
(I-P_i) (\nabla^{k} P_i) (I-P_i) = \sum_{m=1}^{k-1} (\nabla^{m} P_i) * (\nabla^{k-m} P_i) (I-P_i).
\end{equation}

This gives two of the four components of $\nabla^{k} P_i$.  To obtain the other components, multiply \eqref{alp-k} on the left by $I-P_i$ and notice that 
the $(I-P_i) (\nabla^{k} \lambda_i) P_i$ term vanishes because of \eqref{ppi}. Rearranging the terms, we obtain 

$$
(I-P_i) (A - \lambda_i) (\nabla^{k} P_i) = - \sum_{m=1}^{k-1} (I-P_i) ((\nabla^m A) - (\nabla^m \lambda_i) I) * (\nabla^{k-m} P_i) - (I-P_i) (\nabla^k A) P_i
$$
  Applying $R_i$ on the left and $P_i$ on the right and using \eqref{RA}, we get 
$$
(I-P_i) (\nabla^{k} P_i) P_i = - \sum_{m=1}^{k-1} R_i ((\nabla^m A) - (\nabla^m \lambda_i) I) * (\nabla^{k-m} P_i) P_i - R_i (\nabla^k A) P_i.
$$
By taking adjoints, we obtain 
$$
P_i (\nabla^{k} P_i) (I-P_i) = - \sum_{m=1}^{k-1} P_i (\nabla^{k-m} P_i) * ((\nabla^m A) - (\nabla^m \lambda_i) I) R_i - P_i (\nabla^k A) R_i.
$$
These, together with \eqref{ppi-1}, \eqref{ppi-2} and \eqref{eqn:decomposition} imply \eqref{ppi-0}.

Now we turn to $R_i$.  Here, we use the identities \eqref{RA}
$$ R_i (A-\lambda_i I) = I-P_i$$
and
$$ R_i P_i = 0.$$
Differentiating the second identity $k$ times gives \eqref{rop-cont}.  Differentiating the first identity $k$ times, meanwhile, gives
$$ (\nabla^{k} R_i) (A-\lambda_i I) = - (\nabla^{k} P_i) - \sum_{m=0}^{k-1} (\nabla^{m} R_i) * ((\nabla^{k-m} A) - (\nabla^{k-m} \lambda_i) I);$$
multiplying on the right by $R_i$, we obtain \eqref{rop0}, and then \eqref{rop} follows.
\end{proof}

We isolate the $k=1$ case of Proposition \ref{recur}, obtaining the \emph{Hadamard variation formulae}
\begin{equation}\label{alp-first}
\nabla \lambda_i = \tr( \nabla A P_i )
\end{equation}
and
\begin{equation}\label{ppi-first}
\nabla P_i = - R_i (\nabla A) P_i - P_i (\nabla A) R_i
\end{equation}

\subsection{Bounding the derivatives}

 We now use the recursive inequalities obtained in the previous section to bound the derivatives of $\lambda_i$ and $P_i$, assuming some quantitative control on the spectral gap between $\lambda_i$ and other eigenvalues, and on the matrix $A$ and its derivatives.  Let us
   begin with a crude bound.

\begin{lemma}[Crude bound]\label{crude}  Let $A = A(z)$ be an $n \times n$ matrix varying (real)-linearly in $z$ (thus $\nabla^k A = 0$ for $k \geq 2$), with
$$ \| \nabla A \|_{op} \leq V$$
for some $V > 0$.  Let $1 \leq i \leq n$.
At some fixed value of $z$, suppose we have the spectral gap condition
\begin{equation}\label{laj}
|\lambda_j(A(z)) - \lambda_i(A(z))| \geq r
\end{equation}
for all $j \neq i$ and some $r > 0$ (in particular, $\lambda_i(A(z))$ is a simple eigenvalue).
Then for all $k \geq 1$ we have (at this fixed choice of $z$)
\begin{equation}\label{li-crude}
 |\nabla^{k} \lambda_i| \ll_k V^k r^{1-k}
\end{equation}
and
\begin{equation}\label{pi-crude}
 \| \nabla^{k} P_i \|_{op} \ll_k V^k r^{-k}
\end{equation}
and
\begin{equation}\label{ri-crude}
 \| \nabla^{k} R_i \|_{op} \ll_k V^k r^{-k-1}.
\end{equation}
and
\begin{equation}\label{xi-crude}
 |\nabla^{k} Q_i| \ll_k n V^k r^{-k-2}.
\end{equation}

\end{lemma}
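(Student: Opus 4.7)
The plan is to argue by induction on $k$, using the recursive identities of Proposition \ref{recur}. The base $k=1$ is a direct computation from the Hadamard formulae \eqref{alp-first}, \eqref{ppi-first}, together with the $k=1$ cases of \eqref{rop} and the identity $Q_i = \tr(R_i^2)$ (valid since $R_i$ is Hermitian). The ingredients are the trivial bounds $\|P_i\|_{op} = \tr(P_i) = 1$, the spectral gap bound $\|R_i\|_{op} \leq r^{-1}$ which follows from \eqref{laj} and the spectral decomposition of $R_i$, and the Frobenius refinement $\|R_i\|_F \leq \sqrt{n}\,\|R_i\|_{op} \leq \sqrt{n}/r$. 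Since $P_i$ is rank one with trace one, $|\tr(MP_i)| \leq \|M\|_{op}$ for any matrix $M$, which gives $|\nabla\lambda_i| \leq V$. The identity \eqref{ppi-first} yields $\|\nabla P_i\|_{op} \leq 2V/r$; substituting into \eqref{rop} with $k=1$ (and using $\nabla^m A=0$ for $m\ge 2$) gives $\|\nabla R_i\|_{op} \ll V r^{-2}$; finally $|\nabla Q_i| = |2\tr(R_i \nabla R_i)| \leq 2 \|R_i\|_F\,\|\nabla R_i\|_F \ll n V r^{-3}$.

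For the inductive step, suppose the four bounds hold for all orders $< k$. The key simplification is that $A$ is (real-)linear in $z$, so $\nabla^m A = 0$ for $m \geq 2$ and only the $m=1$ slot of the $(\nabla^m A)$ sums in \eqref{alp-0}, \eqref{ppi-0}, \eqref{rop} contributes; all other higher-order contributions come through the $(\nabla^m \lambda_i) I$ subtractions. Each surviving summand in \eqref{alp-0} can then be estimated by $\|\nabla A\|_{op}\,\|\nabla^{k-1} P_i\|_{op}$ (the $m=1$ term) or $|\nabla^m \lambda_i|\,\|\nabla^{k-m} P_i\|_{op}$ (the $\lambda$-correction terms), and in each case the induction hypothesis gives a bound of $V^k r^{1-k}$; summing over the $O_k(1)$ terms yields \eqref{li-crude}. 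The identity \eqref{ppi-0} is treated in the same fashion, with an extra factor of $\|R_i\|_{op} \leq r^{-1}$ from the leading $R_i$, so each summand is $\ll V^k r^{-k}$, giving \eqref{pi-crude}. The same scheme applied to \eqref{rop} (again only the $k-m=1$ slot of $\nabla^{k-m} A$ survives, the others being $\nabla^{k-m}\lambda_i I$) produces the extra $r^{-1}$ to give \eqref{ri-crude}.

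For \eqref{xi-crude} we differentiate $Q_i = \tr(R_i^2)$ via a scalar Leibniz rule to get $\nabla^k Q_i$ as an $O_k(1)$-size linear combination of terms $\tr((\nabla^m R_i)(\nabla^{k-m} R_i))$ with $0 \le m \le k$. Each such term is bounded by $\|\nabla^m R_i\|_F\,\|\nabla^{k-m} R_i\|_F$, and the Frobenius/operator-norm inequality $\|M\|_F \le \sqrt{n}\,\|M\|_{op}$ together with the already-proven bound \eqref{ri-crude} yields $\|\nabla^j R_i\|_F \ll_j \sqrt{n}\,V^j r^{-j-1}$; multiplying gives $n V^k r^{-k-2}$ per term, and \eqref{xi-crude} follows.

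The only point requiring a modicum of care is bookkeeping: because $z$ is complex, each $\nabla^j$ is a $(j{+}1)$-tuple of real partial derivatives, so the Leibniz products $(\nabla^m f)*(\nabla^{k-m}g)$ in Proposition \ref{recur} are bilinear combinations rather than single products, but this only affects the implied constants $O_k(1)$ and not the shape of the bounds. There is no genuine obstacle; the proof is simply a careful induction driven by Proposition \ref{recur} and the linearity hypothesis $\nabla^{\ge 2} A = 0$.
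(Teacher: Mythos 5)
Your proof is correct and follows essentially the same route as the paper: establish the two key bounds $\|R_i\|_{op}\le 1/r$ and $|\tr(BP_i)|\le\|B\|_{op}$, then induct on $k$ through the recursive identities \eqref{alp-0}, \eqref{ppi-0}, \eqref{rop} of Proposition \ref{recur}, exploiting the linearity $\nabla^{\ge2}A=0$, and finish \eqref{xi-crude} by a Leibniz expansion of $\nabla^k\tr(R_i^2)$ bounded via Frobenius norms. The paper states this more tersely, but the structure, order of arguments, and all the estimates you supply match its proof.
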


\begin{proof}  Observe that the spectral gap condition \eqref{laj} ensures that
\begin{equation}\label{laj-inv}
\|R_i\|_{op} \leq \frac{1}{r}.
\end{equation}
We also observe the easy inequality
\begin{equation}\label{rank1}
|\tr( B P_i )| = |\tr(P_i B)| = |\tr( P_i B P_i)| \leq \|B\|_{op}
\end{equation}
for any Hermitian matrix $B$, which follows as $P_i$ is a rank one orthogonal projection.

To prove \eqref{li-crude}, \eqref{pi-crude}, we induct on $k$.  The case $k=1$ follows from \eqref{alp-first}, \eqref{ppi-first}, \eqref{rank1}, and \eqref{laj-inv}; and then, for $k > 1$, the claim follows from the induction hypotheses and \eqref{alp-0}, \eqref{ppi-0}, \eqref{laj-inv}, \eqref{rank1}.

To prove \eqref{ri-crude}, we also induct on $k$.  The case $k=0$ follows from \eqref{laj-inv}.  For $k \geq 1$, the claim then follows from the induction hypotheses and \eqref{rop}.

To prove \eqref{xi-crude}, we use the product rule to bound
\begin{align*}
 |\nabla^{k} Q_i| &\ll_k \sum_{m=0}^k |\tr( (\nabla^m R_i) * (\nabla^{k-m} R_i) )|\\
&\ll_k n \sum_{m=0}^k \| \nabla^m R_i \|_{op} \| \nabla^{k-m} R_i \|_{op}
\end{align*}
and the claim follows from \eqref{ri-crude}.
\end{proof}

This crude bound is insufficient for our applications, and we will need to supplement it with one that strengthens the spectral condition, and also assumes an ``delocalization'' property for the projections $P_\alpha, P_\beta$ relative to the perturbation $\dot A$.

\begin{lemma}[Better bound]\label{better} Let $A = A(z)$ be an $n \times n$ matrix varying real-linearly in $z$.  Let $1 \leq i \leq n$.  At some fixed value of $z$, suppose that $\lambda_i = \lambda_i(A(z))$ is a simple eigenvalue, and that we have a partition
$$ I = P_i + \sum_{\alpha \in J} P_\alpha$$
where $J$ is a finite index set (not containing $i$), and $P_\alpha$ are orthogonal projections to invariant spaces on $A$ (i.e. to spans of eigenvectors not corresponding to $\lambda_i$).  Suppose that on the range of each $P_\alpha$, the eigenvalues of $A-\lambda_i$ have magnitude at least $r_\alpha$ for some $r_\alpha > 0$; equivalently, we have
\begin{equation}\label{riF}
\| R_i P_\alpha \|_{op} \leq \frac{1}{r_\alpha}.
\end{equation}
Suppose also that we have the delocalization bounds
\begin{equation}\label{vu}
\| P_\alpha \dot A P_\beta\|_{F} \leq v c_\alpha c_\beta
\end{equation}
for all $\alpha,\beta \in J$ and some $v>0$ and $c_\alpha \geq 1$ with $c_i=1$, and the strong spectral gap condition
\begin{equation}\label{jil}
 \sum_{\alpha \in J} \frac{c_\alpha^2}{r_\alpha} \leq L
\end{equation}
for some $L>0$.  Then at this fixed choice of $z$, and for all $\alpha,\beta \in J$, we have
\begin{align}
 |\nabla^{k} \lambda_i| &\ll_k L^{k-1} v^k \label{luck}\\
 \| P_i (\nabla^{k} P_i) P_i \|_{F} &\ll_k L^k v^k\label{uki}\\
 \|P_\alpha (\nabla^{k} P_i) P_i\|_{F} = \|P_i (\nabla^{k} P_i) P_\alpha\|_{F} &\ll_k \frac{c_\alpha}{r_\alpha} L^{k-1} v^k\label{ukl}\\
 \|P_\alpha (\nabla^{k} P_i) P_\beta\|_{F} &\ll_k \frac{c_\alpha c_\beta}{r_\alpha r_\beta} L^{k-2} v^k\label{ukl2}
\end{align}
 for all $k \geq 1$, and
\begin{align}
\| P_i (\nabla^{k} R_i) P_i \|_{F} &\ll_k L^{k+1} v^k \label{rop4}\\
\|P_\alpha (\nabla^{k} R_i) P_i\|_{F} = \|P_i (\nabla^{k} R_i) P_\alpha\|_{F} &\ll_k \frac{c_\alpha}{r_\alpha} L^{k} v^k\label{rop5}\\
\| P_\alpha (\nabla^{k} R_i) P_\beta \|_{F} &\ll_k \frac{c_\alpha c_\beta}{r_\alpha r_\beta} L^{k-1} v^k \label{rop2}
\end{align}
for all $k \geq 0$.
\end{lemma}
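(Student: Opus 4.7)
The proof proceeds by strong induction on $k$, establishing \eqref{luck}--\eqref{rop2} simultaneously. Since $A$ is affine in $z$, all derivatives $\nabla^m A$ with $m \geq 2$ vanish, so in the recursions of Proposition \ref{recur} the only surviving $A$-derivative is $\nabla A =: \dot A$, and the delocalization \eqref{vu} together with the gap bound \eqref{riF} are the only inputs we have on $A$ and $R_i$.

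For the base case $k=1$, the Hadamard formulas \eqref{alp-first}, \eqref{ppi-first} give everything directly. The bound \eqref{luck} follows from $|\nabla \lambda_i| = |\tr(P_i \dot A P_i)| \leq \|P_i \dot A P_i\|_F \leq v$. For \eqref{uki}--\eqref{ukl2}, the identity $\nabla P_i = -R_i \dot A P_i - P_i \dot A R_i$ combined with $R_i P_i = P_i R_i = 0$ makes the $P_i P_i$ and $P_\alpha P_\beta$ blocks (with $\alpha,\beta\neq i$) vanish, and the remaining $P_\alpha(\nabla P_i)P_i$ block is bounded by $\|P_\alpha R_i\|_{\mathrm{op}}\|P_\alpha \dot A P_i\|_F \leq v c_\alpha/r_\alpha$. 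The $k=0$ versions of \eqref{rop4}--\eqref{rop2} are immediate from $R_i P_i = 0$ and \eqref{riF}.

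For the inductive step, sandwich each recursion from Proposition \ref{recur} by the appropriate pair of projections from $\{P_i, P_\alpha, P_\beta\}$, and insert a resolution $I = P_i + \sum_\gamma P_\gamma$ between every adjacent pair of factors on the right-hand side. Each resulting block is a product of lower-order quantities already controlled by the inductive hypothesis, together with at most one factor of $\dot A$ (controlled by \eqref{vu}) and at most one factor of $R_i P_\gamma$ (controlled by \eqref{riF}). The sum over each inserted index $\gamma$ then has the schematic form $\sum_\gamma (c_\gamma^2/r_\gamma)(\cdots)$, which is absorbed into one factor of $L$ via \eqref{jil}. Applying this scheme to \eqref{alp-0} yields \eqref{luck}; to \eqref{ppi-0} yields \eqref{uki}--\eqref{ukl2}; and to \eqref{rop-cont} or \eqref{rop0} (feeding in the projection bounds just obtained at the same step $k$) yields \eqref{rop4}--\eqref{rop2}, the extra $R_i$ in those formulas producing the single additional power of $1/r_\alpha$ or $L$ relative to the corresponding $P_i$-bounds.

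The main technical obstacle is purely bookkeeping: at each term produced by sandwiching and inserting partitions of unity, one must track which block $(P_\gamma, P_{\gamma'})$ each factor sits between and then verify that the accumulated exponents of $L$, $v$, and $c_\alpha/r_\alpha$ match the exponents $k{-}1$, $k$, or $k{-}2$ stated in \eqref{luck}--\eqref{rop2}. Two observations make this tractable: first, because $c_\alpha \geq 1$, condition \eqref{jil} forces $1/r_\alpha \leq L$ for every individual $\alpha$, so minor overestimates that trade a factor of $c_\alpha/r_\alpha$ for a factor of $L$ remain within the stated bounds; second, in the recursions \eqref{ppi-0} and \eqref{rop} the cross term $-(\nabla^m \lambda_i) I$ contributes only through its product with another projection-derivative factor, so the bound \eqref{luck} at lower orders feeds cleanly into the higher-order bounds without creating extraneous $L$-factors. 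These two facts keep the induction self-consistent and close the proof.
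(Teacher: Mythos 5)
Your proposal follows essentially the same approach as the paper's proof: inducting on $k$ via the Hadamard recursions of Proposition~\ref{recur}, sandwiching each identity by projections and inserting partitions of unity, then closing each step with \eqref{vu}, \eqref{riF}, \eqref{jil}, and the crude per-index bound $1/r_\alpha \le L$. The paper organises this as two sequential inductions (first \eqref{luck}--\eqref{ukl2} for all $k$, then \eqref{rop4}--\eqref{rop2} using those), which is only cosmetically different from your simultaneous scheme, and your two closing observations about $1/r_\alpha\le L$ and the role of the $\nabla^m\lambda_i$ cross terms match the exact devices used in the paper's bookkeeping.
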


We remark that we can unify the bounds \eqref{uki}-\eqref{ukl2} and \eqref{rop4}-\eqref{rop2} by allowing $\alpha,\beta$ to vary in $J \cup \{i\}$ rather than $J$, and adopting the convention that $r_i := 1/L$.

\begin{proof}  Note that the projections $P_i$ and the $P_\alpha$ are idempotent and all annihilate each other, and commute with $A$ and $R_i$.  We will use these facts throughout this proof without further comment.

To prove \eqref{luck}-\eqref{ukl2}, we again induct on $k$.  When $k=1$, the claim \eqref{luck} follows from \eqref{alp-first}, \eqref{vu}, and \eqref{rank1}, while \eqref{uki}-\eqref{ukl2} follow from \eqref{ppi-first} and \eqref{vu}.  (For \eqref{ukl2} we in fact obtain that the left-hand side is zero.)

Now suppose inductively that $k > 1$, and that the claims have already been proven for all smaller values of $k$.

We first prove \eqref{luck}.  From \eqref{alp-0} and the linear nature of $A$, we have
$$
|\nabla^{k} \lambda_i| \ll_k |\tr( (\nabla A) * (\nabla^{k-1} P_i) P_i )| + \sum_{m=1}^{k-1} |\nabla^m \lambda_i| |\tr( (\nabla^{k-m} P_i) P_i )|.
$$
From \eqref{rank1} and the inductive hypothesis \eqref{uki} we have
$$ |\tr( (\nabla^{k-m} P_i) P_i )| \ll_k L^{k-m} v^{k-m}$$
for any $1 \leq m \leq k-1$, and thus by the inductive hypothesis \eqref{luck} we see that
$$ \sum_{m=1}^{k-1} |\nabla^m \lambda_i| |\tr( (\nabla^{k-m} P_i) P_i )| \ll_k L^{k-1} v^k.$$
Next, by splitting $(\nabla A) * (\nabla^{k-1} P_i) P_i$ as $\sum_{\alpha \in J \cup \{i\}} (\nabla A) * P_\alpha (\nabla^{k-1} P_i) P_i$ and using \eqref{rank1}, we have
$$ |\tr( (\nabla A) * (\nabla^{k-1} P_i) P_i )| \ll_k \sum_{\alpha \in J \cup \{i\}} \| P_i (\nabla A) P_\alpha \|_{F} \| P_\alpha (\nabla^{k-1} P_i) P_i \|_{F}.$$
Using \eqref{vu} and the inductive hypotheses \eqref{uki}, \eqref{ukl}, we thus have
$$ |\tr( (\nabla A) * (\nabla^{k-1} P_i) P_i )| \ll_k v L^{k-1} v^{k-1} + \sum_{\alpha \in J} v c_\alpha \frac{c_\alpha}{r_\alpha} L^{k-2} v^{k-1}.$$
Applying \eqref{jil} we conclude \eqref{luck} as desired.

Now we prove \eqref{uki}.  From \eqref{ppi-0} (or \eqref{ppi-1}) we have
$$
\| P_i (\nabla^{k} P_i) P_i \|_{F} \ll_k \sum_{m=1}^{k-1} \| P_i (\nabla^{m} P_i) * (\nabla^{k-m} P_i) P_i \|_{F}.$$
We can split
$$ \| P_i (\nabla^{m} P_i) * (\nabla^{k-m} P_i) P_i \|_{F} \ll_k \sum_{\alpha \in J \cup \{i\}} \|P_i (\nabla^{m} P_i) P_{\alpha}\|_{F} \|P_{\alpha} (\nabla^{k-m} P_i) P_i\|_{F}.$$
Applying the inductive hypotheses \eqref{uki}, \eqref{ukl}, we conclude
$$
\| P_i (\nabla^{k} P_i) P_i \|_{F} \ll_k \sum_{m=1}^{k-1} L^m v^m L^{k-m} v^{k-m} + \sum_{\alpha \in J} \frac{c_\alpha}{r_\alpha} L^{m-1} v^m \frac{c_\alpha}{r_\alpha} L^{k-m-1} v^{k-m}.$$
bounding one of the $\frac{1}{r_\alpha}$ factors crudely by $L$ and then summing using \eqref{jil} we obtain \eqref{uki} as desired.

Now we prove \eqref{ukl2}.  From \eqref{ppi-0} (or \eqref{ppi-2}) we have
\begin{align*}
\| P_\alpha (\nabla^{k} P_i) P_\beta \|_{F} &\ll_k \sum_{m=1}^{k-1} \| P_\alpha (\nabla^{m} P_i) * (\nabla^{k-m} P_i) P_\beta \|_{F} \\
&\ll_k \sum_{m=1}^{k-1} \sum_{\gamma \in J \cup \{i\}} \| P_\alpha (\nabla^{m} P_i) P_\gamma \|_{F} \| P_\gamma (\nabla^{k-m} P_i) P_\beta \|_{F}
\end{align*}
Using the inductive hypotheses \eqref{ukl}, \eqref{ukl2}, we obtain
$$
\| P_\alpha (\nabla^{k} P_i) P_\beta \|_{F} \ll_k
\sum_{m=1}^{k-1} \frac{1}{r_\alpha} L^{m-1} v^m \frac{1}{r_\beta} L^{k-m-1} v^{k-m} + \sum_{\gamma \in J} \frac{c_\alpha c_\gamma}{r_\alpha r_\gamma} L^{m-2} v^m \frac{c_\gamma c_\beta}{r_\gamma r_\beta} L^{k-m-2} v^{k-m}.
$$
Bounding one of the $\frac{1}{r_\gamma}$ factors crudely by $L$ and applying \eqref{jil} we obtain \eqref{ukl2} as desired.

Finally, we prove \eqref{ukl}.  Since $P_\alpha (\nabla^{k} P_i) P_i$ has the same Frobenius norm as its adjoint $P_i (\nabla^{k} P_i) P_\alpha$, it suffices to bound
$\|P_\alpha (\nabla^{k} P_i) P_i\|_{F}$.  From \eqref{ppi-0} we have
$$
\|P_\alpha (\nabla^{k} P_i) P_i\|_{F} \ll_k \| R_i P_\alpha (\nabla A) * (\nabla^{k-1} P_i) P_i \|_{F} + \sum_{m=1}^{k-1} |\nabla^m \lambda_i|
\| R_i P_\alpha (\nabla^{k-m} P_i) P_i \|_{F} $$
and hence by \eqref{riF}
$$
\|P_\alpha (\nabla^{k} P_i) P_i\|_{F} \ll_k
\frac{1}{r_\alpha} \| P_\alpha (\nabla A) (\nabla^{k-1} P_i) P_i \|_{F} + \frac{1}{r_\alpha} \sum_{m=1}^{k-1} |\nabla^m \lambda_i|
\| P_\alpha (\nabla^{k-m} P_i) P_i \|_{F}.$$
From the inductive hypotheses \eqref{luck}, \eqref{ukl} and crudely bounding $\frac{1}{r_\alpha}$ by $L$, we have
$$ \frac{1}{r_\alpha} \sum_{m=1}^{k-1} |\nabla^m \lambda_i|
\| P_\alpha (\nabla^{k-m} P_i) P_i \|_{F} \ll_k \frac{c_\alpha}{r_\alpha} L^{k-1} v^k.$$
Meanwhile, by splitting
$$ \| P_\alpha (\nabla A) * (\nabla^{k-1} P_i) P_i \|_{F} \leq \sum_{\beta \in J \cup \{i\}} \| P_\alpha (\nabla A) P_\beta \|_{F} \| P_\beta (\nabla^{k-1} P_i) P_i \|_{F}$$
and using \eqref{vu} and the inductive hypothesis \eqref{ukl} we have
$$
\frac{1}{r_\alpha} \| P_\alpha (\nabla A) * (\nabla^{k-1} P_i) P_i \|_{F}  \ll_k \frac{1}{r_\alpha} (v c_\alpha L^{k-1} v^{k-1} + \sum_{\beta \in J} v c_\alpha c_\beta \frac{c_\beta}{r_\beta} L^{k-2} v^{k-1}).
$$
Applying \eqref{jil} we obtain \eqref{ukl} as required.

Having proven \eqref{luck}-\eqref{ukl2} for all $k \geq 1$, we now prove \eqref{rop4}-\eqref{rop2} by induction.  The claim is easily verified for $k=0$ (note that the left-hand side of \eqref{rop4} and \eqref{rop5} in fact vanishes, as does the left-hand side of \eqref{rop2} unless $\alpha=\beta$), so suppose $k \geq 1$ and that \eqref{rop4}-\eqref{rop2} has been proven for smaller values of $k$.

We first prove \eqref{rop4}.  From \eqref{rop-cont} we have
\begin{align*}
 \| P_i (\nabla^{k} R_i) P_i \|_{F} &\ll_k \sum_{m=0}^{k-1} \| P_i (\nabla^{m} R_i) * (\nabla^{k-m} P_i) P_i \|_{F} \\
&\ll_k \sum_{m=0}^{k-1} \sum_{\alpha \in J \cup \{i\}} \| P_i (\nabla^{m} R_i) P_\alpha \|_{F} \| P_\alpha (\nabla^{k-m} P_i) P_i \|_{F}.
\end{align*}
Applying \eqref{uki}, \eqref{ukl} and the induction hypotheses \eqref{rop4}, \eqref{rop5} we conclude
$$
 \| P_i (\nabla^{k} R_i) P_i \|_{F}  \ll_k \sum_{m=0}^{k-1} L^{m+1} v^m L^{k-m} v^{k-m} + \sum_{\alpha \in J}
 \frac{c_\alpha}{r_\alpha} L^{m} v^m \frac{c_\alpha}{r_\alpha} L^{k-m-1} v^{k-m};$$
crudely bounding one of the $\frac{1}{r_\alpha}$ factors by $L$ and using \eqref{jil} we obtain the claim.

Similarly, to prove \eqref{rop5}, we apply \eqref{rop-cont} as before to obtain
\begin{align*}
 \| P_\alpha (\nabla^{k} R_i) P_i \|_{F} &\ll_k \sum_{m=0}^{k-1} \| P_\alpha (\nabla^{m} R_i) * (\nabla^{k-m} P_i) P_i \|_{F} \\
&\ll_k \sum_{m=0}^{k-1} \sum_{\beta \in J \cup \{i\}} \| P_\alpha (\nabla^{m} R_i) P_\beta \|_{F} \| P_\beta (\nabla^{k-m} P_i) P_i \|_{F};
\end{align*}
applying \eqref{uki}, \eqref{ukl} and the induction hypotheses \eqref{rop5}, \eqref{rop2} we conclude
$$
 \| P_\alpha (\nabla^{k} R_i) P_i \|_{F}  \ll_k \sum_{m=0}^{k-1} \frac{c_\alpha}{r_\alpha} L^{m} v^m L^{k-m} v^{k-m} + \sum_{\beta \in J}
 \frac{c_\alpha c_\beta}{r_\alpha r_\beta} L^{m-1} v^m \frac{c_\beta}{r_\beta} L^{k-m-1} v^{k-m}.$$
Again, bounding one of the $\frac{1}{r_\beta}$ factors by $L$ and using \eqref{jil} we obtain the claim.

Finally, we prove \eqref{rop2}.  From \eqref{rop0} we have
$$
\| P_\alpha (\nabla^{k} R_i) P_\beta \|_{F} \ll_k \| P_\alpha (\nabla^{k} P_i) R_i P_\beta \|_{F} + \| P_\alpha (\nabla^{k-1} R_i) * (\nabla A) R_i P_\beta \|_{F} + \sum_{m=0}^{k-1}  |\nabla^{k-m} \lambda_i| \| P_\alpha (\nabla^{m} R_i) R_i P_\beta \|_{F}.$$
As $R_i P_\beta = P_\beta R_i P_\beta$ has a norm of at most $\frac{1}{r_\beta}$, we conclude
$$
\| P_\alpha (\nabla^{k} R_i) P_\beta \|_{F} \ll_k \frac{1}{r_\beta} \| P_\alpha (\nabla^{k} P_i) P_\beta \|_{F} + \frac{1}{r_\beta} \| P_\alpha (\nabla^{k-1} R_i) * (\nabla A) P_\beta \|_{F} + \frac{1}{r_\beta} \sum_{m=0}^{k-1} |\nabla^{k-m} \lambda_i| \| P_\alpha (\nabla^{m} R_i) P_\beta \|_{F}.$$
From \eqref{luck}, the crude bound $\frac{1}{r_\beta} \leq L$, and the induction hypothesis \eqref{rop2} we have
$$ \frac{1}{r_\beta} \sum_{m=0}^{k-1} |\nabla^{k-m} \lambda_i| \| P_\alpha (\nabla^{m} R_i) P_\beta \|_{F} \ll_k \frac{c_\alpha c_\beta}{r_\alpha r_\beta} L^{k-1} v^k.$$
From \eqref{ukl2} and $\frac{1}{r_\beta} \leq L$ we similarly have
$$ \frac{1}{r_\beta} \| P_\alpha (\nabla^{k} P_i) P_\beta \|_{F} \ll_k \frac{c_\alpha c_\beta}{r_\alpha r_\beta} L^{k-1} v^k.$$
Finally, splitting
$$ \| P_\alpha (\nabla^{k-1} R_i) * (\nabla A) P_\beta \|_{F} \leq \sum_{\gamma \in J \cup \{i\}} \| P_\alpha (\nabla^{k-1} R_i) P_\gamma \|_{F} \| P_\gamma (\nabla A) P_\beta \|_{F}$$
and using the induction hypothesis \eqref{rop5}, \eqref{rop2} and \eqref{vu}, we obtain
$$ \| P_\alpha (\nabla^{k-1} R_i) * (\nabla A) P_\beta \|_{F} \ll_k \frac{c_\alpha}{r_\alpha} L^{k-1} v^{k-1} v c_\beta + \sum_{\gamma \in J}
\frac{c_\alpha c_\gamma}{r_\alpha r_\gamma} L^{k-2} v^{k-1} v c_\gamma c_\beta$$
and thus by \eqref{jil}
$$ \| P_\alpha (\nabla^{k-1} R_i) * (\nabla A) P_\beta \|_{F} \ll_k \frac{c_\alpha c_\beta}{r_\alpha} L^{k-1} v^{k}.$$
Putting all this together we obtain \eqref{rop2} as claimed.
\end{proof}

We extract a special case of the above lemma, in which the perturbation only affects a single entry (and its transpose):

\begin{corollary}[Better bound, special case]\label{better-cor} Let $A = A(z)$ be an $n \times n$ matrix depending on a complex parameter $z$ of the form
$$ A(z) = A(0) + z e_p^* e_q + \overline{z} e_q^* e_p$$
for some vectors $e_p,e_q$.  Let $1 \leq i \leq n$.  At some fixed value of $z$, suppose that $\lambda_i = \lambda_i(A(z))$ is a simple eigenvalue, and that we have a partition
$$ I = P_i + \sum_{\alpha \in J} P_\alpha$$
where $J$ is a finite index set, and $P_\alpha$ are orthogonal projections to invariant spaces on $A$ (i.e. to spans of eigenvectors not corresponding to $\lambda_i$).  Suppose that on the range of each $P_\alpha$, the eigenvalues of $A-\lambda_i$ have magnitude at least $r_\alpha$ for some $r_\alpha > 0$.  Suppose also that we have the incompressibility bounds
$$ \| P_\alpha e_p \|, \| P_\alpha e_q \| \leq w d_\alpha^{1/2} $$
for all $\alpha \in J \cup \{i\}$ and some $w > 0$ and $d_\alpha \geq 1$.  Then at this value of $z$, and for all $k \geq 1$, we have
\begin{equation}\label{luck2}
 |\nabla^{k} \lambda_i| \ll_k (\sum_{\alpha \in J} \frac{d_\alpha}{r_\alpha})^{k-1} w^{2k}.
\end{equation}
and
\begin{equation}\label{furioso}
|\nabla^k Q_i| \ll_k (\sum_{\alpha \in J} \frac{d_\alpha}{r_\alpha})^{k+2} w^{2k}
\end{equation}
for all $k \geq 0$ at this value of $z$.
\end{corollary}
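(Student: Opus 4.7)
The plan is to derive the corollary as a direct specialization of Lemma \ref{better}. Since $A(z)$ is real-affine in $z$, we have $\nabla^k A = 0$ for $k \ge 2$, and each of the two components of $\nabla A$ (namely $e_p e_q^* + e_q e_p^*$ and $i(e_p e_q^* - e_q e_p^*)$) is a Hermitian matrix of rank at most two supported on $\mathrm{span}(e_p,e_q)$. Consequently, for any orthogonal projections $P,P'$ one has the elementary estimate
\begin{equation*}
\|P(\nabla A)P'\|_F \le \|Pe_p\|\|P'e_q\| + \|Pe_q\|\|P'e_p\|.
\end{equation*}
Applying the incompressibility hypothesis to $P = P_\alpha$, $P' = P_\beta$ yields $\|P_\alpha(\nabla A)P_\beta\|_F \ll w^2 (d_\alpha d_\beta)^{1/2}$.

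I now apply Lemma \ref{better} with the choices $c_\alpha := d_\alpha^{1/2}$ for $\alpha \in J$, $c_i := 1$, and $v := C w^2$ for a sufficiently large absolute constant $C$ (the factor $d_i^{1/2}$ arising in the $P_i$-block being absorbed into the constant). Hypothesis \eqref{vu} is then satisfied, and \eqref{jil} reads $L = \sum_{\alpha \in J} d_\alpha/r_\alpha$. The bound \eqref{luck} of Lemma \ref{better} immediately gives \eqref{luck2}.

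For \eqref{furioso}, I start from $Q_i = \tr(R_i^2)$ and apply the Leibniz rule to obtain
\begin{equation*}
|\nabla^k Q_i| \ll_k \sum_{m=0}^k \|\nabla^m R_i\|_F \, \|\nabla^{k-m} R_i\|_F.
\end{equation*}
To bound $\|\nabla^m R_i\|_F$, I decompose block-wise via $I = P_i + \sum_{\alpha\in J}P_\alpha$, writing $\|\nabla^m R_i\|_F^2 = \sum_{\alpha,\beta \in J\cup\{i\}} \|P_\alpha(\nabla^m R_i)P_\beta\|_F^2$, and inserting \eqref{rop4}, \eqref{rop5}, \eqref{rop2}. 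These three estimates fit under the unified bound
\begin{equation*}
\|P_\alpha (\nabla^m R_i) P_\beta\|_F \ll_m \frac{c_\alpha c_\beta}{r_\alpha r_\beta}\, L^{m-1} v^m
\end{equation*}
with the conventions $c_i = 1$ and $r_i := 1/L$. The key arithmetic step is that $\sum_{\alpha} c_\alpha^2/r_\alpha^2 \le L^2$, which follows from $\max_\alpha 1/r_\alpha \le L$ (a consequence of $c_\alpha \ge 1$ and \eqref{jil}) combined with \eqref{jil} itself; squaring then gives $\sum_{\alpha,\beta} (c_\alpha c_\beta)^2/(r_\alpha r_\beta)^2 \le L^4$. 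Therefore $\|\nabla^m R_i\|_F \ll_m L^{m+1} v^m$, and substituting into the Leibniz expansion yields $|\nabla^k Q_i| \ll_k L^{k+2} v^k$, which becomes \eqref{furioso} after inserting $L = \sum_{\alpha\in J} d_\alpha/r_\alpha$ and $v \ll w^2$.

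The only real subtlety is the bookkeeping used to fold the rank-one projection $P_i$ into the index set $J$ (so that the three separate bounds of Lemma \ref{better} collapse to a single formula), together with the verification $\max_\alpha 1/r_\alpha \le L$ that drives the key summation $\sum_\alpha c_\alpha^2/r_\alpha^2 \le L^2$; once these conventions are in place, both \eqref{luck2} and \eqref{furioso} follow mechanically.
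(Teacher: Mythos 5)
Your proof is correct and follows essentially the same route as the paper's: specialize Lemma \ref{better} by taking $c_\alpha = d_\alpha^{1/2}$, $v = O(w^2)$, and $L = O(\sum_{\alpha\in J}d_\alpha/r_\alpha)$, then read off \eqref{luck2} from \eqref{luck}, and for \eqref{furioso} Leibniz-expand $\nabla^k Q_i = \nabla^k\tr(R_iR_i^*)$ and bound the resulting traces using the block estimates \eqref{rop4}--\eqref{rop2} together with $\max_\alpha 1/r_\alpha\le L$ and \eqref{jil}. The one small organizational difference is that you bound $\|\nabla^m R_i\|_F$ globally (via $\|\nabla^m R_i\|_F^2 = \sum_{\alpha,\beta}\|P_\alpha(\nabla^m R_i)P_\beta\|_F^2$) and then pair off factors, whereas the paper applies Cauchy--Schwarz block-by-block inside the trace, arriving at the three terms $L^{k+2}v^k + (\sum_\alpha c_\alpha^2/r_\alpha^2)L^k v^k + (\sum_{\alpha,\beta}c_\alpha^2 c_\beta^2/r_\alpha^2 r_\beta^2)L^{k-2}v^k$ before collapsing with $1/r_\alpha \le L$ and \eqref{jil}; both give $L^{k+2}v^k$ and are interchangeable. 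Your unified convention $r_i:=1/L$, $c_i:=1$ is exactly the one the paper suggests in the remark after Lemma \ref{better}.

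One very minor point: you say the factor $d_i^{1/2}$ in the $P_i$-block is ``absorbed into the constant.'' Strictly speaking the corollary's hypothesis allows $d_i>1$, in which case the Lemma \ref{better} normalization $c_i=1$ forces $v\gg w^2 d_i^{1/2}$ and the bounds would carry an extra $d_i^{k/2}$; but since the corollary is only invoked with $d_i=1$ (cf.\ \eqref{pz1}), this is harmless and the paper glosses over it in exactly the same way.
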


\begin{proof} A short computation shows that the hypotheses of Lemma \ref{better} are obeyed with $v$ replaced by $O(w^2)$, $c_\alpha$ set equal to $d_\alpha^{1/2}$, and $L$ equal to $O(\sum_{\alpha \in J} \frac{d_\alpha}{r_\alpha})$.  From \eqref{luck} we then conclude \eqref{luck2}.  As for \eqref{furioso}, we see from the product rule that
$$ |\nabla^k Q_i| \ll_k \sum_{m=0}^k |\tr( (\nabla^m R_i) (\nabla^{k-m} R_i^*) )|$$
which we can split further using Cauchy-Schwarz as
$$ |\nabla^k Q_i| \ll_k \sum_{m=0}^k \sum_{\alpha,\beta \in J \cup \{i\}} \| P_\alpha (\nabla^m R_i) P_\beta \|_F \| P_\alpha (\nabla^{k-m} R_i) P_\beta \|_F.$$
Applying \eqref{rop4}, \eqref{rop5}, \eqref{rop2}, we conclude
$$
|\nabla^k Q_i| \ll_k L^{k+2} v^{k} + \sum_{\alpha \in J} \frac{c_\alpha^2}{r_\alpha^2} L^{k} v^{k} + \sum_{\alpha,\beta \in J} \frac{c_\alpha^2 c_\beta^2}{r_\alpha^2 r_\beta^2} L^{k-2} v^{k}.$$
Bounding one of the factors of $\frac{1}{r_\alpha}$ in the first sum by $L$, and $\frac{1}{r_\alpha r_\beta}$ in the second sum by $L^2$, and using \eqref{jil} and the choices for $v$ and $L$, we obtain the claim.
\end{proof}

\subsection{Conclusion of the argument}

We can now prove Proposition \ref{swap}.  
Fix $k \geq 1$, $r=2,3,4$ and $\eps_1 > 0$, and suppose that $C_1$ is sufficiently large.  We assume $A(0), e_p, e_q, i_1,\ldots,i_k, G, F, \zeta, \zeta'$ are as in the proposition.

We may of course assume that $F(z_0) \neq 0$ for at least one $z_0$ with $|z_0| \leq n^{1/2 +\eps_1}$, since the claim is vacuous otherwise.

Suppose we can show that
\begin{equation}\label{naba}
\nabla^m F(z) = O( n^{-m+O(\eps_1)} )
\end{equation}
for all $|z| \leq n^{1/2 +\eps_1}$ and $0 \leq m \leq 5$. 
 Then by Taylor expansion, one has
$$ F(\zeta) = P(\zeta,\overline{\zeta}) + O( n^{-(r+1)/2 + O(\eps_1)} )$$
where $P$ is a polynomial of degree at most $r$ whose coefficients are of size at most $n^{O(\eps_1)}$.  Taking expectations for both $F(\zeta)$ and $F(\zeta')$, we obtain the claim \eqref{barg} when $\zeta, \zeta'$.  A similar argument gives the improved version of \eqref{barg} at the end of Proposition \ref{swap} if one can improve the right-hand side of \eqref{naba} to $O( n^{-m-C' m \eps_1} )$ for some sufficiently large absolute constant $C'_1$.

It remains to show \eqref{naba}.  By up to five applications of the chain rule, the above claims follow from

\begin{lemma}  Suppose that $F(z_0) \neq 0$ for at least one $z_0$ with $|z_0| \leq n^{1/2 +\eps_1}$.  Then for \emph{all} $z$ with $|z_0| \leq n^{1/2 +\eps_1}$, and all $1 \leq j \leq k$, we have
$$
 |\nabla^{k} \lambda_{i_j}(z)| \ll_k n^{5\eps_1k} n^{-k}
$$
and
$$
 |\nabla^{k} Q_{i_j}(z)| \ll_k n^{5 \eps_1 (k+2)} n^{-k}
$$
for all $z$ with $|z| \leq n^{1/2 +\eps_1}$ and all $0 \leq k \leq 10$.
\end{lemma}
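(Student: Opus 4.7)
My plan is to combine Corollary \ref{better-cor} with a bootstrap/connectedness argument. The key input from the hypothesis $F(z_0) \neq 0$ is that $Q_{i_j}(A(z_0)) \leq n^{\eps_1}$ for every $j$ (since $G$ is supported on $\{Q_{i_1},\ldots,Q_{i_k} \leq n^{\eps_1}\}$), and I will propagate this control across the entire disk $|z| \leq n^{1/2+\eps_1}$ while simultaneously extracting the claimed derivative bounds.

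First I would fix any $z$ at which $Q_{i_j}(A(z)) \leq 2 n^{\eps_1}$ holds for every $j$, and invoke Corollary \ref{better-cor} at that $z$. Take the partition $I = P_{i_j} + P_{\mathrm{close}} + \sum_{\alpha : 2^\alpha \geq n^{\eps_1}} P_{i_j,\alpha}$, where $P_{\mathrm{close}} := \sum_{\alpha : 2^\alpha < n^{\eps_1}} P_{i_j,\alpha}$, and set $w := n^{-1/2+\eps_1}$. Equations \eqref{pz1} and \eqref{pz2}, together with the $\ell^2$ sum over the $O(\log n)$ dyadic scales below $n^{\eps_1}$, yield $d_i = 1$, $d_{\mathrm{close}} = O(n^{\eps_1})$, and $d_\alpha = 2^\alpha$; meanwhile \eqref{noon} gives $r_\alpha \geq 2^\alpha n^{-\eps_1}$, and the bound $Q_{i_j} \leq 2 n^{\eps_1}$ forces $|\lambda_i - \lambda_{i_j}| \geq (2 n^{\eps_1})^{-1/2}$ for every $i$ contributing to $P_{\mathrm{close}}$, so $r_{\mathrm{close}} \geq n^{-\eps_1/2}/\sqrt{2}$. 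Summing gives $L := \sum_\alpha d_\alpha/r_\alpha = O(n^{2\eps_1} \log n)$. Plugging into \eqref{luck2} and \eqref{furioso},
$$ |\nabla^k \lambda_{i_j}(z)| \ll_k L^{k-1} w^{2k} \ll_k n^{-k + O(k\eps_1)}, \qquad |\nabla^k Q_{i_j}(z)| \ll_k L^{k+2} w^{2k} \ll_k n^{-k + O(k\eps_1)} $$
for $0 \leq k \leq 10$, which is comfortably inside the slack $n^{5\eps_1 k}$ (resp.\ $n^{5\eps_1(k+2)}$). A minor technical point is that the hypotheses \eqref{noon}--\eqref{pz2} are only assumed on the grid $n^{-C_1}(\Z + \sqrt{-1}\Z)$; I transfer to arbitrary $z$ by letting $z'$ be the nearest grid point and noting that $A(z) - A(z')$ has operator norm $\leq 2 n^{-C_1}$. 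Weyl's inequality \eqref{weyl} transfers \eqref{noon} directly, while the contour-integral formula $P_{i_j,\alpha}(A) = \frac{1}{2\pi i} \oint_\gamma (\zeta - A)^{-1}\,d\zeta$ around a circle $\gamma$ at distance $\sim r_\alpha$ from the relevant eigenvalues gives $\|P_{i_j,\alpha}(A(z)) - P_{i_j,\alpha}(A(z'))\|_{\mathrm{op}} \ll n^{-C_1} r_\alpha^{-1}$, which is negligible once $C_1$ is large enough compared to $\eps_1$.

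Finally I close the bootstrap. Let $\Omega$ be the connected component of $z_0$ in the relatively open subset of the closed disk on which $Q_{i_j}(A(z)) < 2 n^{\eps_1}$ for every $j$ (and on which each $\lambda_{i_j}$ is simple, which is automatic when $Q_{i_j}$ is finite). The previous paragraph applies pointwise on $\Omega$, giving in particular $|\nabla Q_{i_j}| \ll n^{-1 + O(\eps_1)}$ there. Integrating along any path in $\Omega$ from $z_0$ of length at most the diameter $2 n^{1/2+\eps_1}$,
$$ |Q_{i_j}(A(z)) - Q_{i_j}(A(z_0))| \leq 2 n^{1/2+\eps_1} \cdot n^{-1+O(\eps_1)} = n^{-1/2 + O(\eps_1)} = o(1), $$
so $Q_{i_j} \leq n^{\eps_1} + o(1) < 2 n^{\eps_1}$ throughout $\Omega$. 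Hence $\Omega$ is both relatively open and relatively closed in the (connected) disk, so $\Omega$ equals the whole disk, and the derivative bounds hold everywhere. The main obstacle is the spectral-projector transfer from grid to continuum: projections are not uniformly Lipschitz in the entries, and the contour argument only succeeds because the Good Configuration spectral gap $r_\alpha \geq n^{-O(\eps_1)}$ is enormously larger than the grid spacing $n^{-C_1}$; choosing $C_1$ appropriately large (e.g.\ $C_1 = 100$) makes all the resulting error terms uniformly negligible.
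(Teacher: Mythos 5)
Your plan mirrors the paper's skeleton closely: extract $Q_{i_j}(z_0)\leq n^{\eps_1}$ from $F(z_0)\neq 0$, feed the Good Configuration bounds into Corollary \ref{better-cor} via a dyadic partition about $\lambda_{i_j}$ (your merged ``close'' block versus the paper's all-dyadic partition both give $L\ll n^{O(\eps_1)}$), and propagate $Q_{i_j}\leq 2n^{\eps_1}$ across the disk (your open-and-closed argument is a repackaging of the paper's explicit iteration through $n^{3/2+3\eps_1}$ balls of radius $n^{-1-2\eps_1}$). The step that does not go through, however, is the grid-to-continuum transfer of \eqref{pz2} by contour integration. For $P_{i_j,\alpha}(A)=\frac{1}{2\pi i}\oint_\gamma(\zeta-A)^{-1}\,d\zeta$ to produce the projection onto $\{2^\alpha\leq|i-i_j|<2^{\alpha+1}\}$, the contour $\gamma$ must thread the spectral gaps at the block boundaries, e.g.\ between $\lambda_{i_j+2^{\alpha+1}-1}$ and $\lambda_{i_j+2^{\alpha+1}}$. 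But the Good Configuration Condition gives lower bounds only on distances to $\lambda_{i_j}$ itself (via \eqref{noon} and the $Q$-bound) and says nothing about consecutive gaps between eigenvalues away from $\lambda_{i_j}$, which for a deterministic matrix satisfying the hypotheses can be arbitrarily small. When such a boundary gap is below $n^{-C_1}$ there is no place for $\gamma$ to pass; $\|(\zeta-A)^{-1}\|$ is unbounded on any admissible contour, the near-degenerate eigenvectors can rotate by $O(1)$ under an $O(n^{-C_1})$ perturbation, and the claimed estimate $\|P_{i_j,\alpha}(A(z))-P_{i_j,\alpha}(A(z'))\|_{\mathrm{op}}\ll n^{-C_1}r_\alpha^{-1}$ fails. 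Increasing $C_1$ does not help, since the adverse gaps can be smaller than $n^{-C_1}$ for any fixed $C_1$.

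The paper's proof sidesteps this entirely by never transporting the hypotheses off the grid. It invokes Corollary \ref{better-cor} only at the grid points $z$, where \eqref{pz1}--\eqref{pz2} are assumed, and then transfers the resulting \emph{derivative bounds} — not the delocalization data — to nearby non-grid points via Lemma \ref{crude}. The crude bounds $|\nabla^{k+1}\lambda_{i_j}|\ll n^{O(\eps_1)}$ and $|\nabla^{k+1}Q_{i_j}|\ll n^{1+O(\eps_1)}$ need only the spectral gap to $\lambda_{i_j}$, hence hold at every $z$ in the small ball, and combined with the mean value theorem over a grid cell of side $n^{-C_1}$ they show that $\nabla^k\lambda_{i_j}$ and $\nabla^kQ_{i_j}$ change by only $O(n^{1-C_1+O(\eps_1)})$ between a non-grid $z$ and the nearest grid point, which is negligible against $n^{-k+O(\eps_1)}$ once $C_1$ exceeds $11$ or so. (This is what the paper's phrase ``combining this with \eqref{lamnab}, \eqref{nabmax}'' is doing.) That mechanism never requires any regularity of the block projections $P_{i_j,\alpha}$ in $z$, which is precisely what your contour estimate would need to establish. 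If you replace the contour step by this transfer of conclusions via Lemma \ref{crude}, the rest of your argument closes.
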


\begin{proof} Fix $j$.  Since $F(z_0) \neq 0$, we have
\begin{equation}\label{xii}
Q_{i_j}(A(z_0)) \leq n^{\eps_1}.
\end{equation}

For a technical reason having to do with a subsequent iteration argument, we will replace \eqref{xii} with the slightly weaker bound
\begin{equation}\label{xii-2}
Q_{i_j}(A(z_0)) \leq 2 n^{\eps_1}.
\end{equation}

By the definition of $Q_i$, we have, as a consequence, that
\begin{equation}\label{liz}
 |\lambda_{i'}(A(z_0)) - \lambda_{i_j}(A(z_0))| \gg n^{-\eps_1/2}
\end{equation}
for all $i' \neq i_j$.

By the Weyl inequalities \eqref{weyl}, we thus have
$$ |\lambda_{i'}(A(z)) - \lambda_{i_j}(A(z))| \gg n^{-\eps_1/2}$$
whenever $|z - z_0| \ll n^{-1 -2 \eps_1}$.  From Lemma \ref{crude}, we conclude that
\begin{equation}\label{lamnab}
 |\nabla^m \lambda_{i_j}(A(z))| \ll_m n^{\eps_1 (m+1)/2}
\end{equation}
and
\begin{equation}\label{nabmax}
 |\nabla^m Q_{i_j}(A(z))| \ll_m n^{\eps_1(m+2)/2} n
\end{equation}
for all $m \geq 1$, whenever $|z - z_0| \ll n^{-1  - 2 \eps_1}$.
In particular, from \eqref{xii-2} and the fundamental theorem of calculus we have
\begin{equation}\label{xii-3}
Q_{i_j}(A(z)) \ll n^{\eps_1}
\end{equation}
for all such $z$.

Note that by setting $C_1$ sufficiently large,
we can find $z$ such that
$|z-z_0| \ll n^{-1- 2\eps_1}$, $|z| \leq n^{1/2 +\eps_1}$, and that the real and imaginary parts of $z$ are integer multiples of $n^{-C_1}$.  Then \eqref{pz1}, \eqref{pz2} holds for this value of $z$.  Applying Corollary \ref{better-cor} we conclude that
$$
 |\nabla^{k} \lambda_{i_j}(z)| \ll_k n^{2\eps_1 k} n^{-k} (\sum_{0 \leq \alpha \leq \log n} \frac{2^\alpha}{r_\alpha})^{k-1}
$$
and
$$
|\nabla^k Q_{i_j}(z)| \ll_k n^{2 \eps_1 k} n^{-k} (\sum_{0 \leq \alpha \leq \log n} \frac{2^\alpha}{r_\alpha})^{k+2}
$$
for all $k \geq 1$, where $r_\alpha$ is the minimal value of $|\lambda_i - \lambda_{i_j}|$ for $|i-i_j| \geq 2^\alpha$.  Note that
$$
\sum_{0 \leq \alpha \leq \log n} \frac{2^\alpha}{r_\alpha} \ll \sum_{i \neq i_j} \frac{1}{|\lambda_i - \lambda_{i_j}|}.$$
Meanwhile, from \eqref{xii-3} we have
$$ \sum_{i \neq i_j} \frac{1}{|\lambda_i - \lambda_{i_j}|^2} \ll n^{\eps_1}.$$
From Cauchy-Schwarz this implies that
$$ \sum_{i \neq i_j: |i-i_j| \leq n^{\eps_1}} \frac{1}{|\lambda_i - \lambda_{i_j}|} \ll n^{\eps_1}$$
while from \eqref{noon} we have (with room to spare)
$$ \sum_{i \neq i_j: |i-i_j| \geq n^{\eps_1}} \frac{1}{|\lambda_i - \lambda_{i_j}|} \ll n^{3\eps_1}$$
and thus
$$
\sum_{0 \leq \alpha \leq \log n} \frac{2^\alpha}{r_\alpha} \ll n^{3\eps_1},$$
and so
$$
 |\nabla^{k} \lambda_{i_j}(z)| \ll_k n^{5 \eps_1 k} n^{-k}
$$
and
$$
 |\nabla^{k} Q_{i_j}(z)| \ll_k n^{5 \eps_1(k+2)} n^{-k}
$$
for all $k \geq 1$.  Combining this with \eqref{lamnab}, \eqref{nabmax} we conclude that
$$
 |\nabla^{k} \lambda_{i_j}(z)| \ll_k n^{5 \eps_1 k} n^{-k}
$$
and
\begin{equation}\label{simpol}
 |\nabla^{k} Q_{i_j}(z)| \ll_k n^{5 \eps_1(k+2)} n^{-k}
\end{equation}
for all $z$ with $|z-z_0| \ll n^{-1 +\eps_1}$, and $0 \leq k \leq 10$.

This establishes the lemma in a ball $B(z_0,n^{-1 -2\eps_1})$ of radius $n^{-1 -2 \eps_1}$ centered at $z_0$.  To extend the result to the remainder of the region $\{ z: |z| \leq n^{1/2 +\eps_1}\}$, we observe from \eqref{simpol} that $Q_{i_j}$ varies by at most $O(n^{-1.9})$ (say) on this ball
(instead of $1.9$ we can write any constant less than $2$, given that $\eps_1$ is sufficiently small).  Because of the gap between \eqref{xii} and \eqref{xii-2}, we now see that \eqref{xii-2} continues to hold for all other points $z_1$ in $B(z_0,n^{-1 -2 \eps_1})$ with $|z_1| \leq n^{1/2 +\eps_1}$.  Repeating the above arguments with $z_0$ replaced by $z_1$, and continuing this process, we can eventually cover the entire ball $\{ z: |z| \leq n^{1/2 +\eps_1} \}$ by these estimates.
The key point here is that
at every point of the process  \eqref{xii-2} holds, since
the length of the process is only $n^{3/2 + 3 \eps_1}$ while in each step the value of
$Q_{i_j}$ changes by at most  $O(n^{-1.99})$.
\end{proof}

The proof of Proposition \ref{swap} is now complete.

\section{Good configurations occur frequently}\label{gcc-sec}

The purpose of this section is to prove Proposition \ref{lemma:GCC}.  The arguments here are largely based on those in \cite{ESY1, ESY2,ESY3}.

\subsection{Reduction to a concentration bound for the empirical spectral distribution}

We will first reduce matters to the following concentration estimate for the empirical spectral distribution:

\begin{theorem}[Concentration for ESD]\label{sdb}  For any $\eps, \delta > 0$ and any random Hermitian matrix $M_n = (\zeta_{ij})_{1 \leq i,j \leq n}$ whose upper-triangular entries are independent with mean zero and variance $1$, and such that $|\zeta_{ij}| \leq K$ almost surely for all $i,j$ and some $1 \leq K \leq n^{1/2-\eps}$, and any interval $I$ in $[-2+\eps, 2-\eps]$ of width $|I| \geq \frac{K^2 \log^{20} n}{n}$, the number of eigenvalues $N_I$ of $W_n := \frac{1}{\sqrt{n}} M_n$ in $I$ obeys the concentration estimate
$$ |N_I - n \int_I \rho_{sc}(x)\ dx| \leq \delta n |I|$$
with overwhelming probability.  In particular, $N_I = \Theta_\eps(n |I|)$ with overwhelming probability.
\end{theorem}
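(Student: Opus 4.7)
My plan is to follow the Stieltjes-transform strategy of Erd\H{o}s--Schlein--Yau, suitably adapted to the truncated setting here. Let $s_n(z) := \frac{1}{n}\sum_i \frac{1}{\lambda_i(W_n)-z}$ denote the Stieltjes transform of the empirical spectral distribution, and let $s_{sc}(z) := \int \frac{\rho_{sc}(x)}{x-z}\,dx$ be its semicircular counterpart. A standard Helffer--Sj\"ostrand (or direct Poisson-kernel smoothing) argument reduces the counting estimate $|N_I - n\int_I \rho_{sc}| \le \delta n |I|$ to a pointwise Stieltjes bound
\begin{equation*}
|s_n(E+i\eta) - s_{sc}(E+i\eta)| = o_{\delta}(1) \qquad \text{with overwhelming probability},
\end{equation*}
uniformly for $E$ in a slight enlargement of $I$ and for $\eta := \eta_0 = K^2 \log^{20} n / n$. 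Uniformity in $E$ is harmless: the trivial Lipschitz bound $|\partial_E s_n| \le \eta^{-2}$ together with a union bound over an $n^{-C}$-mesh suffices once the pointwise bound is established.

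To prove the pointwise bound, apply Lemma \ref{stielt} to write
\begin{equation*}
s_n(z) = \frac{1}{n}\sum_{k=1}^n \frac{1}{\tfrac{1}{\sqrt n}\zeta_{kk} - z - Y_k}, \qquad Y_k := a_k^*(W_k - zI)^{-1} a_k,
\end{equation*}
where $W_k$ is the minor obtained by deleting the $k$-th row and column and $a_k$ is the corresponding column with diagonal entry removed. Conditional on $W_k$, the entries of $a_k$ are independent, mean zero, variance $1/n$, so $\E[Y_k \mid W_k] = \frac{1}{n}\tr(W_k - zI)^{-1}$, which by the Cauchy interlacing inequality \eqref{cauchy-interlace} differs from $s_n(z)$ by only $O((n\eta)^{-1})$. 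Expanding $Y_k - \E[Y_k \mid W_k]$ in the spectral basis of $W_k$ and grouping terms by dyadic spectral windows $\{j : 2^\alpha \le |\mu_j(W_k) - E| < 2^{\alpha+1}\}$, I can apply the quadratic-form concentration of Lemma \ref{lemma:projection} to each window to obtain $|Y_k - \E[Y_k \mid W_k]| = o(1)$ with overwhelming probability. Averaging over $k$, the recursion yields the self-consistent equation $s_n(z)(s_n(z) + z) = -1 + o(1)$, and stability of its semicircular solution in the bulk (where $|2 s_{sc}(z) + z| = |\sqrt{z^2-4}| \gtrsim \sqrt{\eps}$) delivers $|s_n - s_{sc}| = o(1)$.

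The main obstacle is that the concentration argument is not self-sufficient at the target scale $\eta = \eta_0$: controlling $Y_k - \E[Y_k \mid W_k]$ requires the Frobenius bound $\|(W_k - zI)^{-1}\|_F^2 = o(n/\eta)$, which is essentially $\Im s_n(z) = O(1)$ at the same scale, while the $K$-factor in Lemma \ref{lemma:projection} is what forces the width hypothesis $|I| \gtrsim K^2 \log^{20} n / n$. I would resolve this by a bootstrap in $\eta$: the claim is trivial at $\eta = 1$, and at each dyadic step $\eta \mapsto \eta/2$ one feeds the previous scale's Stieltjes control into both the concentration estimate and the Frobenius bound required at the next scale. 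Eigenvector delocalization at the final scale, needed for the $L^\infty$ estimates on the band projections of $a_k$ invoked by the quadratic-form bound, follows from Lemma \ref{lemma:firstcoordinate} combined with the Stieltjes bound at the penultimate scale. The factor $\log^{20} n$ in the hypothesis absorbs the union-bound losses accumulated over the $O(\log n)$ scales, the $n$ indices $k$, and the $n^{O(1)}$-sized mesh in $E$.
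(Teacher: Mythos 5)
Your proposal follows essentially the same road as the paper: reduce the counting claim to a Stieltjes bound (Lemma \ref{lemma:S-transform} in the paper is precisely the Poisson-kernel smoothing you describe), write $s_n(z)$ via Lemma \ref{stielt}, relate $\E[Y_k\mid W_k]$ to $s_n(z)$ by Cauchy interlacing, show $Y_k$ concentrates around its conditional mean by grouping the spectrum of $W_k$ into narrow windows and applying Lemma \ref{lemma:projection}, and then close the self-consistent equation by stability of the semicircular root in the bulk. Where you genuinely diverge is in how the a priori input $\Im s_n = O(1)$ (equivalently $N_I = O(n|I|)$ at scales down to the target) is established: you propose a dyadic bootstrap in $\eta$ starting from the trivial bound at $\eta=1$, whereas the paper proves a stand-alone crude count, Proposition \ref{smallsc}, by a single-scale contradiction argument (if $\Im s_n$ were large then the quadratic forms $Y_k$ would be large, forcing $\Im s_n$ small via the resolvent identity). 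Both are standard variants and both close the argument; the bootstrap is the route taken in later Erd\H{o}s--Schlein--Yau--type papers, while the paper's Proposition \ref{smallsc} is a bit more self-contained and avoids iterating overwhelming-probability union bounds over $\log n$ scales.

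One point in your writeup is off: you assert that applying Lemma \ref{lemma:projection} to the band projections of $a_k$ requires an $L^\infty$ (delocalization) bound on the eigenvectors of $W_k$, obtained via Lemma \ref{lemma:firstcoordinate} plus the Stieltjes bound at the previous scale. This is not the case. Lemma \ref{lemma:projection} is Talagrand's inequality for $\|\pi_H X\|$ and requires only that the vector $X$ (here $a_k$, rescaled) have independent bounded entries and that the subspace $H$ be deterministic; since $a_k$ is independent of $W_k$, conditioning on $W_k$ makes the spectral subspaces of $W_k$ deterministic, and no information whatsoever about the \emph{coefficients} of the eigenvectors of $W_k$ is used. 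The $K$-dependence in the hypothesis (and hence the $K^2$ in the admissible $|I|$) comes entirely from the boundedness of $a_k$'s entries, not from eigenvector delocalization. Dropping this spurious step actually simplifies your bootstrap, since you no longer need to thread eigenvector estimates through the induction.
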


\begin{remark} Similar results were established in \cite{ESY1,ESY2,ESY3} assuming stronger regularity hypotheses on the $\zeta_{ij}$.  The proof of this result follows their approach, but also uses  Lemma \ref{lemma:projection} 
and few other ideas which make the current more general setting possible.  In our applications we will take $K = \log^{O(1)} n$, though Theorem \ref{sdb} also has non-trivial content for larger values of $K$.  The loss of $K^2 \log^{20} n$ can certainly be improved, though for our applications any bound which is polylogarithmic for $K = \log^{O(1)} n$ will suffice.
\end{remark}

Let us assume Theorem \ref{sdb} for the moment.  We can then conclude a useful bound on eigenvectors (which will also be applied to prove Theorem \ref{ltail}):

\begin{proposition}[Delocalization of eigenvectors]\label{deloc} Let $\eps, M_n, W_n,\zeta_{ij}, K$  be as in Theorem \ref{sdb}.  Then for any $1 \leq i \leq n$ with $\lambda_i(W_n) \in [-2+\eps,2-\eps]$, if $u_i(W_n)$ denotes a unit eigenvector corresponding to $\lambda_i(W_n)$, then with overwhelming probability each coordinate of $u_i(M_n)$ is $O_{\eps}( \frac{K^2 \log^{20} n}{n^{1/2}} )$.
\end{proposition}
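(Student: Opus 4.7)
The plan is to follow the Erd\H os--Schlein--Yau strategy \cite{ESY1,ESY2,ESY3}: use Lemma \ref{lemma:firstcoordinate} to reduce a bound on a coordinate of $u_i(W_n)$ to a lower bound on the projection of a random column onto a spectral subspace of the $(n-1)\times (n-1)$ minor, and then combine Theorem \ref{sdb} with Lemma \ref{lemma:projection} to supply that lower bound. By permuting rows and columns of $M_n$ (which only permutes the coordinates of $u_i(M_n)$ and preserves the hypotheses), it suffices to bound the last coordinate, which I denote by $x$. Writing
$$
M_n = \begin{pmatrix} M_{n-1} & X\\ X^* & \sqrt n\,\zeta_{nn}\end{pmatrix},\qquad W_{n-1}:=M_{n-1}/\sqrt n,
$$
and applying Lemma \ref{lemma:firstcoordinate} (after a transposition swapping the first and last indices) gives
$$
|x|^2 = \Big(1 + n^{-1}\sum_{j=1}^{n-1}\frac{|u_j(M_{n-1})^* X|^2}{(\lambda_j(W_{n-1})-\lambda_i(W_n))^2}\Big)^{-1},
$$
using $\lambda_j(M_{n-1})-\lambda_i(M_n)=\sqrt n(\lambda_j(W_{n-1})-\lambda_i(W_n))$.

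Set $\eta:=K^2\log^{20}n/n$. It suffices to lower bound the sum above by $n/\eta = n^2/(K^2\log^{20}n)$ with overwhelming probability, for then $|x|^2\ll\eta = K^2\log^{20}n/n$, which is even stronger than what is claimed. The main obstacle is that the natural interval $[\lambda_i(W_n)-\eta,\lambda_i(W_n)+\eta]$ has a random centre depending on $X$, so Lemma \ref{lemma:projection} cannot be applied to the corresponding subspace directly. To circumvent this I would introduce a deterministic $\eta/2$-net $\{u_k\}$ of $[-2+\eps/2,2-\eps/2]$ of cardinality $O(\eta^{-1})=O(n)$, and let $H_k$ denote the span of those eigenvectors $u_j(M_{n-1})$ for which $\lambda_j(W_{n-1})$ falls in the fixed interval $I_k:=[u_k-\eta/2,u_k+\eta/2]$. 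Each $H_k$ is a function of $M_{n-1}$ alone, hence independent of $X$.

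Theorem \ref{sdb}, applied to the suitably renormalised minor (which still satisfies its hypotheses), gives $d_k:=\dim H_k = \Theta_\eps(n\eta)$ with overwhelming probability for each fixed $k$; a union bound over the $O(n)$ net points preserves overwhelming probability, so this holds for all $k$ simultaneously. Conditioning on $M_{n-1}$, the vector $X/\sqrt n$ has independent mean-zero variance-one entries of magnitude $\leq K$ and is independent of every $H_k$, so Lemma \ref{lemma:projection} applied with $t := K\log n$ yields
$$
\|\pi_{H_k}(X/\sqrt n)\|\ \geq\ \sqrt{d_k}-O(K\log n)\ \gg\ \sqrt{n\eta}
$$
with overwhelming probability uniformly in $k$, since $\sqrt{n\eta}=K\log^{10}n$ dominates the error $O(K\log n)$. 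Rescaling, $\|\pi_{H_k}X\|^2\gg n^2\eta$.

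Given any realisation of $\lambda_i(W_n)\in[-2+\eps,2-\eps]$, selecting the nearest net point $u_k$ gives $I_k\subset[\lambda_i(W_n)-\eta,\lambda_i(W_n)+\eta]$, so every $j$ contributing to $\pi_{H_k}$ satisfies $(\lambda_j(W_{n-1})-\lambda_i(W_n))^{-2}\geq\eta^{-2}$; dropping the remaining terms in the sum yields the required bound $n^{-1}\eta^{-2}\|\pi_{H_k}X\|^2\gg n/\eta$. A final union bound over the $n$ choices of coordinate concludes the proof. The only other subtlety is the nondegeneracy hypothesis in Lemma \ref{lemma:firstcoordinate} (that $X$ is not orthogonal to any eigenvector of $M_{n-1}$), which holds almost surely under the continuous-distribution convention already in force.
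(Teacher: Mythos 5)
Your strategy is the same as the paper's: reduce a coordinate bound via Lemma~\ref{lemma:firstcoordinate} to a lower bound on a projection of the random column onto a spectral subspace of the minor, then invoke Theorem~\ref{sdb} to guarantee the subspace has large dimension and Lemma~\ref{lemma:projection} to get concentration of the projection. The one place you go beyond the paper is the $\eta/2$-net argument. The paper's own proof picks the interval centred at $\lambda_i(W_n)$ and writes the sum as $\|\pi_H X\|^2$, then cites Lemma~\ref{lemma:projection}; but because $\lambda_i(W_n)$ depends on the very column $X$ being projected, the subspace $H$ is \emph{not} independent of $X$, so Lemma~\ref{lemma:projection} does not literally apply. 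Your net construction fixes this cleanly: each $H_k$ is measurable with respect to $W_{n-1}$ alone, the $O(n)$ net points preserve overwhelming probability under the union bound, and the nearest net interval sits inside the random interval around $\lambda_i(W_n)$. This is a genuine gap-filler, not just extra verbosity.

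One bookkeeping caution: you are inconsistent about which normalization $X$ carries. Your displayed formula for $|x|^2$ has the factor $n^{-1}$ outside the sum, which corresponds to $X$ being a column of $M_n$ (entries $\zeta_{in}$, unit variance); but later you apply Lemma~\ref{lemma:projection} to $X/\sqrt n$ as if that were the unit-variance vector (which would make $X$ a column of $A_n=\sqrt n\,M_n$), obtaining $\|\pi_{H_k}X\|^2\gg n^2\eta$, and then you reinsert a spurious $n^{-1}$ in the final line. The two errors of a factor $n$ happen to cancel, so the conclusion $|x|^2\ll\eta$ survives, but as written the intermediate bounds don't match the setup. Choosing one convention — say $X$ the column of $M_n$, apply Lemma~\ref{lemma:projection} to $X$ directly to get $\|\pi_{H_k}X\|^2\gg n\eta$, and then $n^{-1}\eta^{-2}\cdot n\eta=\eta^{-1}$ — closes the loop without the cancellation.
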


\begin{proof}  By symmetry and the union bound, it suffices to establish this for the first coordinate of $u_i(W_n)$.  By Lemma \ref{lemma:firstcoordinate}, it suffices to establish a lower bound
$$
\sum_{j=1}^{n-1}  (\lambda_j(W_{n-1})-\lambda_i(W_n))^{-2} |u_j(W_{n-1})^* \frac{1}{\sqrt{n}} X|^2 \gg_\eps \frac{n}{K^2 \log^{20} n}$$
with overwhelming probability, where $W_{n-1}$ is the bottom right $n-1 \times n-1$ minor of $W_n$ and $X \in \C^{n-1}$ has entries $\zeta_{i1}$ for $i=2,\ldots,n$.  But by Theorem \ref{sdb}, we can (with overwhelming probability) find a set $J \subset \{1,\ldots,n-1\}$ with $|J| \gg_\eps K^2 \log^{20} n$ such that
$|\lambda_j(W_{n-1})-\lambda_i(W_n)| \ll_\eps \frac{K^2 \log^{20} n}{n}$ for all $n \in J$.  Thus it will suffice to show that
$$ \sum_{j \in J} |u_j(W_{n-1})^* X|^2 \gg_\eps |J|$$
with overwhelming probability.
The left-hand side can be written as $\| \pi_H X \|^2$, where $H$ is the span of all the eigenvectors associated to $J$.  The claim now follows from
Lemma \ref{lemma:projection}.
\end{proof}

We also have the following minor variant:

\begin{corollary}\label{sdb2}  The conclusions of Theorem \ref{sdb} and Proposition \ref{deloc} continues to hold if one replaces a single diagonal entry $\zeta_{pp}$ of $M_n$ by a deterministic real number $x=O(K)$, or if one replaces a single off-diagonal entry $\zeta_{pq}$ of $M_n$ by a deterministic complex number $z=O(K)$ (and also replaces $\zeta_{qp}$ with $\overline{z}$).
\end{corollary}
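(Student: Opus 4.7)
The plan is to reduce Corollary \ref{sdb2} to Theorem \ref{sdb} and Proposition \ref{deloc} by treating the replaced entry (together with its transpose, in the off-diagonal case) as a Hermitian perturbation of bounded rank and operator norm $O(K)$.

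First I would handle the ESD conclusion. The key observation is that $\tilde M_n - M_n$ is Hermitian of rank at most two (rank one in the diagonal case; rank two in the off-diagonal case, where both $\zeta_{pq}$ and $\zeta_{qp}$ are altered to preserve Hermiticity). Applying Theorem \ref{sdb} to the fully random $M_n$ with threshold $\delta/2$, and then invoking Lemma \ref{lemma:addingrankone} at most twice (splitting the rank-two perturbation into two rank-one pieces), I would obtain $|N_I(\tilde W_n) - N_I(W_n)| \leq 2$. Since $|I| \geq K^2 \log^{20} n / n$ forces $n|I| \geq \log^{20} n$, the additive error $2$ is swallowed by the $\delta n|I|$ tolerance for $n$ large, producing the desired estimate for $\tilde W_n$.

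For the delocalization conclusion I would closely mimic the proof of Proposition \ref{deloc}. Fix a coordinate index $i$ and an eigenvalue index $j$ with $\lambda_j(\tilde W_n) \in [-2+\eps, 2-\eps]$. Permuting row/column $i$ to the first position, Lemma \ref{lemma:firstcoordinate} expresses $|u_j(\tilde W_n)_i|^2$ as the reciprocal of
\[
1 + \tfrac{1}{n}\sum_{k=1}^{n-1} (\lambda_k(\tilde W_{n-1}^{(i)}) - \lambda_j(\tilde W_n))^{-2} |u_k(\tilde W_{n-1}^{(i)})^* X^{(i)}|^2,
\]
where $\tilde W_{n-1}^{(i)}$ denotes the $i$-th principal minor and $X^{(i)} \in \C^{n-1}$ the corresponding column of $\tilde M_n$ with its $i$-th entry removed. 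In every case the minor $\tilde W_{n-1}^{(i)}$ is itself a matrix to which the ESD part of Corollary \ref{sdb2} (just established) applies---it is either fully random, or a matrix $W_{n-1}^{(i)}$ with a single diagonal or off-diagonal modification---and so with overwhelming probability one obtains a set $J$ of its eigenvalues with $|J| \gg_\eps K^2 \log^{20} n$ lying within $O_\eps(K^2 \log^{20} n / n)$ of $\lambda_j(\tilde W_n)$.

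The step that requires care is the lower bound on $\|\pi_H X^{(i)}\|^2$, where $H$ is the span of the eigenvectors indexed by $J$: the argument of Proposition \ref{deloc} invokes Lemma \ref{lemma:projection} directly on $X^{(i)}$, but in the off-diagonal case with $i \in \{p,q\}$ the vector $X^{(i)}$ contains one deterministic coordinate (equal to $z$ or $\overline{z}$). I plan to handle this by writing $X^{(i)} = \hat X + v$, where $\hat X$ agrees with $X^{(i)}$ except that the offending coordinate (if any) is replaced by a fresh independent random variable of the required type, and $v$ has $\|v\| = O(K)$ supported on that single coordinate. Then $\hat X$ is independent of $\tilde W_{n-1}^{(i)}$, Lemma \ref{lemma:projection} (applied conditional on the minor) yields $\|\pi_H \hat X\| = \sqrt{|J|} + O(K \log n)$ with overwhelming probability, and since $\sqrt{|J|} \gg_\eps K \log^{10} n$ the triangle inequality gives $\|\pi_H X^{(i)}\|^2 \gg_\eps |J|$. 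Substituting back into Lemma \ref{lemma:firstcoordinate} then produces the desired coordinate bound. The only genuinely new ingredient is this workaround for the possibly deterministic coordinate of $X^{(i)}$, and no substantial obstacle is anticipated.
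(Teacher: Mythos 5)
Your proof is correct and takes essentially the same route as the paper: decompose the perturbation into Hermitian rank-one pieces and invoke Lemma \ref{lemma:addingrankone} for the ESD statement, then repeat the proof of Proposition \ref{deloc} and observe that a single deterministic coordinate of $X$ does not affect Lemma \ref{lemma:projection}. The one place where you are more explicit than the paper is the splitting $X^{(i)}=\hat X + v$ to handle the deterministic coordinate; the paper simply asserts this "does not significantly impact Lemma \ref{lemma:projection}," and your triangle-inequality argument is a clean way to justify that remark.
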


\begin{proof} After the indicated replacement, the new matrix $M'_n$ differs from the original matrix by a Hermitian matrix of rank at most $2$.  The modification of Theorem \ref{sdb} then follows from Theorem \ref{sdb} and Lemma \ref{lemma:addingrankone}.  The modification of Proposition \ref{deloc} then follows by repeating the proof.  (One of the coefficients of $X$ might now be deterministic rather than random, but it is easy to see that this does not significantly impact Lemma \ref{lemma:projection}.)
\end{proof}

Now we can prove Proposition \ref{lemma:GCC}.  Let $\eps,\eps_1,C,C_1,k,i_1,\ldots,i_k,p,q,A(0)$ be as in that proposition.  By the union bound we may fix $1 \leq j \leq k$, and also fix the $|z| \leq n^{1/2+\eps_1}$ whose real and imaginary parts are multiples of $n^{-C_1}$.  By the union bound again and Corollary \ref{sdb2} (with $K = \log^C n$), the eigenvalue separation condition \eqref{noon} holds with overwhelming probability for every $1 \leq i \leq n$ with $|i-j| \geq n^{\eps_1}$, as does \eqref{pz1} (note that $\|P_{i_j}(A(z)) e_p\|$ is the magnitude of the $p^{th}$ coordinate of a unit eigenvector $u_{i_j}(A(z))$ of $A(z)$).  A similar argument using Pythagoras' theorem gives \eqref{pz2} with overwhelming probability, unless the eigenvalues $\lambda_i(A(z))$ contributing to \eqref{pz2} are not contained in the bulk region $[(-2+\eps')n, (2-\eps')n]$ for some $\eps' > 0$ independent of $n$.  However, it is known (see \cite{GZ}; one can also deduce this fact from Theorem \ref{sdb}) that $\lambda_i(A(z))$ will fall in this bulk region with overwhelming probability whenever $\frac{\eps}{2} n \leq i \leq (1-\frac{\eps}{2}) n$ (say), if $\eps'$ is small enough depending on $\eps$.  Thus, with overwhelming probability, a contribution outside the bulk region can only occur if $2^\alpha \gg_\eps n$, in which case the claim follows by estimating $\| P_{i_j,\alpha}(A(z)) e_p \|$ crudely by $\|e_p\|=1$, and similarly for $\| P_{i_j,\alpha}(A(z)) e_q \|$.  This concludes the proof of Proposition \ref{lemma:GCC} assuming Theorem \ref{sdb}.

\subsection{Spectral concentration}

It remains to prove Theorem \ref{sdb}.  

Following \cite{ESY1, ESY2, ESY3}, we consider  the \emph{Stieltjes transform}
$$ s_n(z) := \frac{1}{n} \sum_{i=1}^n \frac{1}{\lambda_i(W_n)-z}$$
of $W_n$, together with its semicircular counterpart
$$ s(z) := \int_{-2}^2 \frac{1}{x-z} \rho_{sc}(x)\ dx$$
(which was computed explicitly in \eqref{sqrt}).  We will primarily be interested in the imaginary part
\begin{equation}\label{imsn}
\Im s_n(x+\sqrt{-1}\eta) = \frac{1}{n} \sum_{i=1}^n \frac{\eta}{\eta^2 + (\lambda_i(W_n)-x)^2} > 0
\end{equation}
of the Stieltjes transform in the upper half-plane $\eta > 0$.

It is well known that the convergence of the empirical spectral distribution of $W_n$ to $\rho_{sc}(x)$ is closely tied to the convergence of $s_n$ to $s$
(see \cite{BS}, for example).  In particular, we have the following precise connection (cf. \cite[Corollary 4.2]{ESY1}), whose proof is deferred to Appendix 
\ref{section:ESY}. 

\begin{lemma}[Control of Stieltjes transform implies control on ESD]\label{lemma:S-transform}
Let $1/10 \geq \eta \geq 1/n$,  and $L, \eps, \delta > 0$.  Suppose that one has the bound
\begin{equation}\label{soda}
 |s_{n} (z) -s(z) | \le \delta
\end{equation}
with (uniformly) overwhelming probability for all $z$ with $|\Re(z)| \leq L$ and $\Im(z) \geq \eta$.  Then for any interval $I$ in $[-L+\eps,L-\eps]$ with $|I| \geq \max( 2\eta, \frac{\eta}{\delta} \log\frac{1}{\delta} )$, one has
$$ |N_I - n \int_I \rho_{sc}(x)\ dx| \ll_\eps \delta n |I|$$
with overwhelming probability.
\end{lemma}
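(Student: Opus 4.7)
The plan is to interpret $\frac{1}{\pi}\Im s_n(x+i\eta)$ as the Poisson smoothing (at scale $\eta$) of the empirical spectral distribution, and $\frac{1}{\pi}\Im s(x+i\eta)$ as the Poisson smoothing of $\rho_{sc}$. Integrating the pointwise bound $|s_n(z)-s(z)|\leq \delta$ in the real part of $z$ over $I=[a,b]$ and using Fubini yields
\[
\left|\frac{1}{n}\sum_{i=1}^n g_I(\lambda_i(W_n)) - \int_{\R} g_I(y)\, \rho_{sc}(y)\,dy\right| \leq \frac{\delta |I|}{\pi}
\]
with overwhelming probability, where
\[
g_I(y) := \int_I \frac{\eta/\pi}{\eta^2+(x-y)^2}\,dx = \frac{1}{\pi}\left[\arctan\tfrac{b-y}{\eta}-\arctan\tfrac{a-y}{\eta}\right]
\]
is the Poisson-smoothed indicator of $I$. (The integration of an overwhelming-probability hypothesis is legitimate by restricting to a fine net in $x$ and using elementary continuity.) Since $N_I/n = \frac{1}{n}\sum_i \mathbf{1}_I(\lambda_i)$, the goal reduces to bounding $\frac{1}{n}|\sum_i(\mathbf{1}_I-g_I)(\lambda_i)|$ and $|\int(\mathbf{1}_I-g_I)\rho_{sc}|$ by $O_\eps(\delta|I|)$.

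The bound on $|\int(\mathbf{1}_I-g_I)\rho_{sc}|$ is routine using the Lipschitz continuity of $\rho_{sc}$ on $[-L+\eps/2,L-\eps/2]$: since $\mathbf{1}_I-g_I$ decays like $\eta/\dist(y,\partial I)$ near the boundary and like $\eta|I|/\dist(y,I)^2$ at large distances, a direct integration gives this error as $O_\eps(\eta\log(|I|/\eta))$, which is $O(\delta|I|)$ once $|I|\geq \eta\log(1/\delta)/\delta$. For the discrete part we first deduce a preliminary density bound by applying the hypothesis \eqref{soda} pointwise: since $\Im s(x+i\eta)=O(1)$ uniformly for $|x|\leq L$, the hypothesis gives $\Im s_n(x+i\eta)=O(1)$ with overwhelming probability, and the formula \eqref{imsn} yields $\Im s_n(x+i\eta)\geq N_{[x-\eta,x+\eta]}/(2n\eta)$. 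A union bound over a fine net then produces the uniform density estimate $N_J\ll n|J|$ for every interval $J\subset[-L,L]$ with $|J|\geq \eta$, with overwhelming probability.

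Armed with this density bound, the discrete error is decomposed by distance from $\partial I$. An immediate-boundary layer of width $\eta$ (where $|\mathbf{1}_I-g_I|\leq 1$) contributes $O(n\eta)$; an intermediate slope region at distance between $\eta$ and $A\eta$ from $\partial I$ (where $|\mathbf{1}_I-g_I|=O(\eta/d)$) contributes $O(n\eta\log A)$ by the density bound; the core of $I$ (where $|1-g_I|=O(1/A)$) contributes $O(n|I|/A)$; and the tail outside $I$ (where $g_I(y)=O(\eta|I|/d^2)$) contributes $O(n|I|/A)$ after summing geometric annuli. The far-field from eigenvalues outside $[-L,L]$ is handled trivially: each such $\lambda_i$ contributes $g_I(\lambda_i)=O_\eps(\eta|I|)$, so their total contribution is $O_\eps(n\eta|I|)$, which is $O_\eps(\delta n|I|)$ by the hypothesized lower bound on $|I|$. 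Choosing $A=1/\delta$ balances the leading terms and yields total error $O(n\eta\log(1/\delta) + n|I|\delta)\leq O_\eps(\delta n|I|)$.

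The main obstacle is obtaining a strong enough boundary-layer bound to achieve the sharp threshold $|I|\geq \eta\log(1/\delta)/\delta$. A crude estimate using only $|\mathbf{1}_I-g_I|\leq 1$ on a boundary strip of width $A\eta$ leads to an error of order $nA\eta$, forcing $A \leq \delta|I|/\eta$, which combined with the tail constraint $A\geq 1/\delta$ only gives $|I|\geq c\eta/\delta^2$. It is essential to exploit the sharper $|\mathbf{1}_I-g_I|=O(\eta/d)$ decay in the slope region, whose logarithmic integrability is precisely what converts the extra factor $1/\delta$ into $\log(1/\delta)$.
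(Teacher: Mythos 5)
Your proposal is correct and follows essentially the same strategy as the paper's own proof in Appendix \ref{section:ESY}: deduce the density bound $N_J \ll n|J|$ from the boundedness of $\Im s_n$, compare the empirical measure and $\rho_{sc}$ against the Poisson-smoothed indicator of $I$, and estimate the smoothing error using the $\eta/d$ decay in the transition layer and the $\eta|I|/d^2$ decay in the tails. The only cosmetic differences are that the paper works directly with a discretized sum $F(y)$ over a fine $n^{-100}$-net (making the union bound over $x$ explicit and then invoking Riemann integration) rather than starting from the continuous convolution $g_I$, and the paper sums dyadic shells to obtain the error $O_\eps(\eta n \log(|I|/\eta))$ outright instead of introducing the auxiliary cutoff $A$ (both choices land at the same threshold $|I|\gg \frac{\eta}{\delta}\log\frac{1}{\delta}$).
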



In view of this lemma, it suffices to show that for each complex number $z$ with $\Re(z) \leq 2-\eps/2$ and $\Im(z) \geq \eta := \frac{K^2 \log^{19} n}{n}$, one has
$$ |s_n(z) - s(z)| \leq o(1)$$
with (uniformly) overwhelming probability.

Fix $z$ as above.  From \eqref{sqrt}, $s(z)$ is the unique solution to the equation
\begin{equation}\label{sz}
s(z) + \frac{1}{s(z)+z} = 0
\end{equation}
with $\Im s(z) > 0$. The strategy is then to obtain a similar equation for $s_n(z)$ (note that one automatically has $\Im(s_n(z)) > 0$).

By Lemma \ref{stielt} we may write
\begin{equation}\label{ssz}
 s_n(z) = \frac{1}{n} \sum_{k=1}^n \frac{1}{\frac{1}{\sqrt{n}} \zeta_{kk} - z - Y_k}
\end{equation}
where 
$$Y_k := a_k^* (W_{n,k} - zI)^{-1} a_k,$$
$W_{n,k}$ is the matrix $W_n$ with the $k^{th}$ row and column removed, and $a_k$ is the $k^{th}$ row of $W_n$ with the $k^{th}$ element removed.

The entries of $a_k$ are independent of each other and of $W_{n,k}$, and have mean zero and variance $\frac{1}{n}$.  By linearity of expectation we thus have, on conditioning on $W_{n,k}$
$$ \E(Y_k|W_{n,k}) = \frac{1}{n} \tr (W_{n,k}-zI)^{-1} = (1 - \frac{1}{n}) s_{n,k}(z)$$
where
$$ s_{n,k}(z) := \frac{1}{n-1} \sum_{i=1}^{n-1} \frac{1}{\lambda_i(W_{n,k})-z}$$
is the Stieltjes transform of $W_{n,k}$.  From the Cauchy interlacing law \eqref{cauchy-interlace} we have
$$ s_n(z) - (1-\frac{1}{n}) s_{n,k}(z) = O\left( \frac{1}{n} \int_\R \frac{1}{|x-z|^2}\ dx \right) = O\left( \frac{1}{n \eta} \right)$$
and thus
\begin{equation}\label{eyk}
 \E(Y_k|W_{n,k}) = s_n(z) + O\left( \frac{1}{K^2 \log^{19} n} \right).
\end{equation}

We now claim that a similar estimate holds for $Y_k$ itself:

\begin{proposition}[Concentration of $Y_k$]\label{yk-conc}  For each $1 \leq k \leq n$, one has $Y_k = s_n(z) + O(\frac{1}{\log n})$ with overwhelming probability.
\end{proposition}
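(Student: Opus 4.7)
The plan is to condition on $W_{n,k}$ (which is independent of $a_k$), whereupon $Y_k = a_k^* M a_k$ becomes a quadratic form in the random vector $a_k$ with deterministic matrix $M := (W_{n,k} - zI)^{-1}$; here the $n-1$ entries of $a_k$ are independent, mean zero, variance $1/n$, and bounded in magnitude by $K/\sqrt n$ (from \eqref{supi}). By \eqref{eyk}, the conditional mean $\E(Y_k \mid W_{n,k}) = (1 - 1/n) s_{n,k}(z)$ already matches $s_n(z)$ to within $O(1/(K^2 \log^{19} n))$, so it suffices to establish the concentration estimate
$$\bigl|\,Y_k - \E(Y_k \mid W_{n,k})\,\bigr| = O(1/\log n)$$
with overwhelming probability, conditionally on $W_{n,k}$.

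For this, I would split $M$ into its Hermitian real and imaginary parts $M_1 := (M + M^*)/2$ and $M_2 := (M - M^*)/(2i)$ (both of which share the eigenvectors $u_j$ of $W_{n,k}$, with real eigenvalues $(\lambda_j - \Re z)/|\lambda_j - z|^2$ and $\eta/|\lambda_j - z|^2$ respectively), and apply a Hanson--Wright-type concentration inequality to each of the real quadratic forms $a_k^* M_j a_k$. For entries bounded in magnitude by $K/\sqrt n$ with variance $1/n$, one obtains a tail bound of the form
$$ \P\bigl(\,|a_k^* M_j a_k - \E a_k^* M_j a_k| > t \,\big|\, W_{n,k}\bigr) \le 2\exp\Bigl(-c\min\Bigl(\tfrac{n^2 t^2}{K^4 \|M_j\|_F^2},\; \tfrac{n t}{K^2 \|M_j\|_{op}}\Bigr)\Bigr).$$
The relevant estimates are the crude operator bound $\|M_j\|_{op} \le \eta^{-1}$, and the key identity $\|M\|_F^2 = \sum_j |\lambda_j(W_{n,k}) - z|^{-2} = (n/\eta)\,\Im s_{n,k}(z)$, which yields $\|M_j\|_F^2 \le (n/\eta)\,\Im s_{n,k}(z)$. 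Plugging in $\eta = K^2 \log^{19} n / n$ and $t = 1/\log n$, both exponents inside the $\min$ exceed $\log n$ as soon as one has the a priori bound $\Im s_{n,k}(z) = O(\log^{O(1)} n)$. Lemma \ref{lemma:projection} offers an alternative route to the same style of bound, by dyadically decomposing the spectrum of $W_{n,k}$ into bands and applying projection concentration on each band, but at the cost of heavier bookkeeping to control the intra-block variation of $1/(\lambda_j - z)$.

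The main obstacle is precisely this a priori bound on $\Im s_{n,k}(z)$, which is essentially a consequence of Theorem \ref{sdb} itself (since $\Im s \asymp 1$ in the bulk), creating a circularity. This is broken by a standard continuity/bootstrap argument in the scale $\eta$: one starts at a large scale $\eta_0 \sim 1$, where $\Im s_{n,k}(z) \le \eta_0^{-1}$ is trivial, and iteratively decreases $\eta$ by polylogarithmic factors. At each step, the Stieltjes bound from the previous scale feeds into the concentration estimate above, which via Lemma \ref{stielt} and the self-consistent equation \eqref{sz} allows one to deduce $s_n(z) \approx s(z)$, and hence $\Im s_n(z) = O(1)$, at the refined scale. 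Iterating down to the target scale $\eta = K^2 \log^{19} n / n$ and combining the concentration estimate with \eqref{eyk} yields Proposition \ref{yk-conc}.
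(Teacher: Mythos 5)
Your proposal correctly identifies the structure of the argument — condition on $W_{n,k}$, use \eqref{eyk} to reduce to concentration of the quadratic form $Y_k = a_k^*(W_{n,k}-zI)^{-1}a_k$ about its conditional mean — but it departs from the paper's proof on both remaining steps. On the concentration estimate itself, the paper does not invoke Hanson--Wright. Instead it writes $Y_k = \sum_j |u_j(W_{n,k})^* a_k|^2/(\lambda_j(W_{n,k})-z)$, sets $R_j := |u_j(W_{n,k})^* a_k|^2 - 1/n$, splits the spectrum into $O(\log^3 n)$ bands of multiplicative width $1+1/\log^2 n$ in $|\lambda_j - x|$, and controls $\sum_{\text{band}} R_j$ via the Talagrand-based projection lemma (Lemma \ref{lemma:projection}) — that is, the route you describe as the ``alternative'' is in fact the paper's primary one. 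This choice is not merely cosmetic: the projection lemma gives fluctuations of $\|\pi_H a_k\|^2$ of size $O(\sqrt{d}\,K\log n/n)$, one power of $K$ sharper than what the sub-gaussian Hanson--Wright bound yields (its Gaussian-tail denominator carries $K'^4 = K^4$, whereas the true variance of $|\xi|^2-1$ for $|\xi|\le K$ is only $O(K^2)$). This matters because Theorem \ref{sdb} is asserted for $K$ as large as $n^{1/2-\eps}$; with your bound the Gaussian exponent $\log^{17}n/(K^2\,\Im s_{n,k})$ degrades once $K$ is a large power of $\log n$, which the paper's argument avoids. More substantively, the paper does not need the $\eta$-bootstrap you invoke to break the circularity. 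It proves the a priori input $N_I \ll n|I|$ — equivalently $\Im s_{n,k}(z) = O(1)$ — as a standalone, unconditional statement (Proposition \ref{smallsc}, proved in Appendix \ref{section:ESY}), via a short self-improving argument: if $N_I$ were large then $\Im s_n(x+\sqrt{-1}\eta) \geq C$, which forces each $\Im Y_k$ to be at least of order $C$ (since by Cauchy interlacing and the projection lemma many eigenvalues of $W_{n,k}$ lie in $I$), which in turn forces $\Im s_n = \tfrac{1}{n}\sum_k \Im\tfrac{1}{\cdots - Y_k}$ to be $\ll 1/C$, a contradiction for $C$ large. Your iteration in $\eta$ is a legitimate and standard alternative in the ESY spirit, and it can be made to close if one is careful about the conditioning (the a priori bound at scale $\eta$ is a statement about $W_n$, while the concentration of $Y_k$ is conditional on $W_{n,k}$, so one has to route through interlacing and the constant-factor monotonicity $\Im s_n(x+\sqrt{-1}\eta) \le 2\,\Im s_n(x+2\sqrt{-1}\eta)$); but it is a considerably heavier mechanism than the proof actually requires.
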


Assume this proposition for the moment.   By hypothesis, $\frac{1}{\sqrt{n}} \zeta_{kk} \leq K/\sqrt{n} \leq n^{-\eps}$ almost surely.  Inserting these bounds into \eqref{ssz}, we see that
$$
s_n(z) + \frac{1}{n} \sum_{k=1}^n \frac{1}{s_n(z) + z + o(1)} = 0$$
with overwhelming probability (compare with \eqref{sz}).  This implies that with overwhelming probability either $s_n(z) = s(z) + o(1)$ or that $s_n(z) = -z + o(1)$.  On the other hand, as $\Im s_n(z)$ is necessarily positive, the second possibility can only occur when $\Im z = o(1)$.  A continuity argument (as in \cite{ESY2}) then shows that the second possibility cannot occur at all (note that $s(z)$ stays a fixed distance away from $-z$ for $z$ in a compact set) and the claim follows.

\subsection{A preliminary concentration bound}

It remains to prove Proposition \ref{yk-conc}.  We begin with a preliminary bound (cf. \cite[Theorem 5.1]{ESY3}):

\begin{proposition}\label{smallsc}  For all $I \subset \R$ with $|I| \geq \frac{K^2 \log^2 n}{n}$, one has
$$ N_I \ll n |I|$$
with overwhelming probability.
\end{proposition}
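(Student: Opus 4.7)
The plan is to bound $N_I$ by evaluating the Stieltjes transform at the single point $z = E + \sqrt{-1}\eta$, where $I = [E-\eta/2, E+\eta/2]$ and $\eta := |I|$, and comparing a spectral lower bound for $\Im s_n(z)$ against an upper bound obtained from the self-consistent identity in Lemma~\ref{stielt}. I would first handle a single $I$ with overwhelming probability, and then upgrade to all $I$ simultaneously by a routine union bound over a polynomial-size grid of intervals inside $[-10,10]$.

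For the lower bound, \eqref{imsn} gives
\[
\Im s_n(z) \;=\; \frac{1}{n}\sum_{i}\frac{\eta}{(\lambda_i(W_n)-E)^2 + \eta^2}\;\geq\;\frac{4 N_I}{5 n \eta},
\]
since each eigenvalue in $I$ contributes at least $4/(5\eta)$. For the upper bound, Lemma~\ref{stielt} yields
\[
\Im s_n(z) \;=\; \frac{1}{n}\sum_{k=1}^n \frac{\eta + \Im Y_k}{\left|\zeta_{kk}/\sqrt{n} - z - Y_k\right|^2}, \qquad Y_k := a_k^*(W_{n,k} - zI)^{-1}a_k.
\]
Because $\zeta_{kk}$ is real and $\Im Y_k \ge 0$, the denominator is at least $(\eta + \Im Y_k)^2$, so $\Im s_n(z) \leq \frac{1}{n}\sum_k (\Im Y_k)^{-1}$.

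The heart of the argument is a lower bound on $\Im Y_k$. Expanding in the eigenbasis of $W_{n,k}$,
\[
\Im Y_k \;=\; \sum_j \frac{\eta\,|u_j(W_{n,k})^* a_k|^2}{(\lambda_j(W_{n,k})-E)^2+\eta^2} \;\geq\; \frac{\|\pi_{H_k} a_k\|^2}{2\eta},
\]
where $H_k$ is the span of those $u_j(W_{n,k})$ with $\lambda_j(W_{n,k}) \in [E-\eta, E+\eta]$, of dimension $d_k$. Since $I \subset [E-\eta,E+\eta]$, Cauchy interlacing \eqref{cauchy-interlace} gives $d_k \geq N_I - 1$. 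Since $a_k$ is independent of $W_{n,k}$, I apply Lemma~\ref{lemma:projection} to $\sqrt{n}\,a_k$ (whose entries are independent, of variance $1$, and bounded by $K$) conditionally on $W_{n,k}$: provided $d_k \gg K^2 \log n$, this gives $\|\pi_{H_k} a_k\|^2 \geq d_k/(4n)$ with overwhelming probability, and the union bound over $1\le k\le n$ preserves this. Combining the three displays yields
\[
\frac{4 N_I}{5 n \eta} \;\leq\; \Im s_n(z) \;\leq\; \frac{16 n \eta}{N_I - 1},
\]
hence $N_I \ll n|I|$.

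One may assume $N_I \geq CK^2 \log n$ for a large constant $C$, since otherwise $N_I \leq CK^2\log n \ll n|I|$ by the hypothesis $n|I| \geq K^2\log^2 n$. The ``for all $I$'' form follows by discretizing: intervals with endpoints on a $1/n^3$-grid inside $[-10,10]$ form a net of polynomial size, each of which obeys the bound with overwhelming probability by the above, and any other $I$ with $|I|\geq K^2\log^2 n/n$ is sandwiched in a net interval of length at most $2|I|$. The main obstacle is the quantitative lower bound on $\Im Y_k$: Lemma~\ref{lemma:projection} requires the target dimension to exceed $K^2 \log n$ for the probability bound to survive the union bound over $k$, which is precisely why the small-$N_I$ regime must be dispensed of by a separate trivial argument, and why the hypothesis $n|I| \geq K^2\log^2 n$ cannot be relaxed by this approach. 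The Cauchy interlacing step, which costs only one eigenvalue when passing from $W_n$ to $W_{n,k}$, is the essential bridge: it transfers the eigenvalue count from $W_n$ into a large eigenspace $H_k$ of the \emph{independent} minor $W_{n,k}$, making Lemma~\ref{lemma:projection} applicable.
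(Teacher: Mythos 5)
Your proposal follows essentially the same route as the paper's proof in Appendix~\ref{section:ESY}: both bound $\Im s_n(E+\sqrt{-1}\eta)$ from above via the Schur complement formula (Lemma~\ref{stielt}) together with the trivial bound $|\Im \frac{1}{w}| \le 1/|\Im w|$, then bound $\Im Y_k$ from below by $\|\pi_{H_k} a_k\|^2/(2\eta)$ for a large eigenspace $H_k$ of the minor $W_{n,k}$ produced by Cauchy interlacing, and finally invoke Lemma~\ref{lemma:projection} to lower-bound $\|\pi_{H_k}a_k\|^2$. The only cosmetic difference is that the paper works with a fixed consecutive block $i_- \le j < i_+$ of indices (and then union-bounds over the $O(n^2)$ choices of $(i_-,i_+)$), whereas you take $H_k$ to be the full random eigenspace of $W_{n,k}$ in $[E-\eta,E+\eta]$; this requires a small amount of additional care in setting up the conditioning on $W_{n,k}$, since $d_k$ is then random, but it can be made rigorous.

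One quantitative slip to correct: the reduction ``one may assume $N_I \ge C K^2 \log n$'' is the wrong threshold for the claimed \emph{overwhelming} probability. With $d_k \gtrsim C K^2 \log n$, Lemma~\ref{lemma:projection} (applied with $t = \sqrt{d_k}/2$) only gives a failure probability $\ll \exp(-d_k/(40K^2)) \ll n^{-\Omega(C)}$, which, $C$ being a fixed absolute constant, is high probability but not overwhelming probability. Instead you should assume $N_I > C\, n|I|$ directly (which is anyway the negation of the conclusion you wish to prove), since the hypothesis $n|I| \ge K^2 \log^2 n$ then yields $d_k \ge N_I - 1 \gtrsim C K^2 \log^2 n$, and hence a failure probability $\ll \exp(-\Omega(C \log^2 n))$, which is genuinely overwhelming. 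With that threshold fixed, the argument matches the paper's.
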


The proof, which follows the arguments from \cite{ESY3}, but using Lemma \ref{lemma:projection} to simplify things somewhat, is presented in Appendix 
\ref{section:ESY}.

Now we prove Proposition \ref{yk-conc}.  Fix $k$, and write $z=x+\sqrt{-1}\eta$. From \eqref{eyk} it suffices to show that
$$ Y_k - \E(Y_k|W_{n,k}) = O\left( \frac{1}{\log n} \right)$$
with overwhelming probability.  Decomposing $Y_k$ as in \eqref{yk}, it thus suffices to show that
\begin{equation}\label{rje}
\sum_{j=1}^{n-1} \frac{R_j}{\lambda_j(W_{n,k})-(x+\sqrt{-1}\eta)} = O\left( \frac{1}{\log n} \right)
\end{equation}
with overwhelming probability, where $R_j :=| u_j(W_{n,k})^* a_k|^2-1/n$.

Let $1 \leq i_- < i_+ \leq n$, then
$$ \sum_{i_- \leq j \leq i_+} R_j = \| P_H a_k \|^2 - \frac{\dim(H)}{n}$$
where $H$ is the space spanned by the $u_j(W_{n,k})^*$ for $i_- \leq j \leq i_+$.  From Lemma \ref{lemma:projection} and the union bound,
 we conclude that with overwhelming probability
 
\begin{equation}\label{rjb-0}
 |\sum_{i_- \leq j \leq i_+} R_j| \ll \frac{\sqrt{i_+ - i_-} K \log n + K^2 \log^2 n}{n}.
\end{equation}
By the triangle inequality, this implies that
$$
 \sum_{i_- \leq j \leq i_+} \| P_H a_k \|^2  \ll \frac{i_+-i_-}{n} + \frac{\sqrt{i_+ - i_-} K \log n + K^2 \log^2 n}{n}$$
 and hence by a further application of the triangle inequality
\begin{equation}\label{rjb}
\sum_{i_- \leq j \leq i_+} |R_j| \ll \frac{(i_+ - i_-) + K^2 \log^2 n}{n}
\end{equation}
with overwhelming probability.  

Since $\eta \geq K^2 \log^{19} n/n$, the bound \eqref{rjb-0} (together with Proposition \ref{smallsc}) already lets one dispose of the contribution to \eqref{rje} where $|\lambda_j(W_{n,k}) - x| \leq K^2 \log^{10} n/n$.  For the remaining contributions, we subdivide into $O(\log^3 n)$ intervals $\{ j: i_- \leq j \leq i_+\}$ such that in each interval  $a \leq |\lambda_j(W_{n,k}) - x| \leq (1 + \frac{1}{\log^2 n}) a$ for some $a \geq K^2 \log^{10} n/n$
(the value of $a$ varies from interval to interval).  For each such interval, the function $\frac{1}{\lambda_j(W_{n,k})-(x+\sqrt{-1}\eta)}$ has magnitude $O(\frac{1}{a})$ and fluctuates by at most $O(\frac{1}{a \log^2 n})$ as $j$ ranges over the interval.  From \eqref{rjb-0}, \eqref{rjb} we conclude
$$
\left|\sum_{i_- \leq j \leq i_+} \frac{R_j}{\lambda_j(W_{n,k})-(x+\sqrt{-1}\eta)}\right|
\ll
\frac{1}{an} ( \sqrt{i_+ - i_-} K \log n + K^2 \log^2 n ) + \frac{i_{+}- i_{-}}{an\log^2 n} $$
with overwhelming probability. By Proposition \ref{smallsc}, $i_{+} -i_{-} \ll an $ with overwhelming probability.  Thus we have 
$$
\left|\sum_{i_- \leq j \leq i_+} \frac{R_j}{\lambda_j(W_{n,k})-(x+\sqrt{-1}\eta)}\right|
\ll \frac{K \log n}{ \sqrt{an} } + \frac{1}{\log^4 n}$$

with overwhelming probability. Summing over the values of $a$ (taking into account the lower bound $a$) we obtain \eqref{rje} as desired. 

\section{Propagation of narrow spectral gaps}\label{backprop-sec}

We now prove Lemma \ref{backprop}. Fix $i_0, l, n$.  Assume for contradiction that all of the conclusions fail.  We will always assume that $n_0$ (and hence $n$) is sufficiently large.

By \eqref{giln}, we can find $1 \leq i_- \leq i_0-l < i_0 \leq i_+ \leq n+1$ such that
$$ \lambda_{i_+}(A_{n+1})-\lambda_{i_-}(A_{n+1}) = g_{i_0,l,n+1} \min( i_+-i_-, \log^{C_1} n_0 )^{\log^{0.9} n_0}.$$
If $i_+ - i_- \geq \log^{C_1/2} n$, then conclusion (i) holds (for $n$ large enough), so we may assume that
\begin{equation}\label{ipm}
i_+ - i_- < \log^{C_1/2} n.
\end{equation}
We set
\begin{equation}\label{ldef}
 L := \lambda_{i_+}(A_{n+1})-\lambda_{i_-}(A_{n+1}) = g_{i_0,l,n+1} (i_+ - i_-)^{\log^{0.9} N}.
\end{equation}
In particular (by \eqref{gdel}, \eqref{ipm}) we have
\begin{equation}\label{ldel}
 L \leq \delta \exp( \log^{0.91} n ).
\end{equation}
We now study the eigenvalue equation \eqref{jn} for $i=i_-$, which we rearrange as
$$
 \sum_{i_- \leq j \leq n} \frac{|u_j(A_n)^* X_n|^2}{\lambda_j(A_{n}) - \lambda_{i_-}(A_{n+1})} =
 \sum_{1 \leq j < i_-} \frac{|u_j(A_n)^* X_n|^2}{\lambda_{i_-}(A_{n+1}) - \lambda_j(A_{n})}
+  a_{n+1,n+1} - \lambda_{i_-}(A_{n+1}).$$
Observe that
$$
\sum_{i_- \leq j \leq n} \frac{|u_j(A_n)^* X_n|^2}{\lambda_j(A_{n}) - \lambda_{i_-}(A_{n+1})}
\geq \frac{1}{L} \sum_{i_- \leq j < i_+} |u_j(A_n)^* X_n|^2.$$
Since conclusion (ii) fails, we have
$$
\sum_{i_- \leq j \leq n} \frac{|u_j(A_n)^* X_n|^2}{\lambda_j(A_{n}) - \lambda_{i_-}(A_{n+1})} \geq \frac{n(i_+-i_-)}{2^{m/2} L \log^{0.01} n}.
$$
On the other hand, since conclusions (iii), (iv) fail, we have
$$ |a_{n+1,n+1} - \lambda_{i_-}(A_{n+1})| \leq \frac{n \exp( -\log^{0.95} n )}{\delta^{1/2}} \leq \frac{n(i_+-i_-)}{2^{m/2+1} L\log^{0.01} n}$$
thanks to the bounds $i_+-i_- \geq 1$, \eqref{mcivil} and \eqref{ldel}.
By the triangle inequality, we thus have
$$
\sum_{1 \leq j < i_-} \frac{|u_j(A_n)^* X_n|^2}{\lambda_{i_-}(A_{n+1}) - \lambda_j(A_{n})} \geq \frac{n(i_+-i_-)}{2^{m/2+1} L \log^{0.01} n}.$$
Note that all the summands on the left-hand side are non-negative.  By 
a dyadic partition and the pigeonhole principle (using the convergence of the series $1/l^2$), we can thus find $k \geq 1$ such that
\begin{equation}\label{sami}
\sum_{1 \leq j < i_-: 2^{k-1} \leq i_- - j < 2^k} \frac{|u_j(A_n)^* X_n|^2}{\lambda_{i_-}(A_{n+1}) - \lambda_j(A_{n})} \gg \frac{n(i_+-i_-)}{2^{m/2} L k^2 \log^{0.01} n}.
\end{equation}
In particular $2^{k-1} < i_-$.

Let's first suppose that $2^{k-1} \geq \log^{C_1/2} n$.  Then by the failure of conclusion (i), we have
$$ \lambda_{i_-}(A_{n+1}) - \lambda_j(A_{n}) > \delta^{1/4} \exp( \log^{0.95} n ) 2^{k-1}$$
for all $j$ in the summation in \eqref{sami}, and thus (by \eqref{mcivil}, \eqref{ldel} and the trivial bounds $i_+-i_- \geq 1$ and $k = O(\log n)$)
\begin{equation}\label{jo}
\begin{split}
\sum_{1 \leq j < i_-: 2^{k-1} \leq i_- - j < 2^k}  |u_j(A_n)^* X_n|^2 &\gg 
\frac{n(i_+-i_-)}{2^{m/2} L k^2 \log^{0.01}} \delta^{1/4} \exp( \log^{0.95} n ) 2^{k-1} \\
&\gg \frac{n 2^k}{\delta^{1/2}}.
\end{split}
\end{equation}
On the other hand, from the failure of conclusion (v),  we have
\begin{equation}\label{ujx}
 |u_j(A_n)^* X_n|^2 < \frac{n \exp( - \log^{0.96} n )}{\delta^{1/2}}
\end{equation}
for $\eps n/10 \leq j \leq (1-\eps/10) n$.   This already contradicts \eqref{jo} when the range of summation in \eqref{jo} is contained in the bulk region $\eps n/10 \leq j \leq (1-\eps/10) n$.  The only remaining case is when \eqref{jo} approaches the edge, which only occurs when $2^k \gg \eps n$.  But in this case we note from Pythagoras' theorem and the failure of conclusion (vi) that
$$ \sum_{j=1}^n |u_j(A_n)^* X_n|^2 < \frac{n^2 \exp( - \log^{0.96} n )}{\delta^{1/2}},$$
leading again to a contradiction with \eqref{jo}.  We may therefore assume that $2^{k-1} < \log^{C_1/2} n$, thus $k = O( C_1 \log \log n )$.

By the failure of conclusion (vii), we now have $|u_j(A_n)^* X_n|^2 \ll 2^{m/2} n \log^{0.8} n$ for all $j$ in the summation in \eqref{sami}; we conclude that
$$
\sum_{1 \leq j < i_-: 2^{k-1} \leq i_- - j < 2^k} \frac{1}{\lambda_{i_-}(A_{n+1}) - \lambda_j(A_{n})} \gg \frac{i_+-i_-}{2^m L \log^{0.82} n}.
$$
If we set $i_{--} := i_- - 2^{k-1}$, we conclude that $0 < i_- - i_{--} < \log^{C_1/2} n$ and
$$ \lambda_{i_{-}}(A_{n+1}) - \lambda_{i_{--}}(A_n) \leq 2^m \frac{i_- - i_{--}}{i_+-i_-} L \log^{0.83} n .$$
An analogous argument, starting with $i=i_+$ in \eqref{jn} instead of $i=i_-$ and reflecting all the indices, allows us to find $i_{++}$ with $0 \leq i_{++}-i_+ < \log^{C_1/2} n$ such that
$$ \lambda_{i_{++}}(A_n) - \lambda_{i_+}(A_{n+1}) \leq 2^m \frac{i_- - i_{++}}{i_+-i_-} L \log^{0.83} n.$$
Summing, we have
\begin{equation}\label{yom}
 \lambda_{i_{++}}(A_n) - \lambda_{i_{--}}(A_n) \leq L( 1 + 2^m \alpha \log^{0.84} n )
\end{equation}
where $\alpha := \frac{i_{++}-i_{--}}{i_+-i_-}-1$.  Note that $(1+\alpha)(i_+-i_-) = i_{++}-i_{--} \leq \log^{C_1} N$, and so by \eqref{giln}, \eqref{gilp}
$$
\frac{\lambda_{i_{++}}(A_n) - \lambda_{i_{--}}(A_n)}{ (1+\alpha)^{\log^{0.9} N} (i_+-i_-)^{\log^{0.9} N}} \geq 2^m g_{i_0,l,n+1}.$$
Combining this with \eqref{yom}, \eqref{ldef} we conclude that
$$ 1 + 2^m \alpha \log^{0.84} n \geq 2^m (1+\alpha)^{\log^{0.9} N}$$
and hence
$$ 1 + \alpha \log^{0.84} n \geq (1+\alpha)^{\log^{0.9} N}$$
But this contradicts the elementary estimate $(1+\alpha)^x \geq 1+x\alpha$ for $\alpha >0$ and $x \geq 1$, and Lemma \ref{backprop} follows.

\section{Bad events are rare}\label{bad-event-sec}

We now prove Proposition \ref{bad-event}.  Let the notation and assumptions be as in that proposition.

We first prove (a).  The truncation assumption \eqref{supi} ensures that the events (iii), (v), (vi) from Proposition \ref{backprop} are empty for $n$ large enough.  The event (i) fails with overwhelming probability thanks to Theorem \ref{sdb}.  The event (iv) fails with overwhelming probability because of the well-known fact that the operator norm of $A_n$ is $O(n)$ with overwhelming probability (see e.g. \cite{AGZ}; there are many proofs, for instance one can start by observing that $\|A_n\|_{op} \leq 2 \sup_x \|A_n x\|$, where $x$ ranges over a $1/2$-net of the unit ball, and use the union bound followed by a standard concentration of measure result, such as the Chernoff inequality).  This concludes the proof of (a).

Now we prove (b) and (c) jointly.  By \eqref{supi} and Proposition \ref{deloc} we can find $C'$ such that all the coefficients of the eigenvectors $u_j(A_n)$ for $\eps n/2 \leq j \leq (1-\eps/2) n$ are of magnitude at most $n^{-1/2} \log^{C'} n$ with overwhelming probability.  

Let us first consider (vii), in which we will be able to obtain the better upper bound of $2^{-\kappa m} 2^{-2C_1 n}$ for the conditional probability of occurrence (thus establishing (b) and (c) simultaneously for (vii)).  If $2^m \geq \log^{C_3} n$ for some sufficiently large $C_3$, then the desired bound comes from \eqref{supi} and Lemma \ref{lemma:projection}. (In fact, the Chernoff bound would suffice as well, and the event fails with overwhelming probability.)  Now suppose instead that $2^m \leq \log^{O(1)} n$.  We wish to show that
\begin{equation}\label{sim}
 \P( |S_i| \geq 2^{m/2} \log^{0.8} n ) \leq 2^{-\kappa m} \log^{-2C_1} n,
\end{equation}
where $S_i \in \C$ is the random walk
\begin{equation}\label{si}
 S_i := \zeta_{1,n+1} w_{i,1} + \ldots + \zeta_{n,n+1} w_{i,n}
\end{equation}
and $w_{i,1},\ldots,w_{i,n}$ are the coefficients of $u_i(A_n)$, which by hypothesis have magnitude $O( n^{-1/2} \log^{C'} n)$ and square-sum to $1$.

Observe that $S_i$ has mean zero and variance $1$.  Applying Theorem  \ref{bes} and \eqref{supi}, we conclude that
$$ \P( |S_i| \geq t ) \ll \exp( - ct^2 ) + n^{-1/2} \log^{O(1)} n$$
for any $t \geq 1$ and some absolute constant $c>0$, which easily yields \eqref{sim} in the range $2^m \leq \log^{O(1)} n$.

The consideration of (ii) is similar. Write the left-hand side of \eqref{smallin} as $\|\pi_H(X_n)\|$, where $H$ is the span of the $u_j(A_n)$ for $i_- \leq j < i_+$.  Applying \eqref{supi} and Lemma \ref{lemma:projection} we obtain the claim when $i_+-i_- \geq \log^{C_3} n$ for sufficiently large $c_3$ (in fact (ii) now fails with overwhelming probability), so we may assume instead that $i_+-i_- \leq \log^{O(1)} n$.  In this case, the event (ii)
can now be expressed as
\begin{equation}\label{lemon}
 |\vec S| \leq \frac{(i_+-i_-)^{1/2}}{2^{m/4} \log^{0.005} n}
\end{equation}
where $\vec S \in \C^{i_+-i_-}$ is the random vector with components $S_j$ defined in \eqref{si}.

From the orthonormality of the $u_i(A_n)$, we see that $\vec S$ has mean zero and has covariance matrix equal to the identity.  Applying Theorem  \ref{bes} again, we see that
$$ \P( |\vec S| \leq t ) \ll O(t/(i_+-i_-)^{1/2})^{(i_+-i_-)/4} + n^{-1/2} t^{-3} \log^{O(1)} n$$
Applying this with $t := \frac{(i_+-i_-)^{1/2}}{2^{m/4} \log^{0.005} n}$ and using the fact that $i_+ - i_- \geq l \geq C_2$ by hypothesis, one concludes that \eqref{lemon} occurs with probability
$$ \ll O(2^{m/4} \log^{0.005} n)^{-C_2/4} + n^{-1/2} 2^{3m/4} \log^{O(1)} n$$
which proves the claim as long as $C_2$ is large and $2^m \leq n^{1/100}$.  But the case $2^m \geq n^{1/100}$ then follows by noting that the probability of the event \eqref{lemon} is non-increasing in $m$.  The proof of Proposition \ref{bad-event} is now complete.

\appendix

\section{Concentration of determinant} \label{section:determinant}

In view of the standard identity
$$ \int_{-2}^2 \log |y|  \rho_{sc}(y)\ dy = - \frac{1}{2}$$
(which can be verified for instance by applying contour integration to a branch cut of $(4-z^2)^{1/2} \log z$ around the slit $[-2,2]$; see also Remark \ref{remp} below) and Stirling's formula, it suffices to prove the latter claim.

Fix $z$; we allow implied constants to depend on $z$.  We of course have
\begin{align*}
\log |\det (M_n - \sqrt{n} zI)|  &= \sum_{j=1}^n \log |\lambda_j(M_n) - \sqrt{n} z| \\
&= \frac{1}{2} n \log n + \sum_{j=1}^n \log |\lambda_j(W_n-z)|
\end{align*}
so it will suffice to show that
$$ \left|\frac{1}{n} \sum_{j=1}^n \log |\lambda_j(W_n)-z| - \int_{-2}^2 \log |y-z| \rho_{sc}(y)\ dy\right| \leq n^{-c}$$
asymptotically almost surely for some $c>0$.  Making the change of variables $y = t(x)$, where $t$ is defined in \eqref{lt}, it suffices to show that
$$ \left|\frac{1}{n} \sum_{j=1}^n \log |\lambda_j(W_n)-z| - \int_0^1 \log |t(x)-z|\ dx\right| \leq n^{-c}$$
asymptotically almost surely for some $c>0$.  

From Theorem \ref{theorem:main2} (with $k=1$) we have $\inf_j |\lambda_j(W_n)-z| \geq n^{-2}$ (say) asymptotically almost surely (because the expected number of eigenvalues in the interval $[z-n^{-2},z+n^{-2}]$ is $o(1)$).  From this (and \eqref{bai}) we conclude that $\sup_j \log |\lambda_j(W_n)-z| = O( \log n )$ asymptotically almost surely.   Thus, the contribution of all $j$ with $\left|t\left(\frac{j}{n}\right) - \Re z\right| \leq n^{-\eps}$ will be negligible for any fixed $\eps > 0$, and it suffices to show that
$$ \left|\frac{1}{n} \sum_{1 \leq j \leq n: \left|\left(\frac{j}{n}\right) - \Re z\right| > n^{-\eps}} \log |\lambda_j(W_n)-z| 
- \int_0^1 \log |t(x)-z|\ dx\right| \leq n^{-c}$$
asymptotically almost surely.

By \eqref{lambdaj-t} we see that with probability $1-o(1)$, one has $\lambda_j(W_n) = t\left(\frac{j}{n}\right) + O(n^{-\delta})$ for all $1 \leq j \leq n$ and some absolute constant $\delta > 0$, where $-2 \leq t(a) \leq 2$ is defined by \eqref{lt}. By Taylor expansion, we thus have asymptotically almost surely that
$$ \log |\lambda_j(W_n)-z| = \log \left|t\left(\frac{j}{n}\right)-z\right| + O(n^{-\delta/2})$$
(say) for all $1 \leq j \leq n$ with $\left|t\left(\frac{j}{n}\right) - \Re z\right| > n^{1-\eps}$, if $\eps$ is chosen sufficiently small depending on $\delta$.  The claim then follows (for $\eps$ small enough) by approximating $\int_0^1 \log |t(x)-z|\ dx$ by its Riemann integral away from the possible singularity at $\Re z$.

\begin{remark}\label{remp} The logarithmic potential $\int_{-2}^2 \log |y-z| \rho_{sc}(y)\ dy$ for the semicircular distribution can be computed explicitly as
$$ \int_{-2}^2 \log |y-z| \rho_{sc}(y)\ dy = \frac{1}{2} \Re \frac{z-\sqrt{z^2-4}}{z+\sqrt{z^2-4}} + \log \left| \frac{\sqrt{z^2-4}+z}{2}\right|$$
where $\sqrt{z^2-4}$ is the branch of the square root of $z^2-4$ with cut at $[-2,2]$ which is asymptotic to $z$ at infinity; this can be seen by integrating the formula
\begin{equation}\label{sqrt}
 \int_{-2}^2 \frac{1}{y-z} \rho_{sc}(y)\ dy = \frac{1}{2} (-z + \sqrt{z^2-4})
\end{equation}
for the Stieltjes formula for the semicircular potential, which can be easily verified by the Cauchy integral formula.
\end{remark}

\section{The distance between a random vector and a subspace} \label{section:projection}

The prupose of this appendix is to prove Lemma \ref{lemma:projection}. We restate this lemma for the reader's convenience. 

\begin{lemma}\label{lemma:projection-again}   Let $X=(\xi_1, \dots, \xi_n) \in \C^n$ be a
random vector whose entries are independent with mean zero, variance $1$, 
and are bounded in magnitude by $K$ almost surely for some $K $, where $K \ge 10( \E |\xi|^{4} +1)$. Let $H$ be a subspace of dimension $d$ and
$\pi_H$ the orthogonal projection onto $H$. Then
$$\P (|\|\pi_H (X)\| - \sqrt d| \ge t) \le 10 \exp(-
\frac{t^{2}}{10K^2}). $$
In particular, one has
$$ \| \pi_H(X)\| = \sqrt{d} + O( K \log n )$$
with overwhelming probability.
\end{lemma}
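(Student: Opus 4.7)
My plan is to apply Talagrand's concentration inequality for convex Lipschitz functions of independent bounded random variables, viewing the complex vector $X \in \C^n$ as a real vector in $\R^{2n}$. Each real or imaginary component of an entry $\xi_j$ is bounded by $K$ in absolute value. The function $f(X) := \|\pi_H(X)\|$ is convex (the norm composed with the linear map $\pi_H$) and $1$-Lipschitz in the Euclidean metric, since
$$|\|\pi_H X\| - \|\pi_H Y\|| \leq \|\pi_H(X-Y)\| \leq \|X-Y\|.$$
Talagrand's inequality therefore gives the sub-Gaussian concentration
$$\P(|\|\pi_H(X)\| - M| \geq t) \ll \exp(-t^2/(CK^2))$$
for some absolute constant $C$ and any median $M$ of $\|\pi_H(X)\|$.

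To identify $M$ with $\sqrt{d}$, I will compute $\E \|\pi_H(X)\|^2$ exactly. Since $\|\pi_H(X)\|^2 = X^* \pi_H X$ and the covariance matrix $\E[XX^*]$ equals the identity (by independence of the $\xi_j$, together with mean zero and unit variance), we obtain $\E \|\pi_H(X)\|^2 = \tr(\pi_H) = d$. Integrating the Talagrand tail yields the variance bound $\Var(\|\pi_H X\|) \ll K^2$. Combining this with the identity $(\E f)^2 + \Var(f) = \E f^2 = d$ (and splitting into the regimes $d \gg K^2$ and $d \lesssim K^2$) gives $|\E f - \sqrt{d}| \ll K$, and the standard bound $|M - \E f| \ll \sqrt{\Var(f)} \ll K$ completes the comparison, producing $|M - \sqrt{d}| \ll K$.

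Substituting this back into the Talagrand bound and absorbing the $O(K)$ shift into the exponent (adjusting constants) produces the claimed inequality $\P(|\|\pi_H X\| - \sqrt{d}| \geq t) \leq 10\exp(-t^2/(10K^2))$; the role of the hypothesis $K \geq 10(\E|\xi|^4 + 1)$ is to absorb the fourth-moment contribution to $\Var(\|\pi_H X\|^2) \ll \sum_i (\pi_H)_{ii}^2 \E|\xi|^4 + \sum_{i\neq j}|(\pi_H)_{ij}|^2$ into the numerical constants cleanly, so that the prefactor $10$ and the denominator $10K^2$ come out as stated. The ``in particular'' assertion then follows by taking $t = K\log n$, which renders the right-hand side superpolynomially small and hence negligible in the overwhelming-probability sense. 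I expect the main obstacle to be the careful tracking of numerical constants in the median-versus-mean comparison and in the exponent absorption; the conceptual structure is standard, but making it yield precisely the constants $10$ and $1/10$ stated here, while also verifying that the mean-identification error is truly controlled by $K$ rather than by some larger quantity depending on $\E|\xi|^4$, requires some delicate bookkeeping.
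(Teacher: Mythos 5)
Your proposal uses Talagrand's inequality for the concentration-around-the-median step, exactly as the paper does, but takes a genuinely different route to locate the median near $\sqrt d$. The paper expands $\|\pi_H X\|^2 = \sum_i p_{ii}|\xi_i|^2 + \sum_{i\neq j} p_{ij}\xi_i\overline{\xi_j}$ and applies Chebyshev separately to the diagonal piece $S_1 = \sum_i p_{ii}(|\xi_i|^2-1)$ and the off-diagonal piece; the bound $\E|S_1|^2 = \sum_i p_{ii}^2(\E|\xi_i|^4-1) \leq dK$ is where the hypothesis $K \geq 10(\E|\xi|^4+1)$ is actually used, giving $\P(\|\pi_H X\| \geq \sqrt d + 2K) \leq 1/5$ (and symmetrically below), hence $|M - \sqrt d| \leq 2K$. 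You instead integrate the Talagrand tail itself to get $\E(\|\pi_H X\| - M)^2 \ll K^2$, hence $\Var(\|\pi_H X\|) \ll K^2$, then compare mean to $\sqrt d$ via $(\E f)^2 = d - \Var f$ and median to mean via $|M - \E f| \leq \sqrt{\Var f}$. This is a valid alternative with the interesting feature that, unlike the paper's argument, it does not use the fourth moment anywhere: Talagrand alone controls $\Var(\|\pi_H X\|)$. Your closing remark about the fourth-moment hypothesis ``absorbing'' into $\Var(\|\pi_H X\|^2)$ is therefore a misdiagnosis of your own argument, which never touches $\Var(\|\pi_H X\|^2)$; it is the \emph{paper's} proof that needs the fourth moment, not yours. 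The only cost of your route is that the constant in $|M - \sqrt d| \leq CK$ comes out larger (on the order of $20K$ rather than the paper's $2K$), which affects only the specific numerical constants $10$ and $1/10$ in the statement --- and the paper's own derivation is not tight enough to justify those constants either, so this is a wash.
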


 It is easy to show that
$\E \|\pi_{H} (X)\|^{2} = d$, so it is indeed natural to expect that with high probability
$\pi_{H} (X)$ is around $\sqrt d$.

In a previous paper \cite{TVdet}, the authors proved Lemma \ref{lemma:projection-again}
for the special case when $\xi_{i} $ are Bernoulli
random variables (taking value $\pm 1$ with probability half).
This proof is a simple generalization of one in \cite{TVdet} (see also \cite[Appendix E]{TVhard}). We use the following theorem, which is a consequence of Talagrand's inequality
(see \cite[Theorem E.2]{TVhard} or \cite{Le, Tal}).

\begin{theorem}[Talagrand's inequality]\label{theorem:T}
Let $\D$ be the unit disk  $\{z\in \C, |z| \le 1 \}$. For every
product probability $\mu$ on $\D^n$, every convex $1$-Lipschitz
function $F: \C^n \to \R$, and every $r \ge 0$,
$$\mu (|F- M(F)| \ge r) \le 4 \exp(-r^2/16), $$
where $M(F)$ denotes the median of $F$.
\end{theorem}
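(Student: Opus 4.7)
The plan is to apply Talagrand's inequality (Theorem~\ref{theorem:T}) to the function $F(X) := \|\pi_H(X)\|$, after an appropriate rescaling to land in the unit polydisk $\D^n$, and then to compare the resulting median to $\sqrt{d}$ using a second-moment calculation. This is the same route used in \cite{TVdet} for Bernoulli entries, and the only new ingredient needed here is that $F$ is insensitive to the precise distribution of the entries beyond variance $1$ and an a.s.\ bound by $K$.

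First I would verify that $F(X) = \|\pi_H(X)\|$ is convex (immediate from linearity of $\pi_H$ and the triangle inequality for $\|\cdot\|$) and $1$-Lipschitz on $\C^n$ (since $|F(X) - F(X')| \le \|\pi_H(X-X')\| \le \|X-X'\|$). Setting $Y := X/K$, the entries of $Y$ lie almost surely in $\D$, and $\tilde F(Y) := \|\pi_H(Y)\| = F(X)/K$ is a convex $1$-Lipschitz function on $\D^n$. Theorem~\ref{theorem:T} then gives
\begin{equation*}
\P(|F(X) - M| \ge r) \le 4 \exp(-r^2/(16 K^2))
\end{equation*}
for all $r \ge 0$, where $M := K \cdot M(\tilde F)$ is a median of $F(X)$.

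The main remaining step, and the one requiring some care, is to show that $M = \sqrt{d} + O(K)$, so that centering at $\sqrt{d}$ costs only an $O(K)$ shift that can be absorbed into the Gaussian tail (at the price of slightly worsened constants, yielding the stated $10 \exp(-t^2/(10K^2))$). For this, note that $\E F(X)^2 = \E X^* \pi_H X = \tr(\pi_H \cdot \Cov(X)) = \tr(\pi_H) = d$, using that the entries of $X$ are independent with variance $1$. Integrating the Talagrand tail bound, $\E (F(X) - M)^2 \le 64 K^2$, so $\Var(F(X)) \le 64 K^2$ and $|\E F(X) - M| \le 8K$. Since $(\E F(X))^2 = d - \Var(F(X))$, we get $\sqrt{d - 64 K^2} \le \E F(X) \le \sqrt{d}$ (interpreting the lower bound as $0$ when $d < 64 K^2$), which in either case yields $|\E F(X) - \sqrt{d}| \le 8K$, and hence $|M - \sqrt{d}| \le 16 K$. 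The hypothesis $K \ge 10(\E|\xi|^4 + 1)$ ensures in particular $K \ge 10$, which keeps these constants comfortable.

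Putting these together, for $t \ge 32K$ we have $\P(|F(X) - \sqrt{d}| \ge t) \le \P(|F(X) - M| \ge t/2) \le 4 \exp(-t^2/(64 K^2))$, and for $t \le 32 K$ the bound $10 \exp(-t^2/(10 K^2))$ holds trivially, so the full inequality of the lemma follows after adjusting constants. The ``in particular'' statement is then immediate by taking $t$ to be a sufficiently large constant multiple of $K \log n$, so that $10 \exp(-t^2/(10 K^2))$ is smaller than any fixed negative power of $n$. The main technical obstacle is really just the mean/median comparison in the range where $d$ is not much larger than $K^2$, which is handled uniformly by the above second-moment bound.
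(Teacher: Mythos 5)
You have not proved the statement in question. The statement is Theorem \ref{theorem:T}, Talagrand's concentration inequality itself, which the paper does not prove but quotes from the literature (it is \cite[Theorem E.2]{TVhard}, see also \cite{Le, Tal}). Your very first step is ``apply Talagrand's inequality (Theorem~\ref{theorem:T})'', so as an argument for Theorem \ref{theorem:T} the proposal is circular: it assumes exactly the inequality it is supposed to establish. An actual proof would have to produce the concentration bound from scratch, e.g.\ by proving Talagrand's convex distance inequality $\int e^{d_T(x,A)^2/4}\,d\mu \le 1/\mu(A)$ for product measures (typically by induction on the dimension $n$), then passing from convex sets to convex $1$-Lipschitz functions via the sublevel sets $A = \{F \le M(F)\}$ and the fact that for convex $A$ the Euclidean distance to $A$ is dominated by the convex distance. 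None of that machinery appears in your write-up.

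What you have actually written is a proof of Lemma \ref{lemma:projection} (the concentration of $\|\pi_H(X)\|$ about $\sqrt d$), i.e.\ of the application, not of the tool. For what it is worth, as a proof of that lemma your argument is essentially sound and follows the same overall route as Appendix B of the paper (rescale to $K\cdot\D^n$, apply Theorem \ref{theorem:T}, then compare the median to $\sqrt d$); your median comparison via $\E F^2 = d$ together with $\E(F-M)^2 \le 64K^2$ obtained by integrating the tail bound is a clean alternative to the paper's Chebyshev computation with $S_1, S_2$, and it does not even use the fourth-moment hypothesis $K \ge 10(\E|\xi|^4+1)$. But that does not repair the central defect: the assigned statement is the Talagrand inequality, and for it you have supplied no proof at all.
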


\begin{remark} In fact, the results still holds for the space
$\D_1 \times \dots \times \D_n$ where $\D_i$ are complex region
with diameter $2$.
\end{remark}
 
An easy change of variables reveals the following generalization
of this inequality: if $\mu$ is supported on a dilate $K \cdot \D^n$ of
the unit disk for some $K > 0$, rather than $\D^n$ itself, then for every $r > 0$ we have
\begin{equation}\label{mood}
\mu (|F- M(F)| \ge r) \le 4 \exp(-r^2/16K^2).
\end{equation}
In what follows, we assume that $K \ge g(n)$, where $g(n)$ is tending (arbitrarily slowly) to infinity with $n$.
 The map $X \rightarrow |\pi_H (X)|$ is clearly convex and $1$-Lipschitz.  Applying \eqref{mood} we conclude that
\begin{equation}\label{po}
 \P( \Big| |\pi_H (X)| - M( |\pi_H (X) )| \Big| \geq t ) \le 4\exp( -t^2 / 16 K^2 ) ).
\end{equation}
for any $t > 0$.  To conclude the proof, it suffices to show that
\begin{equation}  \Big| M(|\pi_H(X)|  -\sqrt d \Big| \le 2K \end{equation}

Let $P = (p_{jk})_{1 \leq
j,k \leq n}$ be the $n \times n$ orthogonal projection matrix onto $H$.
We have that $\tr P^2= \tr P= \sum_i p_{ii} = d$ and $|p_{ii}| \le 1$.  Furthermore,
$$
|\pi_H (X)| ^2
=  \sum_{1 \le i,i \le n}  p_{ij} \xi_i \overline \xi_j
=\sum_{i=1}^n p_{ii} |\xi_i|^2 + \sum_{1\le i \neq j \le n} p_{ij} \xi_i \overline \xi_j. $$
 Consider the event $\CE_{+}$ that $|\pi_H(X)| \ge \sqrt d + 2K$. Since this implies
 $|\pi_H(X)|^2 \ge d + 4 \sqrt d K + 4K^2$, we have
 
 $$\P( \CE_{+} ) \le \P( \sum_{i=1}^n p_{ii} |\xi_{i}|^2 \ge d + 2 \sqrt d K ) +
 \P (|\sum_{1\le i \neq j \le n} p_{ij} \xi_i \overline \xi_j | \ge 2 \sqrt d K). $$

 Let $S_1:= \sum_{i=1}^n p_{ii} (|\xi_i|^2-1 )$, then we have by Chebyshev's  inequality
 
$$  \P( \sum_{i=1}^n p_{ii} |\xi_{i}|^2 \ge d + 2 \sqrt d K ) \le  \P(|S_1| \ge 2 \sqrt d K)
\le \frac{\E (|S_1|^2 ) } {4 d K^2} . $$

On the other hand, by the assumption on $K$

$$\E |S_1|^2 = \sum_{i=1}^n  p_{ii}^2 \E (|\xi_i|^2-1 )^2  = \sum_i p_{ii}^2 (\E |\xi_i|^4-1 )
\le \sum_{i=1}^n p_{ii}^2 K =  dK. $$

Thus,
$$ \P(|S_1| \ge 2 \sqrt d K)
\le \frac{\E (|S_1|^2 ) } {4 d K^2}  \le \frac{1}{K} ) \le 1/10. $$

Similarly, set $S_2:= |\sum_{i \neq j} p_{ij} \xi_i \overline {\xi} |.$ We have
$\E S_2^2 = \sum_{i \neq j} |p_{ij}|^2  \le d . $ So again  by Chebyshev's inequality
$$\P( S_2 \ge 2\sqrt d K) \le \frac{d}{ 4 d K^2}  \le 1/10. $$
 
It follows that $\P(\CE_{+} ) \le 1/5$ and so $M (\|\pi_H (X)\| ) \le \sqrt d + 2K$.
To prove the lower bound, let   $\CE_{-}$ be the event that $\|\pi_H(X)\| \le \sqrt d - 2K$
and notice that
$$\P(\CE_{-} ) \le \P(|\pi_H (X)|^2 \le d -2 \sqrt d K) \le \P(S_1 \le d- \sqrt d K)+ \P(S_2 \ge \sqrt d K). $$
Both terms on the RHS can be bounded by $1/5$ by the same argument as above. The proof is complete.

\section{Controlling the spectral density by the Stieltjes transform} \label{section:ESY}

In this appendix we establish Lemma \ref{lemma:S-transform} and Proposition \ref{smallsc}.

\begin{proof}  (Proof of Lemma \ref{lemma:S-transform}.) 
From \eqref{soda} we see that with overwhelming probability, one has
$$ |s_n(x + \sqrt{-1} \eta)| \ll 1$$
for all $-L \leq x \leq L$ which are multiples of $n^{-100}$ (say).  From \eqref{imsn} one concludes that
$$ \frac{1}{n} \sum_{i=1}^n \frac{\eta}{\eta^2 + |\lambda_i(W_n)-x|^2} \ll 1$$
and we thus have the crude bound
$$ N_I \ll \eta n$$
whenever $I \subset [-L, L]$ is an interval of length $|I| = \eta$.  Summing in $I$, we thus obtain the bound
\begin{equation}\label{ni}
 N_I \ll |I| n
\end{equation}
with overwhelming probability whenever $I \subset [-L, L]$ has length $|I| \geq \eta$.  (One could also invoke Proposition \ref{smallsc}  for this step.)

Next, let $I \subset [-L+\eps,L-\eps]$ be such that $|I| \geq 2\eta$, and consider the function
$$ F(y) := n^{-100} \sum_{x \in I; n^{-100}|x} \frac{\eta}{\pi(\eta^2 + |y-x|^2)}$$
where the sum ranges over all $x \in I$ that are multiples of $n^{-100}$.  Observe that
$$ \frac{1}{n} \sum_{i=1}^n F(\lambda_i(W_n)) = n^{-100} \frac{1}{\pi} \Im \sum_{x \in I; n^{-100}|x} s_n(x+\sqrt{-1}\eta)$$
and
$$ \int_\R F(y) \rho_{sc}(y)\ dy = n^{-100} \frac{1}{\pi} \Im \sum_{x \in I; n^{-100}|x} s(x+\sqrt{-1}\eta).$$
With overwhelming probability, we have $s_n(x+\sqrt{-1}\eta) = s(x+\sqrt{-1}\eta) + O(\delta)$ for all $x$ in the sum by hypothesis, and hence
$$ \frac{1}{n} \sum_{i=1}^n F(\lambda_i(W_n)) = \int_\R F(y) \rho_{sc}(y)\ dy + O( |I| \delta ).$$
On the other hand, from Riemann integration one sees that
$$ F(y) = \int_I \frac{\eta}{\pi(\eta^2 + |y-x|^2)}\ dx + O(n^{-10})$$
(say).    One can then establish the pointwise bounds
$$ F(y) \ll \frac{1}{1 + (\dist(y,I)/\eta)} + n^{-10}$$
when $y \not \in I$ and $\dist(y,I) \leq |I|$,
$$ F(y) \ll \frac{\eta |I|}{\dist(y,I)^2} + n^{-10}$$
when $y \not \in I$ and $\dist(y,I) > |I|$, and (since $\frac{\eta}{\pi(\eta^2 + |y-x|^2)}$ has integral $1$) in the remaining case 
$$ F(y) = 1 + O\left(\frac{1}{1 + (\dist(y,I^c)/\eta)}\right) + O(n^{-10}).$$
Using these bounds one sees that
$$ \int_\R F(y) \rho_{sc}(y)\ dy = \int_I \rho_{sc}(y)\ dy + O\left( \eta \log \frac{|I|}{\eta} \right)$$
and a similar argument using Riemann integration and \eqref{ni} (as well as the trivial bound $N_J \leq n$ when $J$ lies outside $[-L,L]$) gives
$$ \frac{1}{n} \sum_{i=1}^n F(\lambda_i(W_n)) = \frac{1}{n} N_I + O_\eps\left( \eta \log \frac{|I|}{\eta} \right).$$
Putting all this together, we conclude that
$$ N_I = n \int_I \rho_{sc}(y)\ dy  + O_\eps( \delta n |I| ) + O_\eps\left( \eta n \log \frac{|I|}{\eta} \right).$$
The latter error term can be absorbed into the former since $|I| \geq \frac{\eta}{\delta} \log \frac{1}{\delta}$, and the claim follows.
\end{proof}

\begin{proof}  (Proof of Lemma \ref{smallsc})
By the union bound it suffices to show this for $|I|=\eta:= \frac{K^2 \log^2 n}{n}$.  Let $x$ be the center of $I$, then by \eqref{imsn} it suffices to show that the event that
\begin{equation}\label{nin}
 N_I \geq C n \eta
\end{equation}
and
\begin{equation}\label{nan}
\Im s_n( x + \sqrt{-1} \eta ) \geq C
\end{equation}
for some large absolute constant $C$, fails with overwhelming probability.

Suppose that we have both \eqref{nin} and \eqref{nan}.  By \eqref{ssz} we have
$$ \frac{1}{n} \sum_{k=1}^n |\Im \frac{1}{\frac{1}{\sqrt{n}} \zeta_{kk} - (x+\sqrt{-1} \eta) - Y_k}| \geq C;$$
using the crude bound $|\Im \frac{1}{z}| \leq \frac{1}{|\Im z|}$, we conclude
$$ \frac{1}{n} \sum_{k=1}^n \frac{1}{|\eta + \Im Y_k|} \geq C;$$
On the other hand, by writing $W_{n,k}$ in terms of an orthonormal basis $u_j(W_{n,k})$ of eigenfunctions, one sees that
\begin{equation}\label{yk}
Y_k = \sum_{j=1}^{n-1} \frac{|u_j(W_{n,k})^* a_k|^2}{\lambda_j(W_{n,k})-(x+\sqrt{-1}\eta)}
\end{equation}
and hence
$$ \Im Y_k \geq \eta \sum_{j=1}^{n-1} \frac{|u_j(W_{n,k})^* a_k|^2}{\eta^2 + (\lambda_j(W_{n,k})-x)^2}.$$
On the other hand, from \eqref{nin} we can find $1 \leq i_- < i_+ \leq n$ with $i_+ - i_- \geq \eta n$ such that $\lambda_i(W_n) \in I$ for all $i_- \leq i \leq i_+$; by the Cauchy interlacing property \eqref{cauchy-interlace} we thus have $\lambda_i(W_{n,k}) \in I$ for $i_- \leq i < i_+$.  We conclude that
$$  \Im Y_k \gg \frac{1}{\eta} \sum_{i_- \leq j < i_+} |u_j(W_{n,k})^* a_k|^2 = \frac{1}{\eta} \| P_{H_k} a_k \|^2$$
where $P_{H_k}$ is the orthogonal projection to the $i_+-i_-$-dimensional space $H_k$ spanned by the eigenvectors $u_j(W_{n,k})$ for $i_- \leq j < i_+$.  Putting all this together, we conclude that
$$ \frac{1}{n} \sum_{k=1}^n \frac{\eta}{\eta^2 + \| P_{H_k} a_k \|^2} \gg C.$$
On the other hand, from Lemma \ref{lemma:projection-again} we see that $\| P_{H_k} a_k \|^2 = O( \eta )$ with overwhelming probability.  (One has to take the union bound over all possible choices of $i_-, i_+$, but there are only $O(n^2)$ such choices at most, so this is not a problem.)  The claim then follows by taking $C$ sufficiently large.
\end{proof}

\section{A multidimensional Berry-Ess\'een theorem} \label{section:BE}

In this section, we prove Theorem \ref{bes}.
We will need the following multidimensional Berry-Ess\'een theorem, which is a generalisation of \cite[Proposition D.2]{TVhard}.

\begin{theorem}\label{berry-esseen}  Let $N, n \geq 1$ be integers, let $v_1,\ldots,v_n \in \C^N$ be vectors, let $\zeta_1,\ldots,\zeta_n$ be independent complex-valued variables with mean zero, variance $\E |\zeta_j|^2 = 1$, and the third moment bound
\begin{equation}\label{xi3}
\sup_{1 \leq i \leq n} \E |\zeta_i|^3 \leq C
\end{equation}
for some constant $C \geq 1$.  Let $S$ be the $\C^N$-valued random variable
$$ S := \sum_{i=1}^n v_i \zeta_i.$$
We identify $\C^N$ with $\R^{2N}$ in the usual manner, and define the \emph{covariance matrix} $M$ of $S$ to be the unique symmetric $2N \times 2N$ real matrix such that
\begin{equation}\label{umu}
 u^* M u := \E |\Re u^* S|^2
\end{equation}
for all $u \in \C^N \equiv \R^{2N}$.  

Let $G$ be a gaussian random variable on $\R^{2N} \equiv \C^N$ with mean zero and with the same covariance matrix $M$ as $S$, thus
$$ u^* M u = \E |G \cdot u|^2 = \E |\Re u^* S|$$
for all $u \in \R^{2n} \equiv \C^n$ (where $u \cdot v = \Re v^* u$ denotes the dot product on $\R^{2N}$).
More explicitly, $G$ has the distribution function
$$ \frac{1}{(2\pi)^n (\det M)^{1/2}} \exp( - x^* M^{-1} x / 2 )\ dx_1 \ldots dx_{2N}$$
if $M$ is invertible, with an analogous limiting formula when $M$ is singular.  Then for any $\eps > 0$ and any measurable set $\Omega \subset \R^{2N} \equiv \C^N$, one has
\begin{equation}\label{som}
 \P( S \in \Omega ) \leq \P( G \in \Omega \cup \partial_\eps \Omega ) + O( C N^{3/2} \eps^{-3} \sum_{1 \leq j \leq n} |v_j|^3 )
\end{equation}
and similarly
\begin{equation}\label{som2}
 \P( S \in \Omega ) \geq \P( G \in \Omega \backslash \partial_\eps \Omega ) - O( C N^{3/2} \eps^{-3} \sum_{1 \leq j \leq n} |v_j|^3 )
\end{equation}
where 
$$\partial_\eps \Omega := \{ x \in \R^{2N}: \dist_{\infty}(x,\partial \Omega) \leq \eps \},$$
$\partial \Omega$ is the topological boundary of $\Omega$, and $\dist_\infty$ is the distance with respect to the $\ell^\infty$ metric on $\R^{2N}$.
\end{theorem}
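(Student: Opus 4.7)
My plan is to follow a standard smoothing plus Lindeberg replacement strategy, adapted from the proof of \cite[Proposition D.2]{TVhard}. The basic idea is to compare $S$ with $G$ against a smooth test function that approximates $\mathbf{1}_\Omega$, and then swap the summands $v_i \zeta_i$ one at a time for $v_i \eta_i$, where the $\eta_i$ are independent complex Gaussians constructed to have matching mean, variance, and real/imaginary covariance with $\zeta_i$ (so that $\sum_i v_i \eta_i$ has the same law as $G$ by Gaussian independence and the definition of $M$).

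First I would fix a nonnegative smooth bump $\phi$ on $\R^{2N}$ with $\int \phi = 1$ and support in the $\ell^\infty$-ball of radius $1/2$, with derivatives obeying the uniform bounds $\|\partial^\alpha \phi\|_1 \leq C_\alpha$, and define the mollifier $\phi_\eps(x) := \eps^{-2N} \phi(x/\eps)$. I would then let $F := \mathbf{1}_{\Omega^+} * \phi_\eps$, where $\Omega^+ := \Omega \cup \{x : \dist_\infty(x,\Omega) \leq \eps/2\}$, so that $F$ is smooth, takes values in $[0,1]$, equals $1$ on $\Omega$, and is supported in $\Omega \cup \partial_\eps \Omega$. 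Componentwise, $\|\partial_{i_1} \cdots \partial_{i_k} F\|_\infty \leq C_k \eps^{-k}$ uniformly in the choice of coordinate indices $i_1,\ldots,i_k$.

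Next, I would carry out the Lindeberg swap: writing $S_{<i} := \sum_{j<i} v_j \zeta_j + \sum_{j>i} v_j \eta_j$ for the partial sum with the $i$-th entry removed, I would compare $\E F(S_{<i} + v_i \zeta_i)$ with $\E F(S_{<i} + v_i \eta_i)$ by Taylor expanding $F$ around $S_{<i}$ to third order in the complex direction $v_i$. The constant, first-order, and second-order terms agree in expectation because $\eta_i$ and $\zeta_i$ share their real moments up to order $2$ and the conditioning makes $S_{<i}$ independent of both. The third-order remainder is a trilinear form in $v_i \zeta_i$ (resp.\ $v_i \eta_i$), and using the coordinate-wise bound on third derivatives plus the Cauchy--Schwarz inequality $|h|_1 \leq \sqrt{2N}\, |h|_2$, I would estimate each third-order term by $O(N^{3/2} \eps^{-3} |v_i|^3)$ times $\E |\zeta_i|^3 + \E |\eta_i|^3 \ll C$ (the Gaussian third moment being controlled by the matching variance, which is $\leq 1 + \E |\zeta_i|^2 \leq 2$). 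Summing over $i$ yields
\[
|\E F(S) - \E F(G)| \ll C N^{3/2} \eps^{-3} \sum_j |v_j|^3.
\]
Finally, since $\mathbf{1}_\Omega \leq F$ and $F$ vanishes outside $\Omega \cup \partial_\eps\Omega$, we have $\P(S \in \Omega) \leq \E F(S) \leq \E F(G) + O(\cdots) \leq \P(G \in \Omega \cup \partial_\eps\Omega) + O(\cdots)$, giving \eqref{som}; applying the same argument with $F$ replaced by a mollified indicator of $\Omega \setminus \partial_\eps \Omega$ that lies below $\mathbf{1}_\Omega$ produces \eqref{som2}.

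The main technical point to watch is the proper matching of the covariance structure in the Lindeberg step: one needs $\eta_i$ to be a complex Gaussian whose real part, imaginary part, and cross-covariance agree with those of $\zeta_i$, not merely a circularly symmetric Gaussian of variance $1$. Granting this, the Taylor expansion kills all terms through second order after taking expectations, and the dimension factor $N^{3/2}$ arises precisely from converting a coordinate-wise derivative bound into one that controls the trilinear form $\sum_{i,j,k} \partial^3_{ijk} F \cdot h_i h_j h_k$ in terms of the Euclidean norm $|h|$. No other step depends on $N$, so this is the sharp source of the stated $N^{3/2}$.
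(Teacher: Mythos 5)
Your proposal is essentially the paper's own proof: mollify the indicator, run a Lindeberg replacement with Gaussians matched in real/imaginary covariance, and Taylor expand to third order, converting coordinate-wise derivative bounds $O(\eps^{-3})$ into a bound on the trilinear remainder of size $O(N^{3/2}\eps^{-3}|v_i|^3)$ by $\ell^1$-$\ell^2$ comparison. The one small refinement you make is to inflate $\Omega$ by $\eps/2$ in the $\ell^\infty$ metric before convolving, so that the mollified function $F$ satisfies $1_\Omega \leq F \leq 1_{\Omega\cup\partial_\eps\Omega}$ exactly; the paper instead convolves $1_\Omega$ directly with the scale-$\eps$ mollifier, obtaining $1_{\Omega\setminus\partial_\eps\Omega}\leq f\leq 1_{\Omega\cup\partial_\eps\Omega}$, which yields the desired inequalities only after a trivial $\eps\to 2\eps$ adjustment that the paper leaves implicit. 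Your version sidesteps that bookkeeping cleanly; otherwise the two arguments coincide step for step.
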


\begin{remark} The main novelty here, compared with that in \cite[Proposition D.2]{TVhard}, is that the random variable $\zeta_j$ is not assumed to be $\C$-normalized (which means that the real and imaginary parts of $\zeta_j$ have covariance matrix equal to half the identity.)  For instance, some of the $\zeta_j$ could be purely real, or supported on some other line through the origin, such as the imaginary axis.
\end{remark}

\begin{proof} We obtain the  result  by repeating the proof of \cite[Proposition D.2]{TVhard} with some proper modification.  
For the readers convenience, we present all details. 

It suffices to prove \eqref{som}, as \eqref{som2} follows by replacing $\Omega$ with its complement.

Let $\psi: \R \to \R^+$ be a bump function supported on the unit ball $\{x \in \R: |x| \leq 1 \}$ of total mass $\int_\R \psi = 1$, let $\Psi_{\eps,N}: F^\R \to \R^+$ be the approximation to the identity
$$ \Psi_{\eps,\R}(x_1,\ldots,x_N) := \prod_{i=1}^\R \frac{1}{\eps} \psi( \frac{x_i}{\eps} ),$$
and let $f: \R^n \to \R^+$ be the convolution
\begin{equation}\label{third}
 f(x) = \int_{\R^N} \Psi_{\eps,N}(y) 1_\Omega( x-y)\ dy
 \end{equation}
where $1_\Omega$ is the indicator function of $\Omega$.  Observe that $f$ equals $1$ on $\Omega \backslash \partial_\eps \Omega$, vanishes outside of $\Omega \cup \partial_\eps \Omega$, and is smoothly varying between $0$ and $1$ on $\partial_\eps \Omega$.  Thus it will suffice to show that
$$ |\E( f( S ) ) - \E( f( G ) )| \ll C N^{3/2} \eps^{-3} (\sum_{1 \leq j \leq n} |v_j|^3).$$

We now use a Lindeberg replacement trick (cf. \cite{lindeberg, PR}). For each $1 \leq i \leq n$, let $g_i$ be a complex gaussian with mean zero and with the same covariance matrix as $\zeta_i$, thus
$$ \E \Re(g_i)^2 = \E \Re(\zeta_i)^2; \quad \E \Im(g_i)^2 = \E \Im(\zeta_i)^2; \quad \E \Re(g_i) \Im(g_i) = \E \Re(\zeta_i) \Im(\zeta_i).$$
In particular $g_i$ has mean zero and variance $1$.  We construct the $g_1,\ldots,g_n$ to be jointly independent.    Observe from \eqref{umu} that the random variable
$$ g_{1} v_1 + \ldots + g_{n} v_n \in \C^N$$
has mean zero and covariance matrix $M$, and thus has the same distribution as $G$.  Thus if we define the random variables
$$ S_j := \zeta_1 v_1 + \ldots + \zeta_j v_j + g_{j+1} v_j + \ldots + g_{n} v_n \in \C^N$$
we have the telescoping triangle inequality
\begin{equation}\label{fsag}
 |\E(f(S)) - \E( f(G) )| \leq \sum_{j=1}^{n} |\E f( S_{j} ) - \E f( S_{j-1} )|.
\end{equation}
For each $1 \leq j \leq n$, we may write
$$ S_j = S'_j + \zeta_j v_j; S_{j-1} = S'_j + g_{j} v_j$$
where
$$ S'_j := \zeta_1 v_1 + \ldots + \zeta_{j-1} v_{j-1} + g_{j+1} v_j + \ldots + g_{n} v_n.$$
By Taylor's theorem with remainder we thus have
\begin{equation}\label{fa}
 f(S_j) = P_{S'_j}( \Re \zeta_j, \Im \zeta_j ) + O( |\zeta_j|^3 \sup_{x \in \R^n} \sum_{k=0}^3 |(v_j \cdot \nabla)^k (\sqrt{-1} v_j \cdot \nabla)^{3-k} f(x)| )
 \end{equation}
and
\begin{equation}\label{fb}
 f(S_{j-1}) = P_{S'_j}( \Re g_j, \Im g_j ) + O( |g_j|^3 \sup_{x \in \R^n} \sum_{k=0}^3 |(v_j \cdot \nabla)^k (\sqrt{-1} v_j \cdot \nabla)^{3-k} f(x)| )
 \end{equation}
where $P_{S'_j}$ is some quadratic polynomial depending on $S'_j$, and $v_j$ and $\sqrt{-1} v_j$ are viewed as vectors in $\R^{2N}$.  A computation using \eqref{third} and the Leibniz rule reveals that all third partial derivatives of $f$ have magnitude $O( \eps^{-3} )$, and so by Cauchy-Schwarz we have
$$ \sum_{k=0}^3 |(v_j \cdot \nabla)^k (\sqrt{-1} v_j \cdot \nabla)^{3-k} f(x)| ) \ll |v_j|^3 N^{3/2} \eps^{-3}.$$
Observe that $\zeta_j$, $g_{j}$ are independent of $S'_j$, and have the same mean and covariance matrix.  Subtracting \eqref{fa} from \eqref{fb} and taking expectations using \eqref{xi3} we conclude that
$$ | \E( f(S_j) ) - \E( f( S_{j-1} ) )| \ll C |v_j|^3 N^{3/2} \eps^{-3} $$
and the claim follows from \eqref{fsag}.
\end{proof}


\begin{remark} The bounds here are not best possible, but are sufficient for our applications.
\end{remark}

Now we are ready to prove Theorem \ref{bes}. 

\begin{proof} (Proof of Theorem \ref{bes}) We first prove the upper tail bound on $S_i$.  Here, the main tool is the $N=1$ case of Theorem \ref{berry-esseen}.  The variance of $S_i$ is
\begin{equation}\label{psy}
 \E |S_i|^2 = \sum_{j=1}^n |a_{i,j}|^2 = 1
\end{equation}
since the rows of $A$ have unit size.  Thus, the $2 \times 2$ covariance matrix of $S_i$ is $O(1)$.  Let $G_i$ be a complex gaussian with mean zero and the same covariance matrix as $S_i$.  By Theorem \ref{berry-esseen}, we have
$$
 \P( |S_i| \geq t ) \leq \P( |G| \geq t - \sqrt{2} \eps ) + O( C \eps^{-3} \sum_{i=1}^N \sum_{j=1}^n|a_{i,j}|^3 )$$
for any $\eps > 0$.  Selecting $\eps := \frac{1}{10}$ (say), and using the fact that $G$ has variance $1$, we conclude that
$$
 \P( |S_i| \geq t ) \leq \exp( - c t^2 ) + O( C \sum_{i=1}^N \sum_{j=1}^n |a_{i,j}|^3 ),$$
and the claim follows from \eqref{summer} and \eqref{psy}.
 
Now we prove the lower tail bound on $\vec S$, using  Theorem \ref{berry-esseen} in full generality.  Observe that for any unit vector $u \in \C^N \equiv \R^{2N}$, one has
$$ \E |u^* S|^2 = \E \|u^* A\|^2 = 1$$
by the orthonormality of the rows of $A$.  Thus, by \eqref{umu}, the operator norm of the covariance matrix $M$ of $S$ has operator norm at most $1$.  On the other hand, we have
$$ \tr M = \E |S|^2 = \| A \|_F^2 = N.$$
Thus, the $2N$ eigenvalues of $M$ range between $0$ and $1$ and add up to $N$.  This implies that  at least $N/2$ of them are at least $1/4$, and so one can find an $\lfloor N/2\rfloor$-dimensional real subspace $V$ of $\R^{2N}$ such that $M$ is invariant on $V$ and has all eigenvalues at least $1/4$ on $V$.

Now let $G$ be a gaussian in $\R^{2N} \equiv \C^N$ with mean zero and covariance matrix $M$.  By Theorem \ref{berry-esseen}, we have
$$ \P( | \vec S| \leq t ) \leq \P( |G| \leq t + \sqrt{2N} \eps ) + O( C N^{3/2} \eps^{-3} \sum_{i=1}^N \sum_{j=1}^n |a_{i,j}|^3 )$$
for any $\eps > 0$. By \eqref{summer}, \eqref{psy} we have
$$ \sum_{i=1}^N \sum_{j=1}^n  |a_{i,j}|^{3} \le N \sigma.$$

Setting $\eps := t / \sqrt{2N}$, we conclude that
$$ \P( |\vec S| \leq t ) \leq \P( |G| \leq 2t ) + O( C N^4 t^{-3} \sigma ).$$
Let $G_V$ be the orthogonal projection of $G$ to $V$. Clearly
$$ \P( |G| \leq 2t ) \leq \P( |G_V| \leq 2t ).$$

The gaussian $G_V$ has mean zero and covariance matrix at least $\frac{1}{4} I_V$ (i.e. all eigenvalues are at least $1/4$).  By applying a linear transformation to reduce the covariance, we see that the quantity $\P( |G_V| \leq 2t)$ is maximized when the covariance matrix \emph{exactly} $\frac{1}{4} I_V$. Thus, 
 in any orthonormal basis of $G_V$, the $\lfloor N/2\rfloor$ $g_1,\ldots,g_{\lfloor N/2\rfloor}$ components of $G_V$ are independent real gaussians of variance $1/4$.  If $|G_V| \leq 2t$, then $g_1^2 + \ldots + g_{\lfloor N/2\rfloor}^2 \leq 4t^2$, and thus (by Markov's inequality) $g_i^2 \leq 8t^2/N$ for at least $\lfloor N/4\rfloor$ of the indices $i$.  The number of choices of these indices is at most $2^{\lfloor N/2\rfloor}$, and the events $g_i^2 \leq 2 t^2/N$ are independent and occur with probability $O( t/\sqrt{N} )$, so we conclude from the union bound that
$$\P( |G_V| \leq 2t ) \leq O( t/\sqrt{N} )^{\lfloor N/4\rfloor}$$
and the claim follows.
\end{proof}

\vskip2mm

{\it Acknowledgement.}  We would like to thank P. Sarnak for bringing this beautiful problem to our attention. 
We would like to thank M. Krishnapur and S. Chatterjee who introduced us to the Lindeberg method. 
We would like to thank  G. Ben Arous, M. Krishnapur, K. Johansson,  J. Lebowitz, S. P\'ech\'e,   B. Rider, A. Soshnikov 
and  O. Zeitouni for useful conversations regarding the state-of-the-art of GUE/ GOE  and Johansson matrices.  
Part of the paper was written while the second author was visiting Microsoft Research in Boston and he would like to thank J. Chayes and C. Borgs for their hospitality.   We thank Alain-Sol Sznitman and the anonymous referees for corrections.

\end{document}